  \DeclareSymbolFontAlphabet{\mathbb}{AMSb}
  \DeclareSymbolFontAlphabet{\mathbbl}{bbold}
\newcommand{\version}{Ver.~0.0}
\newcommand{\setversion}[1]{\renewcommand{\version}{Ver.~{#1}}}
\title{Overview on the theory of double flag varieties for symmetric pairs}
\dedicatory{Dedicated to the memory of Joe Wolf}
\author{Lucas Fresse and Kyo Nishiyama}
\address{Universit\'e de Lorraine, CNRS, Institut \'Elie Cartan de Lorraine, UMR 7502, Vandoeu\-vre-l\`es-Nancy, F-54506, France}
\email{lucas.fresse@univ-lorraine.fr}
\address{Department of Mathematics, Aoyama Gakuin University, Fuchinobe 5-10-1, Chuo-ku, Sagamihara 252-5258, Japan}
\email{kyo@math.aoyama.ac.jp}
\thanks{K.~N.~is supported by JSPS KAKENHI Grant Number \#{21K03184}.}
\newcounter{mypart}  % new conter
\newcommand{\partlabel}[1]{%
\refstepcounter{mypart}% increase the counter
\label{#1}% defining label
}
\numberwithin{equation}{section}
\newtheorem{theorem}{Theorem}[section]
\newtheorem{lemma}[theorem]{Lemma}
\newtheorem{proposition}[theorem]{Proposition}
\newtheorem{corollary}[theorem]{Corollary}
\newtheorem{conjecture}[theorem]{Conjecture}
\theoremstyle{definition}
\newtheorem{example}[theorem]{Example}
\newtheorem{notation}[theorem]{Notation}
\newtheorem{definition}[theorem]{Definition}
\newtheorem{remark}[theorem]{Remark}
\newtheorem{problem}[theorem]{Problem}
\newtheorem{assumption}[theorem]{Assumption}
\newcounter{penum}
\newenvironment{penumerate}{%
\par\smallskip
\begin{list}{$\;\;(\thepenum)$}{%
\usecounter{penum}
\setlength{\topsep}{0pt}
\setlength{\partopsep}{0pt}
\setlength{\parsep}{1ex}
\setlength{\itemindent}{0pt}
\setlength{\labelsep}{.5em}
\setlength{\labelwidth}{0pt}
\setlength{\leftmargin}{0pt}
\setlength{\rightmargin}{0pt}
\setlength{\itemsep}{0pt}
}}
{\end{list}\par}
\newcommand{\skipover}[1]{}
\newcommand{\smallvstrut}{\vphantom{\Bigm|}}
\newcommand{\Z}{\mathbb{Z}}
\newcommand{\R}{\mathbb{R}}
\newcommand{\bbB}{\mathbb{B}}
\newcommand{\bbH}{\mathbb{H}}
\newcommand{\bbK}{\mathbb{K}}
\newcommand{\bbG}{\mathbb{G}}
\newcommand{\C}{\mathbb{C}}
\newcommand{\Cbatsu}{\mathbb{C}^{\times}}
\newcommand{\bbF}{\mathbb{F}}
\newcommand{\bbP}{\mathbb{P}}
\newcommand{\bbQ}{\mathbb{Q}}
\newcommand{\bbU}{\mathbb{U}}
\newcommand{\bbX}{\mathbb{X}}
\newcommand{\mbfa}{\mathbf{a}}
\newcommand{\mbfb}{\mathbf{b}}
\newcommand{\mbfc}{\mathbf{c}}
\newcommand{\I}{\mathcal{I}}
\newcommand{\Xfv}{\mathfrak{X}}
\newcommand{\Grass}{\mathrm{Gr}}
\newcommand{\Lie}{\mathrm{Lie}}
\newcommand{\SL}{\mathrm{SL}}
\newcommand{\GL}{\mathrm{GL}}
\newcommand{\Sp}{\mathrm{Sp}}
\newcommand{\SO}{\mathrm{SO}}
\newcommand{\OO}{\mathrm{O}}
\newcommand{\rk}{\mathrm{rk}\,}
\newcommand{\Mat}{\mathrm{M}}
\newcommand{\diag}{\qopname\relax o{diag}}
\newcommand{\Hom}{\qopname\relax o{Hom}}
\newcommand{\End}{\qopname\relax o{End}}
\newcommand{\Stab}{\qopname\relax o{Stab}}
\newcommand{\Rep}{\qopname\relax o{Rep}}
\newcommand{\Irr}{\qopname\relax o{Irr}}
\newcommand{\Aut}{\qopname\relax o{Aut}}
\newcommand{\Ad}{\qopname\relax o{Ad}}
\newcommand{\unitmatrix}{\mathbbl{1}}
\newcommand{\columnstrip}{\subset\!\!\!\!\!\cdot\,\,\,}
\renewcommand{\Im}{\qopname\relax o{Im}}
\newcommand{\vectwo}[2]{{\renewcommand{\arraystretch}{.85}\Bigl(\begin{array}{@{\,}c@{\,}}{#1}\\ {#2}\end{array}\Bigr)}}
\newcommand{\mattwo}[4]{\Bigl(\begin{array}{@{\,}c@{\;\;}c@{\,}}{#1} & {#2} \\ {#3} & {#4} \end{array}\Bigr)}
\newcommand{\dblFV}{\Xfv}
\newcommand{\bbdblFV}{\mathbb{X}}
\newcommand{\tripleFV}{\mathscr{X}}
\newcommand{\multipleFV}{\mathscr{X}}
\newcommand{\Flag}{\mathscr{F}}
\newcommand{\Flags}{\mathscr{F}\!\ell}
\newcommand{\FlagVar}{\mathscr{F}\!\ell}
\newcommand{\JVar}{\mathbf{JF}}
\newcommand{\MultQRep}{\mathscr{MF}}
\newcommand{\JQRep}{\mathscr{J}}
\newcommand{\FlP}{\mathscr{P}}
\newcommand{\FlB}{\mathscr{B}}
\newcommand{\lie}[1]{\mathfrak{#1}}
\newcommand{\nilradical}[1]{\lie{u}_{#1}}
\newcommand{\orbit}{\mathbb{O}}
\newcommand{\bborbit}{\mathbb{O}}
\newcommand{\calorbit}{\mathcal{O}}
\newcommand{\closure}[1]{\overline{#1}}
\newcommand{\transpose}[1]{\,{}^t{#1}}
\newcommand{\norm}[1]{\|{#1}\|}
\newcommand{\nilpotents}{\mathcal{N}}
\newcommand{\nilpotentsof}[1]{\mathcal{N}_{#1}}
\newcommand{\eb}{\boldsymbol{e}}
\newcommand{\signedyd}[1]{\mathcal{SYD}(#1)}
\newcommand{\genRS}{\qopname\relax o{gRS}}
\newcommand{\Rowinsert}{\mathrm{Rowinsert}}
\newcommand{\ST}[1]{\mathrm{STab}({#1})}
\newcommand{\Tquintuples}[1]{\mathcal{T}_{#1}}
\newcommand{\nopicture}[1]{}
\newcommand{\fk}{\mathfrak{k}}
\newcommand{\fs}{\mathfrak{s}}
\newcommand{\fg}{\mathfrak{g}}
\newcommand{\fp}{\mathfrak{p}}
\newcommand{\fn}{\mathfrak{n}}
\newcommand{\fq}{\mathfrak{q}}
\newcommand{\gl}{\mathfrak{gl}}
\newcommand{\fb}{\mathfrak{b}}
\newcommand{\RS}{\mathrm{RS}}
\newcommand{\shape}{\qopname\relax o{\mathbf{sh}}}
\newcommand{\Tr}{\qopname\relax o{Tr}}
\newcommand{\rank}{\qopname\relax o{rank}}
\newcommand{\nrboxes}[2]{\# #1_{\leq #2}}
\newcommand{\nrplus}[2]{\# #1_{\leq #2}(+)}
\newcommand{\nrminus}[2]{\# #1_{\leq #2}(-)}
\newcommand{\flag}{\mathcal{F}}
\newcommand{\conormal}{\mathcal{Y}}
\newcommand{\Xorbit}{\mathbb{O}}
\newcommand{\partitionsof}[1]{\mathcal{P}(#1)}
\newcommand{\permutationsof}[1]{\mathfrak{S}_{#1}}
\newcommand{\ppermutations}{\mathfrak{T}}
\newcommand{\regTnxTn}{\ppermutations_n^{\circ}}
\newcommand{\parameters}{\overline{\ppermutations}}
\newcommand{\graphic}{\Gamma}
\newcommand{\Indecomposable}{\mathrm{Indec}}
\newcommand{\ModQ}{\mathfrak{M}}
\newcommand{\bbd}{\boldsymbol{d}}
\newcommand{\bbg}{\boldsymbol{g}}
\newcommand{\bbk}{\boldsymbol{k}}
\newcommand{\bbp}{\boldsymbol{p}}
\newcommand{\bbu}{\boldsymbol{u}}
\newcommand{\bbx}{\boldsymbol{x}}
\newcommand{\bbPhi}{\mathbbl{\Phi}}
\newcommand{\bbBK}{\bbB_{\bbK}}
\newcommand{\restrict}{\big|}
\renewcommand{\bbphi}{\phi_{\bbdblFV}}
\newcommand{\LGrass}{\qopname\relax o{LGr}}
\newcommand{\regMat}{\mathrm{M}^{\circ}}
\newcommand{\regCMat}{\mathscr{S}^{\circ}}
\newcommand{\regCnxCn}{\mathscr{C}_n^{\circ}}
\newcommand{\Sym}{\mathop{\mathrm{Sym}}\nolimits}
\newcommand{\vdotsshiftedup}{\raisebox{1.2ex}{$\vdots$}}
\newcommand{\omegaI}{\mbox{\tiny
$\begin{picture}(85,20)(0,0)
\put(0,11){$\bullet$}
\put(2,13){\circle{6}}
\put(0,18){$1^+$}
\put(18,11){$\cdots$}
\put(40,11){$\bullet$}
\put(40,18){$i^+$}
\put(58,11){$\cdots$}
\put(80,11){$\bullet$}
\put(80,18){$p^+$}
\put(82,13){\circle{6}}
\put(0,-11){$\bullet$}
\put(0,-20){$1^-$}
\put(42,12){\line(-2,-1){38}}
%\put(22,-6){\circle{6}}
\end{picture}$}}
\newcommand{\omegaII}{\mbox{\tiny
$\begin{picture}(85,20)(0,0)
\put(0,11){$\bullet$}
\put(2,13){\circle{6}}
\put(0,18){$1^+$}
\put(18,11){$\cdots$}
\put(40,11){$\bullet$}
\put(40,18){$i^+$}
\put(58,11){$\cdots$}
\put(80,11){$\bullet$}
\put(80,18){$p^+$}
\put(82,13){\circle{6}}
\put(0,-11){$\bullet$}
\put(0,-20){$1^-$}
\put(2,-9){\circle{6}}
\end{picture}$}}
\newcommand{\omegaIII}{\mbox{\tiny
$\begin{picture}(85,20)(0,0)
\put(0,11){$\bullet$}
\put(2,13){\circle{6}}
\put(0,18){$1^+$}
\put(38,11){$\cdots$}
\put(20,11){$\bullet$}
\put(20,18){$2^+$}
\put(22,13){\circle{6}}
\put(58,11){$\cdots$}
\put(80,11){$\bullet$}
\put(80,18){$p^+$}
\put(82,13){\circle{6}}
\put(0,-11){$\bullet$}
\put(0,-20){$1^-$}
\end{picture}$}}
\newcommand{\GIiplus}{\mbox{\tiny
$\begin{picture}(6,20)(0,0)
\put(0,0){$\bullet$}
\put(0,8){$i^+$}
\put(2,1){\circle{8}}
\end{picture}$}}
\newcommand{\GIiprimeplus}{\mbox{\tiny
$\begin{picture}(6,20)(0,0)
\put(0,0){$\bullet$}
\put(0,8){$i^+$}
\end{picture}$}}
\newcommand{\GIjminus}{\mbox{\tiny
$\begin{picture}(6,20)(0,0)
\put(0,0){$\bullet$}
\put(0,-10){$j^-$}
\put(2,1){\circle{8}}
\end{picture}$}}
\newcommand{\GIjprimeminus}{\mbox{\tiny
$\begin{picture}(6,20)(0,0)
\put(0,0){$\bullet$}
\put(0,-10){$j^-$}
\end{picture}$}}
\newcommand{\GIij}{\mbox{\tiny
$\begin{picture}(6,20)(0,0)
\put(1,11){$\bullet$}
\put(1,17){$i^+$}
\put(1,-11){$\bullet$}
\put(1,-19){$j^-$}
\put(3,12){\line(0,-1){20}}
\end{picture}$}}
\begin{document}

\maketitle

\begin{abstract}
Let $ G $ be a connected reductive algebraic group and {$ K $ its symmetric subgroup fixed by an involution}.  
The variety $ \dblFV = K/Q \times G/P $ is called a double flag variety, 
where $ Q $ and $ P $ are parabolic subgroups of $ K $ and $ G $ respectively.   

In this article, we make a survey on 
{the current status of the research and results related to }
%%the theory of 
double flag varieties for a symmetric pair $ (G, K) $ 
and report {a number of new results and theorems}.
{(By the word ``theory'', in the title, we mean 
a series of general results on finiteness and parametrization of orbits under the action of suitable algebraic groups, the geometry related to orbits, and infinitesimal character of the action including relation with nilpotent orbits.)}

Most important topic {in this article} is the finiteness of $ K $-orbits on $ \dblFV $.  
We summarize the classification of $ \dblFV $ of finite type, which are scattered in the {literature}.  
In some respects such classifications are complete, and in some cases not.  
In particular, we get a classification of double flag varieties of finite type when 
a symmetric pair is of type AIII, using the theorems of Homma who describes ``indecomposable'' objects of such double flag varieties.  
Together with these classifications, newly developed embedding theory provides double flag varieties of finite type, which are new.  
Other ingredients in this article are Steinberg theory, generalization of Robinson-Schensted correspondence, and orbit classification via quiver representations.

We hope this article is useful for those who want to study the theory of double flag varieties.
\end{abstract}

%\vspace*{12pt}

\clearpage

\tableofcontents
%type this to generate out the chapter content text. 

\section{Introduction}

Let $ G $ be a connected reductive algebraic group over the complex number field $ \C $ 
 and $ \theta \in \Aut G $ an involutive automorphism.  
There will be no harm in considering an algebraically closed field of characteristic zero, but we use $ \C $ for simplicity.  
The fixed point subgroup $ K = G^{\theta} $ is called a \emph{symmetric subgroup} and $ (G, K) $ is called a \emph{symmetric pair}.  
The subgroup $ K $ is also reductive, but it is not connected in general.  
In this paper, we assume $ K $ is connected, although most of the theory will go fine if we take the connected component of $ G^{\theta} $ as $ K $.  

We concentrate on the cases where $ G $ is classical.  
Let us list irreducible classical symmetric pairs referring to \cite[\S~10.3]{Helgason.DG.1978}.

\begin{itemize}
\item[(AI)]
$ (\SL_n, \SO_n) $ \qquad\qquad \S\S~\ref{subsection:Table.P=Borel} and \ref{subsection:Table.Q=Borel}; \; \S~\ref{subsec:AII}
\item[(AII)]
$ (\SL_{2n}, \Sp_{2n}) $ \qquad\qquad \S\S~\ref{subsection:Table.P=Borel} and \ref{subsection:Table.Q=Borel}; \; \S~\ref{subsec:AI}
\item[(AIII)]
$ (\SL_n, (\GL_p \times \GL_q) \cap \SL_n) \; (n = p + q) $ \quad 
\qquad\qquad Table~\ref{table:AIII.finite.type} in \S~\ref{subsec:classification.table.typeAIII.DFV}
\\
(Below, we consider $ (\GL_n, \GL_p \times \GL_q) $, which is easier to handle.) 
\item[(BDI)]
$ (\SO_n, \SO_p \times \SO_q) \; (n = p + q) $ \quad \S\S~\ref{subsection:Table.P=Borel} and \ref{subsection:Table.Q=Borel}; \; \S\S~\ref{subsec:BI}, \ref{subsec:DI-even}, \ref{subsec:DI-odd}
\item[(DIII)]
$ (\SO_{2n}, \GL_n) $ \qquad\qquad \S\S~\ref{subsection:Table.P=Borel} and \ref{subsection:Table.Q=Borel}; \; \S~\ref{subsec:DIII}
\item[(CI)]
$ (\Sp_{2n}, \GL_n) $ \qquad\qquad \S\S~\ref{subsection:Table.P=Borel} and \ref{subsection:Table.Q=Borel}; \; \S~\ref{subsec:CI}
\item[(CII)]
$ (\Sp_{2n}, \Sp_{2p} \times \Sp_{2q}) \; (n = p + q) $ \qquad\qquad \S\S~\ref{subsection:Table.P=Borel} and \ref{subsection:Table.Q=Borel}; \; \S~\ref{subsec:CII}
\end{itemize}

\begin{remark}
\begin{penumerate}
\item
For each type we mention relevant sections, which contain the tables of double flag varieties of finite type (for details, see the explanation below).  
For the tables of the triple flag varieties of finite type, 
see the tables in \S\S~\ref{subsec:Table.3FV.Borel} and \ref{subsec:Table.3FV.general}.  
\item
Sometimes (BDII) refers to $ (\SO_{n + 1}, \SO_n) $ (see \cite[\S~6.3]{Mimura.Toda.1991}, for example), 
but we include this into (BDI) for convenience.  
Further, in occasions, we separate (BDI) into (BI) and (DI) according as $ n $ is odd or even. 
\end{penumerate}
\end{remark}

Let $ (G, K) $ be a symmetric pair.  
For parabolic subgroups $ Q $ of $ K $ and $ P $ of $ G $, 
we call $ \dblFV = K/Q \times G/P $ a \emph{double flag variety} on which $ K $ acts diagonally.  
The study of double flag varieties started a decade ago by \cite{NO.2011}.  
Since then, there appeared many works on them; we mention  
\cite{Henderson.Trapa.2012,Kobayashi.Oshima.2013,HNOO.2013,Kobayashi.Mastuki.2014,Fresse.N.2016,Fresse.N.2020,Fresse.N.2021,Fresse.N.2023,Nishiyama.Orsted.2018,Homma.2021}
for example.  
Double flag varieties are intimately related to triple flag varieties (in fact, such triple flag varieties are considered to be special cases of double flag varieties, see \S~\ref{section:mult.FV}).  
There are also many works on these triple/multiple flag varieties, e.g.,  
 \cite{MWZ.1999,MWZ.2000,Stembridge.2003,Travkin.2009,FGT.2009,Matsuki.2013,Matsuki.2015,Matsuki.arXiv2019,BSEKG.2017.arXiv}.  
The main interests of these papers are proving finiteness of orbits, classifications of multiple flag varieties with finitely many orbits,  and generalizations of Steinberg theory including combinatorics such as Robinson-Schensted correspondence.  
There have been many results including various classification tables of double/triple flag varieties with finitely many orbits 
and they scattered over in different places.  

In this paper, we summarize important properties, classification results and tables in the theory of double flag varieties from various resources, and explain general theory of Steinberg maps.  
We try to keep the description understandable not only for experts but also for readers from different fields.  
In the course of making summary, we have obtained many of entirely new results or improvements on existent theorems.  
These new results, which advance our understanding of the theory, are in Parts~\ref{part:finite.type.DFV}--\ref{part:Orbit.Embedding}.

Let us explain the content of the paper.
\medskip

In Part~\ref{part:DFV.symmetric.pairs}, we introduce basics on double/triple flag varieties.  
We begin with the well known theory of flag varieties and Bruhat decompositions.  
{We are interested in double or triple flag varieties with the action of $ G $ or $ K $.}  
If $ X $ is a smooth $ G $-variety, then the action of $ G $ induces a Hamiltonian action on the cotangent bundle $ T^*X $ and it provides a moment map.  
We introduce the \emph{conormal variety} as the null fiber of the moment map, and describe the irreducible decomposition of the conormal variety using conormal bundles of orbits, 
if there exist only finitely many orbits.  
{In this respect, the finiteness of orbits plays a central role.  We summarize various results related to orbit structures in double/triple flag varieties as well as the classical results on (a single) flag varieties.}

\medskip

In Part~\ref{part:finite.type.DFV}, we discuss finiteness of orbits on double flag varieties, which is crucial in the Steinberg theory developed below.  
If there are only finitely many $ K $-orbits on $ \dblFV = K/Q \times G/P $, $ \dblFV $ is called of \emph{finite type}.  
Since $ \dblFV /K \simeq Q \backslash G/P $, it is equivalent to having finitely many $ Q $ orbits on $ G/P $.  
In particular, if $ Q = B_K $ is a Borel subgroup of $ K $, 
classification of the double flag {varieties} of finite type amounts that of $ K $-spherical partial flag varieties $ G/P $.  This classification is achieved by He-N-Ochiai-Oshima \cite{HNOO.2013}.  
They also classified the double flag {varieties} of finite type in the case where $ P = B_G $ is a Borel subgroup of $ G $.  
We reproduce the classification in \S~\ref{section:finite.DFV.P.or.Q=Borel} together with the key ideas of the proofs.  In fact the classification is reduced to that of multiplicity free action on certain space of nilpotent elements (see \S~\ref{subsec:ideas.proofs.Q=BK.or.P=BG}).  
Together with these results, we also collect the tables of triple flag varieties of finite type in \S\S~\ref{subsec:Table.3FV.Borel} and \ref{subsec:Table.3FV.general}.
They are quoted from \cite{MWZ.1999,MWZ.2000}, \cite{Stembridge.2003}, \cite{Matsuki.2015,Matsuki.arXiv2019}.  
These classifications are used later to obtain further examples of double flag varieties of finite type.  

To classify general double flag varieties of finite type, 
it is useful to apply representation theory of quivers and consider suitable subcategories of representations.  
The usage of quiver representations was first introduced by Magyar-Weyman-Zelevinsky \cite{MWZ.1999} 
for triple flag varieties 
based on the theory of Gabriel \cite{Gabriel.1972}, and Kac \cite{Kac.Quiver.1980,Kac.QuiverII.1982}.  
We explain core ideas of these theories in \S~\ref{sec:quiver.joint.variety}.  

For the classification of double flag varieties of finite type (for type AIII), 
Homma \cite{Homma.2021} introduced \emph{joint flag category} and \emph{joint flag varieties} which correspond 
to the quiver representations associated with the flags in the double flag varieties.  
Using his idea and results, we classify the double flag varieties of finite type in the case of the symmetric pairs of type AIII.  Homma actually classified 
indecomposable quiver representations of finite type, and it is not immediate to get the explicit form of double flag varieties.  
Our Table~\ref{table:AIII.finite.type} in \S~\ref{subsec:classification.table.typeAIII.DFV}  presents precisely when given parabolic subgroups 
 contribute to the double flag varieties of finite type, and 
this completes the classification in the case of type AIII (Corollary~\ref{cor:classification.AIII.finite.type}).  

For the symmetric pair $ (G, K) = (\GL_n, \GL_p \times \GL_q) \; (n = p + q) $ of type AIII, 
we also present a classification of $ K $-orbits on $ \dblFV = K/B_K \times G/P $ in terms of indecomposable quiver representations, 
where $ B_K $ is a Borel subgroup of $ K $ and $ P $ is a maximal parabolic subgroup of $ G $.  
See Theorem~\ref{theorem:orbits.AIII} in \S~\ref{subsec:orbit.by.quivers.typeAIII}.
%%$ \GL_p/B_p \times \GL_q/B_q \times \GL_n/P_{(r, n - r)} $

\medskip

In Part~\ref{part:Steinberg.theory}, we explain the Steinberg theory for double flag varieties.  

For a symmetric pair $ (G, K) $, denote by $ \lie{g} $ and $ \lie{k} $ the Lie algebras of $ G $ and $ K $ respectively.  
Let $ \lie{g} = \lie{k} + \lie{s} $ be the Cartan decomposition, where $ \lie{s} $ is the $ (-1) $-eigenspace of $ \theta \in \Aut \lie{g} $ (we denote the differential of $ \theta $ by the same letter).  
Note that $ \lie{s} $ is isomorphic to the tangent space of $ G/K $ at the base point $ eK $, on which $ K $ acts.  
We denote by $ \nilpotents $ the nilpotent variety of $ \lie{g} $ and 
put $ \nilpotentsof{\lie{k}} = \nilpotents \cap \lie{k} $ and $ \nilpotentsof{\lie{s}} = \nilpotents \cap \lie{s} $.  
Due to the work of Kostant, Mal'cev and Kostant-Rallis (\cite{Malcev.1944}, \cite{Kostant.1963,Kostant.Rallis.1971}, see also \cite{Collingwood.McGovern.1993}) it is known that 
there are only finitely many $ K $-orbits in both of the nilpotent varieties $ \nilpotentsof{\lie{k}} $ and $ \nilpotentsof{\lie{s}} $.  

If $ \dblFV $ has finitely many $ K $-orbits, then there exists a natural orbit map 
$ \Phi_\fk : \dblFV/K \to \nilpotentsof{\lie{k}}/K $ by using moment maps.  
In the classical case where $ \dblFV = G/B \times G/B $ is the product of two full flag varieties on which $ K = G $ acts diagonally, 
this reduces to the classical Steinberg's theory \cite{Steinberg.1988}.  Namely, there exists a map 
$ W_G \simeq (G/B \times G/B)/G \to \nilpotents/G $ called the \emph{Steinberg map}, where $ W_G $ denotes the Weyl group of $ G $.    
In the case of type A, the set of nilpotent orbits $ \nilpotents/G $ is parameterized by Young diagrams (or partitions) and the fibers of the Steinberg map are identified with the pairs of 
standard Young tableaux.  In such a way, the Steinberg map is identified with the \emph{Robinson-Schensted correspondence} (RS correspondence), a combinatorial procedure.  
Therefore, in our situation, the orbit map $ \Phi_\fk : \dblFV/K \to \nilpotentsof{\lie{k}}/K $ thus obtained is a generalization of the Steinberg map.  
In addition, in the case of double flag variety, we get one \emph{extra} map $ \Phi_\fs : \dblFV/K \to \nilpotentsof{\lie{s}}/K $ if certain condition called ``nil-projection assumption'' holds 
(see Assumption~\ref{assumption:phis} in \S~\ref{subsec:two.Steinberg.maps.nil-projection.assumption} for details).
The map $ \Phi_\fk $ is called the \emph{symmetrized Steinberg map} and $ \Phi_\fs $ is called \emph{exotic} one.  
In \S~\ref{section:generalization.RS}, for symmetric pairs of type AIII, we explain combinatorial procedures to get these two Steinberg maps, which are {generalizations} of RS correspondence.  

In this respect, Nil-projection Assumption is very important.  We propose a conjecture that says it always holds whenever the double flag variety is of finite type.  
For example, this is valid when the parabolic subgroup $ P \subset G $ has abelian unipotent radical.  
See Conjecture~\ref{conj:Im.exotic.mm.in.nilpotent.variety} and Proposition~\ref{proposition:image:nilpotent.classical}.

\medskip

Part~\ref{part:Orbit.Embedding} is devoted to the study of embedding theory of double flag varieties.  
Since the double flag varieties of finite type are completely classified when the symmetric pair is of type AIII, 
if we can embed a double flag variety of another type into one of type AIII, so that the structure of orbits is preserved, then we get various examples of double flag varieties of finite type.   
In \cite{Fresse.N.2021}, this attempt is carried out for the double flag varieties of type CI.  
In this paper, we generalize it to a large family of double flag varieties (Theorem~\ref{thm:embedding.orbits} and Proposition~\ref{prop:condition.E} in \S~\ref{subsec:Embedding.AIII})
and get fairly large, new families of double flag varieties of finite type (see \S\S~\ref{subsec:AII}--\ref{subsec:DIII}).  
The embedding theory is equally applicable to triple flag varieties (Theorem~\ref{thm:DFV.embedded.into.3FV} in \S~\ref{subsec:Embedding.3FV}), and we also recover the results in \cite{NO.2011}.  

The advantage of the embedding theory is that it also provides the classification of orbits in terms of the orbits in type AIII.  
This corresponds to the classical theory of Jordan type of nilpotent orbits for general classical groups.  
We give such examples in \S~\ref{section:embedding.typeCI.into.typeAIII}, which are taken from \cite{Fresse.N.2021}.
One more advantage of the embedding theory is that it behaves functorially in Steinberg theory.  
However, we still do not prove the full commutativity between embeddings and Steinberg maps, which are left to Conjecture~\ref{conj:question.compatibility}.

\medskip

{Finally, let us make some remarks on open questions and related future problems.}
%%important remarks.  

Although we get a large portion of double flag varieties of finite type (we believe so), 
the complete classification of them is still open.  
Also, in this article, we only deal with classical groups, and classification of double flag varieties of finite type for exceptional groups {is} widely open.  
Parametrizations of $K$-orbits on $ \dblFV $ should be interesting and 
we are expecting new combinatorial structures on orbits and Steinberg maps, which are fruitful subjects.

We only work over algebraically closed field, namely over $ \C $, but the same problem over the field of real numbers (or even over $ p $-adic numbers) will be interesting  
from the view point of infinite dimensional unitary representations (see \cite{Kobayashi.Mastuki.2014,Kobayashi.Oshima.2013} and \cite{Nishiyama.Orsted.2018} for example).  

The $ K $-orbit space on $ \dblFV $ admits a natural action of Hecke algebras (see our recent paper \cite{Fresse.N.2024}), and 
there should be a rich theory like Springer theory, or categorical study over perverse sheaves since there are finitely many orbits.  
In this respect, we mention \cite{FGT.2009,Travkin.2009} and \cite{SSI.2013,SSII.2014,SSIII.2014}.  

\medskip

As we mention above the tables in this paper do not exhaust the double flag varieties of finite type (in some cases, it is complete though).  
However, they give rich examples of them, and we indicate the sections and table numbers at the beginning of this introduction together with the classification of the classical symmetric pairs.  
We hope they are useful for readers.

\part{\textbf{Double flag varieties for symmetric pairs}}
%%\section*{\textbf{Part I. Double flag varieties for symmetric pairs}}
%%\addcontentslinentsline{toc}{section}{\textbf{Part I. Double flag varieties for symmetric pairs}\partstrut}
\partlabel{part:DFV.symmetric.pairs}

\section{Flag varieties and moment maps}\label{sec:flag.varieties.and.moment.maps}

In this section we introduce (without proofs) basic notions related to flag varieties, which are needed later.  
For the proofs and more detailed description, see \cite{Borel.1991,Humphreys.LAG.1975,Springer.1998} and references therein.
Also we will review some facts on moment maps.  For this we refer the readers to \cite{Chriss.Ginzburg.1997}.

\subsection{Flag varieties}\label{subsec:flag.varieties}

Let $ G $ be a connected reductive algebraic group over the complex number field $ \C $.  
Most of the story in this article goes through over an algebraically closed field of characteristic zero without changes. {(See, for instance, \cite[\S3]{Losev} for the general background on Hamiltonian actions and moment maps over algebraically closed fields.)}
%%(or even over a field of good characteristic, with some modifications).    
However, we prefer a simpler assumption which will highlight the key points of the theory.

A subgroup $ B = B_G $ of $ G $ is called a Borel subgroup if it is maximal among connected solvable subgroups of $ G $.  
It is well known that all the Borel subgroups are conjugate to each other by inner automorphisms of $ G $, 
and moreover self-normalizing, i.e.,  $ B = N_G(B) = N_G(\lie{b}) $, where $ \lie{b} $ denotes the Lie algebra of $ B $.  

\begin{theorem}
The homogeneous space $ G/B $ is a projective variety (hence compact).  
As a set, it can be identified with the set of all Borel subgroups in $ G $, 
as well as the set of all Borel subalgebras 
%$ \lie{b'} $ 
in $ \lie{g} = \Lie(G) $.
\end{theorem}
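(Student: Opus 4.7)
The plan is to address the three assertions in reverse order, since the set-theoretic identifications fall out quickly from the self-normalization property already stated, while the projectivity is the substantive claim.

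First I would handle the bijections. Using the fact that all Borel subgroups are conjugate, the assignment $gB \mapsto gBg^{-1}$ is a well-defined surjection from $G/B$ onto the set of Borel subgroups; it is injective because if $gBg^{-1} = B$ then $g \in N_G(B) = B$. The same reasoning applied to $\mathrm{Ad}(g)\mathfrak{b}$, combined with $N_G(\mathfrak{b}) = B$, yields the bijection onto the set of Borel subalgebras. So all three sets are in natural bijection as soon as we know the quotient $G/B$ exists.

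Next I would realize $G/B$ as a subvariety of a Grassmannian. Consider the map
\[
\iota : G/B \to \mathrm{Gr}(d, \mathfrak{g}), \quad gB \mapsto \mathrm{Ad}(g)\mathfrak{b},
\]
where $d = \dim \mathfrak{b}$. The map is well defined and injective by the identification above, and it is a $G$-equivariant morphism of quasi-projective varieties; standard arguments (using that the stabilizer of $\mathfrak{b}$ under $\mathrm{Ad}$ is exactly $B$) show that $\iota$ is a locally closed immersion. Thus $G/B$ is quasi-projective and we only need to show the image is closed in the Grassmannian.

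The main obstacle is completeness, and here I would invoke the Borel fixed point theorem. Take the closure $X = \overline{\iota(G/B)}$ in $\mathrm{Gr}(d,\mathfrak{g})$; it is a projective $G$-variety containing $\iota(G/B)$ as an open $G$-orbit. Any closed $G$-orbit $Y \subset X$ is itself projective, so the connected solvable group $B$ acts on $Y$ with a fixed point, corresponding to a Borel subalgebra $\mathfrak{b}' \subset \mathfrak{g}$ with $\mathrm{Ad}(B)\mathfrak{b}' = \mathfrak{b}'$, i.e.\ $B \subset N_G(\mathfrak{b}') = B'$. Maximality of $B$ among connected solvable subgroups forces $B = B'$ (they have the same dimension, and both are connected, so equality follows from the inclusion), hence $\mathfrak{b}' = \mathfrak{b}$, meaning $Y = \iota(G/B)$. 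So the open orbit $\iota(G/B)$ is the unique closed orbit in $X$, forcing $\iota(G/B) = X$ to be projective. This simultaneously establishes projectivity and completes the identification.
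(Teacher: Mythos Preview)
The paper does not prove this theorem: the section opens by saying basic notions are introduced ``without proofs'' and refers the reader to Borel, Springer, and Humphreys. So there is no argument in the paper to compare yours against.

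Your strategy (embed into a Grassmannian, then argue via Borel's fixed-point theorem on a closed orbit) is standard, and the bijections in your first paragraph are correctly derived from the self-normalization facts the paper has already granted. The gap is in the third paragraph: you assert that the $B$-fixed point in the closed orbit $Y$ is ``a Borel subalgebra $\mathfrak{b}'$,'' but a priori a point of $X = \overline{G \cdot \mathfrak{b}} \subset \mathrm{Gr}(d,\mathfrak{g})$ is only a $d$-dimensional subspace, and nothing you have said rules out degenerations in the boundary that fail to be Borel subalgebras. Without this, the identification $N_G(\mathfrak{b}') = B'$ with $B'$ a Borel subgroup of the same dimension as $B$ has no content, and the maximality argument collapses.

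The gap can be filled, and filling it actually renders the fixed-point step superfluous. Being a Lie subalgebra is a closed condition on $\mathrm{Gr}(d,\mathfrak{g})$ (vanishing of the bundle map $\wedge^2 \mathcal{S} \to \mathfrak{g}/\mathcal{S}$). For a subalgebra $W$ of a reductive $\mathfrak{g}$, solvability is equivalent to $[W,W] \subset \mathcal{N}_{\mathfrak{g}}$: forward, any solvable subalgebra sits in some Borel whose derived algebra is its nilradical; backward, a subalgebra consisting of ad-nilpotent elements is nilpotent by Engel, so $W$ is solvable. This is again a closed condition. Hence every point of $X$ is a solvable subalgebra of dimension $d = \dim\mathfrak{b}$; since the paper has already granted conjugacy of Borels, such a subalgebra must equal the Borel containing it, so $X \subset G\cdot\mathfrak{b}$ and the orbit is already closed.
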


The projective variety $ \FlB = G/B $ is called a \emph{complete flag variety} (or a \emph{full} flag variety), and is independent of choice of $ B $.  The adjective ``complete'' is often omitted, but it is sometimes useful to tell a difference from ``partial'' flag varieties.

A closed subgroup $ P $ of $ G $ is called \emph{parabolic} if it contains a Borel subgroup.  
Then $ P $ is automatically connected.  
As well as Borel subgroups, $ P $ is self-normalizing: 
$ P = N_G(P) = N_G(\lie{p}) $, where $ \lie{p} $ is the Lie algebra of $ P $.  
In the following, we will denote the Lie algebra of a reductive (or ``Lie'') group $ G $ by its corresponding lower case German letter $ \lie{g} $.

\begin{theorem}
A homogeneous space $ G/P $ is projective if and only if $ P $ is parabolic.  
As a set, 
it can be identified with the set of all parabolic subgroups in $ G $ that are conjugate to $ P $.  
Similarly, $ G/P $ can be identified with the set of all parabolic subalgebras in $ \lie{g} $ 
that are conjugate to $ \lie{p} $.  
\end{theorem}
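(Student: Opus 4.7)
The plan is to establish the equivalence in two directions and then read off the set-theoretic identifications from orbit maps. For the implication ``$P$ parabolic $\Rightarrow$ $G/P$ projective,'' I exploit that $P$ contains a Borel subgroup $B$, so the natural $G$-equivariant projection $\pi\colon G/B \twoheadrightarrow G/P$ is surjective. The previous theorem says $G/B$ is projective, hence complete, so the image $G/P$ is also complete. Separately, $G/P$ is quasi-projective via a Chevalley-style construction: one finds a finite-dimensional representation $V$ of $G$ and a line $\ell \in \mathbb{P}(V)$ whose stabilizer is exactly $P$, giving a locally closed immersion $G/P \hookrightarrow \mathbb{P}(V)$. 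A variety that is simultaneously complete and quasi-projective is projective, so this direction is finished.

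For the converse, I apply the Borel fixed-point theorem: if $G/P$ is projective and $B \subseteq G$ is any Borel subgroup, then $B$ (being connected and solvable) must have a fixed point $gP \in G/P$ under left translation. The fixed-point condition $BgP = gP$ translates to $g^{-1}Bg \subseteq P$, so $P$ contains a Borel subgroup and is parabolic by definition.

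For the set-theoretic identifications, I use self-normalization. The orbit map $\varphi\colon G \to \{gPg^{-1} : g \in G\}$, $g \mapsto gPg^{-1}$, has stabilizer $N_G(P)$ at the base point $P$; since $N_G(P) = P$, it descends to a $G$-equivariant bijection from $G/P$ onto the set of $G$-conjugates of $P$. The same argument applied to the adjoint action on Lie subalgebras, using $N_G(\lie{p}) = P$, yields the analogous bijection with conjugates of $\lie{p}$.

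The main obstacle is the quasi-projectivity of $G/P$ in the first implication: completeness alone would only give a proper variety, not a projective one. The Chevalley-type construction of the projective embedding is the technical heart of the argument, and it requires identifying a representation $V$ and a weight line with stabilizer exactly $P$ (rather than something larger). Granting this standard fact, the remainder of the proof assembles from the Borel fixed-point theorem and the orbit-stabilizer principle.
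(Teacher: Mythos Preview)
Your proof is correct and follows the standard textbook argument. Note, however, that the paper does not supply its own proof of this theorem: it is stated without proof in \S\ref{subsec:flag.varieties}, with the reader referred to Borel, Springer, and Humphreys for details. Your sketch---completeness of $G/P$ via the surjection from $G/B$, quasi-projectivity via a Chevalley line, the Borel fixed-point theorem for the converse, and self-normalization for the set identifications---is exactly the route those references take, so there is nothing to compare against within the paper itself.
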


If $ P $ is a parabolic subgroup, 
the projective variety $ \FlP = G/P $ is called a (\emph{partial}) \emph{flag variety}.  
There is a natural projection $ G/B \to G/P $ with fiber $ P/B $ which is also a flag variety for 
a smaller group, namely, a Levi subgroup of $ P $.  
We often omit the word ``partial'' and simply call flag variety if there is no need to distinguish 
``complete'' ones and ``partial'' ones.

In the following, the cotangent bundle of flag varieties will play an important role.  
Since the tangent space at $ \lie{b} \in G/B = \FlB $ is $ \lie{g}/\lie{b} $ 
(note that the {stabilizer} of $ \lie{b} $ is $ N_G(\lie{b}) = B $), 
we get the cotangent space $ T_{\lie{b}}^{\ast} \FlB = (\lie{g}/\lie{b})^{\ast} $ at $ \lie{b} $.  
By choosing a non-degenerate invariant bilinear form on $ \lie{g} $,  
we can identify $ (\lie{g}/\lie{b})^{\ast} $ with the nilradical $ \lie{n} $ of $ \lie{b} $.  
\skipover{Here $ B = T N $ is a Levi decomposition with a choice of maximal torus $ T $ and $ N $ is the unipotent radical.  }
Thus the cotangent bundle can be described as
\begin{equation}
T^*\FlB 
\simeq \{ (\lie{b_1}, x) \in \FlB \times \lie{g} \mid x \in \lie{n_1} \} 
\simeq G \times_B \lie{n} ,
\end{equation}
where in the second expression $ \FlB $ is considered to be the set of Borel subalgebras as explained above and 
$ \lie{n_1} $ is the nilradical of a Borel subalgebra $ \lie{b_1} $.

In a very similar way, we can describe the cotangent bundle of a partial flag variety $ \FlP = G/P $.  
So let $ \lie{p_1} \in \FlP $ be a parabolic subalgebra conjugate to $ \lie{p} $, 
and denote its nilpotent radical by 
$ \lie{u_1} $ (we will denote a Levi decomposition by $ P = L U $, where $ L $ is a Levi part and $ U $ its unipotent radical in general).
Then we get
\begin{equation}
T^*\FlP \simeq G \times_P \lie{u} 
\simeq \{ (\lie{p_1}, x) \in \FlP \times \lie{g} \mid x \in \lie{u_1} \} .
\end{equation}

\subsection{Moment maps}\label{subsec:moment.map.for.FlB}

Let $ X $ be a smooth variety on which $ G $ acts.  We call $ X $ a smooth $ G $-variety for short.  
The cotangent bundle $ T^*X $ inherits a natural structure of symplectic variety which is $ G $-invariant.  
Choose $ (x, \xi) \in T^*X $, where $ x \in X $ and $ \xi \in T_x^*X $.  
The tangent space of $ T^*X $ at $ (x, \xi) $ is 
$ T_x X \oplus T_\xi (T_x^* X) \simeq  T_x X \oplus T_x^* X $, and 
there is a natural symplectic form $ \omega $, for which $ T_x X \oplus T_x^* X $ is a polar decomposition by 
Lagrangian subspaces $ T_x X $ and $ T_x^* X $.   
{To get the symplectic form, we adopt the natural pairing between $ T_x X $ and $ T_x^* X $, which is the dual of $ T_x X $.}  

Pick a point $ x \in X $ and consider a map $ v_x : G \to X, \;\; v_x(g) = g x $.   
The differential (at $ e \in G $) of this map induces a map from $ \lie{g} $ to $ T_x X $, which we denote by the same letter $ v_x $ by abuse of notation.  
For $ A \in \lie{g} $, $ v_x(A) $ is simply a vector field generated by the action of $ \exp t A $ on $ X $.  
Let us consider a $ G $-equivariant map
\begin{equation}
\mu_X : T^*X \to \lie{g}^* , \qquad
\mu_X((x, \xi))(A) = \xi(v_x(A)) \quad  \text{ for $ A \in \lie{g} $}, 
\end{equation}
which is called a ``moment map'' \footnote{For any Hamiltonian symplectic $ G $-variety, one can define a moment map.  For general definition and properties of moment maps, 
see \cite{Chriss.Ginzburg.1997} and \cite{Kostant.1979}.}.

Let us apply the framework above to flag varieties.  
At a point $ b \in \FlB $ which is represented by a Borel subalgebra $ \lie{b_1} $, 
the tangent space is given by $ T_{b}\FlB = \lie{g}/\lie{b_1} $ and the cotangent space is 
$ T_{b}^*\FlB = (\lie{g}/\lie{b_1})^* \simeq \lie{n_1} $.  
The vector field generated by $ A \in \lie{g} $ is just $ A \mod{\lie{b_1}} $.
Thus we get 
\begin{equation}
\mu_{\FlB} : T^*{\FlB} \to \lie{g}^* , \qquad
\mu_{\FlB}((\lie{b_1}, x))(A) = \langle x, A \rangle \quad  \text{ for $ x \in \lie{n_1}, \; A \in \lie{g} $}, 
\end{equation}
where $ \langle \cdot, \cdot \rangle $ denotes the nondegenerate invariant bilinear form on $ \lie{g} $.  
Note that $ x $ kills $ \lie{b_1} $ so that the last expression is well defined.
By this formula, if we identify $ \lie{g} $ and $ \lie{g}^* $ by the invariant bilinear form (we always do so below),  
\begin{equation}
\mu_{\FlB}((\lie{b_1}, x)) = x \qquad ((\lie{b_1}, x) \in T^*\FlB)
\end{equation}
and it is just the second projection.  
From this formula, since $ x $ is nilpotent, 
the image of the moment map $ \Im \mu_{\FlB} $ is contained in $ \nilpotentsof{\lie{g}} $, the nilpotent variety of $ \lie{g} $.  
It is easy to see that in fact $ \Im \mu_{\FlB} = \nilpotentsof{\lie{g}} $.  
Thus we get $ \mu_{\FlB} : T^*\FlB \to \nilpotentsof{\lie{g}} $.  This map is known to be birational, and 
gives a {$ G $-equivariant} resolution of singularity of the nilpotent variety $ \nilpotentsof{\lie{g}} $ called the \emph{Springer resolution} 
(\cite{Springer.1968}; see \cite{Slodowy.LNM815.1980} for details).  
{Here we chose a nondegenerate invariant bilinear form on $ \lie{g} $.  The choice does not affect the structure of orbits or geometric properties of double flag varieties discussed below, although we do not explicitly state it.}

Similarly, for the partial flag variety $ \FlP = G/P $, 
the moment map is also the second projection
\begin{equation}
\mu_{\FlP}((\lie{p_1}, x)) = x \qquad 
((\lie{p_1}, x) \in T^*\FlP; \; \lie{p_1} \in \FlP, \, x \in \lie{u_1}).
\end{equation}
In this case, the image $ \Im \mu_{\FlP} = \Ad(G) \, \lie{u} $ is the closure of a certain nilpotent orbit.  
This orbit is called a \emph{Richardson orbit} associated with the parabolic subalgebra $ \lie{p} $, and often denoted by $ \calorbit_P $.  
{For a parabolic subgroup $ P $, the moment map} $ \mu_{\FlP} : T^*\FlP \to \closure{\calorbit_P} $ is not a resolution in general 
(see \cite{Spaltenstein.1976}, \cite[\S~3.5]{Borho.MacPherson.1983}, \cite[\S~8.8]{Jantzen.PM228.2004} for example).  
It is a resolution if the nilpotent orbit is even or $ G =\GL_n $.  

\medskip

Let us {return to} a general $ G $-variety $ X $ and the moment map $ \mu_X: T^*X \to \lie{g} $ above.  
We identify $ \lie{g}^* $ with $ \lie{g} $ as we mentioned.

Take a point $ x_0 \in X $ and consider the $ G $-orbit $ \Xorbit = G \cdot x_0 $.  
We define a Lagrangian subvariety $ T_{\Xorbit}^*X \subset T^*X $ as follows.
\begin{equation}
T_{\Xorbit}^*X = \{ (x, \xi) \in T^*X \mid x \in \Xorbit, \; \xi \in (T_x\Xorbit)^{\bot} \} ,
\end{equation}
{where $ (T_x\Xorbit)^{\bot} $ denotes the subspace of $ T_x^*X $ consisting of all linear forms $ \xi $ killing 
$ T_x\Xorbit $.  }
The subvariety $ T_{\Xorbit}^*X $ is called the \emph{conormal bundle} of the orbit $ \Xorbit $.  
{Note that a conormal bundle is irreducible since it is a vector bundle over a $ G $-orbit, which is irreducible because $ G $ is assumed to be connected.}

\begin{lemma}
\label{lemma:conormal}
Put $ Y = \mu_X^{-1}(0) \subset T^*X $, the null fiber of the moment map.  
Then it is a disjoint union of all conormal bundles.
\begin{equation}
Y = \coprod_{\Xorbit \in X/G} T_{\Xorbit}^*X \qquad (disjoint).
\end{equation}
If there are only finitely many $G$-orbits in $ X $, {then} 
$ Y $ is an equidimensional variety of dimension $ \dim X $, and 
its irreducible components are given by the closures of conormal bundles: 
$ \Irr(Y) = \{ \closure{T_{\Xorbit}^*X} \mid \Xorbit \in X/G \} $.
\end{lemma}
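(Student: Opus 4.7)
The plan is to prove the two assertions independently, starting with the set-theoretic equality and then using it together with the finiteness hypothesis to deduce equidimensionality and identify the irreducible components.

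First, I would establish the disjoint union decomposition. Take $(x,\xi) \in T^*X$. By the definition of $\mu_X$, we have $(x,\xi) \in Y$ if and only if $\xi(v_x(A)) = 0$ for every $A \in \lie{g}$. The key observation is that the image of the differential $v_x : \lie{g} \to T_xX$ is exactly the tangent space to the $G$-orbit through $x$, i.e.\ $v_x(\lie{g}) = T_x\Xorbit$ where $\Xorbit = G \cdot x$ (this follows from the fact that $\Xorbit$ is smooth and the action map $G \to \Xorbit$ is submersive at the identity). Consequently, the annihilation condition $\xi \perp v_x(\lie{g})$ is equivalent to $\xi \in (T_x\Xorbit)^{\bot}$, which means $(x,\xi) \in T_{\Xorbit}^*X$. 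Since every $x \in X$ lies in exactly one $G$-orbit, the union $Y = \bigcup_{\Xorbit} T_{\Xorbit}^*X$ is disjoint.

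Next, assuming finitely many $G$-orbits, I would compute dimensions. For each orbit $\Xorbit$, the conormal bundle $T_{\Xorbit}^*X$ is a vector bundle over $\Xorbit$ whose fiber is $(T_x\Xorbit)^{\bot} \subset T_x^*X$ of codimension $\dim \Xorbit$, hence of dimension $\dim X - \dim \Xorbit$. Therefore
\begin{equation}
\dim T_{\Xorbit}^*X = \dim \Xorbit + (\dim X - \dim \Xorbit) = \dim X.
\end{equation}
Each $T_{\Xorbit}^*X$ is irreducible as a vector bundle over the smooth irreducible variety $\Xorbit$, so its closure $\closure{T_{\Xorbit}^*X}$ is an irreducible closed subvariety of $Y$ of dimension $\dim X$.

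Finally, I would identify irreducible components. Since $Y = \bigcup_{\Xorbit \in X/G} \closure{T_{\Xorbit}^*X}$ is a finite union of irreducible closed subsets, every irreducible component of $Y$ is contained in, and hence (by maximality) equal to, one of the $\closure{T_{\Xorbit}^*X}$. Conversely, each $\closure{T_{\Xorbit}^*X}$ is maximal irreducible in $Y$: any irreducible subvariety of $Y$ containing it would have dimension at least $\dim X$, but every irreducible closed subset of $Y$ lies in some $\closure{T_{\Xorbit'}^*X}$ (again by the finite-union structure), which has dimension exactly $\dim X$. Thus $\Irr(Y) = \{\closure{T_{\Xorbit}^*X} : \Xorbit \in X/G\}$ and $Y$ is equidimensional of dimension $\dim X$.

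The main obstacle is the first step: cleanly identifying $v_x(\lie{g})$ with $T_x\Xorbit$ and using this to translate the moment-map vanishing into the conormal condition. The dimension bookkeeping and the passage from closures to irreducible components are then formal consequences of finiteness; the essential geometric content lies entirely in the conormal interpretation of the zero fiber.
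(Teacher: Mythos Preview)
Your proof is correct and follows the same approach as the paper's: the paper simply asserts that the disjoint-union decomposition is ``straightforward from definitions'' and that the equidimensionality and irreducible-component statements follow since $\dim T_{\Xorbit}^*X = \dim X$. You have carefully unpacked exactly these two steps.
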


\begin{proof}
{The fact that} $ Y $ is a disjoint union of conormal bundles is straightforward from definitions.  
Since $ \dim T_{\Xorbit}^*X = \dim X $, if there appear only finitely many of them, clearly their closures give irreducible components and 
hence $ Y $ is equidimensional of dimension $ \dim X $.
\end{proof}

Since $ Y $ consists of conormal bundles, 
we call $ Y = \mu_X^{-1}(0) $ a \emph{conormal variety}.

\begin{corollary}
Assume that there are only finitely many $ G $-orbits in $ X $.  
There exists a natural bijection between irreducible components of the conormal variety $ Y $ and 
the $ G $-orbits in $ X $: 
$ X/G \simeq \Irr(Y) $, which maps $ \Xorbit \in X/G $ to the closure of its conormal bundle 
$ \closure{T_{\Xorbit}^*X} \subset Y $.
\end{corollary}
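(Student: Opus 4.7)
The plan is to derive the corollary essentially as a direct consequence of Lemma~\ref{lemma:conormal}, which already supplies the set-theoretic surjection $\Xorbit \mapsto \closure{T_{\Xorbit}^*X}$ from $X/G$ onto $\Irr(Y)$ under the finiteness hypothesis. What remains is to check that this assignment is well defined (closures of conormal bundles are irreducible of the right dimension) and injective (distinct orbits yield distinct closures).

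First, I would observe that each orbit $\Xorbit \subset X$ is smooth and irreducible (as a homogeneous space under the connected group $G$, or more generally as a single orbit in a $G$-variety), so the conormal bundle $T_{\Xorbit}^*X$ is a vector bundle of rank $\codim_X \Xorbit$ over $\Xorbit$, hence itself irreducible and of dimension $\dim \Xorbit + \codim_X \Xorbit = \dim X$. Its Zariski closure in $T^*X$ is therefore an irreducible closed subvariety of $Y$ of dimension $\dim X$. By Lemma~\ref{lemma:conormal}, $Y$ is equidimensional of this same dimension, so each $\closure{T_{\Xorbit}^*X}$ is a full-dimensional irreducible closed subset of $Y$, and since the lemma lists these closures as exhausting $\Irr(Y)$, the assignment $\Xorbit \mapsto \closure{T_{\Xorbit}^*X}$ is a well-defined surjection onto $\Irr(Y)$.

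For injectivity, I would exploit the natural projection $\pi : T^*X \to X$. For any orbit $\Xorbit$, we have $\pi(T_{\Xorbit}^*X) = \Xorbit$, hence $\pi(\closure{T_{\Xorbit}^*X}) \subset \closure{\Xorbit}$, and in fact the image contains $\Xorbit$ as a dense constructible subset. Thus $\closure{T_{\Xorbit}^*X}$ uniquely determines $\closure{\Xorbit}$ via $\pi$, and then $\Xorbit$ as the unique open $G$-orbit in $\closure{\Xorbit}$ (using again the finiteness of orbits, which forces $\closure{\Xorbit}$ to be a union of finitely many orbits with $\Xorbit$ the unique dense one). Consequently, if $\closure{T_{\Xorbit_1}^*X} = \closure{T_{\Xorbit_2}^*X}$, then $\Xorbit_1 = \Xorbit_2$, and the map is a bijection.

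There is no real obstacle in this argument since the lemma already does the substantive work; the only subtlety to watch is verifying that no two distinct orbits produce the same irreducible closure, which is why the projection-to-$X$ argument combined with the finiteness hypothesis (ensuring a well-defined notion of the open orbit in an orbit closure) is the key step to state cleanly.
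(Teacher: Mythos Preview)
Your proposal is correct and follows the same approach as the paper, which in fact gives no explicit proof for this corollary at all---it treats the statement as immediate from Lemma~\ref{lemma:conormal}, whose conclusion $\Irr(Y) = \{ \closure{T_{\Xorbit}^*X} \mid \Xorbit \in X/G \}$ already encodes the bijection. Your additional verification of injectivity via the projection $\pi: T^*X \to X$ is a clean way to spell out what the paper leaves implicit.
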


\section{Orbits in flag varieties: known results, summary}

We give basic facts on the orbits in flag varieties, which should be well-known.  
See \cite{Borel.1991,Springer.1998,Humphreys.LAG.1975} for example.

\subsection{Bruhat decomposition}

Let $ B \subset G $ be a Borel subgroup and 
fix a Levi decomposition $ B = T N $, where $ T $ is a maximal torus (in $ G $) and $ N $ the unipotent radical.  
The finite group $ W = N_G(T)/T $ is called the Weyl group of $ G $, and 
it coincides with the Weyl group of the root system of $ G $.  
We often pick representatives for each element in $ W $, and if there is no confusion we simply write $ w \in W $ for an element $ w \in N_G(T) $.  

We have a double coset decomposition 
\begin{equation}
G = \coprod_{w \in W} B w B ,  
\end{equation}
which is called the \emph{Bruhat decomposition}.
From this, we know $ B $-orbits on the complete flag variety $ \FlB = G/B $ have representatives in $ W $.  
Thus $ \FlB/B \simeq W $.

\skipover{
Moment maps for the action of $ B $ and orbital varieties.  
I skip this for now.
}

Let $ P $ be a parabolic subgroup which contains $ B $.  
In this case $ P $ is called \emph{standard}.
We get the standard Levi decomposition $ P = L U $, where $ L $ is a Levi subgroup containing $ T $ and $ U $ is the unipotent radical.  
Since $ L $ is reductive, we can consider the Weyl group $ W_L $ of $ L $, which is considered to be a subgroup of $ W = W_G $.  
We also denote $ W_L $ by $ W_P $.  

There is a generalization of the Bruhat decomposition extending to the case of partial flag varieties.

\begin{lemma}\label{lemma:Bruhat.decomposition.for.parabolics}
Let $ P $ be a parabolic subgroup.  
Recall the notation 
$ \FlB = G/B $ and $ \FlP = G/P $.  
\begin{penumerate}
\item
$ \FlB/P \simeq \FlP/B \simeq W/W_P $
\item
For another parabolic subgroup $ P' $ of $ G $, the generalized Bruhat decomposition  
$ \FlP/P' \simeq W_P \backslash W / W_{P'} $ holds.  
In particular, we get $ P \backslash G / P = \FlP/P \simeq W_P \backslash W / W_P $.
\end{penumerate}
\end{lemma}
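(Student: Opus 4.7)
The strategy is to reduce both claims to the Bruhat decomposition $G = \coprod_{w \in W} BwB$ together with its refinement for a standard parabolic subgroup $P \supseteq B$, namely
\begin{equation*}
P = \coprod_{w \in W_P} BwB.
\end{equation*}
This refinement follows by applying Bruhat inside the Levi factor $L$ of $P$ (whose standard Borel is $L \cap B$, with Weyl group $W_L = W_P$) and then multiplying by the unipotent radical of $P$. Up to conjugation I may assume $P$ (and $P'$ in part (2)) is standard, since this affects neither the homogeneous spaces nor the Weyl subgroups that appear in the statement.

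For part (1), identify $\FlB/P$ with $P \backslash G / B$ and $\FlP/B$ with $B \backslash G / P$. The key combinatorial input is the fact that each coset $wW_P \in W/W_P$ contains a unique minimal-length representative $w$, and for such a $w$ one has $\ell(wu) = \ell(w) + \ell(u)$ for every $u \in W_P$. The latter is equivalent to $BwB \cdot BuB = BwuB$ (no length collapse in the product of Bruhat cells), which yields
\begin{equation*}
BwP = \coprod_{u \in W_P} BwuB.
\end{equation*}
As $w$ ranges over the minimal-length representatives of $W/W_P$, the $BwuB$ on the right are pairwise disjoint and cover $G$, giving the bijection $W/W_P \xrightarrow{\sim} B \backslash G / P = \FlP/B$. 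The bijection $W_P \backslash W \xrightarrow{\sim} P \backslash G / B = \FlB/P$ follows by the symmetric argument (or by applying $g \mapsto g^{-1}$), and $W/W_P \simeq W_P \backslash W$ via $wW_P \mapsto W_P w^{-1}$.

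Part (2) is proved by the same strategy with a two-sided choice of minimal representatives. Each double coset in $W_P \backslash W / W_{P'}$ contains a unique representative $w$ satisfying $\ell(uwu') = \ell(u) + \ell(w) + \ell(u')$ for every $u \in W_P$ and $u' \in W_{P'}$. For such a $w$, iterating the no-collapse identity gives $BuB \cdot BwB \cdot Bu'B = Buwu'B$, so
\begin{equation*}
PwP' = \coprod_{u \in W_P,\, u' \in W_{P'}} Buwu'B,
\end{equation*}
and as $w$ varies over minimal double-coset representatives this is a disjoint union equal to $G$. The resulting bijection $W_P \backslash W / W_{P'} \xrightarrow{\sim} P \backslash G / P' = \FlP/P'$ specializes at $P' = P$ to the final assertion.

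The only genuinely non-formal step is the combinatorial fact about minimal (double-)coset representatives in $W$ and the length-additivity $\ell(uwu') = \ell(u) + \ell(w) + \ell(u')$, which is the main obstacle to making the argument self-contained. This is a standard result in the combinatorics of Coxeter groups, and once one accepts it as a black box, the remainder of the argument is a routine manipulation with disjoint unions of Bruhat cells.
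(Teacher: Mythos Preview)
The paper does not give a proof here at all: it simply cites Borel \S14.16 and Springer Exercise~8.4.6(3) and says the lemma follows from the Bruhat decomposition. Your write-up therefore goes well beyond the paper's treatment, and part~(1) is correct as written.

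Part~(2), however, contains a genuine error. Your claim that the minimal-length double-coset representative $w$ satisfies $\ell(uwu') = \ell(u) + \ell(w) + \ell(u')$ for \emph{every} $u \in W_P$ and $u' \in W_{P'}$ is false in general. Take $W = \permutationsof{3}$ with $W_P = \langle s_1\rangle$ and $W_{P'} = \langle s_2\rangle$; the minimal element of the nontrivial double coset is $w = s_2 s_1$, yet $s_1 \cdot w \cdot s_2 = s_1 s_2 s_1 s_2 = s_2 s_1$ has length $2$, not $4$. Consequently your displayed formula $PwP' = \coprod_{u \in W_P,\, u' \in W_{P'}} Buwu'B$ is ill-posed: the map $(u,u') \mapsto uwu'$ is not injective on $W_P \times W_{P'}$, so distinct index pairs can yield the same Bruhat cell, and the ``disjoint union'' double-counts.

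The repair is standard. What \emph{is} true is that every $v \in W_P w W_{P'}$ admits \emph{some} factorization $v = uwu'$ with $\ell(v) = \ell(u) + \ell(w) + \ell(u')$ (existence, not uniqueness). Together with the BN-pair axiom $BsB \cdot BvB \subseteq BvB \cup BsvB$, this gives $PwP' = \bigcup_{v \in W_P w W_{P'}} BvB$, a union of Bruhat cells indexed by the Weyl-group double coset itself rather than by pairs $(u,u')$. From there your conclusion follows. Alternatively, you can bypass the two-sided length statement entirely: apply part~(1) to get $B \backslash G / P' \simeq W / W_{P'}$, then observe that the left $P$-action collapses $B$-double cosets exactly along left $W_P$-cosets, using part~(1) once more on the $P$ side.
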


\begin{proof}
This follows from the Bruhat decomposition.  
See Borel \cite[\S~14.16]{Borel.1991} and Springer \cite[Exercise 8.4.6(3)]{Springer.1998}.
\end{proof}

If $ X $ is an irreducible normal $ G $-variety on which a Borel subgroup $ B $ acts with finitely many $ B $-orbits, 
the variety $ X $ is called \emph{spherical}.
The following theorem is well known.

\begin{theorem}[{Brion \cite{Brion.MM.1986}, Vinberg \cite{Vinberg.1986}}]\label{thm:Brion.Vinberg}
An irreducible normal $ G $-variety 
$ X $ has finitely many $ B $-orbits if and only if 
it admits an open $ B $-orbit.  
\end{theorem}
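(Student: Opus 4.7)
The forward implication is immediate from irreducibility: if $X$ is a union of finitely many $B$-orbit closures, then one closure must equal $X$, and the corresponding orbit is locally closed in $X$, hence open and dense in $X$.

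For the converse, my plan is to argue by induction on $\dim X$, with the local structure theorem of Brion--Luna--Vust as the main tool. Given the open $B$-orbit $\Omega \subset X$, I attach a parabolic subgroup $P = L U_P \supseteq B$ (for instance the stabiliser of a general $B$-stable prime divisor in $X$) and invoke the local structure theorem to produce a $P$-stable open affine subvariety $X_0 \subset X$ meeting $\Omega$, together with a closed $L$-stable affine subvariety $S \subset X_0$ such that the multiplication map gives an isomorphism $U_P^- \times S \xrightarrow{\,\sim\,} X_0$. The existence of an open $B$-orbit forces the derived subgroup $[L,L]$ to act trivially on $S$, so $S$ is an affine toric variety for the quotient torus $L/[L,L]$ and therefore has finitely many torus orbits; consequently $X_0$ has finitely many $B$-orbits.

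To close the argument I must handle the proper closed $B$-stable complement $Z = X \setminus X_0$. The main obstacle, which I expect to be the delicate step, is that the irreducible components of $Z$ need not be $G$-stable and may not individually carry open $B$-orbits in an obvious way. The standard remedy is to replace each irreducible component by the Zariski closure of its $G$-saturation and pass to the normalisation: the result is an irreducible normal $G$-variety of strictly smaller dimension, and one verifies, using the finiteness of $B$-stable prime divisors in $X$ (a byproduct of the local structure theorem together with the fact $\C(X)^B = \C$ coming from the dense $B$-orbit), that each such piece again admits an open $B$-orbit. Noetherian descent on these $G$-saturated subvarieties then closes the induction.
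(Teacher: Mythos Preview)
The paper does not prove this theorem; it merely states it with attributions to Brion and Vinberg. So there is no ``paper's proof'' to compare against, and your attempt must be judged on its own.

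The forward direction is fine. For the converse, the local structure theorem does give you the finiteness of $B$-orbits on the open piece $X_0$, but your induction on the complement $Z = X \setminus X_0$ does not close. The claim that the $G$-saturation $\overline{G \cdot Z_i}$ of a component has \emph{strictly smaller} dimension than $X$ is false in general. The components of $Z$ are $B$-stable prime divisors; those that are not $G$-stable (the ``colors'') satisfy $\overline{G \cdot Z_i} = X$. Already for $X = \mathbb{P}^1$ with $G = \mathrm{PGL}_2$ and $B$ upper triangular, the unique $B$-fixed point $\{\infty\}$ is the sole $B$-stable divisor, it is not $G$-stable, and its $G$-saturation is all of $\mathbb{P}^1$. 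So passing to $G$-saturations buys nothing, and your Noetherian descent stalls precisely on the colors. Knowing that there are only finitely many $B$-stable prime divisors does not rescue this: each such divisor is only a $B$-variety, not a $G$-variety, so you cannot re-apply the local structure theorem to it.

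What the original proofs of Brion and Vinberg (and later Knop) actually establish is a sharper lemma that works directly for $B$-stable subvarieties: if $Y \subset X$ is an irreducible closed $B$-stable subvariety, then the complexity satisfies $c(Y) \le c(X)$, where $c(X)$ is the transcendence degree of $\C(X)^B$ (equivalently, the generic codimension of a $B$-orbit). In particular, when $c(X)=0$ every $B$-stable irreducible closed subvariety again has an open $B$-orbit, and Noetherian induction on $B$-stable closed subsets finishes immediately without any detour through $G$-saturations. Proving that complexity inequality is the genuine content of the theorem; the local structure theorem is a consequence rather than a substitute for it.
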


From above considerations, flag varieties are spherical.

Spherical varieties are ubiquitous in representation theory and 
bear many nice properties.  
For this, see Brion \cite{Brion.2012}, 
Knop and others \cite{Knop.1997,Knop.VanSteirteghem.2006,MR3916090,MR3948942}, 
Perrin \cite{Perrin.2014}, Timashev \cite{Timashev.EMS.2011} and references therein.

\skipover{
Brion, M. (1995). Spherical varieties. In Proceedings of the International Congress of Mathematicians (pp. 753-760). Birkh\"{a}user, Basel.

Knop, F., & Van Steirteghem, B. (2006). Classification of smooth affine spherical varieties. Transformation groups, 11(3), 495-516.

Perrin, N. (2014). On the geometry of spherical varieties. Transformation Groups, 19(1), 171-223.

Knop, F. (1998). Some remarks on multiplicity free spaces. In Representation theories and algebraic geometry (pp. 301-317). Springer, Dordrecht.

Knop, F., Kr\"{o}tz, B., & Schlichtkrull, H. (2015). The local structure theorem for real spherical varieties. Compositio Mathematica, 151(11), 2145-2159.

Timashev??
}

\subsection{Symmetric pairs}

Let $ \theta $ be an involutive automorphism of $ G $ and 
$ K = G^{\theta} $ the subgroup of fixed points of $ \theta $.  
%%For simplicity, we assume $ \theta $ is not an identity.  
The pair $ (G, K) $ is called a \emph{symmetric pair} and 
$ K $ a \emph{symmetric subgroup}.
The quotient space $ G/K $ is called a \emph{symmetric space}.
Note that since 
the group $ K $ is closed and reductive, 
the quotient $ G/K $ is indeed an affine variety.  

If $ G $ is simply connected, then $ K $ is connected.  
However, without the assumption, it need not be connected in general.  
In the following, we always assume $ K $ is connected
\footnote{
Many of the facts and results below will hold after some modifications even if $ K $ is not connected, 
and for most of our purpose it is sufficient to take a connected component of the identity instead of 
the whole $ K $.
}.  

\begin{example}\label{ex:symmetric.pairs.AIII.CI}
For $ G = \GL_n(\C) $, put $ \theta(g) = I_{p,q} g I_{p,q} \;\; (g \in G) $, 
where $ I_{p,q} = \diag(\unitmatrix_p, - \unitmatrix_q) $ with $ \unitmatrix_m $ being the identity matrix of size $ m $.  
Then $ K = \GL_p(\C) \times \GL_q(\C) $, block-diagonally embedded into $ G $.  
The pair $ (\GL_n(\C), \GL_p(\C) \times \GL_q(\C)) $ is said to be a symmetric pair of type AIII.

Next, let us consider 
$ G = \Sp_{2n}(\C) $, the symplectic group of size $ 2 n $ (and rank $ n $) preserving the standard symplectic form.
\begin{equation}
\Sp_{2n}(\C) = \{ g \in \GL_{2n}(\C) \mid \transpose{g} J_n g = J_n \} , 
\;\;
%%\text{ where  } \;\;
J_n = \mattwo{0}{\unitmatrix_n}{-\unitmatrix_n}{0} .
\end{equation}
The involution $ \theta(g) = I_{n, n} g I_{n, n} \;\; (g \in G) $ defines 
$ K \simeq \GL_n(\C) $.  
This symmetric pair $ (\Sp_{2n}(\C), \GL_n(\C)) $ is called of type CI. 

Finally, 
let us consider $ G = \GL_n(\C) $ with the involution 
$ \theta(g) = \transpose{g}^{-1} $.  
In this case 
$ K = \OO_n(\C) $ is an orthogonal group, which is not connected but has $ 2 $ connected components.  
\end{example}

\subsection{KGB theory: $ K $-orbits in flag varieties}

The following lemma is well known, but we will give a proof which will clarify some parametrization of 
$ K $-orbits in $ G/B $.

\begin{lemma}\label{lemma:KGB.decomposition}
$ G/K $ is a $ G $-spherical variety, i.e., 
a Borel subgroup $ B $ of $ G $ has finitely many orbits in $ G/K $.  
\end{lemma}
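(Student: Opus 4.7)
The plan is to apply the Brion--Vinberg theorem quoted just above: since $G/K$ is an irreducible, smooth, affine (hence normal) $G$-variety, it suffices to exhibit a Borel subgroup $B \subset G$ for which the orbit $B \cdot eK$ is open in $G/K$. Equivalently, the multiplication map $B \times K \to G$ should have dense image, which at the level of tangent spaces at $(e,e)$ becomes the infinitesimal condition $\lie{b} + \lie{k} = \lie{g}$ (since $T_{eK}(G/K) \simeq \lie{g}/\lie{k} \simeq \lie{s}$, this is the surjectivity of $\lie{b} \to \lie{s}$ along the Cartan decomposition).

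To produce such a Borel, I would use the restricted root decomposition of $(\lie{g},\theta)$. Pick a Cartan subspace $\lie{a} \subset \lie{s}$ (a maximal ad-semisimple abelian subspace), set $\lie{m} = Z_\lie{k}(\lie{a})$, and extend $\lie{a}$ to a $\theta$-stable Cartan subalgebra $\lie{t} = \lie{t}_\lie{m} \oplus \lie{a}$, where $\lie{t}_\lie{m}$ is a Cartan subalgebra of $\lie{m}$. Let $\Sigma \subset \lie{a}^\ast \setminus \{0\}$ be the set of restricted roots, giving
\[
\lie{g} = \lie{m} \oplus \lie{a} \oplus \bigoplus_{\alpha \in \Sigma} \lie{g}_\alpha.
\]
Since $\theta$ acts as $-1$ on $\lie{a}$, it interchanges $\lie{g}_\alpha$ and $\lie{g}_{-\alpha}$; hence for any $x \in \lie{g}_\alpha$ we have $x + \theta(x) \in \lie{k}$, and so $\lie{g}_{-\alpha} \subset \lie{g}_\alpha + \lie{k}$. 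Choosing a positive system $\Sigma^+$ and setting $\lie{n} = \bigoplus_{\alpha \in \Sigma^+} \lie{g}_\alpha$, this observation yields the infinitesimal Iwasawa decomposition $\lie{g} = \lie{n} + \lie{a} + \lie{k}$ (using $\lie{m} \subset \lie{k}$).

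Finally I would promote $\Sigma^+$ to a full positive system of the root system of $(\lie{g},\lie{t})$ and let $\lie{b} = \lie{t} \oplus \lie{u}$ be the corresponding Borel subalgebra, with $B$ the associated Borel subgroup. By construction $\lie{b}$ contains $\lie{a}$ and all the restricted positive root spaces $\lie{g}_\alpha$ for $\alpha \in \Sigma^+$, so $\lie{b} + \lie{k} \supseteq \lie{a} + \lie{n} + \lie{k} = \lie{g}$. Brion--Vinberg then gives the required finiteness of $B$-orbits on $G/K$.

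The main obstacle is the coherence between the restricted root system (inherited from $\theta$) and an ordinary root system, namely verifying that a positive system for $\Sigma$ extends to a positive system of the full root system of $(\lie{g},\lie{t})$ so that the chosen Borel contains the Iwasawa-type piece $\lie{a} + \lie{n}$; this is standard bookkeeping once $\lie{t}$ and the $\theta$-action on root spaces are fixed. I would remark that an equivalent and possibly cleaner formulation uses the finite number of $K$-orbits on $G/B$ (via $B \backslash G / K \simeq K \backslash G / B$), which is the point of view dominating the rest of the paper.
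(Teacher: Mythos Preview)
Your argument is correct, but it takes a genuinely different route from the paper's main proof. The paper does \emph{not} invoke Brion--Vinberg; instead it gives a direct geometric argument: fixing a $\theta$-stable Borel $B$, it embeds $\FlB$ diagonally into the $\theta$-twisted double flag variety $\FlB^2_\theta$, and shows via a tangent-space computation that each intersection $\Delta(\FlB)\cap\calorbit_w^\theta$ with a twisted Bruhat cell has all irreducible components equal to single $K$-orbits. This buys more than mere finiteness: it yields the surjection $\FlB/K\to\{w\in W:\theta(w)=w^{-1}\}$ onto twisted involutions, which is exactly what the subsequent Corollary records.

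Your approach---reduce to the existence of an open $B$-orbit via Brion--Vinberg, then produce a Borel $\lie{b}$ with $\lie{b}+\lie{k}=\lie{g}$ from the infinitesimal Iwasawa decomposition---is essentially what the paper sketches in the \emph{Remark} immediately following its proof (where it uses a $\theta$-split Borel rather than your Iwasawa-type $\lie{b}=\lie{b}_{\lie{m}}+\lie{a}+\lie{n}$; both work). Your version is arguably more self-contained for readers familiar with restricted-root structure theory, and the ``obstacle'' you flag about extending $\Sigma^+$ to a full positive system is indeed routine: any Borel contained in the minimal $\theta$-split parabolic $\lie{m}+\lie{a}+\lie{n}$ automatically contains $\lie{a}+\lie{n}$, since $\lie{a}$ is central in the Levi. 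The trade-off is that you lose the twisted-involution parametrization, so if you were writing this in the paper's context you would still need the geometric argument (or Richardson--Springer) for the Corollary.
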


The lemma is equivalent to the claim that 
the double coset space $ B \backslash  G/K $ is finite, which is further equivalent to 
that there are finitely many $ K $-orbits in the complete flag variety $ G/B $.

\begin{proof}
We will show there are finitely many $ K $-orbits on $ \FlB = G/B $.  
For this, we follow the strategy of Milli\'{c}i\v{c} \cite[\S~H.2, Theorem 1]{Milicic.1993} (see also 
\cite[Remark~1.8]{Richardson.Springer.1994}).  
We fix a $ \theta $-stable Borel subgroup $ B $, which always exists (see \cite[Theorem~7.2]{Steinberg.1968}).
Consider the double flag variety 
$ \FlB^2 = G/B \times G/B $, on which $ G $ acts diagonally.    
We define another $ G $ action on $ \FlB^2 $, which is called the $ \theta $ twisted $ G $ action, by 
\begin{equation*}
\delta_{\theta}(g) (B_1, B_2) := (g B_1, \theta(g) B_2) 
\qquad
g \in G, \; (B_1, B_2) \in \FlB^2. 
\end{equation*}
We denote the double flag variety with this twisted action by $ \FlB^2_{\theta} $.   
As $ G $-varieties, $ \FlB^2 $ and $ \FlB^2_{\theta} $ are isomorphic via 
\begin{equation*}
\Phi : \FlB^2 \to \FlB^2_{\theta} , \qquad
\Phi(B_1, B_2) = (B_1, \theta(B_2)) .
\end{equation*}
Thus $ \FlB^2_{\theta}/\delta_{\theta}(G) \simeq W $ by the Bruhat decomposition.  
Let us denote the twisted $ G $-orbit corresponding to $ w \in W $ by $ \calorbit_w^{\theta} $.  
Let $ X = \Delta(\FlB) = \{ (B_1, B_1) \mid B_1 \in \FlB \} \subset \FlB^2_{\theta} $ be 
the (non-twisted) diagonal embedding.

We will show that any irreducible component of $ X \cap \calorbit_w^{\theta} $ is precisely a single $ K $-orbit, 
which implies there are finitely many orbits in $ X \cap \calorbit_w^{\theta} $.  
Since $ \{ \calorbit_w^{\theta} \}_{w \in W} $ covers $ \FlB^2_{\theta} $ as well as $ X $, we conclude 
$ X \simeq \FlB $ has finitely many $ K $-orbits (note that the twisted action and the natural diagonal action of $ K $ {coincide}).

Take $ \xi \in X \cap \calorbit_w^{\theta} $ and consider its tangent space:
\begin{equation}\label{eq:Txi.X.cap.Owtheta}
T_{\xi}(X \cap \calorbit_w^{\theta}) \subset T_{\xi}X \cap T_{\xi}\calorbit_w^{\theta}.
\end{equation}
Since $ \xi \in X $, it can be written as 
$ \xi = (x B, x B) \in X $ for some $ x \in G $.  
Then the right hand side of \eqref{eq:Txi.X.cap.Owtheta} is
\begin{equation}\label{eq:tangent.of.intersection}
\{ (y + \lie{b}^x, y + \lie{b}^x) \mid y \in \lie{g} \} 
\cap 
\{ (y + \lie{b}^x, \theta(y) + \lie{b}^x) \mid y \in \lie{g} \} \subset \lie{g}/\lie{b}^x \oplus \lie{g}/\lie{b}^x.  
\end{equation}
From this, considering $ (y + \lie{b}^x, y + \lie{b}^x) $ in the left hand side of \eqref{eq:tangent.of.intersection}, 
we conclude that $ y - \theta(y) \in \lie{b}^x $ which implies 
\begin{equation*}
y = \frac{1}{2}(y + \theta(y)) + \frac{1}{2} ( y - \theta(y)) \in \lie{k} + \lie{b}^x.
\end{equation*}
This shows in fact we can take $ y \in \lie{k} $ modulo $ \lie{b}^x $.  
We denote the $ K $-orbit through $ \xi $ by $ \bborbit_K $.  
The above arguments show $ T_{\xi}X \cap T_{\xi}\calorbit_w^{\theta} \subset T_{\xi}\bborbit_K $.  
On the other hand, since $ \bborbit_K \subset X \cap \calorbit_w^{\theta} $, it follows 
$ T_{\xi}\bborbit_K \subset T_{\xi}(X \cap \calorbit_w^{\theta}) $.  
Together with Equation~\eqref{eq:Txi.X.cap.Owtheta}, we get 
\begin{equation}
T_{\xi}\bborbit_K \subset T_{\xi}(X \cap \calorbit_w^{\theta}) \subset T_{\xi}X \cap T_{\xi}\calorbit_w^{\theta} \subset T_{\xi}\bborbit_K .
\end{equation}
Thus all the terms in the above containments are equal in fact.  

Now consider an irreducible component $ Z $ of $ X \cap \calorbit_w^{\theta} $.  
We choose a point $ \xi $ from the set of smooth points in $ Z $.  
Then $ T_{\xi}Z = T_{\xi}\bborbit_K $ holds by the above consideration, which means 
$ \bborbit_K $ is open dense in $ Z $ and we get $ Z = \closure{\bborbit_K} $.  

If $ \xi_0 \in Z \setminus \bborbit_K $, 
the general theory of algebraic varieties tells 
$ \dim T_{\xi_0}Z \geq \dim Z = \dim \bborbit_K $.  
While $ \xi_0 \in \closure{\bborbit_K} \setminus \bborbit_K $, and in that case, 
$ \dim K \cdot \xi_0 < \dim \bborbit_K $ holds.  Thus we get 
\begin{equation*}
\dim T_{\xi_0}Z \geq \dim Z = \dim \bborbit_K > \dim K \cdot \xi_0 = \dim T_{\xi_0}(X \cap \calorbit_w^{\theta})
\geq \dim T_{\xi_0}Z, 
\end{equation*}
contradiction.  Thus we conclude exactly $ Z = \bborbit_K $ holds.

So irreducible components of $ X \cap \calorbit_w^{\theta} $ are $ K $-orbits and coincide with connected components.  They are finite in number.
\end{proof}

\begin{remark}
By Theorem~\ref{thm:Brion.Vinberg}, the above lemma is equivalent to 
the claim that there is an open dense $ B $-orbit in $ G/K $.  
It is not hard to prove the existence of an open dense $ B $-orbit.  
In fact, it suffices to show $ \lie{g} = \lie{k} + \lie{b} $ for a certain Borel subalgebra.  

It is known that there is a $ \theta $-split Borel subalgebra $ \lie{b} $ (see \cite{Vust.1974}).  
Here, the term ``$ \theta $-split'' means $ \theta(\lie{b}) $ is the opposite Borel subalgebra of $ \lie{b} $, 
so that we have $ \lie{g} = \lie{b} + \theta(\lie{b}) $.  
Take an arbitrary $ x \in \lie{g} $ and write it as 
$ x = x_1 + x_2 $ for $ x_1 \in \lie{b} $ and $ x_2 \in \theta(\lie{b}) $.  
Then we have 
$ x_2 = \bigl( x_2 + \theta(x_2) \bigr) - \theta(x_2) $ and 
clearly $ x_2 + \theta(x_2) \in \lie{k}, \; \theta(x_2) \in \lie{b} $.  
This proves that $ x \in \lie{k} + \lie{b} $, and hence proves $ \lie{g} = \lie{k} + \lie{b} $ for a $ \theta $-split $ \lie{b} $.
\skipover{
If you are familiar with the structure theory of real semisimple Lie group, 
this follows immediately.  

In fact, for a suitable real form $ G_{\R} $ which is $ \theta $-stable, 
$ K $ is the complexification of a maximal compact subgroup $ K_{\R} $ of $ G_{\R} $.  
Consider Iwasawa decomposition $ G_{\R} = K_{\R} A_{\R} N^+ $.  
By complexification, we get $ \lie{g} = \lie{k} + \lie{a} + \lie{n^+} $.  
Put $ \lie{m} = \lie{z}_{\lie{k}}(\lie{a}) $, the centralizer of $ \lie{a} $ in $ \lie{k} $, which is reductive.  
Then $ \lie{m} + \lie{a} + \lie{n^+} $ is a parabolic subalgebra, which is the complexification of the Lie algebra of a minimal parabolic subgroup for $ G_{\R} $.  
Choose a Borel subalgebra $ \lie{b}_{\lie{m}} $ of $ \lie{m} $, then 
$ \lie{b} = \lie{b}_{\lie{m}} + \lie{a} + \lie{n^+} $ is a Borel subalgebra of $ \lie{g} $ and it is easy to see 
$ \lie{g} = \lie{k} + \lie{b} $ for this $ \lie{b} $.  
This proves existence of a dense orbit of $ B $ in $ G/K $.
}
\end{remark}

\begin{corollary}
There exists a natural bijection 
\begin{equation*}
\FlB/K = K \backslash G/B \simeq \coprod_{w \in \mathscr{I}} \Irr (X \cap \calorbit_w^{\theta}) , 
\end{equation*}
%
%%$ K $-orbits in the flag variety $ \FlB = G/B $ is in bijection with 
%%$ \coprod_{w \in \mathscr{I}} \Irr (X \cap \calorbit_w^{\theta}) $, 
where $ \mathscr{I} = \{ w \in W \mid \theta(w) = w^{-1} \} $ is the set of $\theta$-twisted involutions and 
$ X $ and $ \calorbit_w^{\theta} $ are given in the proof of Lemma~\ref{lemma:KGB.decomposition}.
\end{corollary}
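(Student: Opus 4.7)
The plan is to exploit what was already proved inside Lemma~\ref{lemma:KGB.decomposition}. That argument established both the partition $\FlB^2_\theta = \coprod_{w \in W} \calorbit_w^{\theta}$ coming from the Bruhat decomposition, and the stronger fact that every irreducible component of $X \cap \calorbit_w^{\theta}$ is a single $K$-orbit on $X \simeq \FlB$. Hence, without any further work, summing over $w \in W$ already yields the bijection
\[
\FlB/K \;\simeq\; \coprod_{w \in W} \Irr(X \cap \calorbit_w^{\theta}),
\]
so the only content remaining in the corollary is to show that the intersection $X \cap \calorbit_w^{\theta}$ is empty whenever $w \notin \mathscr{I}$.

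For this vanishing step I would translate the intersection condition through the isomorphism $\Phi : \FlB^2 \to \FlB^2_\theta$, $\Phi(B_1, B_2) = (B_1, \theta(B_2))$, which converts $\delta_\theta$ into the standard diagonal $G$-action. A point $(B_1, B_1) \in X$ lies in $\calorbit_w^{\theta}$ precisely when $\Phi(B_1, B_1) = (B_1, \theta(B_1))$ lies in the usual diagonal orbit $\calorbit_w = G \cdot (B, wB)$, that is, when $B_1$ and $\theta(B_1)$ are in Bruhat-relative position $w$. Applying $\theta$ componentwise and using $\theta(B) = B$, one gets $(\theta(B_1), B_1) \in \theta(\calorbit_w) = \calorbit_{\theta(w)}$. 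On the other hand, swapping the two entries inverts the relative position, so $(\theta(B_1), B_1) \in \calorbit_{w^{-1}}$. Since the $\calorbit_u$ are pairwise disjoint, comparing these two statements forces $\theta(w) = w^{-1}$, i.e.\ $w \in \mathscr{I}$.

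The only points to check carefully are the two Weyl-group identities just used: that $\theta(\calorbit_w) = \calorbit_{\theta(w)}$, which is immediate from $\theta(G \cdot (B, wB)) = G \cdot (\theta(B), \theta(w)\theta(B)) = G \cdot (B, \theta(w)B)$; and that the coordinate swap sends $\calorbit_w$ to $\calorbit_{w^{-1}}$, which is the standard consequence of $G = \coprod_w BwB$. Neither is a real obstacle, so there is in fact no hard step here: the substantive content was already built into Lemma~\ref{lemma:KGB.decomposition}, and the corollary is essentially a bookkeeping statement identifying which $w$ contribute a nonempty piece, namely precisely the $\theta$-twisted involutions in $\mathscr{I}$.
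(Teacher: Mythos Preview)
Your argument is correct. Both you and the paper recognize that everything except the restriction to $w\in\mathscr{I}$ was already contained in Lemma~\ref{lemma:KGB.decomposition}, and the only task is to show $X\cap\calorbit_w^{\theta}\neq\emptyset\Rightarrow\theta(w)=w^{-1}$.

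The routes diverge here. The paper invokes an external structural result (Springer \cite[Theorem~4.2]{Springer.1985}) to replace a point $gB$ of the intersection by a $K$-translate $vB$ with $v\theta(v)^{-1}\in N_G(T)$; one then reads off $w=v\theta(v)^{-1}$ in $W$ directly and checks $\theta(w)=w^{-1}$ by hand. Your argument is more elementary and self-contained: pulling back through $\Phi$, the condition becomes $(B_1,\theta(B_1))\in\calorbit_w$, and then the two symmetries $\theta(\calorbit_w)=\calorbit_{\theta(w)}$ and $\mathrm{swap}(\calorbit_w)=\calorbit_{w^{-1}}$ of the ordinary Bruhat stratification force $\theta(w)=w^{-1}$ by disjointness of the cells. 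This avoids any appeal to \cite{Springer.1985}; the only hidden hypothesis is that $T$ is $\theta$-stable so that $\theta$ acts on $W$, which is already implicit in the statement of the corollary (and is made explicit in the paper's proof). What the paper's approach buys, on the other hand, is slightly more: it actually exhibits each contributing $w$ as $v\theta(v)^{-1}$ for a coset representative $v$, which is the starting point of the Richardson--Springer parametrization mentioned immediately after the corollary.
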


\begin{proof}
Except for the condition $ w \in \mathscr{I} $, the claim has been already established in the proof of the above lemma.  

If $ X \cap \calorbit_w^{\theta} $ is not empty, 
then there exists $ g \in G $ which satisfies 
$ (g B, g B) = (g B, \theta(g) w B) $.  
By \cite[Theorem~4.2]{Springer.1985}, 
a representative of the double coset $ K v B \; (v \in G) $ can be chosen so that 
$ v \theta(v)^{-1} \in N_G(T) $, where $ T $ is a $ \theta $-stable maximal torus in $ B $, which is also assumed to be $ \theta $-stable. 
Thus, if we translate $ g B $ by $ K $, we can assume 
$ (v B, v B) = (v B, \theta(v) w B) $ for some $ v $ with the property $ v \theta(v)^{-1} \in N_G(T) $.  
Since $ v B = \theta(v) w B $, we get 
$ v^{-1} \theta(v) w \in B $.  
This means $ v^{-1} \theta(v) w \in T $ and 
$ w = v \theta(v)^{-1} $ in $ W = N_G(T)/T $.  
Thus we see 
$ \theta(w) = \theta(v \theta(v)^{-1}) = \theta(v) v^{-1} = w^{-1} $, 
which concludes that $ w \in \mathscr{I} $.
\end{proof}

From the corollary, we get a natural projection 
\begin{equation}
\FlB/K = K \backslash G/B \to \mathscr{I},
\end{equation}
which is essentially the same as Richardson-Springer (\cite[\S~1.6]{Richardson.Springer.1994}, \cite[Proposition~1.7.1]{Richardson.Springer.1993}).  

There is another way to parameterize $ K \backslash G / B $ by using nilpotent orbits and their Springer fibers.  
For this, see \cite{CNT.2009,Trapa.IMRN.1999} and references therein.

\subsection{Reductive subgroups acting on flag varieties}

{There is a classification, due to Avdeev and Petukhov \cite{Avdeev.Petukhov.2021}, of the reductive subgroups $ H $ in $ G $ which act spherically on any given partial flag variety $ G/P $.} 
In particular these subgroups $ H $ have finitely many orbits in $ G/P $.  
There are many examples of such subgroups other than symmetric subgroups, which we already consider (see Lemma~\ref{lemma:KGB.decomposition} above).

\skipover{
Classification of spherical reductive subgroups acting on partial flag varieties:
Avdeev, Roman ;  Petukhov, Alexey . Spherical actions on isotropic flag varieties and related branching rules.
 Transform. Groups  26  (2021),  no. 3, 719--774.

Description of lattice of HW of Spherical subgroups acting on partial flag varieties (symmetric subgroup and GL_n):
MR4109129  Avdeev, Roman ;  Petukhov, Alexey . Branching rules related to spherical actions on flag varieties.
 Algebr. Represent. Theory  23  (2020),  no. 3, 541--581.

Classification of spherical reductive subgroups acting on partial flag varieties of type A:
MR3288423  Avdeev, R. S. ;  Petukhov, A. V.  Spherical actions on flag varieties.
(Russian) ; translated from  Mat. Sb.  205  (2014),  no. 9, 3--48 Sb. Math.  205  (2014),  no. 9-10, 1223--1263
}

\section{Product of flag varieties}\label{section:mult.FV}

For applications of the theory of moment maps introduced in \S\ref{sec:flag.varieties.and.moment.maps}, 
we need a $ G $-variety $ X $ with finitely many orbits.  
For example, 
%%We are interested in the case where $ X $ is a projective variety.  
if $ X $ is a product of partial flag varieties $ G/P_1 $ and $ G/P_2 $, where $ P_i \; (i = 1, 2) $ are parabolic subgroups, 
then the diagonal action of $ G $ on $ X = G/P_1 \times G/P_2 $ has finitely many orbits.  
To see this, we note
\begin{equation*}
X/G = (G/P_1 \times G/P_2)/G \simeq P_1 \backslash G / P_2 \simeq W_{P_1} \backslash W_G / W_{P_2} ,
\end{equation*}
where the last isomorphism is given by the Bruhat decomposition (see Lemma~\ref{lemma:Bruhat.decomposition.for.parabolics}).  
In this way, we also get a parametrization of $ G $-orbits in $ X $.  

It is natural to ask the problem of when a product of flag varieties has finitely many orbits under the diagonal action of $ G $.

\subsection{Triple flag varieties}\label{subsec:3FV}

\begin{theorem}\label{thm:3FV.of.finite.type}
Let $ G $ be a classical group of type $ A, B, C, D $.  
and $ P_i \; (1 \leq i \leq k) $  parabolic subgroups of $ G $, {none of which is equal to $ G $}.  
If the product $ G/P_1 \times G/P_2 \times \cdots \times G/P_k $ admits finitely many $ G $-orbits under the diagonal $ G $-action then 
$ k \leq 3 $ holds.  
\end{theorem}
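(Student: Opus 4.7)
The plan is to reduce, via passage to larger parabolics, to the case where each $P_i$ is a maximal parabolic; then for $k = 4$, to exhibit a one-parameter family of $G$-orbits using a cross-ratio construction.

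First, I would observe that a $G$-equivariant surjection $X \twoheadrightarrow Y$ sends $G$-orbits onto $G$-orbits, so infinitely many $G$-orbits on $Y$ force infinitely many $G$-orbits on $X$. Consequently, for any parabolic containments $P_i \subset Q_i$, the induced surjection $\prod_{i=1}^k G/P_i \twoheadrightarrow \prod_{i=1}^k G/Q_i$ transmits infinite $G$-orbits from the target back to the source. There is no loss of generality in assuming each $P_i \neq G$ (otherwise that factor is a point and can be dropped). Enlarging each $P_i$ to a maximal parabolic $P_i^{\max} \supseteq P_i$, and then projecting out any factors beyond four when $k > 4$, the claim reduces to: for a classical group $G$ and any four proper maximal parabolics $P_1^{\max},\ldots,P_4^{\max}$, the product $\prod_{i=1}^4 G/P_i^{\max}$ has infinitely many $G$-orbits.

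Second, I would construct a one-parameter family of $G$-orbits on this quadruple product. The archetype is $(\mathbb{P}^1)^4$ under $PGL_2$, whose orbit space contains, as a continuous invariant, the classical cross-ratio. In type $A$, each $G/P_i^{\max}$ is a Grassmannian $\mathrm{Gr}(d_i,n)$, and one restricts to the sublocus consisting of quadruples of subspaces aligned through a common projective line---either contained in a common two-plane, or sharing a common $(\min_i d_i - 1)$-dimensional subspace, chosen according to the dimension vector. On this sublocus the diagonal $G = SL_n$ action factors through $PGL_2$ acting on four points of $\mathbb{P}^1$, yielding a one-dimensional modulus. For types $B$, $C$, $D$ the analogous construction uses four isotropic lines lying in a common totally isotropic plane. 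The cross-ratio then produces infinitely many $G$-orbits on the quadruple product for every choice of proper maximal parabolics.

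The main obstacle is the uniform realization of the cross-ratio sublocus for every quadruple of maximal parabolics. In borderline configurations (for instance, spinor varieties of very low rank, or Grassmannians with all $d_i$ close to $n/2$), the required alignment is not immediately geometrically obvious. The cleanest resolution is to appeal to the explicit classifications of triple flag varieties of finite type by Magyar--Weyman--Zelevinsky \cite{MWZ.1999,MWZ.2000} and Stembridge \cite{Stembridge.2003}, whose tables (Sections~\ref{subsec:Table.3FV.Borel}--\ref{subsec:Table.3FV.general}) permit a direct verification, quadruple by quadruple, that no finite-orbit configuration arises.
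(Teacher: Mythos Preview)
Your reduction step is correct and your cross-ratio intuition is exactly the right one; in fact the paper's own argument in Theorem~\ref{thm:finiteness.criterion.via.quiver}\,(2) is precisely the quiver-theoretic incarnation of your idea in type $A$: the dimension vector $((1^2),(1^2),(1^2),(1^2))$ on the four-branched star quiver is the null root of affine $\widetilde D_4$, which encodes the cross-ratio modulus on $(\mathbb{P}^1)^4$. So for type $A$ your sketch and the paper's proof coincide.

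For Theorem~\ref{thm:3FV.of.finite.type} itself, however, the paper gives no direct argument: it simply records that the statement is a byproduct of the explicit classifications of triple flag varieties of finite type due to Magyar--Weyman--Zelevinsky (types $A$, $C$) and Matsuki (types $B$, $D$). Your proposal ends in the same place, so in that sense the two agree.

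There are two genuine weaknesses in your write-up. First, your treatment of types $B$, $C$, $D$ is not yet a proof: ``four isotropic lines lying in a common totally isotropic plane'' only addresses the maximal parabolic stabilizing an isotropic line, whereas you must handle \emph{every} maximal parabolic (stabilizers of isotropic $d$-planes for all $d$, and in type $D$ the two spinor varieties). Producing a $\mathbb{P}^1$-family inside each such variety in a way that makes the cross-ratio a genuine $G$-invariant of the quadruple requires a compatible choice of isotropic flags across all four factors, and this is exactly the ``alignment'' difficulty you flag but do not resolve. Second, your fallback citation is off: Stembridge \cite{Stembridge.2003} classifies only the case where one factor is $G/B$, which does not suffice here. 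For the full classification in types $B$ and $D$ you need Matsuki \cite{Matsuki.2015,Matsuki.arXiv2019}, as the paper cites; together with \cite{MWZ.1999,MWZ.2000} for types $A$ and $C$, those references do give the result.
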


The case $ k = 1 $ is trivial, and 
as mentioned above, the case $ k = 2 $ reduces to the Bruhat decomposition.    
So, in this case, the number of orbits is always finite.  

Let us assume $ k = 3 $, and consider a triple flag variety $ G/P_1 \times G/P_2 \times G/P_3 $.  
In general this variety has infinitely many $ G $-orbits, but there are many interesting cases where 
they admit a finite number of orbits.  
For classical groups, those cases are classified by Magyar-Weyman-Zelevinsky \cite{MWZ.1999,MWZ.2000} and Matsuki \cite{Matsuki.2015,Matsuki.arXiv2019}, 
which gives Theorem~\ref{thm:3FV.of.finite.type}.  
In \cite{MWZ.1999,MWZ.2000}, they also classified orbits in terms of quiver representations (see \S~\ref{subsection:JV} below for type A).  
For odd orthogonal groups, Matsuki \cite{Matsuki.2013} classified orbits in the case of (I) in \cite[Theorem~1.6]{Matsuki.2015}.  

{Up to now, there is no simple way to prove the number $ k $ should be smaller than $ 4 $ for multiple flag varieties of finite type for classical groups, to the knowledge of authors.}

The classification of multiple flag varieties of finite type for exceptional groups remains open, except for the cases of type $ G_2 $ and $ F_4 $ (see \cite{BSEKG.2017.arXiv}).  
It is known the number $ k $ of the flag varieties of finite type is less than or equal to $ 4 $ for type $ E_6, E_7, E_8 $, that it is $ 3 $ for type $ F_4 $, 
and that there is no multiple flag varieties {of finite type} for type $ G_2 $ if $k$ is greater than $ 2 $ (\cite[Theorem 2]{Popov.2007}).  
Actually the result of Popov \cite[Theorem 2]{Popov.2007} specifies the upper bound of the number of 
flag varieties for which the multiple flag varieties admit an \emph{open} orbit.  
We reproduce a table in \cite{Popov.2007} here for the convenience of readers.  
For the number $ b_G $ specified in Table~\ref{Table:Popov2007}, 
if a multiple flag variety $ G/P_1 \times G/P_2 \times \cdots \times G/P_d $ 
has an open $ G $-orbit then $ d \leq b_G $ holds. 
\begin{table}[H]
\caption{Number $ b_G $}\label{Table:Popov2007}
%%\begin{equation*}
%%\hfil{Table~\ref{Table:Popov2007}: Number $ b_G $}\hfil
%%\\[1ex]
{
\hfil
$
\begin{array}{c||c|c|c|c|c|c|c|c|ccccccccccccccccc}
%%\text{Type of $ G $} 
G & A_n \; (n \geq 1 ) & B_n \; (n \geq 1) & C_n \; (n \geq 2) & D_n \; (n \geq 4) & E_6 & E_7 & E_8 & F_4 & G_2 \\ 
\hline
b_G & n + 2 & n + 1 & n + 1 & n & 4 & 4 & 4 & 3 & 2
\end{array}
$
\hfil
}
%%\\[1ex]
%%\end{equation*}
%
\end{table}
\indent
{For type $A_n$, it is widely known that there exists an open orbit in an  $ (n + 2) $-tuple projective spaces.  
Beyond $ (n + 2) $ there is no open orbit, which means $  b_G = n + 2 $ is best possible for type A 
(cf. Shimamoto \cite{Shimamoto.2020}, which describes families of generic orbits in the product of projective spaces).  
For other types, these bound seems over estimated.  For this, see Theorem 3 in \cite{Popov.2007}.}
%%
%%Coskun, Izzet and Hadian, Majid and Zakharov, Dmitry: open orbits in the product of Grassmannians

Even if there is an open orbit, it does not imply the finiteness of orbits.  
However, if one of the parabolic subgroups is a Borel subgroup, 
the existence of an open orbit tells that the multiple flag variety is of finite type.  
Stembridge \cite{Stembridge.2003} gets a complete classification of 
the triple flag varieties $ G/P_1 \times G/P_2 \times G/B $ with $ B $ being a Borel subgroup of $ G $, 
including exceptional cases.  
For a summary, we refer the readers to \cite[Table 1]{HNOO.2013}.  
In this case, there is no such triple flag variety for type $ E_8, F_4, G_2 $.  
{If $ G = E_6, E_7 $, there is no quadruple flag variety of finite type with one of the member $ G/B $ because of the dimension reason, i.e., the dimension of such quadruple flag variety is strictly greater than $ \dim G $.}

\subsection{Double flag varieties for symmetric pairs}\label{subsec:DFl.symmetric.pair}

Let us interpret the triple flag varieties in a different way {as} explained below.  
Let $ \bbG = G \times G $ and put $ \bbP = P_2 \times P_3 $, a parabolic subgroup of $ \bbG $.
Choose an involution $ \Theta $ on $ \bbG $ as a flip of pairs, namely we put 
$ \Theta(g_1, g_2) = (g_2, g_1) $ for $ (g_1, g_2) \in \bbG $.  
For this involution, the fixed point subgroup is the diagonal embedding of $ G $ into $ G \times G $, 
which we denote by $ \bbK \simeq G $, the symmetric subgroup associated with $ \Theta $.  
Hence we can (and do) identify $ P_1 $ with a parabolic subgroup of $ \bbK $, and we write $ \bbQ = P_1 $.  
Thus we see
\begin{equation}\label{eq:DFl.symmetric.pair}
\bbK/\bbQ \times \bbG/\bbP \simeq G/P_1 \times G/P_2 \times G/P_3
\end{equation}
is the triple flag variety on which $ \bbK \simeq G $ acts.  
Note that $ \bbK/\bbQ $ is a flag variety for the symmetric subgroup $ \bbK $ and 
$ \bbG/\bbP $ is a flag variety for $ \bbG $.

Following this example, let us define a double flag variety for a symmetric pair.  
Thus, now we consider $ G $ with an involution $ \theta $, 
and denote the corresponding symmetric subgroup by $ K = G^{\theta} $.  
We pick a parabolic subgroup $ Q \subset K $ of $ K $ and 
a parabolic subgroup $ P \subset G $ of $ G $.  
We call the following variety a \emph{double flag variety for the symmetric pair} $ (G, K) $, 
on which $ K $ acts diagonally,
\begin{equation}
\dblFV = K/Q \times G/P.
\end{equation}
The above discussion shows that triple flag varieties are special cases of double flag varieties.

Our main interest in this paper is the theory of double flag varieties of symmetric pairs.  

\begin{example}\label{ex:DFV.AIII}
We keep the notation in Example~\ref{ex:symmetric.pairs.AIII.CI}.  
Let us consider a symmetric pair of type AIII: 
\begin{equation*}
(G, K)  = (\GL_n(\C),  \GL_p(\C) \times \GL_q(\C)) \qquad (n = p + q) ,    
\end{equation*}
where $ K $ is embedded into $ G $ block diagonally.  
A parabolic subgroup $ P $ of $ G $ is determined by a composition 
$ \mbfa = (a_1, \dots, a_r) $ of $ n $.  
We denote the parabolic subgroup determined by this composition $ \mbfa $ by $ P_{\mbfa} $.  
If we pick a standard basis $ \{ \eb_i \mid 1 \leq i \leq n \} $ and 
define a \emph{standard flag} $ \Flag(\mbfa) $ associated with $ \mbfa $ by  
\begin{equation}\label{eq:flag.by.composition}
\begin{aligned}
\Flag(\mbfa): V_0 = \{ 0 \} \subset & V_1 \subset V_2 \subset \cdots \subset V_r = \C^n , 
\qquad
\\
&
V_i = \langle \eb_k \mid 1 \leq k \leq a_1 + a_2 + \cdots + a_i \rangle .
\end{aligned}
\end{equation}
The parabolic subgroup $ P_{\mbfa} $ is defined to be the stabilizer of the flag $ \Flag(\mbfa) $, 
which has diagonal blocks of size $ a_1, a_2, \dots, a_r $ and blockwise upper triangular.

Similarly, take compositions $ \mbfb = (b_1, \dots, b_{l_1}) $ of $ p $ and  
$ \mbfc = (c_1, \dots, c_{l_2}) $ of $ q $.  
Then $ Q = P_{\mbfb} \times P_{\mbfc} $ up to $ K $-conjugacy, and 
$ K/Q = \GL_p/P_{\mbfb} \times \GL_q/P_{\mbfc} $ is a product of flag varieties of type A.  
Thus we get a double flag variety 
\begin{equation}
\dblFV = K/Q \times G/P 
= \GL_p/P_{\mbfb} \times \GL_q/P_{\mbfc} \times \GL_n/P_{\mbfa} 
\end{equation}
on which $ K = \GL_p \times \GL_q $ acts.
\end{example}

\begin{example}\label{ex:DFV.CI}
We continue examining the examples given in Example~\ref{ex:symmetric.pairs.AIII.CI}, 
and consider a symmetric pair of type CI: 
$ (G, K) = (\Sp_{2n}(\C), \GL_n(\C)) $.  

Let $ V $ be a symplectic vector space of dimension $ 2n $ with a symplectic form $ \langle \;,\; \rangle $, and 
$ V = V^+ \oplus V^- $ a polar decomposition of $ V $ by Lagrangian subspaces $ V^{\pm} $.  
For a composition $ \mbfa $ of $ n $ and 
$ \Flag(\mbfa) = (V_i^+)_{i = 1}^n $ the corresponding flag of isotropic subspaces inside $ V^+ $  
(cf.~Equation~\eqref{eq:flag.by.composition}),
we define the subgroup of elements in $ G $ which stabilize $ \Flag(\mbfa) $ and denote it by $ P_{\mbfa} $.  
Put 
\begin{equation}
V_i^- = (V_{n - i}^+)^{\bot} 
= \{ v \in V^- \mid \langle v, V_i^+ \rangle = 0 \} .
\end{equation}
It is the dual flag of $ \Flag(\mbfa) $ inside $ V^- $.  
An element $ h \in P_{\mbfa} $ preserves both the flag $ \Flag(\mbfa) $ and its dual.
\end{example}

\skipover{
There are several ways to identify $ K $-orbits on the double flag varieties $ \dblFV $.  
Here we will explain a classification based on \cite[Theorem 2.7]{HNOO.2013}, which is useful to identify finiteness of the number of orbits.  
Another way of description is that in terms of quiver representations, which we will explain in detail in \S~\ref{quiver.rep}.  Combinatorial description is also useful.  Steinberg maps and their Springer fibers....
}

\part{\textbf{Double flag varieties of finite type}}
%%\section*{\textbf{Part II. Double flag varieties of finite type}}
\partlabel{part:finite.type.DFV}
%%\addcontentsline{toc}{section}{\textbf{Part II. Double flag varieties of finite type}\partstrut}

\section{Finiteness criterions}\label{sec:finiteness.criterions}

We will apply the theory explained in \S~\ref{subsec:moment.map.for.FlB}
to the double flag varieties of symmetric pairs.  
For this purpose, it is important to know if there are finitely many orbits on a double flag variety.  
Thus we say, a double flag variety $ \dblFV = K/Q \times G/P $ is \emph{of finite type} if there are only finitely many $ K $-orbits on $ \dblFV $.  
Although the classification of the double flag varieties of finite type is still largely open, 
there are several different classifications of such varieties.  

Roughly speaking, there are three major 
methods to classify the $ K $-orbits on $ \dblFV $.  
Let us explain them briefly.  
\begin{penumerate}
\item
Since $ \dblFV /K \simeq P \backslash G / Q $, 
the double coset decomposition by $ (P,Q) $ will give the classification.  
This is the strategy used in \cite{HNOO.2013}, but explicit decomposition is often very subtle.  
In \cite{HNOO.2013}, the decomposition is reduced to that of unipotent subgroups, and 
by linearization, it {is} further reduced to the multiplicity free representations if one of $ P $ or $ Q $ is a Borel subgroup.

\item
A double flag variety can be considered as configuration spaces of flags (or vector spaces).  
In this viewpoint, the theory of quiver representations is useful, especially for type A groups.  
This idea first appeared in \cite{MWZ.1999,MWZ.2000} for triple flag varieties, 
 and then applied to the double flag variety by Homma in \cite{Homma.2021}.  
We will explain this in detail later in \S~\ref{sec:quiver.joint.variety}.

\item
Let $ \nilpotentsof{\lie{k}} $ be the nilpotent variety of $ \lie{k} $.  Then there is a well defined map between the orbit spaces 
$ \Phi_\fk : \dblFV / K \to \nilpotentsof{\lie{k}} / K $ (see \S~\ref{sec:Definition.Steinberg.maps} for the precise definition).  
We call this map a \emph{symmetrized Steinberg map}.  
Since the space of nilpotent orbits $ \nilpotentsof{\lie{k}} / K $ is finite and well-understood, 
if we can determine the fiber of the map $ \Phi_\fk $, it will produce the classification of $ \dblFV / K $.  
The fibers often can be described by  some combinatorial objects related to Weyl group representations  
(like Robinson-Schensted correspondence), though there is no general theory yet.  
The origin of this method is the classical Steinberg theory, and it is generalized to 
double flag varieties of type AIII in \cite{Fresse.N.2016,Fresse.N.2020,Fresse.N.2021,Fresse.N.2023}; see also \S~\ref{section:generalization.RS} below. 
\end{penumerate}

\section{Double flag varieties associated to Borel subgroups}\label{section:finite.DFV.P.or.Q=Borel}

Let us assume that either 
\begin{penumerate}
\item
$ G $ is a \emph{simple} algebraic group, which is simply connected, and 
$ K = G^{\theta} $ is the fixed point subgroup of an involution $ \theta $ as above; 
or 
\item
$ G = G_1 \times G_1 $ with $ K = \Delta G_1 $, which is the fixed point subgroup of the ``flip'' (see \S~\ref{subsec:DFl.symmetric.pair} around Equation~\eqref{eq:DFl.symmetric.pair}).  
Here $ G_1 $ is a simple, simply connected algebraic group.
\end{penumerate}
Note that in both cases $ K $ is connected.  

For a double flag variety $ \dblFV = K/Q \times G/P $, 
if one of $ P $ or $ Q $ is a Borel subgroup of $ G $ or $ K $ respectively, 
then there is a complete classification of $ \dblFV $ of finite type in this setting 
due to He-N-Ochiai-Oshima \cite{HNOO.2013}.  
%%Let us introduce the results and idea of the proof.

%%\input{ideas_proofs.tex}

\subsection{Ideas of the proofs}\label{subsec:ideas.proofs.Q=BK.or.P=BG}

Let us explain the ideas of the proofs in accordance with \cite{HNOO.2013}.  

We begin with the case where $ Q = B_K $ is a Borel subgroup.  

Take a $ \theta $-stable Borel subgroup $ B $ of $ G $ and 
$ \theta $-stable maximal torus $ T \subset B $, which always exist.  
Put $ B_K = B^{\theta} = B \cap K $.  Since $ B $ is $ \theta $-stable, $ B_K $ is a Borel subgroup of $ K $.  

Let $ P $ be a parabolic subgroup of $ G $ which contains $ B $.  
Choose a Levi decomposition $ P = L U $, where $ U $ is the unipotent radical and $ T \subset L $, i.e., 
the decomposition is standard.    

\begin{lemma}\label{lemma:LcapK}
$ L \cap K $ is a connected reductive subgroup of $ K $ and 
$ L \cap B_K $ is a Borel subgroup of $ L \cap K $.  
\end{lemma}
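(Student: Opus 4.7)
The plan is to reduce the lemma to Steinberg's connectedness theorem by exhibiting $L \cap K$ as the $\theta$-fixed points of a $\theta$-stable Levi subgroup of $G$. Concretely, since $B$ and $T$ are both $\theta$-stable, $\theta$ permutes the set $\Delta$ of simple roots relative to $(B,T)$. Writing $L = L_I$ for the standard Levi of $P = P_I$, the image $\theta(L) = L_{\theta(I)}$ is again a standard Levi, and a routine verification with root subgroups gives
\[
M := L \cap \theta(L) = L_{I \cap \theta(I)},
\]
a standard Levi subgroup of $G$ containing $T$. It is $\theta$-stable because $\theta$ fixes the set $I \cap \theta(I)$ setwise. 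Any $\theta$-fixed $l \in L$ lies in $\theta(L)$, hence in $M$, so $L \cap K = M \cap K = M^\theta$.

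Now we invoke Steinberg. Since $G$ is simply connected (case (1); case (2) reduces to this over each factor), the derived subgroup $[M,M]$ of the standard Levi $M$ is simply connected. Steinberg's theorem then implies that $M^\theta$ is connected, and general structure theory gives that fixed points of a semisimple automorphism on a connected reductive group are reductive. Hence $L \cap K = M^\theta$ is a connected reductive subgroup of $K$, proving the first assertion.

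For the Borel statement, observe that $B \cap M$ is a Borel subgroup of $M$ (it contains the maximal torus $T$ and is the intersection with $M$ of a Borel of $G$ containing $T$), and it is $\theta$-stable. Applying the $KGB$-principle internally to the connected reductive group $M$ and the $\theta$-stable Borel $B \cap M$—that is, the standard fact that for a $\theta$-stable Borel $B_0$ of a connected reductive group $M$ with $M^\theta$ connected, $B_0^\theta$ is a Borel subgroup of $M^\theta$—we conclude that $(B \cap M)^\theta$ is a Borel of $M^\theta = L \cap K$. Finally, since $L \cap K \subset M$, we have
\[
L \cap B_K = L \cap K \cap B = M^\theta \cap B = (B \cap M)^\theta,
\]
which is therefore a Borel of $L \cap K$, as desired.

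The main obstacle I anticipate is the careful identification $L \cap \theta(L) = L_{I \cap \theta(I)}$, which is routine but requires tracking the normal form of elements in a standard Levi via its root subgroups; and the invocation of the ``Borel inside Borel'' fact for the symmetric pair $(M, M^\theta)$, which is classical (due to Vust / Steinberg / Richardson--Springer) but should probably be cited rather than reproved here.
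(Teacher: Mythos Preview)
Your reduction to the $\theta$-stable Levi $M = L \cap \theta(L)$ and the identification $L\cap K = M^\theta$ match the paper's approach exactly (the paper writes $L' := L\cap\theta(L)$ and works with $P':=P\cap\theta(P)$ rather than with the subset $I\cap\theta(I)$ of simple roots, but this is cosmetic). Your Borel argument via $(B\cap M)^\theta$ is likewise the same as the paper's. The only substantive difference is how you establish connectedness of $M^\theta$, and there your argument has a gap.

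Steinberg's connectedness theorem applies to simply connected \emph{semisimple} groups; the implication ``$[M,M]$ simply connected $\Rightarrow M^\theta$ connected'' for a merely reductive $M$ is false in general. For instance, $M=\GL_2$ has $[M,M]=\SL_2$ simply connected, yet for the involution $\theta(g)={}^tg^{-1}$ one finds $M^\theta=\mathrm{O}_2$, which is disconnected. So your invocation of Steinberg does not, as stated, yield connectedness of $M^\theta$: the contribution of the central torus $Z(M)^0$ must still be controlled, and nothing in your argument does that.

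The paper closes this gap by a more structural route that bypasses Steinberg for $M$ entirely. Since $P'=P\cap\theta(P)$ is a $\theta$-stable parabolic subgroup of $G$ with $\theta$-stable Levi $L'=M$, the intersection $Q':=P'\cap K$ is a parabolic subgroup of the connected group $K$, and $L'\cap K = M^\theta$ is its Levi factor. Levi subgroups of parabolics in a connected reductive group are automatically connected and reductive, which gives the desired connectedness of $L\cap K$. With this replacement for your Steinberg step, the rest of your proof goes through and coincides with the paper's.
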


\begin{proof}
Note that $ P' := P \cap \theta(P) $ is a minimal $ \theta $-stable parabolic subgroup 
among those that contain $ P \cap K $.  It is easy to see $ L' := L \cap \theta(L) $ is a Levi part of $ P' $.  
Since $ P' $ is $ \theta $-stable, 
$ Q' = P' \cap K $ is a parabolic subgroup of $ K $ whose Levi component is $ L' \cap K $.  
Now $ L \cap K = L \cap \theta(L) \cap K = L' \cap K $ is reductive and connected, since it is a Levi component of a parabolic subgroup $ Q' $.  
Since $ L' \cap B $ is a $ \theta $-stable Borel subgroup of $ L' $, 
$ L \cap B_K = L \cap \theta(L) \cap B_K = L' \cap B_K = (L' \cap B)^{\theta} $ is a Borel subgroup of $ L \cap K $.
\end{proof}

Let $ \lie{u} $ be the Lie algebra of the unipotent radical $ U $ of $ P $.  
We denote
\begin{equation}
\lie{s} = \lie{g}^{- \theta} = \{ x \in \lie{g} \mid \theta(x) = - x \} ,
\end{equation}
so that $ \lie{g} = \lie{k} + \lie{s} $ is the Cartan decomposition with respect to $ \theta $.  

The following theorem is a key theorem for the classification.  

\begin{theorem}[{He-N-Ochiai-Oshima \cite[Theorem~4.2]{HNOO.2013}}]\label{thm:Q=BK.DFV.finite.type.MFA}
Let $ B_K $ be a Borel subgroup of $ K $ and $ P = L U $ a parabolic subgroup of $ G $ as explained above.  
The double flag variety $ \dblFV = K/B_K \times G/P $ is of finite type if and only if 
the adjoint action of $ L \cap K $ on $ \lie{u} \cap \lie{s} $ is spherical. 
\end{theorem}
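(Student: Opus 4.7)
The plan is to translate each side of the equivalence into an existence-of-open-orbit statement via Brion--Vinberg (Theorem~\ref{thm:Brion.Vinberg}), and then to match these two open-orbit conditions through a local slice analysis near a basepoint of the open $K$-orbit on $G/P$.

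First, note that $G/P$ is a normal irreducible $K$-variety with finitely many $K$-orbits: by Lemma~\ref{lemma:KGB.decomposition}, $G/B$ is $K$-spherical, and $G/P$ is one of its quotients. Hence Theorem~\ref{thm:Brion.Vinberg} gives that $\dblFV$ is of finite type (equivalently, $B_K$ has finitely many orbits on $G/P$) if and only if $B_K$ admits an open orbit on $G/P$. On the other side, Lemma~\ref{lemma:LcapK} asserts that $L \cap B_K$ is a Borel subgroup of the connected reductive group $L \cap K$, so a second application of Theorem~\ref{thm:Brion.Vinberg} shows that the $(L \cap K)$-action on $\lie{u} \cap \lie{s}$ is spherical if and only if $L \cap B_K$ admits an open orbit on $\lie{u} \cap \lie{s}$. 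The theorem thus reduces to matching these two open-orbit conditions.

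Next, let $\Xorbit_0 \subset G/P$ denote the unique open $K$-orbit (which exists since $K$ acts with finitely many orbits). Any open $B_K$-orbit on $G/P$ must lie inside $\Xorbit_0$, so sphericity of $G/P$ under $K$ is equivalent to sphericity of $\Xorbit_0$. Choosing a representative $x_0 = g_0 P \in \Xorbit_0$ via the $\theta$-twisted Bruhat data from the proof of Lemma~\ref{lemma:KGB.decomposition}, one identifies $\Xorbit_0 \cong K/H$ with stabilizer $H = K \cap g_0 P g_0^{-1}$. The key step is to express the $B_K$-orbit structure on $K/H$ in terms of the $(L \cap B_K)$-orbit structure on $\lie{u} \cap \lie{s}$. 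Using the cotangent identification $T^*_{x_0}(G/P) \cong \Ad(g_0)\lie{u}$ together with the Cartan decomposition $\lie{g} = \lie{k} + \lie{s}$, one isolates $\lie{u} \cap \lie{s}$ as the relevant $(L \cap K)$-module ``slice''; the comparison of dimensions of $B_K$-orbits on $K/H$ with those of $(L \cap B_K)$-orbits on this slice (after accounting for the free motion along the $K$-orbit) is expected to show that one open orbit exists iff the other does.

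The main obstacle is making this slice-to-orbit correspondence rigorous, particularly when $P$ is not $\theta$-stable (so $\theta(\lie{u}) \neq \lie{u}$ and $\lie{u}$ does not split directly into its $\lie{k}$- and $\lie{s}$-parts as an $(L \cap K)$-module). In that case, the standard remedy is to first replace $P$ by the minimal $\theta$-stable parabolic $P' = P \cap \theta(P)$ with Levi $L' = L \cap \theta(L)$ as in the proof of Lemma~\ref{lemma:LcapK}, use that $L \cap K = L' \cap K$, and verify by tracking $\lie{u}$ inside the nilradical $\lie{u}'$ of $P'$ that the sphericity question for $(P, L, \lie{u})$ descends correctly to the orbit question for the $(L \cap K)$-action on $\lie{u} \cap \lie{s}$.
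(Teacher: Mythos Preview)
Your reduction via Brion--Vinberg on both sides is fine, but the core of the argument is missing and your choice of basepoint makes it harder than necessary. The paper does \emph{not} work at a point of the open $K$-orbit; it works at the basepoint $p=eP$ (recall $B_K\subset P$ in the setup) and invokes Panyushev's theorem (Theorem~\ref{thm:Panyushev99}): for a smooth $K$-variety $X$ and \emph{any} $K$-orbit $\orbit\subset X$, $X$ is $K$-spherical iff the conormal bundle $T^*_{\orbit}X$ is $K$-spherical. Taking $\orbit=K\cdot p$, the conormal fibre at $p$ is $(\lie{k}+\lie{p})^{\perp}=\lie{s}\cap\lie{u}$ on the nose, and the stabiliser of $p$ in $B_K^-$ contains the Borel $L\cap B_K^-$ of $L\cap K$. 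Since $\lie{b}_K\subset\lie{p}$ forces $B_K^-\cdot p$ to be open in $\orbit$, sphericity of $T^*_{\orbit}X$ under $K$ becomes exactly the existence of an open $(L\cap B_K^-)$-orbit on $\lie{u}\cap\lie{s}$. No slice gymnastics, no passage to $P'=P\cap\theta(P)$, and no issue when $P$ is not $\theta$-stable: the conormal identification $(\lie{k}+\lie{p})^{\perp}=\lie{s}\cap\lie{u}$ holds regardless.

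Your approach, by contrast, places the basepoint at $x_0=g_0P$ in the open $K$-orbit, where the cotangent space is $\Ad(g_0)\lie{u}$ and the stabiliser is $K\cap g_0Pg_0^{-1}$; neither of these gives you $\lie{u}\cap\lie{s}$ or $L\cap K$ without further conjugation that destroys the clean picture. The sentence ``the comparison of dimensions \ldots\ is expected to show'' is precisely the gap: you have not established any mechanism relating $B_K$-orbits on $K/H$ to $(L\cap B_K)$-orbits on $\lie{u}\cap\lie{s}$, and the discussion of $P'=P\cap\theta(P)$ addresses a difficulty that the paper's argument never encounters. The missing ingredient is Panyushev's conormal-bundle criterion, which lets you pick the convenient orbit (through $eP$) rather than being forced to the open one.
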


To prove this theorem, the following theorem due to Panyushev is essential. 

\begin{theorem}[{Panyushev \cite[Theorem~2.1]{Panyushev.1999.MMath}}]\label{thm:Panyushev99}
Let $ G $ be a connected reductive algebraic group and $ X $ a smooth $ G $-variety. 
For a smooth locally closed $ G $-stable subvariety $ M $ of $ X $, 
the following {\upshape(1)--(3)} are equivalent.
\begin{penumerate}
\item
The $ G $-variety $ X $ is spherical.
\item
The normal bundle $ T_M X $ is $ G $-spherical.
\item
The conormal bundle $ T_M^* X $ is $ G $-spherical.  
\end{penumerate}
\end{theorem}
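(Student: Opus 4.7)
The plan is to establish (1) $\Leftrightarrow$ (3) by a deformation to the normal cone, and (2) $\Leftrightarrow$ (3) via duality of $G$-equivariant vector bundles. Both steps rest on Theorem~\ref{thm:Brion.Vinberg}, which identifies sphericity with the existence of an open $B$-orbit for a normal irreducible $G$-variety.

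For the equivalence (2) $\Leftrightarrow$ (3), I would observe that $T_M X$ and $T_M^* X$ are mutually dual $G$-equivariant vector bundles over $M$. Applying Luna's \'etale slice theorem locally trivializes the $G$-action on such a bundle $E \to M$, and reduces the $G$-sphericity of $E$ to two conditions: that $M$ itself is $G$-spherical (which follows symmetrically for $E$ and $E^*$ by restriction to the zero section), and that for a generic point $m \in M$, the fiber $E_m$ carries an open orbit under $B_m := B \cap G_m$. The subgroup $B_m$ is a connected solvable linear algebraic group; by Lie--Kolchin its linear action on $E_m$ is simultaneously triangularizable, so the open-orbit condition can be read off from the weights of a maximal torus of $B_m$, and this condition is invariant under passing to the dual representation. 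This yields the equivalence of (2) and (3).

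For the equivalence (1) $\Leftrightarrow$ (3), I would use the $G$-equivariant deformation to the normal cone: a flat family $\pi: \mathcal{X} \to \mathbb{A}^1$ with $\pi^{-1}(t) \cong X$ for $t \neq 0$ and $\pi^{-1}(0) \cong T_M X$, together with a fiberwise $G$-action and a commuting $\mathbb{G}_m$-action covering the scaling on $\mathbb{A}^1$. Semicontinuity of the generic $G$-orbit dimension along the flat family yields one of the two implications directly. For the reverse direction, one combines the extra $\mathbb{G}_m$-symmetry---which identifies the various fibers $\pi^{-1}(t)$ for $t \neq 0$ with $X$---with the intrinsic characterization that a smooth $G$-variety $Y$ is spherical if and only if $\C[T^*Y]^G$ is Poisson-commutative; this transfers sphericity between general and special fiber.

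The main obstacle will be the reverse direction of the deformation argument. Upper-semicontinuity immediately gives one implication, but the converse requires showing that an open $B$-orbit on $X$ actually degenerates to (rather than shatters into) an open $B$-orbit on $T_M X$. The precise interplay between the $\mathbb{G}_m$-grading on the special fiber and the $B$-orbit stratification is subtle and constitutes the technical heart of Panyushev's theorem; this is where the argument demands the most care.
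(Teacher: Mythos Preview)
The paper does not prove this theorem at all: it is quoted verbatim from Panyushev \cite[Theorem~2.1]{Panyushev.1999.MMath} and immediately used as a black box in the proof of Theorem~\ref{thm:Q=BK.DFV.finite.type.MFA}. So there is no ``paper's own proof'' to compare against; any assessment must be of your outline on its own merits and against Panyushev's original argument.

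Your overall strategy---deformation to the normal cone plus a duality statement for vector bundles---is indeed the shape of Panyushev's proof, but two points need correction.

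First, a labeling slip: the deformation to the normal cone degenerates $X$ to the \emph{normal} bundle $T_M X$, not the conormal bundle. So what you have sketched is (1)$\Leftrightarrow$(2), not (1)$\Leftrightarrow$(3). This is harmless once you have (2)$\Leftrightarrow$(3), but as written the logic does not close up.

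Second, and more substantively, your argument for (2)$\Leftrightarrow$(3) has a genuine gap. You reduce to the statement that a connected solvable group $B_m$ has an open orbit on a linear representation $E_m$ if and only if it has one on $E_m^*$, and justify this by saying that Lie--Kolchin triangularizes the action and ``the open-orbit condition can be read off from the weights of a maximal torus of $B_m$''. This last claim is not correct: the open-orbit condition for a solvable group depends on the full unipotent structure of the action, not just the torus weights on a composition series. (For a torus alone the claim is fine---open orbit means linearly independent weights, which is preserved under negation---but the extension to the unipotent radical is exactly what is at issue.) Panyushev's route avoids this entirely: for any $G$-equivariant vector bundle $E\to M$ there is a canonical $G$-equivariant isomorphism $T^*E \cong T^*(E^*)$ (both are naturally identified with $T^*M \times_M E \times_M E^*$), and sphericity of a smooth $G$-variety $Y$ is characterized by the coisotropy condition on $T^*Y$ (equivalently, Poisson-commutativity of $\C[T^*Y]^G$), a criterion you already invoke elsewhere. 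Applying that criterion to $E=T_M X$ and $E^*=T_M^*X$ gives (2)$\Leftrightarrow$(3) in one stroke.
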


\begin{proof}[Proof of Theorem~\ref{thm:Q=BK.DFV.finite.type.MFA}]
Let us assume $ \lie{u} \cap \lie{s} $ is spherical.  
We will prove that $ X := G/P $ is a spherical $ K $-variety so that it contains finitely many $ B_K $-orbits.  

Let $ \orbit $ be a $ K $-orbit through the base point $ p = e P $.  
By Panyushev's theorem, it suffices to prove that the conormal bundle $ T_{\orbit}^* X $ is $ K $-spherical.  
For that, let us see existence of an open $ B_K^- $-orbit, where $ B_K^- $ is the opposite Borel subgroup of $ B_K $.  
Since $ P $ contains $ B_K $ by assumption, the tangent space of $ \orbit $ at $ p $ is 
$ (\lie{k} + \lie{p})/\lie{p} = (\Lie(B_K^-) + \lie{p})/\lie{p} $.  Thus we conclude $ B_K^- \cdot p \subset \orbit $ is open dense.
The conormal direction of $ \orbit $ at $ p $ is $ (\lie{k} + \lie{p})^{\bot} = \lie{s} \cap \lie{p}^{\bot} = \lie{s} \cap \lie{u} $, 
which is a $ L \cap K $-variety.  Note that $ L \cap K $ is reductive by Lemma~\ref{lemma:LcapK}.
The stabilizer $ \Stab_{B_K^-}(p) = (P \cap K) \cap B_K^- \supset L \cap B_K^- $, which is a Borel subgroup of $ L \cap K $ by the same lemma.
Since $ \lie{u} \cap \lie{s} $ is spherical, there is a dense open $ L \cap B_K^- $-orbit, and we get an open $ B_K^- $-orbit on $ T_{\orbit}^* X $.

The converse implication is proved by following the above arguments in reversed order.
\end{proof}

By Theorem~\ref{thm:Q=BK.DFV.finite.type.MFA}, the classification of the double flag varieties of finite type is reduced to the classification of  spherical linear actions (they are often called multiplicity free actions, too).  This is already done by Kac \cite{Kac.1980}, and independently by Benson-Ratcliff \cite{Benson.Ratcliff.1996} and Leahy \cite{Leahy.1998}.  

\medskip

Next, we consider the case where $ P = B $ is a Borel subgroup of $ G $.  
For this case, we only quote a key theorem in \cite{HNOO.2013}.  

\newcommand{\PLmin}{P_{\mathrm{min}}^{L'}}
To do so, we choose a $ \theta $-stable parabolic subgroup $ P' $ of $ G $ which cuts out $ Q $, 
i.e., $ Q = P' \cap K $, which is always possible.  
By taking conjugation, 
we can also assume $ B $ is $ \theta $-stable and $ P' $ contains $ B $.  
Let $ P' = L' U' $ be the standard Levi decomposition with respect to a $ \theta $-stable maximal torus $ T \subset B $, 
and $ \PLmin $ be a minimal theta split parabolic subgroup of $ L' $.  
Here, ``theta split'' means $ \theta(\PLmin) $ becomes the opposite parabolic subgroup of $ \PLmin $.  
Put $ M' = \PLmin \cap K $ and 
denote by $ M'_0 $ its connected component.  

\begin{theorem}[{He-N-Ochiai-Oshima \cite[Theorem~4.4]{HNOO.2013}}]
The double flag variety $ \dblFV = K/Q \times G/B $ is of finite type if and only if 
the adjoint action of $ M'_0 $ on $ \lie{u'} \cap \lie{s} $ is spherical.  
\end{theorem}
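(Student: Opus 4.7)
The plan is to parallel the strategy of the proof of Theorem~\ref{thm:Q=BK.DFV.finite.type.MFA}: first convert the finite type condition into $K$-sphericity of $\dblFV$ via Brion--Vinberg (Theorem~\ref{thm:Brion.Vinberg}), and then apply Panyushev's theorem (Theorem~\ref{thm:Panyushev99}) to express that $K$-sphericity as a sphericity statement on a conormal direction at a well-chosen basepoint. The twist, compared with the first theorem, is that the relevant $K$-orbit on $G/B$ is not the base orbit $K \cdot eB$, but rather a generic $K$-orbit sitting inside the $\pi$-fiber over the base $K$-orbit of $G/P'$, and its reductive stabilizer turns out to be exactly $M'_0$.

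First I would reformulate the finite type condition. Since $B_K \subset Q$, Brion--Vinberg shows that $\dblFV = K/Q \times G/B$ has finitely many $K$-orbits if and only if $\dblFV$ is $K$-spherical; equivalently, via $\dblFV/K \simeq Q\backslash G/B$, this says that $G/B$ admits an open $Q$-orbit.

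Next I would exploit the $K$-equivariant projection $\pi : G/B \to G/P'$, whose fiber $P'/B$ is isomorphic to the flag variety $L'/(L'\cap B)$ (using $B \subset P'$, so $U' \subset B$ acts trivially on the fiber). The $K$-orbit $K \cdot eP'$ on $G/P'$ is isomorphic to $K/Q$, and the $K$-orbits on the restricted bundle $\pi^{-1}(K \cdot eP') \simeq K \times_Q L'/(L' \cap B)$ correspond to $(L' \cap K)$-orbits on $L'/(L' \cap B)$ (since $U' \cap K$ acts trivially on the fiber). By the KGB lemma (Lemma~\ref{lemma:KGB.decomposition}) for the symmetric pair $(L', L' \cap K)$ these orbits are finite; moreover, by Vust's theorem on the existence of a $\theta$-split Borel of $L'$, the open $(L' \cap K)$-orbit on $L'/(L'\cap B)$ is isomorphic to $(L' \cap K)/M'$, where $M' = \PLmin \cap K$. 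Choosing a representative $g_0 B$ in this open orbit yields a distinguished $K$-orbit $\bborbit \subset G/B$ whose stabilizer has reductive Levi component $M'_0$.

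Then I would apply Panyushev's theorem (Theorem~\ref{thm:Panyushev99}) to $X = G/B$ with $M = \bborbit$. The conormal at $g_0 B$ equals $\lie{s}\cap \Ad(g_0)\lie{n}$, where $\lie{n}$ is the nilradical of $\lie{b}$; since $g_0 \in L'$ fixes $\lie{u'}$ and $\lie{n} = \lie{u'}\oplus \lie{n}_{L'}$ with $\lie{n}_{L'} = \lie{n}\cap \lie{l'}$, this conormal splits as $(\lie{u'}\cap \lie{s}) \oplus (\lie{s}_{L'} \cap \Ad(g_0) \lie{n}_{L'})$, where $\lie{s}_{L'} = \lie{s}\cap \lie{l'}$. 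The second summand vanishes because it is the conormal at $g_0 (L'\cap B)$ to the open $(L'\cap K)$-orbit inside $L'/(L'\cap B)$. Hence the conormal direction at $g_0 B$ is exactly $\lie{u'}\cap \lie{s}$, on which $M'_0$ acts by the adjoint representation. Panyushev's theorem, together with the standard identification of $K$-sphericity of a $K$-equivariant vector bundle over a homogeneous space with sphericity of the fiber under the stabilizer, then equates $K$-sphericity of $G/B$ with $M'_0$-sphericity of $\lie{u'}\cap \lie{s}$. Combined with the first step this yields the theorem.

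The main obstacle will be to reduce the action of the full stabilizer of $g_0 B$ to that of its reductive Levi $M'_0$. The stabilizer has a unipotent part $U_* \subset K \cap \Ad(g_0)N$ whose adjoint action on $\lie{u'}\cap \lie{s}$ is non-trivial in general, and one must verify that sphericity of the bundle fiber reduces in this setting to the sphericity of $M'_0$ on $\lie{u'}\cap \lie{s}$. The Vust / Matsuki--Springer structure of $\theta$-split parabolics in $L'$ and the observation that $\lie{u'}\cap \lie{s}$ is an $M'_0$-invariant complement to the relevant unipotent piece of the stabilizer Lie algebra make this reduction natural, but it requires careful bookkeeping analogous to the end of the proof of Theorem~\ref{thm:Q=BK.DFV.finite.type.MFA}.
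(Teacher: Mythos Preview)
The paper does not prove this theorem (it only quotes it from \cite{HNOO.2013}), so I assess your proposal on its own, and there is a genuine gap in the first reduction. Your Step~1 conflates two inequivalent conditions. What Brion--Vinberg actually gives here (applied to the $G$-variety $G/Q$, using that $B$ is a Borel of $G$) is: $\dblFV$ of finite type $\iff Q\backslash G/B$ finite $\iff B\backslash G/Q$ finite $\iff G/Q$ is $G$-spherical $\iff G/B$ has an open $Q$-orbit. This is \emph{not} equivalent to $\dblFV$ (or $G/B$) being $K$-spherical. A concrete counterexample: take $(G,K)=(\SL_3,\SO_3)$ and $Q=K$. Then $\dblFV=G/B$ has finitely many $K$-orbits by Lemma~\ref{lemma:KGB.decomposition}, hence is of finite type, yet $\dim B_K=2<3=\dim G/B$, so $G/B$ is not $K$-spherical. (The theorem is consistent here: $Q=K$ forces $P'=G$, so $\lie{u'}\cap\lie{s}=0$ and the sphericity condition is vacuous.) Consequently, applying Panyushev to the $K$-variety $G/B$ characterizes the wrong property. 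The same example also breaks your ``standard identification'' in the last step: the conormal bundle over the open $K$-orbit $\bborbit$ is just $\bborbit$ (fiber $=0$), which is not $K$-spherical, whereas $M'_0$ acting on $0$ is trivially spherical; that identification tacitly requires $\bborbit\cong K/\Stab_K(g_0B)$ itself to be $K$-spherical, which fails here.

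The correct starting point is $G$-sphericity of $G/Q$. Since $G/Q$ is $G$-homogeneous, Panyushev's theorem in the form stated gives no nontrivial subvariety to reduce along; one needs instead a parabolic reduction through $P'$ (a local-structure/horospherical argument) and then through the minimal $\theta$-split parabolic $\PLmin$ inside $L'$ to pass from $G$-sphericity of $G/Q$ to $M'_0$-sphericity of $\lie{u'}\cap\lie{s}$. Your computations of the conormal direction $\lie{u'}\cap\lie{s}$ and of the stabilizer $M'\cdot(U'\cap K)$ at $g_0B$ are correct and do reappear in that argument, but they must feed into the right sphericity question.
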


Thus, again, the classification reduces to that of spherical linear actions.

\smallskip

Let us summarize the classifications of double flag varieties of finite types 
where one of the parabolic subgroups is a Borel subgroup 
into several tables in \S\S~\ref{subsection:Table.P=Borel}--\ref{subsec:Table.3FV.general} for the convenience and later use.  
These tables are all quoted from \cite{HNOO.2013}, \cite{Stembridge.2003}, \cite{MWZ.1999,MWZ.2000} and \cite{Matsuki.2015,Matsuki.arXiv2019}, 
with modifications to the present paper.

\subsection{Classification when $ P = B_G $ is a Borel subgroup of $ G $}\label{subsection:Table.P=Borel}

%%table: finite type where P = B_G Borel of G
{In the following tables, we separate the conditions by semicolons.  
If there is a semicolon, then the conditions should be considered independently from the former one(s). }

{\small
\begin{longtable}{c|c|c}
\caption{Double flag {varieties} 
$ K/Q \times G/B_G $ of finite type}
%%$G$-spherical $G/Q$
\label{table:g.spherical}
%\vphantom{$\Biggm|$}
%
\\
\hline
%\rule[0pt]{0pt}{12pt}
$\lie{g}$ & $\lie{k}$ & $\Pi_K \setminus J_K\ (Q=Q_{J_K})$
\\
\hline\hline
%\rule[0pt]{0pt}{12pt}
%\rule[-5pt]{0pt}{0pt}
$\lie{sl}_{2n}$ & $\lie{sp}_{2n}$ & 
\begin{xy}
\ar@{-} (0,0) *++!D{\alpha_1} *{\circ}="A"; (10,0) *++!D{\alpha_2} 
 *{\circ}="B"
\ar@{-} "B"; (20,0)="C" 
\ar@{.} "C"; (30,0)="D" 
\ar@{-} "D"; (40,0) *++!D{\alpha_{n-1}} *{\circ}="E"
\ar@{<=} "E"; (50,0) *++!D{\alpha_n} *{\circ}="F"
\end{xy}
\\
%\rule[0pt]{0pt}{12pt}
 \small$n\geq 2$ &   & 
%\\
%\rule[0pt]{0pt}{12pt}
%%  &  & 
\begin{tabular}{c}
$\{\alpha_1\}$; $\{\alpha_3\}$ if $n=3 \smallvstrut$; 
\\
%\rule[0pt]{0pt}{12pt}
%\rule[-5pt]{0pt}{0pt}
%%  &  & 
any subset of $\Pi_K$ if $n=2 \smallvstrut$ \\
\end{tabular}
\\
\hline
%\rule[0pt]{0pt}{12pt}
$\lie{sl}_{p+q}$ & $\lie{sl}_{p} \oplus \lie{sl}_{q} \oplus \C$ &
\begin{xy}
\ar@{-} (0,0) *++!D{\alpha_1} *{\circ}="A"; (10,0) *++!D{\alpha_2} 
 *{\circ}="B"
\ar@{-} "B"; (20,0)="C" 
\ar@{.} "C"; (30,0)="D" 
\ar@{-} "D"; (40,0) *++!D{\alpha_{p-1}} *{\circ}="E"
\end{xy}
\\
%\rule[0pt]{0pt}{12pt}
%\rule[-5pt]{0pt}{0pt}
\small $p+q\geq 3$ & \small $1\leq p\leq q$ &  
\begin{xy}
\ar@{-} (0,10) *++!D{\beta_{1}} *{\circ}="A"; (10,10) *++!D{\beta_{2}} 
 *{\circ}="B"
\ar@{-} "B"; (20,10)="C" 
\ar@{.} "C"; (30,10)="D" 
\ar@{-} "D"; (40,10) *++!D{\beta_{q-1}} *{\circ}="E"
\end{xy}
\\
%\rule[0pt]{0pt}{12pt}
  &  &  
\begin{tabular}{c}
 $\{\alpha_1\}$, $\{\alpha_{p-1}\}$,  $\{\beta_{1}\}$, $\{\beta_{q-1}\} \smallvstrut$; 
\\
%\rule[0pt]{0pt}{12pt}
%%  &  &  
{$\{\beta_i\} (\forall i)$ if $p=2 \smallvstrut$;} \\
%\rule[0pt]{0pt}{12pt}
%\rule[-5pt]{0pt}{0pt}
%%  &  &  
 any subset of $\Pi_K$ if $p=1 \smallvstrut$ \\
\end{tabular}
\\
\hline
%\rule[0pt]{0pt}{12pt}
%\rule[-5pt]{0pt}{0pt}
\begin{tabular}{c}
{\ }
\\
$\lie{so}_{2n+2}$ 
\\[2ex]
{\small $n\geq 3$ }
\end{tabular}
& $\lie{so}_{2n}\oplus\C$ &
\begin{xy}
\ar@{-} (0,0) *++!D{\alpha_1} *{\circ}="A"; (10,0)="C" 
\ar@{.} "C"; (20,0)="D" 
\ar@{-} "D"; (30,0) *+!DR{\alpha_{n-2}} *{\circ}="E"
\ar@{-} "E"; (35,8.6)  *+!L{\alpha_{n-1}} *{\circ}
\ar@{-} "E"; (35,-8.6)  *+!L{\alpha_n} *{\circ}
\end{xy}
\\
%\rule[0pt]{0pt}{12pt}
%\rule[-5pt]{0pt}{0pt}
%$n\geq 3$  
&  & $\{\alpha_{n-1}\}$, $\{\alpha_n\} \smallvstrut$
\\
\hline
%\rule[0pt]{0pt}{12pt}
%\rule[-5pt]{0pt}{0pt}
\begin{tabular}{c}
{\ }
\\
$\lie{so}_{2n+1}$ 
\\[2ex]
{\small $n\geq 3$ }
\end{tabular}
& $\lie{so}_{2n}$ &
\begin{xy}
\ar@{-} (0,0) *++!D{\alpha_1} *{\circ}="A"; (10,0)="C" 
\ar@{.} "C"; (20,0)="D" 
\ar@{-} "D"; (30,0) *+!DR{\alpha_{n-2}} *{\circ}="E"
\ar@{-} "E"; (35,8.6)  *+!L{\alpha_{n-1}} *{\circ}
\ar@{-} "E"; (35,-8.6)  *+!L{\alpha_n} *{\circ}
\end{xy}
\\
%\rule[0pt]{0pt}{12pt}
%\rule[-5pt]{0pt}{0pt}
%$n\geq 3$  
& & 
 any subset of $\Pi_K \smallvstrut$
\\
\hline
%\rule[0pt]{0pt}{12pt}
%\rule[-5pt]{0pt}{0pt}
$\lie{so}_{2n+2}$ & $\lie{so}_{2n+1}$ &
\begin{xy}
\ar@{-} (0,0) *++!D{\alpha_1} *{\circ}="A"; (10,0)="C" 
\ar@{.} "C"; (20,0)="D" 
\ar@{-} "D"; (30,0) *++!D{\alpha_{n-1}} *{\circ}="E"
\ar@{=>} "E"; (40,0) *++!D{\alpha_n} *{\circ}="F"
\end{xy}
\\
%\rule[0pt]{0pt}{12pt}
%\rule[-5pt]{0pt}{0pt}
{\small$n\geq 3$} & & 
 any subset of $\Pi_K \smallvstrut$
\\
\hline
%\rule[0pt]{0pt}{12pt}
%\rule[-5pt]{0pt}{0pt}
$\lie{so}_{2n+2}$ & $\lie{sl}_{n+1} \oplus \C$ &
\begin{xy}
\ar@{-} (0,0) *++!D{\alpha_1} *{\circ}="A"; (10,0) *++!D{\alpha_2} 
 *{\circ}="B"
\ar@{-} "B"; (20,0)="C" 
\ar@{.} "C"; (30,0)="D" 
\ar@{-} "D"; (40,0) *++!D{\alpha_{n}} *{\circ}="E"
\end{xy}
\\
%\rule[0pt]{0pt}{12pt}
%\rule[-5pt]{0pt}{0pt}
{\small$n\geq 3$} & & 
 $\{\alpha_1\}$, $\{\alpha_n\} \smallvstrut$
\\
\hline
\end{longtable}
}

\clearpage

\hfil
{Double flag {varieties} 
$ K/Q \times G/B_G $ of finite type (continued)}
\hfil

\begin{longtable}{c|c|c}
%%$G$-spherical $G/Q$
%%\label{table:g.spherical.continued}
%%\\
\hline
%\rule[0pt]{0pt}{12pt}
$\lie{g}$ & $\lie{k}$ & $\Pi_K \setminus J_K\ (Q=Q_{J_K})$
\\
\hline\hline
%\rule[0pt]{0pt}{12pt}
$\lie{sp}_{2(p+q)}$ & $\lie{sp}_{2p} \oplus \lie{sp}_{2q}$ &
\begin{xy}
\ar@{-} (0,0) *++!D{\alpha_1} *{\circ}="A"; (10,0)="C" 
\ar@{.} "C"; (20,0)="D" 
\ar@{-} "D"; (30,0) *++!D{\alpha_{p-1}} *{\circ}="E"
\ar@{<=} "E"; (40,0) *++!D{\alpha_p} *{\circ}="F"
\end{xy}
\\
%\rule[0pt]{0pt}{12pt}
%\rule[-5pt]{0pt}{0pt}
&{\small$1\leq p\leq q$}& 
\begin{xy}
\ar@{-} (0,0) *++!D{\beta_{1}} *{\circ}="A"; (10,0)="C" 
\ar@{.} "C"; (20,0)="D" 
\ar@{-} "D"; (30,0) *++!D{\beta_{q-1}} *{\circ}="E"
\ar@{<=} "E"; (40,0) *++!D{\beta_{q}} *{\circ}="F"
\end{xy}
\\
%\rule[0pt]{0pt}{12pt}
  &  &  
 $\{\alpha_1\}$, $\{\beta_{1}\}$;\ \ 
\\
%\rule[0pt]{0pt}{12pt}
  &  &  
 $\{\alpha_p\}$ if $p\leq 3$;\ $\{\beta_{q}\}$ if $p\leq 2$;
 \  $\{\beta_{q}\}$ if $q\leq 3$; 
 \\
%\rule[0pt]{0pt}{12pt}
  &  &  
 $\{\alpha_1, \alpha_2\}$ if $p=2$;
 \ \ $\{\beta_{1}, \beta_{2}\}$ if $q=2$; 
 \\
%\rule[0pt]{0pt}{12pt}
  &  &  
 $\{\beta_i\} (\forall i)$, 
%%if $p=1$; 
$\{\beta_i, \beta_j\} (\forall i, j)$ if $p=1$ \\[.7ex]
\hline
%\rule[0pt]{0pt}{12pt}
%\rule[-5pt]{0pt}{0pt}
$\lie{f}_4$ & $\lie{so}_9$ & 
\begin{xy}
\ar@{-} (0,0) *++!D{\alpha_1} *{\circ}="A"; (10,0) *++!D{\alpha_2} 
 *{\circ}="B"
\ar@{-} "B"; (20,0)*++!D{\alpha_3}  *{\circ}="C"
\ar@{=>} "C"; (30,0) *++!D{\alpha_4}  *{\circ}="D"
\end{xy}
\\
%\rule[0pt]{0pt}{12pt}
%\rule[-5pt]{0pt}{0pt}
  &  &  
 $\{\alpha_i\} (\forall i)$,
 $\{\alpha_1, \alpha_2\} \smallvstrut$ \\
\hline
%\rule[0pt]{0pt}{12pt}
%\rule[-5pt]{0pt}{0pt}
$\lie{e}_6$ & $\lie{so}_{10} \oplus \C$ &
\begin{xy}
\ar@{-} (0,0) *+!DR{\alpha_1} *{\circ}="A"; (10,0) *+!DR{\alpha_2} 
 *{\circ}="B"
\ar@{-} "B"; (20,0) *+!DR{\alpha_3} *{\circ}="C" 
\ar@{-} "C"; (25,8.6)  *+!L{\alpha_4} *{\circ}
\ar@{-} "C"; (25,-8.6)  *+!L{\alpha_5} *{\circ}
\end{xy}
\\
%\rule[0pt]{0pt}{12pt}
%\rule[-5pt]{0pt}{0pt}
  &  & 
 $\{\alpha_1\}$\\
\hline
%\rule[0pt]{0pt}{12pt}
%\rule[-5pt]{0pt}{0pt}
$\lie{e}_6$ & $\lie{f}_4$ &
\begin{xy}
\ar@{-} (0,0) *++!D{\alpha_1} *{\circ}="A"; (10,0) *++!D{\alpha_2} 
 *{\circ}="B"
\ar@{=>} "B"; (20,0)*++!D{\alpha_3}  *{\circ}="C"
\ar@{-} "C"; (30,0) *++!D{\alpha_4}  *{\circ}="D"
\end{xy}
\\
%\rule[0pt]{0pt}{12pt}
%\rule[-5pt]{0pt}{0pt}
  &  & 
 $\{\alpha_1\}$\\
\hline
\end{longtable}

\addtocounter{table}{-1}

\clearpage 

\subsection{Classification when $ Q = B_K $ is a Borel subgroup of $ K $}\label{subsection:Table.Q=Borel}
%%table: finite type where Q = B_K Borel of K

\vskip -3ex

{
\begin{longtable}{c|c|c}
\caption{Double flag {varieties} $ K/B_K \times G/P $ of finite type}
%%$K$-spherical $G/P$
\label{table:k.spherical}
\vphantom{$\Biggm|$}
\\
\hline
%\rule[0pt]{0pt}{12pt}
$\lie{g}$ & $\lie{k}$ & $\Pi \setminus J\ (P=P_J)$
\\
\hline\hline
\multicolumn{1}{c}{
%\rule[0pt]{0pt}{12pt}
%\rule[-5pt]{0pt}{0pt}
$\lie{sl}_{n}$} & \multicolumn{1}{c}{} & 
\multicolumn{1}{c}{
\begin{xy}
\ar@{-} (0,0) *++!D{\alpha_1} *{\circ}="A"; (10,0) *++!D{\alpha_2} 
 *{\circ}="B"
\ar@{-} "B"; (20,0)="C" 
\ar@{.} "C"; (30,0)="D" 
\ar@{-} "D"; (40,0) *++!D{\alpha_{n-1}} *{\circ}="E"
\end{xy}
}\\[1ex]
\hline

%\rule[0pt]{0pt}{12pt}
%\rule[-5pt]{0pt}{0pt}
$\lie{sl}_{n}$ & $\lie{so}_{n}$ & $\{\alpha_i\} (\forall i) \smallvstrut$ \\
\hline

%\rule[0pt]{0pt}{12pt}
$\lie{sl}_{2m}$ & $\lie{sp}_{2m}$ &
 $\{\alpha_i\} (\forall i)$,
 $\{\alpha_i, \alpha_{i+1}\} (\forall i)$,
\\
{\small$2m\,{=}\,n \,{\geq}\, 4$} 
&&
 $\{\alpha_1, \alpha_i\} (\forall i)$, 
 $\{\alpha_i, \alpha_{n-1}\}  (\forall i)$, \\
&& $\{\alpha_1, \alpha_2, \alpha_3\}$,
 $\{\alpha_{n-3}, \alpha_{n-2}, \alpha_{n-1}\}$,
\\
%\rule[-5pt]{0pt}{0pt}
&& $\{\alpha_1, \alpha_2, \alpha_{n-1}\}$,
 $\{\alpha_1, \alpha_{n-2}, \alpha_{n-1}\}$
 \\
\hline

%\rule[0pt]{0pt}{12pt}
$\lie{sl}_{p+q}$ & $\lie{sl}_p\oplus\lie{sl}_q\oplus\C$&
 $\{\alpha_i\} (\forall i)$,
 $\{\alpha_i, \alpha_{i+1}\} (\forall i)$,\\
{\small$p+q=n$} & {\small$1\leq p\leq q$} &
 $\{\alpha_1, \alpha_i\} (\forall i)$, 
 $\{\alpha_i, \alpha_{n-1}\} (\forall i)$; \\
&& $\{\alpha_i, \alpha_j\} (\forall i, j)$ if $p=2$;
\\
%\rule[-5pt]{0pt}{0pt}
&& 
any subset of $\Pi$ if $p=1$\\
\hline\hline

\multicolumn{1}{c}{
%\rule[0pt]{0pt}{12pt}
%\rule[-5pt]{0pt}{0pt}
$\lie{so}_{2n+1}$} & \multicolumn{1}{c}{} & 
\multicolumn{1}{c}{
\begin{xy}
\ar@{-} (0,0) *++!D{\alpha_1} *{\circ}="A"; (10,0) *++!D{\alpha_2} 
 *{\circ}="B"
\ar@{-} "B"; (20,0)="C" 
\ar@{.} "C"; (30,0)="D" 
\ar@{-} "D"; (40,0) *++!D{\alpha_{n-1}} *{\circ}="E"
\ar@{=>} "E"; (50,0) *++!D{\alpha_n} *{\circ}="F"
\end{xy}
}\\[1.5ex]
\hline

%\rule[0pt]{0pt}{12pt}
$\lie{so}_{p+q}$ & $\lie{so}_p \oplus \lie{so}_q$
 & $\{\alpha_1\}$, $\{\alpha_n\}$; \\
{\small$p{+}q=2n{+}1$} & {\small$1\leq p\leq q$} &
 $\{\alpha_i\} (\forall i)$ if $p=2$; 
\\
%\rule[-5pt]{0pt}{0pt}
&& 
any subset of $\Pi$ if $p=1$\\
\hline\hline

\multicolumn{1}{c}{
%\rule[0pt]{0pt}{12pt}
%\rule[-5pt]{0pt}{0pt}
$\lie{so}_{2n}$} &\multicolumn{1}{c}{} & 
\multicolumn{1}{c}{
\begin{xy}
\ar@{-} (0,0) *++!D{\alpha_1} *{\circ}="A"; (10,0) *++!D{\alpha_2} 
 *{\circ}="B"
\ar@{-} "B"; (20,0)="C" 
\ar@{.} "C"; (30,0)="D" 
\ar@{-} "D"; (40,0) *+!DR{\alpha_{n-2}} *{\circ}="E"
\ar@{-} "E"; (45,8.6)  *+!L{\alpha_{n-1}} *{\circ}
\ar@{-} "E"; (45,-8.6)  *+!L{\alpha_n} *{\circ}
\end{xy}
}\\[1ex]
\hline

\rule[0pt]{0pt}{2.5ex}
$\lie{so}_{p+q}$ & 
$\lie{so}_p \oplus \lie{so}_q$ \renewcommand{\thefootnote}{\fnsymbol{footnote}}\footnotemark[2] 
& $\{\alpha_1\}$,
 $\{\alpha_{n-1}\}$, $\{\alpha_n\}$; \\
\begin{tabular}{c}
{\small$p+q=2n$} 
\\
{\small$n\geq 4$} 
\end{tabular}
& {\small$1\leq p\leq q$} &
\begin{tabular}{c}
$\{\alpha_i\} (\forall i)$, if $p=2$,\\
$\{\alpha_i, \alpha_{n-1}\} (\forall i)$, if $p=2$,\\
$\{\alpha_i, \alpha_{n}\} (\forall i)$ if $p=2$;
\end{tabular}
\\
%%&& 
%%%\rule[-5pt]{0pt}{0pt}
&& any subset of $\Pi$ if $p=1$\\
\hline

\rule[0pt]{0pt}{2.5ex}
%\rule[-5pt]{0pt}{0pt}
$\lie{so}_{2n}$ & 
 $\lie{sl}_n\oplus\C \; $\renewcommand{\thefootnote}{\fnsymbol{footnote}}\footnotemark[3]
& $\{\alpha_1\}$,
 $\{\alpha_2\}$, $\{\alpha_3\}$, $\{\alpha_{n-1}\}$, $\{\alpha_n\}$, \\
{\small$n\geq 4$} && $\{\alpha_1,\alpha_2\}$, $\{\alpha_1,\alpha_{n-1}\}$,
\\
&& $\{\alpha_1,\alpha_n\}$, $\{\alpha_{n-1},\alpha_n\}$,\\
&& $\{\alpha_2, \alpha_3\}${, $\{\alpha_2, \alpha_4\}$} if $n=4$\\
\hline
\end{longtable}

\clearpage

\begin{longtable}{c|c|c}
\hline
\multicolumn{1}{c}{
%\rule[0pt]{0pt}{12pt}
%\rule[-5pt]{0pt}{0pt}
$\lie{sp}_{2n}$} &\multicolumn{1}{c}{} & 
\multicolumn{1}{c}{
\begin{xy}
\ar@{-} (0,0) *++!D{\alpha_1} *{\circ}="A"; (10,0) *++!D{\alpha_2} 
 *{\circ}="B"
\ar@{-} "B"; (20,0)="C" 
\ar@{.} "C"; (30,0)="D" 
\ar@{-} "D"; (40,0) *++!D{\alpha_{n-1}} *{\circ}="E"
\ar@{<=} "E"; (50,0) *++!D{\alpha_n} *{\circ}="F"
\end{xy}
}\\[1.5ex]
\hline

%\rule[0pt]{0pt}{12pt}
%\rule[-5pt]{0pt}{0pt}
$\lie{sp}_{2n}$ & $\lie{sl}_n\oplus \C$ & $\{\alpha_1\}$,
 $\{\alpha_n\}$ \\
\hline

%\rule[0pt]{0pt}{12pt}
$\lie{sp}_{2(p+q)}$ & $\lie{sp}_{2p} \oplus \lie{sp}_{2q}$ & 
{$\{\alpha_1\}$,
 $\{\alpha_2\}$, $\{\alpha_3\}$,
 $\{\alpha_n\}$, $\{\alpha_1,\alpha_2\}$;} \\
$p+q=n$ & $1\leq p\leq q$ &
 {$\{\alpha_i\} (\forall i)$ if $p\leq 2$;}\\
%\rule[-5pt]{0pt}{0pt}
&& $\{\alpha_i, \alpha_j\} (\forall i, j)$ if $p=1$\\
\hline\hline

\multicolumn{1}{c}{
%\rule[0pt]{0pt}{12pt}
%\rule[-5pt]{0pt}{0pt}
$\lie{f}_4$} & \multicolumn{1}{c}{} & 
\multicolumn{1}{c}{
\begin{xy}
\ar@{-} (0,0) *++!D{\alpha_1} *{\circ}="A"; (10,0) *++!D{\alpha_2} 
 *{\circ}="B"
\ar@{=>} "B"; (20,0)*++!D{\alpha_3}  *{\circ}="C"
\ar@{-} "C"; (30,0) *++!D{\alpha_4}  *{\circ}="D"
\end{xy}
}\\[1ex]
\hline

%\rule[0pt]{0pt}{12pt}
%\rule[-5pt]{0pt}{0pt}
$\lie{f}_4$ & $\lie{so}_9$ & $\{\alpha_1\}$,
 $\{\alpha_2\}$, $\{\alpha_3\}$, $\{\alpha_4\}$,
 $\{\alpha_1,\alpha_4\}$ \\
\hline\hline

\multicolumn{1}{c}{
%\rule[0pt]{0pt}{12pt}
%\rule[-5pt]{0pt}{0pt}
$\lie{e}_6$} &\multicolumn{1}{c}{} & 
\multicolumn{1}{c}{
\begin{xy}
\ar@{-} (0,0) *++!D{\alpha_1} *{\circ}="A"; (10,0) *++!D{\alpha_3} 
 *{\circ}="B"
\ar@{-} "B"; (20,0)*++!U{\alpha_4}  *{\circ}="C"
\ar@{-} "C"; (30,0) *++!D{\alpha_5}  *{\circ}="D"
\ar@{-} "C"; (20,10) *++!D{\alpha_2}  *{\circ}="G"
\ar@{-} "D"; (40,0) *++!D{\alpha_6}  *{\circ}="E"
\end{xy}
}\\
\hline

%\rule[0pt]{0pt}{12pt}
%\rule[-5pt]{0pt}{0pt}
$\lie{e}_6$ & $\lie{sp}_8$ & $\{\alpha_1\}$, $\{\alpha_6\}$ \\
\hline

%\rule[0pt]{0pt}{12pt}
%\rule[-5pt]{0pt}{0pt}
$\lie{e}_6$ & $\lie{sl}_6\oplus \lie{sl}_2$ & $\{\alpha_1\}$, $\{\alpha_6\}$ \\
\hline

%\rule[0pt]{0pt}{12pt}
%\rule[-5pt]{0pt}{0pt}
$\lie{e}_6$ & $\lie{so}_{10}\oplus \C$
 & $\{\alpha_1\}$, $\{\alpha_2\}$,
 $\{\alpha_3\}$, $\{\alpha_5\}$, $\{\alpha_6\}$,
 $\{\alpha_1,\alpha_6\}$ \\
\hline

%\rule[0pt]{0pt}{12pt}
%\rule[-5pt]{0pt}{0pt}
$\lie{e}_6$ & $\lie{f}_4$ & $\{\alpha_1\}$, $\{\alpha_2\}$,
 $\{\alpha_3\}$, $\{\alpha_5\}$, $\{\alpha_6\}$,\\
& & 
 $\{\alpha_1,\alpha_2\}$, $\{\alpha_2,\alpha_6\}$, $\{\alpha_1,\alpha_3\}$,
 $\{\alpha_5,\alpha_6\}$\\
\hline\hline

\multicolumn{1}{c}{
%\rule[0pt]{0pt}{12pt}
%\rule[-5pt]{0pt}{0pt}
$\lie{e}_7$} &\multicolumn{1}{c}{} & 
\multicolumn{1}{c}{
\begin{xy}
\ar@{-} (0,0) *++!D{\alpha_1} *{\circ}="A"; (10,0) *++!D{\alpha_3} 
 *{\circ}="B"
\ar@{-} "B"; (20,0)*++!U{\alpha_4}  *{\circ}="C"
\ar@{-} "C"; (30,0) *++!D{\alpha_5}  *{\circ}="D"
\ar@{-} "C"; (20,10) *++!D{\alpha_2}  *{\circ}="G"
\ar@{-} "D"; (40,0) *++!D{\alpha_6}  *{\circ}="E"
\ar@{-} "E"; (50,0) *++!D{\alpha_7}  *{\circ}="F"
\end{xy}}\\
\hline

%\rule[0pt]{0pt}{12pt}
%\rule[-5pt]{0pt}{0pt}
$\lie{e}_7$ & $\lie{sl}_8$ & $\{\alpha_7\}$ \\
\hline

%\rule[0pt]{0pt}{12pt}
%\rule[-5pt]{0pt}{0pt}
$\lie{e}_7$ & $\lie{so}_{12}\oplus \lie{sl}_2$ & $\{\alpha_7\}$ \\
\hline

%\rule[0pt]{0pt}{12pt}
%\rule[-5pt]{0pt}{0pt}
$\lie{e}_7$ & $\lie{e}_6\oplus \C$ & $\{\alpha_1\}$, $\{\alpha_2\}$,
 $\{\alpha_7\}$ \\
\hline
\end{longtable}
}

\renewcommand{\thefootnote}{\fnsymbol{footnote}}\footnotetext[2]{
The subalgebra $\lie{k}$ of $\lie{g}$
 is not unique for
 $(\lie{g},\lie{k})\simeq(\lie{so}_8, \lie{so}_7)$,
 $(\lie{so}_8, \lie{so}_6 \oplus \C)$,
 $(\lie{so}_8, \lie{so}_5 \oplus \lie{so}_3)$.  
See \cite[Remark 5.3]{HNOO.2013} for details.
%%Remark~\ref{remark:choice.of.k}
}
\renewcommand{\thefootnote}{\fnsymbol{footnote}}\footnotetext[3]{
For $\lie{g}\simeq \lie{so}_{4n}$,
 a symmetric subalgebra $\lie{k}$ that is isomorphic to
 $\lie{sl}_{2n}\oplus \C$ is not unique up to inner automorphisms of $\lie{g}$.
Here we take $K$ as the Levi part $L_{\Pi\setminus\{\alpha_n\}}$
 of the parabolic subgroup $P_{\Pi\setminus\{\alpha_n\}}$.
}

\subsection{Triple flag variety of finite type, {where $ P_3 = B_G $}}\label{subsec:Table.3FV.Borel}

If one of the parabolic subgroup $ P_3 $ is a Borel subgroup $ B_G $ of $ G $, the triple flag varieties 
$ \dblFV = G/P_1 \times G/P_2 \times G/B_G $ 
are classified by Stembridge \cite{Stembridge.2003}.  
We reproduce his classification here.

\clearpage

%%table: triple flag variety of finite type
%
{
\setcounter{table}{3}
\begin{longtable}{c|c}
\caption{$G/P_1 \times G/P_2 \times G/B$ of finite type}
\label{table:group.spherical}
%\vphantom{$\Biggm|$}
\\
\hline
\rule[0pt]{0pt}{12pt}
$\lie{g}$ & $(\Pi \setminus J_1,\ \Pi \setminus J_2)$
 up to switching $J_1$ and $J_2$
\\
\hline\hline
\rule[0pt]{0pt}{12pt}
\rule[-5pt]{0pt}{0pt}
$\lie{sl}_{n+1}$ & 
\begin{xy}
\ar@{-} (0,0) *++!D{\alpha_1} *{\circ}="A"; (10,0) *++!D{\alpha_2} 
 *{\circ}="B"
\ar@{-} "B"; (20,0)="C" 
\ar@{.} "C"; (30,0)="D" 
\ar@{-} "D"; (40,0) *++!D{\alpha_{n}} *{\circ}="E"
\end{xy}\\

\rule[0pt]{0pt}{12pt}
 & $(\{\alpha_i\}, \{\alpha_j\}) (\forall i,j)$,
 $(\{\alpha_1, \alpha_i\}, \{\alpha_j\}) (\forall i,j)$,  \\

\rule[0pt]{0pt}{12pt}
 & $(\{\alpha_i, \alpha_n\}, \{\alpha_j\}) (\forall i,j)$, 
$(\{\alpha_i, \alpha_{i+1}\}, \{\alpha_j\}) (\forall i,j)$, \\

\rule[0pt]{0pt}{12pt}
 & $(\{\alpha_i, \alpha_j\}, \{\alpha_2\}) (\forall i,j)$,
   $(\{\alpha_i, \alpha_j\}, \{\alpha_{n-1}\}) (\forall i,j)$, \\
%
%\rule[0pt]{0pt}{12pt}
%\rule[-5pt]{0pt}{0pt}
 & 
$(\{\alpha_1\}, \text{any})$, $(\{\alpha_n\}, \text{any}) \smallvstrut$ \\

\hline

\rule[0pt]{0pt}{12pt}
\rule[-5pt]{0pt}{0pt}
$\lie{so}_{2n+1}$ & 
\begin{xy}
\ar@{-} (0,0) *++!D{\alpha_1} *{\circ}="A"; (10,0) *++!D{\alpha_2} 
 *{\circ}="B"
\ar@{-} "B"; (20,0)="C" 
\ar@{.} "C"; (30,0)="D" 
\ar@{-} "D"; (40,0) *++!D{\alpha_{n-1}} *{\circ}="E"
\ar@{=>} "E"; (50,0) *++!D{\alpha_n} *{\circ}="F"
\end{xy}\\

\rule[0pt]{0pt}{12pt}
\rule[-5pt]{0pt}{0pt}
 & $(\{\alpha_1\}, \{\alpha_i\}) (\forall i)$,
 $(\{\alpha_n\}, \{\alpha_n\}) \smallvstrut$ \\

\hline

\rule[0pt]{0pt}{12pt}
\rule[-5pt]{0pt}{0pt}
$\lie{so}_{2n}$  &
\begin{xy}
\ar@{-} (0,0) *++!D{\alpha_1} *{\circ}="A"; (10,0) *++!D{\alpha_2} 
 *{\circ}="B"
\ar@{-} "B"; (20,0)="C" 
\ar@{.} "C"; (30,0)="D" 
\ar@{-} "D"; (40,0) *+!DR{\alpha_{n-2}} *{\circ}="E"
\ar@{-} "E"; (45,8.6)  *+!L{\alpha_{n-1}} *{\circ}
\ar@{-} "E"; (45,-8.6)  *+!L{\alpha_n} *{\circ}
\end{xy}\\

\rule[0pt]{0pt}{12pt}
 $n\geq 4$
 & $(\{\alpha_1\}, \{\alpha_i\}) (\forall i)$,
 $(\{\alpha_i\}, \{\alpha_j\}) (i=1,2,3,\ j=n-1,n)$, \\

\rule[0pt]{0pt}{12pt}
 & $(\{\alpha_{n-1}\}, \{\alpha_{n-1}\})$,
 $(\{\alpha_{n-1}\}, \{\alpha_n\})$,
 $(\{\alpha_{n}\}, \{\alpha_n\})$, \\

\rule[0pt]{0pt}{12pt}
 & $(\{\alpha_1\}, \{\alpha_i,\alpha_{n-1}\}) (\forall i)$,
 $(\{\alpha_1\}, \{\alpha_i,\alpha_n\}) (\forall i)$, \\

\rule[0pt]{0pt}{12pt}
 & $(\{\alpha_{n-1}\}, \{\alpha_1,\alpha_2\})$,
 $(\{\alpha_n\}, \{\alpha_1,\alpha_2\})$,\\

\rule[0pt]{0pt}{12pt}
 & 
 {$(\{\alpha_i\}, \{\alpha_j,\alpha_k\})
 (i=n-1,n,\ \{j,k\}\subset\{1,n-1,n\})$;}
 \\

\rule[0pt]{0pt}{12pt}
\rule[-5pt]{0pt}{0pt}
 & 
 {$(\{\alpha_3\}, \{\alpha_2,\alpha_4\})$  if $n=4$;}\ 
 $(\{\alpha_4\}, \{\alpha_2,\alpha_3\})$ if $n=4 \smallvstrut$\\

\hline

\rule[0pt]{0pt}{12pt}
\rule[-5pt]{0pt}{0pt}
$\lie{sp}_{2n}$ &
\begin{xy}
\ar@{-} (0,0) *++!D{\alpha_1} *{\circ}="A"; (10,0) *++!D{\alpha_2} 
 *{\circ}="B"
\ar@{-} "B"; (20,0)="C" 
\ar@{.} "C"; (30,0)="D" 
\ar@{-} "D"; (40,0) *++!D{\alpha_{n-1}} *{\circ}="E"
\ar@{<=} "E"; (50,0) *++!D{\alpha_n} *{\circ}="F"
\end{xy}\\

\rule[0pt]{0pt}{12pt}
\rule[-5pt]{0pt}{0pt}
 & $(\{\alpha_1\}, \{\alpha_i\}) (\forall i)$,
 $(\{\alpha_n\}, \{\alpha_n\})$ \\

\hline

\rule[0pt]{0pt}{12pt}
\rule[-5pt]{0pt}{0pt}
$\lie{e}_6$ &
\begin{xy}
\ar@{-} (0,0) *++!D{\alpha_1} *{\circ}="A"; (10,0) *++!D{\alpha_3} 
 *{\circ}="B"
\ar@{-} "B"; (20,0)*++!U{\alpha_4}  *{\circ}="C"
\ar@{-} "C"; (30,0) *++!D{\alpha_5}  *{\circ}="D"
\ar@{-} "C"; (20,10) *++!D{\alpha_2}  *{\circ}="G"
\ar@{-} "D"; (40,0) *++!D{\alpha_6}  *{\circ}="E"
\end{xy}\\

\rule[0pt]{0pt}{12pt}
 & $(\{\alpha_i\}, \{\alpha_j\}) (i=1,6,\ j\neq 4)$,
\\
\rule[0pt]{0pt}{12pt}
\rule[-5pt]{0pt}{0pt}
&
 $(\{\alpha_1\}, \{\alpha_1,\alpha_6\})$,
 $(\{\alpha_6\}, \{\alpha_1,\alpha_6\})$ \\

\hline

\rule[0pt]{0pt}{12pt}
\rule[-5pt]{0pt}{0pt}
$\lie{e}_7$ &
\begin{xy}
\ar@{-} (0,0) *++!D{\alpha_1} *{\circ}="A"; (10,0) *++!D{\alpha_3} 
 *{\circ}="B"
\ar@{-} "B"; (20,0)*++!U{\alpha_4}  *{\circ}="C"
\ar@{-} "C"; (30,0) *++!D{\alpha_5}  *{\circ}="D"
\ar@{-} "C"; (20,10) *++!D{\alpha_2}  *{\circ}="G"
\ar@{-} "D"; (40,0) *++!D{\alpha_6}  *{\circ}="E"
\ar@{-} "E"; (50,0) *++!D{\alpha_7}  *{\circ}="F"
\end{xy}\\

\rule[0pt]{0pt}{12pt}
\rule[-5pt]{0pt}{0pt}
& $(\{\alpha_7\}, \{\alpha_1\})$,
 $(\{\alpha_7\}, \{\alpha_2\})$,
 $(\{\alpha_7\}, \{\alpha_7\})$ \\
\hline
\end{longtable}
}

\subsection{Triple flag variety of finite type, {where $ G $ is classical}}\label{subsec:Table.3FV.general}

%%table: triple flag variety of finite type

In this subsection, 
we list triple flag varieties 
$ \dblFV = G/P_1 \times G/P_2 \times G/P_3 $ 
of finite type, where $ G $ is classical.  
These classifications are quoted from  
the tables of \cite{MWZ.1999,MWZ.2000,Matsuki.2015,Matsuki.arXiv2019}.  
We only list the triples 
when $ P_i = P_{J_i} \;\; (i = 1, 2, 3) $ are proper parabolic subgroups, 
where $ J_i \subset \Pi $ is a subset of simple roots which generates the Levi part of $ P_i $.  

\subsubsection{Type A}\label{subsubsec:table.3FV.typeA}

We quote the results from \cite[Theorem~2.2]{MWZ.1999} except for the trivial case of double flag variety, which is denoted $ (A_{q,r}) $.  
In the following table, $ G = \GL_n $ and the numbering of simple roots $ \Pi $ is given as in \S~\ref{subsec:Table.3FV.Borel}.
(We use different labeling $ \alpha, \beta, \gamma $ to indicate the simple roots for $ P_1, P_2 $ and $ P_3 $ respectively.)
In the table, $ r $ denotes the number of simple roots in $ \Pi \setminus J_3 $.
\\[2ex]
%
%%\vskip -3ex
%
\begin{table}[H]
\caption{Type A triple flag varieties of finite type}\label{table:finite3FV.type.A}
\vphantom{$\Biggm|$}
%%\hfil{Table~\ref{table:finite3FV.type.A}: Type A triple flag varieties of finite type}\hfil
%%\\[1ex]
\resizebox{.98\linewidth}{!}{
%%\begin{longtable}{c|c|c|c}
\begin{tabular}{c|c|c|c}
\hline
\rule[0pt]{0pt}{12pt}
code in \cite{MWZ.1999} & $ \Pi \setminus J_1 $ & $ \Pi \setminus J_2 $ & $ \Pi \setminus J_3 $ \\
\hline\hline
$ (D_{r + 2}) $ & $ \{ \alpha_i \} \smallvstrut$ & $ \{ \beta_j \} $  & any subset 
%%($ |P_3| = r $) 
\\[1ex]
\cline{1-4}
$ (E_6) $ & $ \{ \alpha_i \} \smallvstrut$ & $ \{ \beta_{j_1}, \beta_{j_2} \} $  & $ \{ \gamma_{k_1}, \gamma_{k_2} \} $  \\[1ex]
\cline{1-4}
$ (E_7) \smallvstrut$ & $  \{ \alpha_i \} $ %%$ |P_1| = 2 $ 
& $ \{ \beta_{j_1}, \beta_{j_2} \} $ %%$ |P_2| = 3 $ 
& $ \{ \gamma_{k_1}, \gamma_{k_2} , \gamma_{k_3} \} $ %%$ |P_3| = 4 $ 
\\[1ex]
\cline{1-4}
$ (E_8) \smallvstrut$ & $  \{ \alpha_i \} $ %%$ |P_1| = 2 $ 
& $ \{ \beta_{j_1}, \beta_{j_2} \} $ %%$ |P_2| = 3 $ 
& $ \{ \gamma_{k_1}, \gamma_{k_2} , \gamma_{k_3} , \gamma_{k_4} \} $ %%$ |P_3| = 5 $ 
\\[1ex]
\cline{1-4}
$ (E_{r + 3}^{(a)}) \smallvstrut$ & $ \{ \alpha_2 \}, \{ \alpha_{n - 2} \} $ 
& $ \{ \beta_{j_1}, \beta_{j_2} \} $ %%$ |P_2| = 3 $ 
& any subset %%($ |P_3| = r $)  
\\[1ex]
\cline{1-4}
$ (E_{r + 3}^{(b)}) \smallvstrut$ & $ \{ \alpha_i \} $ & $ \{ \beta_j, \beta_{j + 1} \}, \{ \beta_1, \beta_j \}, \{ \beta_j, \beta_{n - 1} \} $ 
& any subset %%($ |P_3| = r $) 
\\[1ex]
\cline{1-4}
$ (S_{q, r}) \smallvstrut$ & $ \{ \alpha_1 \}, \{ \alpha_{n - 1} \} $ & any subset %%($ |P_2| = q $) 
& any subset %%($ |P_3| = r $) 
\\[1ex]
\cline{1-4}
%\hline
%\end{longtable}
\end{tabular}
}
\end{table}

\clearpage

\subsubsection{Type B}

We quote the results from \cite[Theorem~1.6]{Matsuki.2015}.  
In the following table, $ G = \SO_{2n+1} $ and the numbering of simple roots $ \Pi $ is given as in \S~\ref{subsec:Table.3FV.Borel}.
%%(We use different labeling $ \alpha, \beta, \gamma $ to indicate the simple roots for $ P_1, P_2 $ and $ P_3 $ respectively.)
%
\begin{longtable}{c|c|c|c}
\caption{Type B triple flag varieties of finite type}
\label{table:finite3FV.type.B}
\vphantom{$\Biggm|$}
\\
\hline
\rule[0pt]{0pt}{12pt}
code in \cite{Matsuki.2015} & $ \Pi \setminus J_1 $ & $ \Pi \setminus J_2 $ & $ \Pi \setminus J_3 $ \\
\hline\hline
(I) & $ \{ \alpha_n \} \smallvstrut$ & $ \{ \beta_n \} $  & any subset \\[1ex]
\cline{1-4}
(II) & $ \{ \alpha_1 \} \smallvstrut$ & $ \{ \beta_j \} $  & any subset \\[1ex]
\cline{1-4}
(III) & $ \{ \alpha_i \} \smallvstrut$ & $ \{ \beta_j \} $  & $ \{ \gamma_n \} $ \\[1ex]
\cline{1-4}
(IV) & $ \{ \alpha_i \} \smallvstrut$ & $ \{ \beta_n \} $  & $ \{ \gamma_i, \gamma_j \} $ \\[1ex]
\cline{1-4}
%\hline
\end{longtable}

\subsubsection{Type C}

We quote the results from \cite[Theorem~1.2]{MWZ.2000} except for the trivial case of double flag variety, which is denoted $ (\mathrm{Sp}A_{q,r}) $.
$ G = \Sp_{2n} $ and the numbering of simple roots $ \Pi $ is given as in \S~\ref{subsec:Table.3FV.Borel}.
(We use different labeling $ \alpha, \beta, \gamma $ to indicate the simple roots for $ P_1, P_2 $ and $ P_3 $ respectively.)
In the table, $ r $ denotes the number of simple roots in $ \Pi \setminus J_3 $.
%
%%\vskip -3ex
%
\begin{longtable}{c|c|c|c}
\caption{Type C triple flag varieties of finite type}
\label{table:finite3FV.type.C}
\vphantom{$\Biggm|$}
\\
\hline
\rule[0pt]{0pt}{12pt}
code in \cite{MWZ.2000} & $ \Pi \setminus J_1 $ & $ \Pi \setminus J_2 $ & $ \Pi \setminus J_3 $ \\
\hline\hline
$ (\mathrm{Sp}D_{r + 2}) $ & $ \{ \alpha_n \} \smallvstrut$ & $ \{ \beta_n \} $  & any subset 
%%($ |P_3| = r $) 
\\[1ex]
\cline{1-4}
$ (\mathrm{Sp}E_6) $ & $ \{ \alpha_n \} \smallvstrut$ & $ \{ \beta_i \} $  & $ \{ \gamma_j \} $  \\[1ex]
\cline{1-4}
$ (\mathrm{Sp}E_7) $ & $ \{ \alpha_n \} \smallvstrut$ & $ \{ \beta_i \} $  & $ \{ \gamma_j, \gamma_n \} $  \\[1ex]
\cline{1-4}
$ (\mathrm{Sp}E_8) $ & $ \{ \alpha_n \} \smallvstrut$ & $ \{ \beta_i \} $  & $ \{ \gamma_{j_1}, \gamma_{j_2} \} $  \\[1ex]
\cline{1-4}
$ (\mathrm{Sp}E_{r + 3}^{(b)}) \smallvstrut$ & $ \{ \alpha_n \} $ & $ \{ \beta_1 \} $ & any subset 
%%($ |P_3| = r $) 
\\[1ex]
\cline{1-4}
$ (\mathrm{Sp}Y_{r + 4}) \smallvstrut$ & $ \{ \alpha_1 \} $ & $ \{ \beta_i \} $ & any subset 
%%($ |P_3| = r $) 
\\[1ex]
\cline{1-4}
%\hline
\end{longtable}

\clearpage 

\subsubsection{Type D}

We quote the results from \cite[Theorem~1.7]{Matsuki.arXiv2019}.
In the following table, $ G = \SO_{2n} $ and the numbering of simple roots $ \Pi $ is given as in \S~\ref{subsec:Table.3FV.Borel}.  
(See Table~\ref{table:finite3FV.type.D} on the next page.)
%%(We use different labeling $ \alpha, \beta, \gamma $ to indicate the simple roots for $ P_1, P_2 $ and $ P_3 $ respectively.)
%
\begin{table}[hbtp]
\caption{Type D triple flag varieties of finite type}\label{table:finite3FV.type.D}
\vphantom{$\Biggm|$}
%%\hfil{Table~\ref{table:finite3FV.type.D}: Type D triple flag varieties of finite type}\hfil
%%\\[1ex]
%
\resizebox{.99\linewidth}{!}{
%%\begin{longtable}{c|c|c|c}
\begin{tabular}{c|c|c|c}
\hline
\rule[0pt]{0pt}{12pt}
code in \cite{Matsuki.arXiv2019} & $ \Pi \setminus J_1 $ & $ \Pi \setminus J_2 $ & $ \Pi \setminus J_3 $ \\
\hline\hline
(I-1) & $ \{ \alpha_1 \} \smallvstrut$ & $ \{ \beta_j \} $  & any subset \\[1ex]
\cline{1-4}
(I-2) & $ \{ \alpha_1 \} \smallvstrut$ & $ \{ \beta_j, \beta_{n -1} \}, \{ \beta_j, \beta_{n} \} $  & any subset \\[1ex]
\cline{1-4}
(II) & $ \{ \alpha_{n - 1} \}, \{ \alpha_n \} \smallvstrut$ & 
\begin{tabular}{c}
$ \{ \beta_1 \}, \{ \beta_2 \}, \{ \beta_3 \}, \{ \beta_1, \beta_2 \}, \smallvstrut$
\\
$ \{ \beta_1, \beta_{n - 1} \}, \{ \beta_1, \beta_n \},  \smallvstrut$  
\\
$ \{ \beta_{n -1} \}, \{ \beta_{n} \}, \{ \beta_{n -1}, \beta_{n} \} \smallvstrut$
\end{tabular}
& any subset \\[1ex]
\cline{1-4}
(III-1) & $ \{ \alpha_{n - 1} \}, \{ \alpha_n \} \smallvstrut$ & $ \{ \beta_j \} \; (4 \leq j \leq n - 2) $  & $ \{ \gamma_k \} $ \\[1ex]
\cline{1-4}
(III-2) & $ \{ \alpha_{n - 1} \}, \{ \alpha_n \} \smallvstrut$ & $ \{ \beta_j \} \; (4 \leq j \leq n - 2) $  & $ \{ \gamma_{k_1}, \gamma_{k_2} \} $ \\[1ex]
\cline{1-4}
(III-3) & $ \{ \alpha_{n - 1} \}, \{ \alpha_n \} \smallvstrut$ & $ \{ \beta_j \} \; (4 \leq j \leq n - 2) $  & 
\begin{tabular}{c}
%%$ \{ \gamma_1, \gamma_k, \gamma_{n - 1} \}, \{ \gamma_1, \gamma_k, \gamma_n \} $ 
$ \{ \gamma_1, \gamma_k, \gamma_{\nu} \} \; (\nu \in \{ n - 1, n \}) $, \\ 
%%$ \{ \gamma_k, \gamma_{k + 1}, \gamma_{n - 1} \}, \{ \gamma_k, \gamma_{k + 1}, \gamma_n \} $ 
$ \{ \gamma_k, \gamma_{k + 1}, \gamma_{\nu} \} \; (\nu \in \{ n - 1, n \}) $, \\
%%$ \{ \gamma_k, \gamma_{n - 2}, \gamma_{n - 1} \}, \{ \gamma_k, \gamma_{n - 2}, \gamma_n \} $ 
$ \{ \gamma_k, \gamma_{n - 1}, \gamma_n \} $, \\
$ \{ \gamma_1, \gamma_2, \gamma_{k + 2} \}, \{ \gamma_1, \gamma_{k + 1}, \gamma_{k + 2} \}, $ \\
$ \{ \gamma_k, \gamma_{k + 1}, \gamma_{k + 2} \} $ 
\end{tabular}
\\[1ex]
\cline{1-4}
(III-4) & $ \{ \alpha_{n - 1} \}, \{ \alpha_n \} \smallvstrut$ & $ \{ \beta_j \} \; (4 \leq j \leq n - 2) $  & 
\begin{tabular}{c}
$ \{ \gamma_1, \gamma_2, \gamma_3, \gamma_{\nu} \} \; (\nu \in \{ n - 1, n \}) $, \\
$ \{ \gamma_1, \gamma_2, \gamma_{n - 1}, \gamma_n \} $, \\
$ \{ \gamma_1, \gamma_{n - 2}, \gamma_{n - 1}, \gamma_n \} $, \\
$ \{ \gamma_{n - 3}, \gamma_{n - 2}, \gamma_{n - 1}, \gamma_n \} $, \\
$ \{ \gamma_1, \gamma_2, \gamma_3, \gamma_4 \} $ \\
\end{tabular}
\\[1ex]
\cline{1-4}
%\hline
\end{tabular}
%%\end{longtable}
}
\end{table}

%%\clearpage

\section{Quivers and multiple flag varieties}\label{sec:quiver.joint.variety}

As we explained in \S~\ref{subsec:3FV} and \S~\ref{subsec:DFl.symmetric.pair}, 
triple flag varieties are a special case of double flag varieties of symmetric pairs.  
For classical groups, those multiple flag varieties and their orbits have fruitful descriptions in terms of quiver representations.  To see it, let us explain the theory of Kac on quiver representations.  

\subsection{Orbits in quiver representations: Theory of Kac}

Let $ Q = (Q_0, Q_1) $ be a quiver, where $ Q_0 $ is the set of vertices, and $ Q_1 $ is the set of arrows.  
Each $ e \in Q_1 $ has the unique initial vertex $ s(e) $ and the terminal one $ t(e) $.  
So, in general,  $ Q $ is an oriented graph, which admits multiple edges and loops.  
However, in this paper, we only consider \emph{simply laced acyclic quivers} with single edges and no loop.
A representation $ \pi $ of the quiver $ Q $ is an assignment of a vector space $ V_i $ to each vertex $ i \in Q_0 $ 
together with an assignment of a linear map $ f_e : V_{s(e)} \to V_{t(e)} $ 
for each arrow $ e \in Q_1 $. 
The collection of dimensions $ d(\pi) = (\dim V_i)_{i \in Q_0} $ is called the dimension vector.  
Two representations $ \pi $ and $ \pi' $ are said to be isomorphic if there are isomorphisms 
$ \varphi_i : V_i \to V_i' \;\; (i \in Q_0) $ such that for each $ e \in Q_1 $, the diagram
\begin{equation*}
\vcenter{
\xymatrix @R-2ex @M+.5ex @C-3ex @L+.5ex @H+1ex {
V_{i} \ar[rr]^{f_e} \ar[d]^{\varphi_{i}} & & V_{j} \ar[d]_{\varphi_{j}}
\\
V_{i}' \ar[rr]^{f_e'} & & V_{j}'
}} \qquad 
(\text{where $ i = s(e) $ and $ j = t(e) $})
\end{equation*}
is commutative.

Fix a dimension vector $ \alpha \in \Z_{\geq 0}^{Q_0} $, and consider 
all the representations 
$ \pi $ of $ Q $ with the same dimension vector $ \alpha $.  Denote the space of such representations 
by 
\begin{equation*}
\ModQ^{\alpha}(Q) = \{ \pi \in \Rep Q \mid d(\pi) = \alpha \} .
\end{equation*}
For this variety, we fix vector spaces $ V_i $ for each $ i \in Q_0 $.  
Let $ G^{\alpha} = \prod_{i \in Q_0} \GL_{\alpha_i} $, which acts naturally on 
$ \ModQ^{\alpha}(Q) $ and it is easy to see that the orbits of $ G^{\alpha} $ represent isomorphism classes of representations of $ Q $ with dimension vector $ \alpha $. 

Obviously, every finite dimensional representation of a quiver $ Q $ decomposes 
into a direct sum of indecomposable representations.  
There exists an ingenious description of such indecomposable modules by Kac (\cite{Kac.Quiver.1980,Kac.QuiverII.1982}), 
which is originally due to Gabriel for the Dynkin quiver of finite type (\cite{Gabriel.1972}).
Kac's idea is to relate the quiver to a Cartan matrix $ A $ defined by 
\begin{equation*}
A = 2 I_{Q_0} - R, 
\qquad
\text{where } 
\begin{aligned}[t]
&
R = (r_{i,j})_{i,j \in Q_0} , \quad
\\
&
r_{i, j} = \# \{ e \in Q_1 \mid 
\{ s(e), t(e) \} = \{ i, j \} \}.
\end{aligned}
\end{equation*}
Thus, $ r_{i,j} $ denotes the number of edges which connect $ i $ and $ j $ so that $ R $ is a so-called adjacency matrix of the underlying graph of $ Q $.
We define a quadratic form in $ x = (x_i)_{i \in Q_0} $ by 
\begin{equation*}
(x|x) = \dfrac{1}{2} \transpose{x} A x = \norm{x}^2 - \dfrac{1}{2} \sum_{i, j \in Q_0} r_{i, j} x_i x_j ,
\end{equation*}
which is called a \emph{Tits form}.  
The following lemma is easy to prove, but the statement is crucial below.

\begin{lemma}
\label{lemma:dim.orbit}
$ \dim \ModQ^{\alpha}(Q) - (\dim G^{\alpha} - 1) = 1 - (\alpha|\alpha) $.  
\end{lemma}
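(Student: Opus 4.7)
The plan is a direct dimension count that unpacks each term on both sides of the claimed equality.

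First I would compute $\dim \ModQ^{\alpha}(Q)$. Since a representation of $Q$ with dimension vector $\alpha$ amounts to a choice of linear map $f_e \in \Hom(V_{s(e)}, V_{t(e)})$ for each $e \in Q_1$, once the vector spaces $V_i$ of dimension $\alpha_i$ are fixed, the space of such representations is the affine space
\begin{equation*}
\ModQ^{\alpha}(Q) \;\simeq\; \bigoplus_{e \in Q_1} \Hom(V_{s(e)}, V_{t(e)}),
\end{equation*}
of dimension $\sum_{e \in Q_1} \alpha_{s(e)} \alpha_{t(e)}$. Next, $\dim G^{\alpha} = \sum_{i \in Q_0} \alpha_i^2$ is immediate from $G^{\alpha} = \prod_i \GL_{\alpha_i}$.

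Then I would rewrite the Tits form in terms of these same quantities. By definition $(x|x) = \tfrac{1}{2}\transpose{x}Ax$ with $A = 2I_{Q_0} - R$, so
\begin{equation*}
(\alpha|\alpha) \;=\; \sum_{i \in Q_0} \alpha_i^2 \;-\; \tfrac{1}{2}\sum_{i,j \in Q_0} r_{i,j}\,\alpha_i \alpha_j.
\end{equation*}
Here the hypothesis that $Q$ is simply laced and has no loops is essential: $r_{i,i} = 0$ and $r_{i,j} \in \{0,1\}$ for $i \neq j$, and since $r_{i,j}$ is symmetric in $(i,j)$ while each edge $e$ contributes to exactly one unordered pair $\{s(e), t(e)\}$, the double sum simplifies to $2\sum_{e \in Q_1} \alpha_{s(e)}\alpha_{t(e)}$. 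Therefore
\begin{equation*}
(\alpha|\alpha) \;=\; \dim G^{\alpha} \;-\; \dim \ModQ^{\alpha}(Q),
\end{equation*}
and rearranging gives the desired identity $\dim \ModQ^{\alpha}(Q) - (\dim G^{\alpha} - 1) = 1 - (\alpha|\alpha)$.

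There is essentially no obstacle; the only thing to be careful about is bookkeeping between ordered vs.\ unordered pairs $(i,j)$ in the Tits form versus edges $e \in Q_1$, which is where the factor $\tfrac{1}{2}$ in the definition of $(x|x)$ plays its role. The $-1$ on the left-hand side has a natural interpretation worth remarking on: the scalar one-parameter subgroup $\C^{\times} \hookrightarrow G^{\alpha}$ (acting by simultaneous scaling on all $V_i$) lies in the kernel of the $G^{\alpha}$-action on $\ModQ^{\alpha}(Q)$, so $\dim G^{\alpha} - 1$ is the dimension of the group that genuinely acts, and the identity then states that a generic orbit has codimension $(\alpha|\alpha)$ in $\ModQ^{\alpha}(Q)$ (a positive quantity precisely in the Dynkin case).
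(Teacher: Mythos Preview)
Your proof is correct and is exactly the direct computation the paper has in mind; the paper itself omits the argument entirely, stating only that the lemma ``is easy to prove.'' One small slip in your closing remark: the codimension of a generic orbit (when the effective group acts with trivial generic stabilizer) is $1 - (\alpha|\alpha)$, not $(\alpha|\alpha)$, and this is nonnegative precisely when $(\alpha|\alpha) \leq 1$---in particular zero for real roots and strictly positive for imaginary ones, which is the dichotomy Kac's theorem exploits.
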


Note that the action of $ G^{\alpha} $ on $ \ModQ^{\alpha}(Q) $ is not effective 
and $ G^{\alpha}/ \Cbatsu $ acts effectively.

Let $ \lie{g}(A) $ be the Kac-Moody Lie algebra defined by the Cartan matrix $ A = (a_{i,j})_{i, j \in Q_0} $ (cf.~\cite{Kac.KMalgebra.1990}), 
and $ \Pi = \{ \alpha_i \mid i \in Q_0 \} $ the set of simple roots so that it satisfies 
$ (\alpha_i|\alpha_j) = \frac{1}{2} a_{i,j} $.    
Let $ \Gamma = \{ \sum_{i \in Q_0} k_i \alpha_i \mid k_i \in \Z \} $ be the root lattice and 
define $ \Gamma^+ = \{ \sum_i k_i \alpha_i \mid k_i \geq 0 \} $.  
We can (and do) identify a dimension vector $ \alpha = (k_i)_{i \in Q_0} $ 
with an element $ \sum_{i \in Q_0} k_i \alpha_i $ 
in $ \Gamma^+ $.   
Denote $ \Delta $ and $ \Delta^+ $ its root system and the set of positive roots respectively.  
There are notions of real roots $ \Delta_{re}^+ $ and imaginary roots $ \Delta_{im}^+ $, 
the definitions of which we refer the readers to \cite{Kac.KMalgebra.1990}.  

\begin{theorem}[{Kac \cite[Theorem 3]{Kac.Quiver.1980}}]
\begin{penumerate}
\item
For any $ \alpha \in \Gamma^+ $, 
$ \ModQ^{\alpha}(Q) $ contains an indecomposable module if and only if 
$ \alpha \in \Delta^+ $.
\item
There exists a unique isomorphism class of indecomposable modules in $ \ModQ^{\alpha}(Q) $ if    and only if 
$ \alpha \in \Delta_{re}^+ $, which is equivalent to $ (\alpha|\alpha) = 1 $.  
In this case, indecomposable modules in $ \ModQ^{\alpha}(Q) $ consist of an open dense orbit of $ G^{\alpha} $.  
\item
If $ \alpha \in \Delta_{im}^+ $, then there are infinitely many isomorphism classes of indecomposable modules in $ \ModQ^{\alpha}(Q) $.  
There exists a family of orbits of indecomposable modules with $ 1 - (\alpha|\alpha) $ parameters.
\end{penumerate}
\end{theorem}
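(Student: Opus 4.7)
The plan is to combine the BGP reflection functors of Bernstein-Gelfand-Ponomarev with the dimension-count Lemma~\ref{lemma:dim.orbit}. I would first recall that for a sink $i \in Q_0$ of an acyclic quiver $Q$ there is a functor $S_i^+ : \Rep(Q) \to \Rep(s_i Q)$, where $s_i Q$ denotes the quiver obtained by reversing all arrows incident to $i$. This restricts to an equivalence between the full subcategories of representations not containing the simple $S_i$ as a direct summand, and on dimension vectors it induces the simple reflection $s_i$ of the Weyl group $W$ associated with the Cartan matrix $A$; a dual functor $S_i^-$ applies at sources. Because $Q$ is acyclic, every vertex can be turned into a sink or source by an admissible sequence of reflections, so $W$ acts coherently on dimension vectors and on isomorphism classes of indecomposables (away from the simples).

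For (1) and (2), if $\pi$ is indecomposable with $d(\pi) = \alpha \neq \alpha_i$ for any $i$, then choosing an appropriate sink or source $i$, the reflected module $S_i^{\pm}(\pi)$ is again indecomposable and $d(S_i^{\pm}\pi) = s_i(\alpha)$. Iterating admissible reflections either drives $\alpha$ into the fundamental Weyl chamber (where $\alpha$ is necessarily imaginary) or eventually produces a simple root, in which case the starting $\alpha$ lies in the $W$-orbit of a simple root, hence $\alpha \in \Delta_{re}^+$. In the latter case, the unique indecomposable at $\alpha_i$ is the simple $S_i$ (one-dimensional at vertex $i$, zero elsewhere), which manifestly is the unique class and forms the sole orbit; pulling back through the reflection equivalences yields a unique class at $\alpha$. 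The equivalence $(\alpha|\alpha) = 1 \iff \alpha \in \Delta_{re}^+$ follows from $W$-invariance of the Tits form together with $(\alpha_i|\alpha_i)=1$. Open density of the indecomposable orbit is then immediate from Lemma~\ref{lemma:dim.orbit}: for real $\alpha$, the unique indecomposable is a brick ($\End(\pi)=\C$), so its stabilizer in $G^{\alpha}$ is $\Cbatsu$ and its orbit has dimension $\dim G^{\alpha} - 1$, matching $\dim \ModQ^{\alpha}(Q)$ since $1 - (\alpha|\alpha) = 0$. The converse — that a positive root must admit an indecomposable — uses the canonical decomposition of a generic module in $\ModQ^{\alpha}(Q)$: if no summand is indecomposable of dimension $\alpha$, $\alpha$ decomposes into strictly smaller positive roots, and an induction on height together with $W$-reduction forces $\alpha \in \Delta^+$.

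For (3), if $\alpha \in \Delta_{im}^+$ then $(\alpha|\alpha) \leq 0$, hence $1 - (\alpha|\alpha) \geq 1$. The indecomposable locus in $\ModQ^{\alpha}(Q)$ is a $G^{\alpha}$-stable constructible subset whose generic orbit has stabilizer at most $\Cbatsu$, so Lemma~\ref{lemma:dim.orbit} forces this locus to carry at least a one-parameter family of orbits, and in fact exactly $1 - (\alpha|\alpha)$ continuous parameters after accounting for the scalar stabilizer. Existence of indecomposables at imaginary roots follows either by explicit construction (on affine Dynkin quivers one exhibits the tubes at the minimal imaginary root and propagates by $W$), or via Kac's original finite-field argument which counts absolutely indecomposable representations modulo $p$ and lifts them to characteristic zero.

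The main obstacle is the imaginary-root case: while the BGP machinery cleanly handles the Weyl-group reduction and the real-root existence/uniqueness, constructing indecomposables of arbitrary imaginary dimension vector, and sharpening ``infinitely many classes'' to ``a family with precisely $1 - (\alpha|\alpha)$ parameters'', requires Kac's deeper apparatus — characteristic-$p$ reduction, Burnside-style counts over $\bbF_q$, and upper semi-continuity of the indecomposable locus in $\ModQ^{\alpha}(Q)$ — which I would quote rather than reprove in full.
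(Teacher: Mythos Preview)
The paper does not prove this theorem; it is stated with attribution to Kac \cite[Theorem 3]{Kac.Quiver.1980} and used as a black box. So there is no ``paper's own proof'' to compare against.

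Your sketch is a reasonable outline of the standard approach to Kac's theorem: BGP reflection functors handle the real-root case cleanly (reducing to simple roots along a Weyl chain, then pulling back uniqueness and open density via the equivalences), and Lemma~\ref{lemma:dim.orbit} gives the dimension count. You are honest about the gap in part (3): existence of indecomposables at imaginary roots and the precise parameter count $1-(\alpha|\alpha)$ genuinely require Kac's finite-field counting argument (the polynomial $a_\alpha(q)$ counting absolutely indecomposable $\bbF_q$-representations), and this is not something one can extract from reflection functors and dimension counts alone. One small caution: in part (1) your ``converse'' argument (that every positive root supports an indecomposable) is underspecified---the canonical decomposition tells you how a \emph{generic} module decomposes, but to conclude that $\alpha$ itself is a root you need to know that the summands in the canonical decomposition are Schur roots, and that this forces $\alpha \in \Delta^+$; this again leans on Kac's machinery rather than pure BGP. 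Since the paper treats the result as a citation, your level of detail is already more than what is expected here.
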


\subsection{Orbits in triple flag varieties}

Let us briefly introduce the classification after Magyar-Weyman-Zelevinsky \cite{MWZ.1999,MWZ.2000} 
in the case of type A.

Let 
$ \mbfa = (a_1, \dots, a_{\ell_{\mbfa}}) $ be a composition of $ n $.  
We allow zero for several components.  For this $ \mbfa $, we define 
a flag variety 
\begin{equation*}
\FlagVar_{\mbfa} = \{ F = (F_i)_{i = 1}^{\ell_{\mbfa}} \mid 
\dim {F_{i}/F_{i - 1} = a_i \ \text{for}\ 1\leq i\leq \ell_{\mbfa}} \} ,
\end{equation*}
where we put $ F_0 = \{ 0 \} $ for convenience.  
Note that $ F_{\ell_{\mbfa}} = \C^n =: V $.

Consider $ N $-tuple of compositions $ (\mbfa_1, \mbfa_2, \dots, \mbfa_N) $, 
where each $ \mbfa_j = (a_i^{(j)})_{i = 1}^{\ell_j} $ is a composition of $ n $.  
Define a multiple flag variety 
$ \multipleFV = \FlagVar_{\mbfa_1} \times \FlagVar_{\mbfa_2} \times \cdots \times \FlagVar_{\mbfa_N} 
= \prod_{j = 1}^N \FlagVar_{\mbfa_j} $.  

For $ \multipleFV $, consider a star shaped quiver $ Q $.  
The underlying graph of $ Q $ is a tree,
which has $ N $-branches of length $ \ell_1, \ell_2, \dots, \ell_N $ with a unique root.  
The orientation is given from the end vertex of each branch toward the root vertex.
We give an example of $ 3 $-branched quiver below.  
%%So let us consider the following star-shaped quiver with 3 arms.  
Here, 
each branch represents flags associated to the compositions $ \mbfa, \mbfb, \mbfc $, 
of length $ p, q $ and $ r $ respectively.
\begin{equation*}
\xymatrix@+=1cm{
& & & 
\bullet \save[]+<2.3ex,-.1ex>*{F_1}\restore \ar[d]_{f_1}
\\
& & & 
\bullet \save[]+<2.3ex,-.1ex>*{\vdots}\restore \ar[d]_(.3){\vdots}
\\
& & & 
\bullet \save[]+<3.3ex,-.1ex>*{F_{p- 1}}\restore \ar[d]_{f_{p-1}}
\\
\bullet \save[]+<0ex,-2.5ex>*{F_1'}\restore \ar[r]^{f_1'} & 
\bullet \save[]+<0ex,-2.5ex>*{\dots}\restore \ar[r]^{\dots} & 
\bullet \save[]+<0ex,-2.5ex>*{F_{q-1}'}\restore \ar[r]^{f_{q - 1}'} & 
\bullet \save[]+<0ex,-2.5ex>*{V}\restore & 
\bullet \save[]+<0ex,-2.5ex>*{F_{r-1}''}\restore \ar[l]_{f_{r - 1}''} & 
\bullet \save[]+<0ex,-2.5ex>*{\dots}\restore \ar[l]_{\dots} & 
\bullet \save[]+<0ex,-2.5ex>*{F_1''}\restore \ar[l]_{f_1''} & 
\\
}
\end{equation*}

We can associate a point in $ \multipleFV $
\begin{equation*}
(F^{(j)})_{j = 1}^N \in \prod_{j = 1}^N \FlagVar_{\mbfa_j} = \multipleFV
\end{equation*}
with a representation of $ Q $. 
Namely, in the $j$-th branch of $ Q $, we associate the flag $ F^{(j)} = (F_i^{(j)})_{i = 1}^{\ell_j} \in \FlagVar_{\mbfa_j} $ 
with a representation
\begin{equation*}
\xymatrix@+=1cm{
\bullet \save[]+<0ex,-2.5ex>*{F_1^{(j)}}\restore \ar[r]^{\varphi_1^{(j)}} & 
\bullet \save[]+<0ex,-2.5ex>*{F_2^{(j)}}\restore \ar[r]^{\varphi_2^{(j)}} & 
\bullet \save[]+<0ex,-2.5ex>*{\dots}\restore \ar[r]^{\dots} & 
\bullet \save[]+<0ex,-2.5ex>*{F_{\ell_j-1}^{(j)}}\restore \ar[r]^{\varphi_{\ell_j - 1}^{(j)}} & 
\bullet \save[]+<0ex,-2.5ex>*{V=F_{\ell_j}^{(j)}}\restore 
}
\end{equation*}
where $ \varphi_k^{(j)} : F_k^{(j)} \to F_{k + 1}^{(j)} $
is the inclusion map.  
In this way points in $ \multipleFV $ are embedded into
the representation space $ \Rep Q $ of the quiver.

\begin{lemma}
Let $ x, y \in \multipleFV $ be two $N$-tuples of flags
and $ \xi, \eta \in \Rep Q $ be the corresponding representations
of the quiver. 
Then $ x $ and $ y $ are in the same
$ \GL(V) $-orbit if and only if $ \xi $ and $ \eta $ are isomorphic
as representations of $ Q $.
\end{lemma}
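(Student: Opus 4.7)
The plan is to unpack both sides of the equivalence and observe that they encode exactly the same data. Both implications reduce to elementary diagram-chasing once the dictionary between the $\GL(V)$-action on flags and morphisms of quiver representations is set up.

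For the forward direction, assume $g \in \GL(V)$ takes $x$ to $y$, i.e.\ $g(F^{(j)}_k) = G^{(j)}_k$ as subspaces of $V$ for every branch $j$ and every index $k$. Placing $g$ itself at the root vertex and each restriction $g|_{F^{(j)}_k} : F^{(j)}_k \to G^{(j)}_k$ at the corresponding internal vertex yields a tuple of linear isomorphisms. Commutativity of each edge square is automatic, since every arrow of $Q$ is represented by an inclusion and restrictions of $g$ to nested subspaces are compatible by construction. This gives the required isomorphism $\xi \simeq \eta$ in $\Rep Q$.

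Conversely, an isomorphism $\xi \simeq \eta$ consists of an automorphism $\varphi$ of $V$ at the root together with linear isomorphisms $\varphi^{(j)}_k : F^{(j)}_k \to G^{(j)}_k$ at the remaining vertices, intertwining all the inclusion maps. Commutativity at the terminal edge of branch $j$ forces $\varphi(F^{(j)}_{\ell_j - 1}) = G^{(j)}_{\ell_j - 1}$ with $\varphi|_{F^{(j)}_{\ell_j-1}} = \varphi^{(j)}_{\ell_j-1}$; iterating along the branch yields $\varphi(F^{(j)}_k) = G^{(j)}_k$ for every $k$. Hence $\varphi \cdot x = y$, and $x,y$ lie in the same $\GL(V)$-orbit.

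The only point that needs a little care, and which I would flag as the main source of potential confusion rather than a genuine obstacle, is the identification of the vector space at an internal vertex with an actual subspace of $V$. If $\Rep Q$ is defined with fixed model spaces $\C^{d^{(j)}_k}$ at each vertex, then assigning a representation to a flag involves an implicit choice of basis; but since the lemma speaks only of \emph{isomorphism classes} of representations, such choices do not affect the statement, and one can work equivalently with the flag components $F^{(j)}_k$ themselves as the vector spaces at non-root vertices. Once this convention is fixed, both directions above are transparent.
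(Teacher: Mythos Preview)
Your argument is correct. The paper states this lemma without proof, treating it as an immediate consequence of the definitions; your explicit unpacking of both directions is exactly the verification one would write out, and the remark about the choice of model spaces at internal vertices versus the actual subspaces of $V$ is well placed.
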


To make use of this scheme ultimately,
we consider a full subcategory of $ \Rep Q $ consisting of 
those objects corresponding to $ N $-tuples of flags of length 
$ \ell_1, \ell_2, \dots, \ell_N $.  
Let $ \MultQRep(\mbfa_1, \dots, \mbfa_N) $ be the set of objects in $ \Rep Q $ 
which are isomorphic to representations 
corresponding to the multiple flags in $ \multipleFV = \prod_{j = 1}^N \FlagVar_{\mbfa_j} $.  
Then sum up all of these to get a full subcategory
\begin{equation*}
\MultQRep = \MultQRep_{\ell_1, \dots, \ell_N} = \bigcup \MultQRep(\mbfa_1, \dots, \mbfa_N) \subset \Rep Q,
\end{equation*}
where the union is taken over tuples of compositions of $n$ (of nonnegative integers including zero) with prescribed lengths, 
and $ n $ moves over all nonnegative integers. 
This subcategory is clearly additive and closed under the decomposition. 
To state it more precisely, let $ \pi \in \MultQRep $ and decompose it into indecomposable 
representations in $ \Rep Q $:
\begin{equation*}
\pi = \bigoplus_k \pi_k \qquad
(\pi_k \in \Rep Q : \text{indecomposable}). 
\end{equation*}
Because the category $ \Rep Q $ is equivalent to a category of modules of the path algebra of $ Q $, the Krull-Schmidt theorem applies. 
Thus a decomposition is always possible and it is indeed unique.  
It is easy to see the direct summands $ \pi_k $'s 
are belonging to $ \MultQRep $ again.  So the decomposition is indeed in the category $ \MultQRep $.

By Kac's theorem, the dimension vector
$ d_k = \dim \pi_k $ is a positive root of the Kac-Moody
algebra $ \lie{g}(A) $.  
Moreover if $ d_k $ is real, i.e. $ (d_k | d_k) = 1 $, then $ \pi_k $ is the unique indecomposable object (up to isomorphism) with the dimension vector $ d_k $.
If $ d_k $ is imaginary, i.e., $ (d_k | d_k) \leq 0 $, there are infinitely many non-isomorphic classes of indecomposable objects with dimension vector $ d_k $.

Let $ \multipleFV = \FlagVar_{\mbfa_1} \times \FlagVar_{\mbfa_2} \times \cdots \times \FlagVar_{\mbfa_N} $ 
be a multiple flag variety.  
We identify a multiple flag $ x \in \multipleFV $ with a representation $ \pi \in \Rep Q $. 
Then $ d = \dim \pi $ does not depend on the choice of $ x \in \multipleFV $.  
The decomposition of $ \pi $ into indecomposables
$ \pi = \bigoplus_k \pi_k $ induces a decomposition of $ d = \sum_k \dim \pi_k $.  

Let $ \Lambda = \Lambda_{\ell_1, \ell_2, \dots, \ell_N} = 
\{ \dim \sigma \mid \sigma \in \MultQRep \} $.  
Then the above considerations read as $ d = \sum_k d_k $ where $ d_k \in \Lambda $.

\begin{theorem}\label{thm:finiteness.criterion.via.quiver}
Let $ d $ be the dimension vector of any $ \pi \in \MultQRep $
corresponding to an $ N $-tuple of flags $ x \in \multipleFV = \prod_{j = 1}^N \FlagVar_{\mbfa_j} $.  
It is independent of the choice of $ x $, but only depending on $ (\mbfa_j)_{j = 1}^N $.   
\begin{penumerate}
\item\label{thm:finiteness.criterion.via.quiver:item:1}
The multiple flag variety $ \multipleFV $ is of finite type if and only if the following condition {\upshape(FT)} holds.

\medskip
\textbf{\upshape(FT)} \;
If there is a decomposition $ d = d' + d'' $ in $ \Lambda $ 
then $ (d'|d'), (d''|d'') \geq 1 $.  

\medskip

\item\label{thm:finiteness.criterion.via.quiver:item:2}
If it is of finite type, 
$ N $ must be less than or equal to $ 3 $ (at most a triple flag variety).  
Moreover, $ \GL(V) $-orbits in $ \multipleFV $ are in bijection with {decompositions} $ d = \sum_k d_k $ where $ d_k \in \Lambda $ and $ (d_k|d_k) = 1 $.
\end{penumerate}
\end{theorem}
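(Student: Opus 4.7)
The plan is to combine the Krull--Schmidt decomposition in the idempotent-complete additive category $\MultQRep$ with Kac's classification of indecomposables and the dimension formula of Lemma~\ref{lemma:dim.orbit}. The underlying bijection is that $\GL(V)$-orbits in $\multipleFV$ correspond to isomorphism classes of objects of $\MultQRep$ of dimension $d$. By Krull--Schmidt, each such $\pi$ decomposes uniquely as $\pi = \bigoplus_k \pi_k$ with $\pi_k$ indecomposable and lying again in $\MultQRep$; by Kac, each $d_k := \dim \pi_k$ is a positive root of $\lie{g}(A)$, and $\pi_k$ is unique up to isomorphism precisely when $d_k$ is a real root, i.e.\ $(d_k|d_k)=1$.

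For the ``if'' direction of (1) together with the bijection in (2), suppose (FT) holds and fix $\pi \in \MultQRep$ with $\dim \pi = d$. If some Krull--Schmidt summand satisfied $(d_{k_0}|d_{k_0}) \leq 0$, one would take $d' = d_{k_0}$ and $d'' = \sum_{k \neq k_0} d_k$, both in $\Lambda$ as dimension vectors of actual subrepresentations in $\MultQRep$, contradicting (FT). Hence every $d_k$ is a real root, uniquely determining $\pi_k$, so $\pi$ is classified by the unordered multiset $\{d_k\}$ of real roots in $\Lambda$ summing to $d$. Because positive roots with $d_k \leq d$ componentwise form a finite set, the collection of such multisets is finite; this yields finite type together with the explicit bijection of (2).

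Conversely, assume $\multipleFV$ is of finite type but there is a nontrivial decomposition $d = d' + d''$ in $\Lambda$ with $(d'|d') \leq 0$. By Lemma~\ref{lemma:dim.orbit}, $\dim \ModQ^{d'}(Q) - (\dim G^{d'} - 1) = 1 - (d'|d') \geq 1$, so the effective action of $G^{d'}/\Cbatsu$ on $\ModQ^{d'}(Q)$ cannot have only finitely many orbits and admits a positive-dimensional family. The subset of flag-type representations (those with all arrows injective) is open, $G^{d'}$-stable, and nonempty because $d' \in \Lambda$, so it too carries infinitely many orbits. Via the quiver--flag dictionary these yield infinitely many pairwise non-isomorphic $\pi' \in \MultQRep$ of dimension $d'$; pairing each with a fixed $\pi'' \in \MultQRep$ of dimension $d''$ produces infinitely many non-isomorphic $\pi = \pi' \oplus \pi''$ of dimension $d$, contradicting finite type.

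The bound $N \leq 3$ in (2) follows because the underlying graph of $Q$ is a star with $N$ arms, which is of Dynkin type $A$, $D$, or $E$ only when $N \leq 3$; for $N \geq 4$ the Kac--Moody algebra $\lie{g}(A)$ is of affine or indefinite type and carries positive imaginary roots, from which one exhibits a decomposition violating (FT) for any sufficiently nontrivial $d$. The step I expect to require the most care is the passage, in the ``only if'' direction, from infinitely many $G^{d'}$-orbits in $\ModQ^{d'}(Q)$ to infinitely many orbits in the flag-type open subvariety; this uses that the latter is a nonempty $G^{d'}$-stable open subset, hence inherits the codimension-$\geq 1$ orbit behaviour from Lemma~\ref{lemma:dim.orbit}, and that the induced moduli of such orbits injects into $\multipleFV/\GL(V)$ under the quiver--flag correspondence.
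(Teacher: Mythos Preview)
Your argument for part~(1) is essentially the same as the paper's: both directions use Krull--Schmidt plus Kac's theorem for the ``if'' half, and the dimension count from Lemma~\ref{lemma:dim.orbit} applied to the open $G^{d'}$-stable locus of injective representations for the ``only if'' half. Your identification of the delicate step---passing from infinitely many orbits in $\ModQ^{d'}(Q)$ to infinitely many in the flag-type open subset---matches exactly what the paper isolates, and your justification (nonempty open stable subset of a variety in which every orbit has positive codimension) is the same as theirs.

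For part~(2), the bound $N\leq 3$, your approach diverges. You invoke the abstract fact that a star with $N\geq 4$ arms is not of finite Dynkin type, so $\lie{g}(A)$ has imaginary roots, and assert one can then ``exhibit a decomposition violating (FT) for any sufficiently nontrivial $d$.'' This is the right intuition but leaves a genuine gap: an imaginary root of $\lie{g}(A)$ need not lie in $\Lambda$ (i.e.\ be a flag-type dimension vector), and even if it does you have not shown it occurs as a summand of the \emph{given} $d$ with complement still in $\Lambda$. The paper closes this by an explicit computation: it writes down the Tits form as
\[
(d|d)=\tfrac{1}{2}\Bigl(\sum_{j=1}^N \|\mbfa_j\|^2 - (N-2)n^2\Bigr),
\]
checks that the specific dimension vector corresponding to $(\mbfa_1,\dots,\mbfa_4)=((1^2),(1^2),(1^2),(1^2))$ satisfies $(d'|d')\leq 0$, observes that this $d'$ lies in $\Lambda$, and notes that for $N\geq 4$ it is a summand (in $\Lambda$) of \emph{every} dimension vector arising from nontrivial compositions. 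Your Dynkin-type remark is a good heuristic for why such a $d'$ should exist, but you still need to produce it concretely inside $\Lambda$ and inside $d$; the paper's explicit choice does exactly that in one line.
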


\begin{proof}
\eqref{thm:finiteness.criterion.via.quiver:item:1}\;\;
If there is a summand $ d' $ where $ (d'|d') \leq 0 $, then 
$ d' $ corresponds to an imaginary root, and we know there are infinitely many orbits.  
Moreover, there are infinitely many such orbits that correspond to representations of the quiver with injective maps. (Indeed, the fact that $ d' $ is imaginary combined with Lemma \ref{lemma:dim.orbit}
implies that every orbit in $ \ModQ^{d'}(Q) $ has positive codimension therein; in addition, the subset of representations with injective maps is dense because of the reason of the rank, thus of the same dimension as $ \ModQ^{d'}(Q) $, hence {itself is a union of infinitely many such orbits.})
So direct summands should satisfy $ (d'|d') \geq 1 $.  
On the other hand if (FT) holds, 
we consider the unique decomposition by indecomposable representations.  
Pick a dimension vector $ d' $ of any indecomposable summand.  
Note that $ d' $ is a root, which is imaginary or real.
By (FT), we get $ (d'|d') \geq 1 $ and we know $ d' $ cannot be imaginary.  
Therefore it is a real root and there is only one isomorphism class.  
In this way, any decomposition produces only one isomorphism class {and there are only finitely many possibilities of the decompositions of dimension vectors.  }

\eqref{thm:finiteness.criterion.via.quiver:item:2}\;\;
Let us rewrite the Tits' form $ (d|d) $ in terms of $ ((\mbfa_j)_{j = 1}^N) $.  
It is easy to see
\begin{equation*}
  (d|d) = \dfrac{1}{2} \Bigl\{ \sum_{j = 1}^N \norm{\mbfa_j}^2 - (N - 2) n^2 \Bigr\} \qquad 
(n = \dim V) .
\end{equation*}
Using this formula, 
we can easily find that 
\begin{equation*}
(\mbfa_1, \dots, \mbfa_4) = ((1^2), (1^2), (1^2), (1^2)) \qquad
\text{(we omit $ 0 $'s)}
\end{equation*}
corresponds to an imaginary root. 
However, if $ N \geq 4 $, then the dimension vector corresponding to this particular $ (\mbfa_1, \dots, \mbfa_4) $ appears as a summand of \emph{any} dimension vector.  
Thus {multiple flag varieties with $N\geq 4$ factors} are not of finite type.
\end{proof}

\section{Finite type double flag varieties of type AIII}

Now we return back to the double flag variety 
$ \dblFV $ of type AIII.  

\subsection{Joint varieties and finiteness criterions}\label{subsection:JV}

To find out double flag varieties of finite type, 
we can also use the same idea along that of \cite{MWZ.1999}.  
For the double flag varieties of symmetric pairs of type AIII, 
this is carried out by Hiroki Homma \cite{Homma.2021}.  
Let us explain it below.

Consider a triple flag variety 
$$ 
\tripleFV(\mbfa,\mbfb,\mbfc) = \FlagVar_{\mbfa} \times \FlagVar_{\mbfb} \times \FlagVar_{\mbfc}
$$
where $ \mbfa, \mbfb, \mbfc $ are compositions of $ n $ that satisfy the condition 
\begin{equation*}
\sum_{i=1}^{\ell_{\mbfa}-1} a_i + \sum_{j=1}^{\ell_{\mbfb}-1} b_j = n .      
\end{equation*}
Sometimes we will write 
$ (\sum_{i=1}^{\ell_{\mbfa}-1} a_i, \sum_{j=1}^{\ell_{\mbfb}-1} b_j) = (p, q) $.  
Thus, if we write $\mbfa' = (a_i)_{i = 1}^{\ell_{\mbfa} - 1}$
for the sequence obtained from $\mbfa$ after removing the last part of the composition, 
then $ \mbfa' $ is a composition of $ p $ and 
$ \mbfb' $ is that of $ q $.  
Hereafter, we keep this notation $ \mbfa' $ for the above meaning.
Let us fix $ (p, q) $ and $ n = p + q $, 
and denote
$ K = \GL(V^+) \times \GL(V^-) $, where 
{$ V = V^+ \oplus V^- $ and }
$ V^+ = \C^p , V^- = \C^q $.  
Under this setting, we consider a double flag variety 
\begin{equation}\label{eq:doubleFVa'b'c}
\dblFV = \dblFV((\mbfa', \mbfb'); \mbfc) = 
K/Q \times G/P = \GL_p/Q_1 \times \GL_q/Q_2 \times \GL_n/P ,     
\end{equation}
where $ Q_1, Q_2, P $ are parabolic subgroups of $ \GL_p, \GL_q $ and $ \GL_n $ respectively such that
$ \GL_p/Q_1 = \FlagVar_{\mbfa'}$, $ \GL_q/Q_2 = \FlagVar_{\mbfb'}$, and $ \GL_n/P = \FlagVar_{\mbfc} $.

After Homma, we define a \emph{joint variety} $ \JVar(\mbfa, \mbfb, \mbfc) $  by
\begin{multline*}
\JVar(\mbfa, \mbfb, \mbfc) = 
\{ ((F_i^a)_{i = 1}^{\ell_{\mbfa}}, (F_i^b)_{i = 1}^{\ell_{\mbfb}}, (F_i^c)_{i = 1}^{\ell_{\mbfc}}) \in \FlagVar_{\mbfa} \times \FlagVar_{\mbfb} \times \FlagVar_{\mbfc} \mid 
\\
F_{\ell_{\mbfc}}^c = V = F_{\ell_{\mbfa} - 1}^a \oplus F_{\ell_{\mbfb} - 1}^b \}.
\end{multline*}
More generally, given a triple $ (\ell_1,\ell_2,\ell_3) $
of positive integers, we consider triples of compositions $ (\mbfa,\mbfb,\mbfc) $ such that
\begin{equation}
\label{condition:compositions}
    (\ell_{\mbfa},\ell_{\mbfb},\ell_{\mbfc}) = (\ell_1,\ell_2,\ell_3) \ \text{ and }\ 
    \sum_{i = 1}^{\ell_{\mbfa}-1} a_i + \sum_{j = 1}^{\ell_{\mbfb}-1} b_j = \sum_{k = 1}^{\ell_3} c_k
\end{equation}
and the semigroup
$$
\Lambda^J = \Lambda^J_{\ell_1,\ell_2,\ell_3}
:= \{ (\mbfa,\mbfb,\mbfc) \text{ satisfying (\ref{condition:compositions})}\}
$$
formed by such triples.
Moreover, we consider the quiver $ Q $ of triple branches of length 
$ \ell_1, \ell_2, \ell_3 $.  
Then any element of $\JVar(\mbfa,\mbfb,\mbfc)$ determines a representation of $ Q $ 
with the linear maps all injective as explained before.  
Let $ \JQRep(\mbfa,\mbfb,\mbfc) $
be the full subcategory of $ \Rep{Q} $ consisting of all the objects obtained in this way. 
%In summary, we have
%$$
%\dblFV(\mbfa,\mbfb,\mbfc)
%\subset
%\JVar(\mbfa,\mbfb,\mbfc)
%\subset
%\JQRep(\mbfa,\mbfb,\mbfc).
%$$
For studying the decompositions by indecomposables we need a larger subcategory.  
We sum up all $ \JQRep(\mbfa,\mbfb,\mbfc) $
when $ (\mbfa,\mbfb,\mbfc)\in \Lambda^J $ and denote the full subcategory of 
such objects by $ \JQRep $.

The next lemma clarifies the relation of the $ K $-orbits on the double flag varieties 
and the isomorphism classes of quiver representations.  
The proof of the lemma is straightforward and we will omit it.

\begin{lemma}\label{lemma:quiver-vs-orbits}
There are natural bijections between 
\begin{penumerate}
\item
isomorphism classes in $ \JQRep(\mbfa,\mbfb,\mbfc) $ as representations of quiver; 
\item
$ \GL(V) $-orbits on the joint flag variety $ \JVar(\mbfa,\mbfb,\mbfc) $;
\item
$ K $-orbits in the double flag variety $ \dblFV((\mbfa',\mbfb');\mbfc) $ 
defined in \eqref{eq:doubleFVa'b'c}.
\end{penumerate}
\end{lemma}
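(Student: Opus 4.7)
The plan is to prove the lemma by establishing the two bijections (1)$\leftrightarrow$(2) and (2)$\leftrightarrow$(3) separately and composing them. Both bijections come from the natural geometric data of the setting: the quiver encoding for (1)$\leftrightarrow$(2), and the normalization of a direct sum decomposition of $V$ for (2)$\leftrightarrow$(3).

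For (1)$\leftrightarrow$(2), I would start by sending each triple of flags $((F_i^a),(F_j^b),(F_k^c))\in \JVar(\mbfa,\mbfb,\mbfc)$ to the representation of the triple-branched quiver $Q$ of \S\ref{subsection:JV}, in which every vertex carries the corresponding subspace, $V=F^c_{\ell_{\mbfc}}$ sits at the root, and every arrow is an inclusion. By construction this lands in $\JQRep(\mbfa,\mbfb,\mbfc)$. An isomorphism between two such quiver representations is a family of linear isomorphisms compatible with all inclusion arrows; compatibility forces each vertex-isomorphism to be the restriction of a single isomorphism $\varphi\in\GL(V)$ at the root vertex, and then amounts exactly to requiring that $\varphi$ carry each flag of the first triple onto the corresponding flag of the second (dimensions already match, so image containments are equalities). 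Thus isomorphism classes of quiver representations in $\JQRep(\mbfa,\mbfb,\mbfc)$ match $\GL(V)$-orbits on $\JVar(\mbfa,\mbfb,\mbfc)$.

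For (2)$\leftrightarrow$(3), I would fix the decomposition $V=V^+\oplus V^-$ that defines $K=\GL(V^+)\times\GL(V^-)$. The joint-variety condition $V=F^a_{\ell_{\mbfa}-1}\oplus F^b_{\ell_{\mbfb}-1}$ means that $(F^a_{\ell_{\mbfa}-1},F^b_{\ell_{\mbfb}-1})$ is a pair of complementary subspaces of dimensions $(p,q)$. Since $\GL(V)$ acts transitively on the set of such pairs with stabilizer exactly $K$, each $\GL(V)$-orbit on $\JVar$ contains representatives with $F^a_{\ell_{\mbfa}-1}=V^+$ and $F^b_{\ell_{\mbfb}-1}=V^-$, any two of which differ by an element of $K$. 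Dropping the now tautological top subspace from each of the first two flags leaves a flag of shape $\mbfa'$ in $V^+$, a flag of shape $\mbfb'$ in $V^-$, and the original flag of shape $\mbfc$ in $V$, i.e., a point of $\dblFV((\mbfa',\mbfb');\mbfc)$. This procedure descends to a well-defined bijection $\JVar/\GL(V)\simeq \dblFV/K$, whose inverse extends the flags in $V^+$ and $V^-$ by appending $V^+$ and $V^-$ at the top.

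The only substantive input is the standard transitivity of $\GL(V)$ on pairs of complementary subspaces of prescribed dimensions and the identification of the pointwise stabilizer with $K$, so I do not anticipate a serious obstacle; the rest is bookkeeping of shapes. Composing the two bijections then yields the lemma.
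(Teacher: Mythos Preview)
Your proposal is correct and is precisely the straightforward argument the paper has in mind; indeed, the paper states immediately after the lemma that ``The proof of the lemma is straightforward and we will omit it,'' so there is no alternative proof to compare against. Your two steps---identifying quiver isomorphisms with a single $\varphi\in\GL(V)$ via the injectivity of all arrows, and normalizing the complementary pair $(F^a_{\ell_{\mbfa}-1},F^b_{\ell_{\mbfb}-1})$ to $(V^+,V^-)$ using transitivity of $\GL(V)$ with stabilizer $K$---are exactly the expected ones.
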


In the next lemma, we also point out some general properties of the orbits.
If $\mathcal{F},\mathcal{G}$ are objects in $\JQRep$, then we denote by $\Hom(\mathcal{F},\mathcal{G})$ the complex vector space of morphisms of quiver representations, and we set
\begin{equation}
\langle\mathcal{F},\mathcal{G}\rangle=\dim\Hom(\mathcal{F},\mathcal{G}).    
\end{equation}
Clearly $\langle\mathcal{F},\mathcal{G}\rangle$ only depends on $\mathcal{F},\mathcal{G}$ up to their isomorphism classes.
We actually focus on $K$-orbits of $\dblFV((\mbfa',\mbfb');\mbfc)$
and, if $\mathcal{F}$ is an element of $\JQRep(\mbfa,\mbfb,\mbfc)$,
then we denote the corresponding $K$-orbit by $\Xorbit_{\mathcal{F}}$ 
via Lemma~\ref{lemma:quiver-vs-orbits}.

\begin{lemma}\label{lemma:general.properties.orbits}
\begin{penumerate}
    \item $\dim\Xorbit_\mathcal{F}=p^2+q^2-\langle\mathcal{F},\mathcal{F}\rangle$.
    \item Given $\mathcal{F},\mathcal{F}'\in\JQRep(\mbfa,\mbfb,\mbfc)$
    the following implication holds:
    $$ \Xorbit_\mathcal{F}\subset\overline{\Xorbit_{\mathcal{F}'}} \implies \langle \mathcal{I},\mathcal{F}\rangle\geq \langle\mathcal{I},\mathcal{F}'\rangle\ \mbox{for all indecomposable object $\mathcal{I}\in\JQRep$.}
    $$
\end{penumerate}
\end{lemma}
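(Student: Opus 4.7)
The plan splits naturally into the two assertions, which need rather different ingredients.

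For (1), I would compute $\dim\Xorbit_\mathcal{F}=\dim K-\dim\Stab_K(x)$ at a chosen representative $x=(F^a,F^b,F^c)$ in the orbit, arranged so that $F^a_{\ell_{\mbfa}-1}=V^+$ and $F^b_{\ell_{\mbfb}-1}=V^-$. The crux is to identify $\Lie(\Stab_K(x))\subset\lie{k}=\lie{gl}(V^+)\oplus\lie{gl}(V^-)$ with $\End_{\Rep Q}(\mathcal{F})$. In $\mathcal{F}$ every arrow is an inclusion, so a quiver endomorphism is determined by its value $h\in\End V$ at the root vertex, with $h$ required to preserve each $F^a_i\subset V^+$, each $F^b_j\subset V^-$ and each $F^c_k\subset V$. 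The conditions $h(V^+)\subset V^+$ and $h(V^-)\subset V^-$ force $h$ to decompose as $h^+\oplus h^-$ with respect to $V=V^+\oplus V^-$, and the remaining flag-preservation conditions cut out exactly the Lie algebra of $\Stab_K(x)$. Hence $\dim\End_{\Rep Q}(\mathcal{F})=\dim\Stab_K(x)$, and (1) follows.

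For (2), the idea is to transfer the closure hypothesis to $\ModQ^{\alpha}(Q)$, where $\alpha$ is the dimension vector attached to $(\mbfa,\mbfb,\mbfc)$, and then invoke the standard upper semicontinuity of $\Hom$. The bijection of Lemma~\ref{lemma:quiver-vs-orbits} is realized by a $G^{\alpha}$-equivariant morphism from the open subset $\bbX\subset\ModQ^{\alpha}(Q)$ of representations with injective maps satisfying the direct-sum condition onto $\dblFV$; this morphism is a principal bundle under the subgroup of $G^{\alpha}$ acting at the non-root vertices, so $K$-orbits on $\dblFV$ lift to $G^{\alpha}$-orbits on $\bbX$ preserving orbit closures. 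Therefore $\Xorbit_\mathcal{F}\subset\overline{\Xorbit_{\mathcal{F}'}}$ translates into $G^{\alpha}\cdot\mathcal{F}\subset\overline{G^{\alpha}\cdot\mathcal{F}'}$ inside $\ModQ^{\alpha}(Q)$, after choosing representatives in $\bbX$.

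For any fixed $\mathcal{I}\in\Rep Q$ the space $\Hom_{\Rep Q}(\mathcal{I},\pi)$ is the kernel of a linear operator $\Psi_\pi$ between two fixed vector spaces, and the matrix of $\Psi_\pi$ depends linearly on $\pi\in\ModQ^{\alpha}(Q)$. Since the rank of a linear family is lower semicontinuous, $\pi\mapsto\dim\Hom(\mathcal{I},\pi)$ is upper semicontinuous on $\ModQ^{\alpha}(Q)$. Evaluating on the orbit closure obtained above yields $\langle\mathcal{I},\mathcal{F}\rangle\geq\langle\mathcal{I},\mathcal{F}'\rangle$; note that indecomposability of $\mathcal{I}$ is not actually used, although it is essential in subsequent applications of the inequality.

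The main technical subtlety is the lifting step in (2): verifying that the correspondence of Lemma~\ref{lemma:quiver-vs-orbits} is geometric enough to transport orbit closures, rather than being only a set-theoretic bijection of orbit sets. The identification in (1), by contrast, is essentially bookkeeping once one unpacks what a quiver endomorphism looks like when every structure map is an inclusion.
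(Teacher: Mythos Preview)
Your proof of (1) is essentially identical to the paper's: both identify $\Hom(\mathcal{F},\mathcal{F})$ with $\Lie(\Stab_K(x))$ by observing that a quiver endomorphism is determined by its value $h\in\End(V)$ at the root, that $h(V^\pm)\subset V^\pm$ forces $h\in\fk$, and that the remaining flag-preservation conditions cut out the stabilizer.

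For (2) the paper simply cites Riedtmann \cite{Riedtmann.1986} for the general upper-semicontinuity property of $\Hom$ under degeneration of quiver representations; you have unpacked exactly this argument (lift the closure relation to $\ModQ^{\alpha}(Q)$ via the geometric correspondence, then use that $\pi\mapsto\dim\Hom(\mathcal{I},\pi)$ is upper semicontinuous). Your explicit treatment is correct, and your flagging of the lifting step as the real technical point is apt, since the paper's one-line citation hides precisely this passage from $K$-orbit closures in $\dblFV$ to $G^\alpha$-orbit closures in the representation space.
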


\begin{proof}
(1) 
%%The formula follows by the following observations:
%%        \item $\dim K=p^2+q^2$.
Since $\mathcal{F} \in \JQRep(\mbfa,\mbfb,\mbfc)$, due to the definition of $\JQRep(\mbfa,\mbfb,\mbfc)$, every element in $\Hom(\mathcal{F},\mathcal{F})$ corresponds to an element $ f \in \End(V) $ such that $f(F)\subset F$ for all subspace $F$ in $\mathcal{F}$. Moreover, $V^+$ and $V^-$ themselves are among the subspaces of $\mathcal{F}$ hence we must have $f\in\gl(V^+)\times\gl(V^-)=\fk$, and finally
        $$\Hom(\mathcal{F},\mathcal{F})\cong\{f\in\fk:f(F)\subset F\ \forall F\in\mathcal{F}\}=\Lie(\Stab_K(\mathcal{F})).$$
        Thus we get $ \langle \mathcal{F}, \mathcal{F} \rangle = \dim \Stab_K(\mathcal{F})$, 
and consequently $ \dim \Xorbit_\mathcal{F} = \dim K/\Stab_K(\mathcal{F}) = p^2 + q^2 - \langle \mathcal{F}, \mathcal{F} \rangle $.

(2) follows from a general property of quiver representations; see e.g. \cite{Riedtmann.1986}.    
\end{proof}

\begin{remark}
\begin{penumerate}
\item
Note that 
    $$    p^2+q^2=\dim\dblFV((\mbfa', \mbfb'); \mbfc) + \frac{1}{\,2\,}\Bigl( \|\mbfa\|^2+\|\mbfb\|^2+\|\mbfc\|^2-n^2 \Bigr)  $$
    hence the formula in Lemma \ref{lemma:general.properties.orbits}\,(1) is an analogue of \cite[Proposition 4.6]{MWZ.1999}.

\item
The natural question is whether the converse of the implication in Lemma \ref{lemma:general.properties.orbits}\,(2) is valid. 
    There is a similar discussion in \cite[Proposition 4.5]{MWZ.1999}.
    In Theorem \ref{theorem:orbits.AIII} below we will show the equivalence 
    for one particular double flag variety.
%$\dblFV(\mbfa,\mbfb,\mbfc)$.
\end{penumerate}
\end{remark}

Now we concentrate on the study of the isomorphism classes in 
$ \JQRep $ as representations of the quiver $ Q $.  

\begin{lemma}
Any representation $ \pi \in \JQRep $ of $ Q $ decomposes into indecomposables in $ \Rep Q $ uniquely as 
    \begin{equation*}
        \pi = \bigoplus_k \pi_k ,
    \end{equation*}
where each indecomposable module $ \pi_k $ belongs to $ \JQRep $.
\end{lemma}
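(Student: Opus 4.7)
The plan is to reduce the statement to two routine compatibility checks after invoking Krull--Schmidt. Since $Q$ is a finite acyclic quiver, $\Rep Q$ is equivalent to the category of finite-dimensional modules over the path algebra $\C Q$, so the Krull--Schmidt theorem yields existence and uniqueness (up to permutation and isomorphism) of a decomposition $\pi=\bigoplus_k\pi_k$ into indecomposables of $\Rep Q$. What remains is to verify that each $\pi_k$ satisfies the two defining properties of $\JQRep$: all arrow maps are injective, and the joint direct-sum condition $V=V^+\oplus V^-$ holds at the central vertex.

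The key observation I would use is that a direct sum decomposition in $\Rep Q$ is vertex-wise and arrow-wise: the vector space at each vertex of $\pi$ splits as the direct sum of the corresponding spaces of the $\pi_k$, and every arrow map of $\pi$ is block-diagonal with respect to this splitting. Injectivity then descends immediately, since for any arrow $e$ one has $\ker f_e(\pi)=\bigoplus_k\ker f_e(\pi_k)$; hence if $\pi$ has all arrow maps injective, so does each $\pi_k$.

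For the joint condition, I would write $\iota^+:V^+\to V$ and $\iota^-:V^-\to V$ for the final arrow maps in the $\mbfa$- and $\mbfb$-branches of $\pi$, and decompose them as $\iota^{\pm}=\bigoplus_k\iota^{\pm}_k$ with $\iota^{\pm}_k:V^{\pm}_k\to V_k$. Unfolding the hypothesis $V=\iota^+(V^+)\oplus\iota^-(V^-)$ gives
\begin{equation*}
\textstyle
\bigoplus_k V_k \;=\; \bigoplus_k \iota^+_k(V^+_k)\;\oplus\;\bigoplus_k\iota^-_k(V^-_k),
\end{equation*}
and since $\iota^{\pm}_k(V^{\pm}_k)\subseteq V_k$, comparing this identity summand by summand forces $V_k=\iota^+_k(V^+_k)\oplus\iota^-_k(V^-_k)$ for every $k$. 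Reading off dimensions, this also produces compositions $(\mbfa^{(k)},\mbfb^{(k)},\mbfc^{(k)})\in\Lambda^J$ (allowing zero parts, as the definition of $\Lambda^J$ permits), so $\pi_k\in\JQRep(\mbfa^{(k)},\mbfb^{(k)},\mbfc^{(k)})\subseteq\JQRep$.

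There is no serious obstacle here: the lemma is essentially the assertion that both defining constraints of $\JQRep$ are stable under passage to direct summands, which follows at once from the block-diagonal form of morphisms in $\Rep Q$. The only point requiring mild attention is to allow zero parts in the compositions attached to the summands $\pi_k$, which is explicitly permitted by the setup.
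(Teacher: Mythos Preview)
Your proof is correct and follows the same approach the paper has in mind: the paper omits the proof, saying it ``can be obtained in the same way as in the case of multiple flag varieties,'' where the argument was Krull--Schmidt via the path algebra plus the remark that the subcategory is closed under direct summands. You have simply filled in the details the paper leaves implicit, in particular the verification that the joint condition $V=\iota^+(V^+)\oplus\iota^-(V^-)$ descends to each summand.
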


This lemma can be obtained in the same way as in the case of multiple flag varieties, and we omit the proof.
Note that in a composition $\mbfa$, we allow parts $a_i$ to be zero. Let $\mbfa^+$ be the list of nonzero parts of $\mbfa$ arranged in nonincreasing order.
For $(\mbfa,\mbfb,\mbfc)\in\Lambda^J$, let $d(\mbfa,\mbfb,\mbfc)$ be the corresponding dimension vector, i.e., the collection of dimensions of the subspaces in the triple of flags.

\begin{theorem}[{Homma \cite[Proposition 2.12 and Theorem 2.13]{Homma.2021}}]\label{thm:finite.type.AIII}
Let $ \boldsymbol{\lambda}=(\mbfa,\mbfb,\mbfc)\in \Lambda^J $.
The following conditions {\upshape(1)--(3)} are equivalent.  
\begin{penumerate}
\item
The double flag variety $ \dblFV=\dblFV((\mbfa',\mbfb');\mbfc) $
is of finite type.
\item 
Any $ \boldsymbol{\mu}=(\mbfa_0,\mbfb_0,\mbfc_0)\in\Lambda^J$ such that $ \boldsymbol{\lambda}-\boldsymbol{\mu}\in\Lambda^J$ satisfies $(d(\boldsymbol{\mu})|d(\boldsymbol{\mu}))\geq 1$.
\item 
There is no $\boldsymbol{\mu}=(\mbfa_0,\mbfb_0,\mbfc_0)\in\Lambda^J$ with $\boldsymbol{\lambda}-\boldsymbol{\mu}\in\Lambda^J$ such that $(\mbfa_0'^+,\mbfb_0'^+,\mbfc_0)$ is in the following list up to switching the first two partitions.  
\begin{eqnarray*}
& ((1^3),(1^3),(2^3)),\quad ((2^2),(1^5),(3^3)),\quad ((1^2),(1^2),(1^4)),\\ & ((3),(1^5),(2^4)),\quad
((2),(1^3),(1^5)),\quad ((3),(2^2),(1^7)).
\end{eqnarray*}
\end{penumerate}
\end{theorem}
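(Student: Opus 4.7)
The plan is to adapt the quiver-theoretic strategy of Theorem~\ref{thm:finiteness.criterion.via.quiver} (after Magyar--Weyman--Zelevinsky) to the subcategory $\JQRep$ of representations of the triple-branch quiver $Q$ of lengths $(\ell_1,\ell_2,\ell_3)$. By Lemma~\ref{lemma:quiver-vs-orbits}, $K$-orbits on $\dblFV((\mbfa',\mbfb');\mbfc)$ are in bijection with isomorphism classes in $\JQRep(\mbfa,\mbfb,\mbfc)$, so (1) becomes the finiteness of these isomorphism classes. The underlying quiver is the same as in Theorem~\ref{thm:finiteness.criterion.via.quiver}, so the Tits form on $\boldsymbol{\mu}=(\mbfa_0,\mbfb_0,\mbfc_0)\in\Lambda^J$ reads
\[
(d(\boldsymbol{\mu})\,|\,d(\boldsymbol{\mu})) = \tfrac{1}{2}\bigl(\|\mbfa_0\|^2 + \|\mbfb_0\|^2 + \|\mbfc_0\|^2 - n_0^2\bigr),\qquad n_0=\textstyle\sum_k c_{0,k}.
\]

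For (2)$\,\Rightarrow\,$(1), a Krull--Schmidt decomposition of $\pi\in\JQRep(\mbfa,\mbfb,\mbfc)$ inside $\JQRep$ writes $\pi=\bigoplus_k\pi_k$ with each $\pi_k$ indecomposable in $\JQRep$. Each summand determines $\boldsymbol{\mu}_k\in\Lambda^J$ with $\boldsymbol{\lambda}-\boldsymbol{\mu}_k\in\Lambda^J$, so (2) yields $(d(\boldsymbol{\mu}_k)\,|\,d(\boldsymbol{\mu}_k))\geq 1$; by Kac's theorem $\pi_k$ is then uniquely determined up to isomorphism by its dimension vector, so isomorphism classes of $\pi$ are parametrized by the finite set of multisets of real positive roots in $\Lambda^J$ summing to $d(\boldsymbol{\lambda})$. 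Conversely, if some $\boldsymbol{\mu}\in\Lambda^J$ with $\boldsymbol{\lambda}-\boldsymbol{\mu}\in\Lambda^J$ satisfies $(d(\boldsymbol{\mu})\,|\,d(\boldsymbol{\mu}))\leq 0$, then $d(\boldsymbol{\mu})$ is an imaginary root and, as in the proof of Theorem~\ref{thm:finiteness.criterion.via.quiver}\,\eqref{thm:finiteness.criterion.via.quiver:item:1}, there are infinitely many non-isomorphic indecomposables with injective structure maps in $\ModQ^{d(\boldsymbol{\mu})}(Q)$. Taking direct sums with any fixed $\rho\in\JQRep$ realizing $\boldsymbol{\lambda}-\boldsymbol{\mu}$ produces infinitely many isomorphism classes in $\JQRep(\mbfa,\mbfb,\mbfc)$, hence infinitely many $K$-orbits.

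For (2)$\,\Leftrightarrow\,$(3), the direction ``not (3) implies not (2)'' is a direct numerical check: for each listed triple $(\mbfa_0'^+,\mbfb_0'^+,\mbfc_0)$, form $\mbfa_0=(\mbfa_0',q_0)$ and $\mbfb_0=(\mbfb_0',p_0)$ with $p_0=|\mbfa_0'|$, $q_0=|\mbfb_0'|$, $n_0=p_0+q_0$, and evaluate the Tits form to obtain $(d\,|\,d)=0$ (e.g.\ $\tfrac{1}{2}(12+12+12-36)=0$ for $((1^3),(1^3),(2^3))$, and all five other triples give $0$ likewise). The reverse direction requires showing that any $\boldsymbol{\mu}\in\Lambda^J$ satisfying $(d(\boldsymbol{\mu})\,|\,d(\boldsymbol{\mu}))\leq 0$ admits a sub-triple $\boldsymbol{\mu}'\in\Lambda^J$ with $\boldsymbol{\lambda}-\boldsymbol{\mu}'\in\Lambda^J$ whose sorted shape appears in the list. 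My plan is to stratify by the effective lengths of $\mbfa_0'^+,\mbfb_0'^+,\mbfc_0$: the underlying tripod graph $T_{\ell_1,\ell_2,\ell_3}$ becomes non-Dynkin (hence admits imaginary roots) precisely in the affine cases $T_{3,3,3},\,T_{2,4,4},\,T_{2,3,6}$ and the first few hyperbolic cases beyond them. Matching the resulting minimal null or imaginary roots against the $\Lambda^J$ constraint, notably the forced identifications $a_{0,\ell_{\mbfa_0}}=q_0$ and $b_{0,\ell_{\mbfb_0}}=p_0$, singles out exactly the six triples.

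The main obstacle is the combinatorial classification step in (3)$\,\Rightarrow\,$(2). The coupling imposed by (\ref{condition:compositions}) prevents us from analyzing $\mbfa_0,\mbfb_0,\mbfc_0$ independently, and the forced values of the final parts make the $\Lambda^J$ constraint quite rigid. The strategy is a minimization argument: iteratively subtract from $\boldsymbol{\mu}$ any sub-triple in $\Lambda^J$ whose removal preserves $(d\,|\,d)\leq 0$ (unit parts being the typical candidates, as they contribute only trivially to the Tits form), until the remaining $\boldsymbol{\mu}'$ is minimal. A finite case analysis on the support then checks that the sorted shape of any such minimal $\boldsymbol{\mu}'$ must appear in the six listed triples, thereby closing the equivalence.
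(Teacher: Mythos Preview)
Your sketch follows the same route as the paper: the equivalence (1)$\Leftrightarrow$(2) is exactly the Kac/MWZ argument of Theorem~\ref{thm:finiteness.criterion.via.quiver} transported to the subcategory $\JQRep$ (which is what the paper means by ``already explained above''), and for (2)$\Leftrightarrow$(3) the paper likewise reduces one direction to the numerical check you carry out and defers the hard direction (3)$\Rightarrow$(2) to Homma's case-by-case analysis, just as you flag it as the remaining obstacle. One small imprecision: in your not-(2)$\Rightarrow$not-(1) step you speak of infinitely many \emph{indecomposables} with injective maps, but what Theorem~\ref{thm:finiteness.criterion.via.quiver}\,\eqref{thm:finiteness.criterion.via.quiver:item:1} actually provides (and what suffices here) is infinitely many \emph{orbits} in the open dense locus of $\ModQ^{d(\boldsymbol{\mu})}(Q)$ where all maps are injective and the two images at the central vertex span; Krull--Schmidt cancellation then makes the direct-summing-with-$\rho$ step go through.
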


The equivalence of (1) and (2) is already explained above.
For deriving the equivalence with (3), the proof is given in \cite[Theorem 2.13]{Homma.2021} based on case-by-case analysis.

We also point out that Homma's results include the complete list of indecomposable representations of the quiver in the case where $\dblFV$ is of finite type; see \cite[Theorem 2.15]{Homma.2021}. 
In principle this yields a parametrization of the orbits in terms of quiver representations.

\subsection{Classification of finite type double flag varieties of type AIII}\label{subsec:classification.table.typeAIII.DFV}

It is not immediate to read off an actual classification of double flag varieties of finite type from Theorem~\ref{thm:finite.type.AIII}.
Here we give a complete list describing the parabolic subgroups 
$ Q = Q_1 \times Q_2 \subset K $ and $ P \subset G $.

\begin{corollary}\label{cor:classification.AIII.finite.type}
The double flag variety $\dblFV=\dblFV((\mbfa',\mbfb');\mbfc)$ of type AIII is of finite type if and only if it appears (up to switching $ Q_1 $ and $ Q_2 $) among those in Table~\ref{table:AIII.finite.type}.
\end{corollary}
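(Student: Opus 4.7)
The plan is to deduce the classification directly from Theorem~\ref{thm:finite.type.AIII}, whose third condition reduces finiteness of $\dblFV((\mbfa',\mbfb');\mbfc)$ to a purely combinatorial question: for $\boldsymbol{\lambda}=(\mbfa,\mbfb,\mbfc)\in\Lambda^J$, one must decide whether there exists a ``sub-triple'' $\boldsymbol{\mu}=(\mbfa_0,\mbfb_0,\mbfc_0)\in\Lambda^J$ with $\boldsymbol{\lambda}-\boldsymbol{\mu}\in\Lambda^J$ such that $(\mbfa_0'^+,\mbfb_0'^+,\mbfc_0)$ matches one of Homma's six forbidden shapes $\mathcal{L}$ (up to switching $\mbfa_0'^+\leftrightarrow\mbfb_0'^+$). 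I would translate the data $(\mbfa',\mbfb',\mbfc)$ into the parabolic datum $(Q_1,Q_2,P)$ using \eqref{eq:doubleFVa'b'c}, so that the desired table appears as a list in terms of the parabolic types.

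First I would exploit that every shape in $\mathcal{L}$ is uniformly bounded: the length of $\mbfc_0$ is at most $8$, the lengths of $\mbfa_0',\mbfb_0'$ are at most $5$, and no part exceeds $3$. Hence any $\boldsymbol{\lambda}$ which is ``too rich'' must automatically contain a member of $\mathcal{L}$, while sufficiently restricted $\boldsymbol{\lambda}$ cannot. Concretely, I would stratify by the numerical data $(\ell_{\mbfa'},\ell_{\mbfb'},\mbfc)$ together with the maximal part sizes of $\mbfa'$ and $\mbfb'$, fixing once and for all the convention $\mbfa\leftrightarrow\mbfb$ to remove the switching redundancy.

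For the ``if'' direction I would go through every row of Table~\ref{table:AIII.finite.type} and verify directly, using the explicit classification of indecomposable representations given in \cite[Theorem~2.15]{Homma.2021}, that no subtraction of a valid $\boldsymbol{\mu}$ from $\boldsymbol{\lambda}$ reproduces any member of $\mathcal{L}$; equivalently, that every indecomposable summand in the canonical decomposition of the corresponding representation is a real root. For the ``only if'' direction I would argue by contraposition: for any $(\mbfa',\mbfb',\mbfc)$ not appearing in the table, I would exhibit an explicit $\boldsymbol{\mu}\in\Lambda^J$ reproducing one of the six forbidden shapes. Which of the six to use is essentially dictated by which numerical threshold in the table is violated (e.g.\ a $\mbfc$ with too many parts of size $\geq 2$ matches $((1^3),(1^3),(2^3))$ or $((2^2),(1^5),(3^3))$, while compositions with too many parts but all of size $1$ match $((1^2),(1^2),(1^4))$, and so on).

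The main obstacle will be the bookkeeping rather than any single hard step. The six forbidden shapes can often be matched to a given $\boldsymbol{\lambda}$ in several inequivalent ways, and the symmetry $\mbfa\leftrightarrow\mbfb$ combined with the freedom to reorder parts of $\mbfc$ multiplies the number of formally distinct cases. To keep the enumeration exhaustive yet non-redundant, I would introduce normal forms for $(\mbfa'^+,\mbfb'^+,\mbfc^+)$ (e.g.\ $\mbfa'^+$ lex-smaller than $\mbfb'^+$ and $\mbfc^+$ sorted weakly decreasing) and then reduce each case to a short arithmetic inequality on partition statistics. Once this stratification is fixed, each individual case becomes a routine check, and the resulting list is precisely Table~\ref{table:AIII.finite.type}.
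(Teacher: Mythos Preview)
Your plan is correct and follows essentially the same route as the paper: both reduce to Theorem~\ref{thm:finite.type.AIII}\,(3) and then do a case analysis on which of the six forbidden triples can occur as a summand. The paper's execution is leaner than what you sketch: it stratifies solely by $|P|=\ell_{\mbfc}$ (the cases $|P|\le 2$, $|P|=3$, $|P|=4$, $|P|\in\{5,6\}$, $|P|\ge 7$) and, within each stratum, reads off directly which forbidden triples are avoidable; in particular there is no need to invoke Homma's explicit list of indecomposables from \cite[Theorem~2.15]{Homma.2021} for the ``if'' direction, since the absence of each of the six summands is an elementary inequality on the part-lengths and part-sizes of $\mbfa',\mbfb',\mbfc$.
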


In the table, we use the above identification of $\dblFV$ with $\GL_p/Q_1\times \GL_q/Q_2\times \GL_n/P$.
Moreover we let $|P|=\ell_\mbfc$, $|Q_1|=\ell_\mbfa-1$, $|Q_2|=\ell_\mbfb-1$.

\begin{table}[htbp]
\caption{Double flag varieties of finite type in type AIII}\label{table:AIII.finite.type}
%
%%\hfil{Table~\ref{table:AIII.finite.type}: Double flag varieties of finite type in type AIII}\hfil
%%\\[2ex]
\hfil
\begin{tabular}{|c||c|c|c|}
\hline 
type &  $P$  &  $Q_1$  &  $Q_2$  \\ \hline \hline
(PA-1) & any & \multicolumn{2}{|c|}{$Q=K$ or mirabolic} \\ \hline
(PA-2) & any & any & $\GL_1$ \\ \hline
(PA-3) & any & \quad\quad maximal \quad\quad & $\GL_2$ \\ \hline\hline
(P6) & $|P|\leq 6$ & maximal & $\GL_q$ \\ \hline\hline
(P4-1) & $|P|\leq 4$ & $|Q_1|\leq 4$ & $\GL_q$ \\ \hline
(P4-2) & $|P|\leq 4$ \& $\min\,c_i=1$ & any & $\GL_q$  \\ \hline
{(P4-3)} & $|P|\leq 4$ & any & $\GL_2$ \\ \hline\hline
(P3-1) & $|P|\leq 3$ \& $\min\,c_i=1$ & any & any \\ \hline
(P3-2) & $|P|\leq 3$ \& $\min\,c_i=2$ & any & $|Q_2|\leq 2$ \\ \hline
(P3-3) & $|P|\leq 3$ & any & $\GL_q$ or mirabolic \\ \hline
(P3-4) & $|P|\leq 3$ & $|Q_1|\leq 4$ & maximal \\ \hline\hline
(P2) & \begin{tabular}{l}$|P|\leq 2$ \\ ($\GL_n$ or maximal) \end{tabular}
& any & any \\ \hline
\end{tabular}
\hfil
\end{table}

\vspace*{-2ex}

\begin{proof}
We first observe that if $\boldsymbol{\lambda}=(\mbfa,\mbfb,\mbfc)$ contains some summand $\boldsymbol{\mu}$ which is in the list given in Theorem \ref{thm:finite.type.AIII}\,(3), then we must have $|P|=\ell_\mbfc\geq 3$.
Thus, if $|P|=1$ or $2$ (that is, $P=\GL_n$ or maximal), then $\dblFV$ will be of finite type. This corresponds to the last row (P2) in Table \ref{table:AIII.finite.type}.

Next, we consider the case where $|P|=3$. 
Up to switching $\mbfa$ and $\mbfb$, we can assume that $|Q_1|=\ell_\mbfa-1\geq \ell_\mbfb-1=|Q_2|$.
\begin{itemize}
\item 
If $\min\,c_i=1$, then
$\boldsymbol{\lambda}$ has no summand arising in the list given in Theorem \ref{thm:finite.type.AIII}\,(3). 
This gives (P3-1). 
\end{itemize}
Now assume that $\min\,c_i \geq 2$. For avoiding summands corresponding to the case $ ((1^3),(1^3),(2^3)) $ 
in Theorem \ref{thm:finite.type.AIII}\,(3), it is necessary and 
sufficient to have $|Q_2| \leq 2$, so let us assume this condition below. 
\begin{itemize}
\item If $\min\,c_i=2$, then the other five triples in Theorem \ref{thm:finite.type.AIII}\,(3) are automatically avoided, and this yields (P3-2).
\item
If $\min\,c_i\geq 3$, then the triple $((2^2),(1^5),(3^3))$ is avoided if and only if $|Q_2|=1$; or $|Q_2|=2$ and $\min\{b_1,b_2\}=1$; or $|Q_1| \leq 4$.
(Note that, since $|Q_2| \leq 2$, $(1^5)$ cannot contribute to $\mbfb$.) Moreover, the remaining four triples of Theorem \ref{thm:finite.type.AIII}\,(3) are automatically avoided.  
This gives (P3-3) and (P3-4).
\end{itemize}

%Assuming $\min\,c_i=2$, it is of finite type exactly when it avoids summands corresponding to the case  $ ((1^3),(1^3),(2^3)) $ 
%in Theorem \ref{thm:finite.type.AIII}\,(3), which happens
%if and only if $|Q_2|\leq 2$.
%This gives (P3-2).
%
%Finally, we assume that $\min\,c_i\geq 3$.   
%If $|Q_2|\leq 3$, the summand $((1^3),(1^3),(2^3))$ will appear.  
%Thus we must have $|Q_2|\leq 2$
%
%for being of finite type $(\mbfa,\mbfb,\mbfc)$ has to avoid summands corresponding to $((1^3),(1^3),(2^3))$ and $((2^2),(1^5),(3^3))$,
%which happens if and only if 
%$|Q_2|\leq 2$ and $|Q_1|\leq 4$; or 
%$|Q_2|=1$; or 
%$|Q_2|=2$ and $\min\{b_1,b_2\}=1$. This gives (P3-3) and (P3-4).

Now suppose that $|P|=4$. 
Still we can assume that $|Q_1|\geq |Q_2|$.
Then $(\mbfa,\mbfb,\mbfc)$ will avoid summands corresponding to $((1^2),(1^2),(1^4))$
if and only if $|Q_2|=1$, that is $Q_2=\GL_q$.
In such a case, the first two triples in Theorem \ref{thm:finite.type.AIII}\,(3) are automatically avoided,
and to avoid the fourth one $((3),(1^5),(2^4))$, it is necessary and sufficient to have $q\leq 2$; or $|Q_1|\leq 4$; or $\min\,c_i=1$.
This gives (P4-1), (P4-2), (P4-3), and (PA-2) (in the special case $|P|=4$).

Next let $|P|\in\{5,6\}$. 
Still we assume $|Q_1|\geq |Q_2|$.
{Then, as above, $ (\mbfa, \mbfb, \mbfc) $ avoids $((1^2),(1^2),(1^4))$ if and only if $Q_2=\GL_q$. Now} the fifth triple $((2),(1^3),(1^5))$ will be avoided if and only if $q=1$ or $|Q_1|\leq 2$. In such a case, the other triples in Theorem \ref{thm:finite.type.AIII}\,(3) are also avoided. This corresponds to (P6) and {(PA-2)} (in the special case $|P|\in\{5,6\}$).

Finally, let $|P|\geq 7$.
Still we assume $|Q_1|\geq |Q_2|$.
First note that we must have $|Q_2|=1$ (i.e., $Q_2=\GL_q$), otherwise $((1^2),(1^2),(1^4))$ cannot be avoided.
If $q=1$ or $|Q_1|=1$, then the six triples are avoided.
If $q=2$, then for avoiding the fifth triple we must have $|Q_1|\leq 2$, and in this case all the triples will be avoided.
If $q\geq 3$, then for avoiding the fifth and sixth triples we must have $|Q_1|=1$ or ($|Q_1|=2$ and $\min\,a_i=1$) (i.e., $Q_1=\GL_p$ or mirabolic){, and then the other triples are also avoided}.
This yields (PA-1), (PA-2), (PA-3).
\end{proof}

\begin{remark}
The cases (PA-1)--(PA-3), and (P2) are already identified as finite type in \cite{NO.2011}.    
But all the other cases contain new series of finite type double flag varieties.  
\end{remark}

\subsection{Generalizations}

Theorem \ref{thm:finite.type.AIII} characterizes double flag
varieties of the form $\dblFV = (\GL_p\times\GL_q)/Q \times \GL_n/P$ with finitely many orbits of $K:=\GL_p\times\GL_q$. Actually, the fact that $K$ is a symmetric subgroup of $\GL_n$ does not play any role in the argument, and it is natural to consider the following more general problem.

\begin{problem}
Given a decomposition $V=\C^n=V_1\oplus\ldots\oplus V_N$
with $ p_i := \dim V_i $,
characterize the sequences of compositions $(\mbfa_1,\ldots,\mbfa_N; \mbfc) $ of $p_1,\ldots,p_N$ and $n$, respectively,
such that the multiple flag variety
\begin{equation}\label{X-general}
\Xfv = \bigl( \Flags_{\mbfa_1}(V_1)\times\cdots\times \Flags_{\mbfa_N}(V_N) \bigr)
\times 
\Flags_{\mbfc}(V) 
\end{equation}
has finitely many orbits for $L:=\GL_{p_1}\times\cdots\times\GL_{p_N}$.
\end{problem}

It is worth to note that, contrary to the case 
of usual multiple flag varieties of the form
$\Flags_{\mbfc_1}(V)\times\cdots\times\Flags_{\mbfc_N}(V)$,
for which it is necessary to have $N\leq 3$ for having a finite number of $\GL_n$-orbits (see \cite{MWZ.1999}), there exist multiple flag varieties $\Xfv$ of the form (\ref{X-general}) 
which are of finite type despite of an arbitrarily large number of factors $N$ as shown in the next example.  

\begin{example}
If we have $\mbfc=(1,n-1)$, so that $\Flags_\mbfc(V)=\mathbb{P}(V)$, then
the variety $\Xfv$ of (\ref{X-general}) will have a finite number of $L$-orbits for any choice of $N$ and $\mbfa_1,\ldots,\mbfa_N$.
Indeed, $\mathbb{P}(V)$ has a finite number of $T$-orbits for any maximal torus $T\subset \GL_n$, hence a fortiori it contains a finite number of $Q$-orbits, so that
$\Xfv \cong L/Q \times \mathbb{P}(V) $ has a finite number of $L$-orbits for any parabolic $Q\subset L$ (since $Q$ contains a maximal torus of $\GL_n$).
\end{example}

The next example is less immediate; the justification relies on a straightforward generalization of the criterion in Theorem \ref{thm:finite.type.AIII}\,(2) (we skip the details).

\begin{example}
Let $V=\C^n=\C^p\oplus \C^q\oplus \C$ with $n=p+q+1$, $p,q\geq 1$. Then the triple flag variety
\begin{align*}
&
\bigl( \mathbb{P}^{p-1}\times\mathbb{P}^{q-1} \bigr) \times \Grass_r(\C^n)
\\
& 
\simeq 
 \bigl( \GL_p/P_{(1, p - 1)} \times \GL_q/P_{(1, q- 1)} \times \GL_1/\GL_1 \bigr) \times \GL_n/P_{(r, n - r)}
\end{align*}
has a finite number of $\GL_p\times \GL_q\times \GL_1$-orbits.  
Here $ \Grass_r(V) $ denotes the Grassmann variety of $ r $-dimensional subspaces in $ V $.  

To check this, it is sufficient to show that every dimension vector of the form $d=(\mbfa_1,\mbfa_2,\mbfa_3, \mbfc)$ with 
$\mbfc=(r,n-r)$, $\mbfa_1=(1,p-1,q+1)$, $\mbfa_2=(1,q-1,p+1)$, $\mbfa_3=(1,n-1)$
satisfies $(d|d)\geq 1$.
Now, for seeing this, we compute
\begin{eqnarray*}
2(d|d) & = & \|\mbfc\|^2+\|\mbfa_1\|^2+\|\mbfa_2\|^2+\|\mbfa_3\|^2-2n^2 \\
 & = & r^2+(n-r)^2+3+(p-1)^2+(q+1)^2+(q-1)^2+(p+1)^2 \\
& & \qquad\hspace*{.45\textwidth} +(n-1)^2-2n^2 \\
 & = & (r-2)^2+2(p-\frac{r+1}{2})^2+2(q-\frac{r+1}{2})^2+1\geq 2.
\end{eqnarray*}
\end{example}

\section{Orbits in the double flag variety of type AIII}\label{section-7}
%%$\Grass_r(V^+\oplus V^-)\times\Flags(V^+)\times\Flags(V^-)$

In this section, we show a concrete example of classification of orbits in a double flag variety by using quiver representations.  
Thus we focus on the category of quiver representations $ \JQRep(\mbfa,\mbfb,\mbfc) $ and the double flag variety 
$ \dblFV((\mbfa',\mbfb');\mbfc) $ of type AIII (cf.~\cite{Fresse.N.2020,Fresse.N.2023}).  
We summarize the notation which will be used below:
\begin{itemize}
\item $n=p+q$, $r\leq n$ are positive integers, and we set
$$
\mbfa=(1^p,q),\quad \mbfb=(1^q,p),\quad \mbfc=(r,n-r);
$$
\item $V=\C^n=V^+\oplus V^-$ where $V^+=\C^p\times \{0\}^q$, $V^-=\{0\}^p\times \C^q$, 
\quad
with standard bases $(e_1,\ldots,e_n)=(e_1^+,\ldots,e_p^+,e_1^-,\ldots,e_q^-)$;
\smallskip
\item $G=\GL(V)=\GL_n$, \quad $K=\GL(V^+)\times \GL(V^-)=\GL_p\times\GL_q$, 
\\
$ P = P_{(r, n - r)} = \Stab_G(\C^r\times\{0\}^{n-r})$, \quad $B_K=B_p^+\times B_q^+$,  
\\
\smallskip
where $B_p^+\subset\GL_p$, $B_q^+\subset \GL_q$ are subgroups of upper-triangular matrices corresponding to $ \mbfa' = (1^p) $ and $ \mbfb' = (1^q) $ respectively;
\smallskip
\item $ \dblFV = \dblFV((\mbfa',\mbfb');\mbfc) = K/B_K \times G/P \simeq (\Flags(V^+)\times\Flags(V^-)) \times \Grass_r(V) $.
\end{itemize}

\subsection{Category of joint representations}

We consider the quiver $ Q = T_{2, p+1, q + 1} $ indicated below, 
which is also denoted as $ S(p+1, q+1) $ in \cite{MWZ.1999}.  
$$
\xymatrix{
&& \bullet \ar[ld] & \bullet \ar[l] & \cdots\ar[l] & \bullet \ar[l] & \text{($p$ vertices)} \\
\bullet \ar[r] & \bullet & \\
&&\bullet \ar[lu] & \bullet \ar[l] & \cdots\ar[l] & \bullet\ar[l]  & \text{($q$ vertices)}
}
$$
We denote by $ \JQRep $ the full subcategory of quiver representations $ \Rep Q $ consisting of 
the following representations 
$$
\xymatrix{
&&L_p \ar[ld]^\alpha & L_{p-1} \ar[l] & \cdots\ar[l] & L_1\ar[l] \\
C \ar[r] & D & \\
&&M_q\ar[lu]_\beta & M_{q-1} \ar[l] & \cdots\ar[l] & M_1\ar[l]
}
$$
where all maps are injective and $\Im\alpha\oplus\Im\beta=D$. 
Then $ \JQRep(\mbfa,\mbfb,\mbfc) $ is the subset of such representations satisfying $\dim C=r$, {$\dim D=n$}, $\dim L_i=i$, $\dim M_j=j$ for all $i,j$,
so that the points in the double flag variety $\dblFV = \dblFV((\mbfa',\mbfb');\mbfc) $ 
correspond to the objects in $\JQRep(\mbfa,\mbfb,\mbfc)$, for which $D=V$, $L_p=V^+$, and $M_q=V^-$ hold.  

In order to classify the $K$-orbits of $\dblFV$
by applying Lemma \ref{lemma:quiver-vs-orbits},
we describe the category $ \JQRep $ in more detail.
From \cite{Homma.2021}, we can describe the indecomposable objects which arise in the decomposition of elements in $ \JQRep $.
For $i\in\{1,\ldots,p\}$, we define
$$
L_\bullet^i=(L_1^i,\ldots,L_p^i)\quad\text{such that}\quad L_k^i=\left\{\begin{array}{ll}
\langle e_i^+\rangle & \mbox{if $k\geq i$} \\
0 & \mbox{if $k<i$.}
\end{array}\right.
$$
Furthermore, we set $L^0_\bullet=(L_1^0,\ldots,L_p^0)$ with $L_1^0=\ldots=L_p^0=0$.
We define in the same way $M_\bullet^j=(M_1^j,\ldots,M_q^j)$ with $e_j^-$ instead of $e_i^+$, and $M_\bullet^0=(M_1^0,\ldots,M_q^0)=(0,\ldots,0)$. 
We deduce the indecomposable objects $\I_i^+$, $\I_i'^+$, $\I_j^-$, $\I_j'^-$, $\I_{i,j}$ depicted in Figure \ref{F1}.
%on page \pageref{fig:indecomp.typeAIII.quiver}. 
For each one
we include (in parentheses) a graphical representation, which will be explained in \S~\ref{subsec:orbit.by.quivers.typeAIII} below.
\begin{figure}\label{fig:indecomp.typeAIII.quiver}
\resizebox{.9\linewidth}{!}{
\begin{minipage}{\textwidth}
\begin{eqnarray*}
&
\I_i^+\,\left(\ \GIiplus\ \right):\ 
\vcenter{\vbox{
\xymatrix{&&L_\bullet^i\ar[dl] \\ 
\langle e_i^+\rangle\ar[r] & \langle e_i^+\rangle & \\
&&M_\bullet^0\ar[lu]}
}}
\quad
\I_i'^+\,\left(\ \GIiprimeplus\ \right):\ 
\vcenter{\vbox{
\xymatrix{&&L_\bullet^i\ar[dl] \\ 
0\ar[r] & \langle e_i^+\rangle & \\
&&M_\bullet^0\ar[lu]}
}}
\\
&
\I_j^-\,\left(\ \GIjminus\ \right):\ 
\vcenter{\vbox{
\xymatrix{&&L_\bullet^0\ar[dl] \\ 
\langle e_j^-\rangle\ar[r] & \langle e_j^-\rangle & \\
&&M_\bullet^j\ar[lu]}
}}
\quad
\I_j'^-\,\left(\ \GIjprimeminus\ \right):\ 
\vcenter{\vbox{
\xymatrix{&&L_\bullet^0\ar[dl] \\ 
0\ar[r] & \langle e_j^-\rangle & \\
&&M_\bullet^j\ar[lu]}
}}
\\
&
\I_{i,j}\,\left(\ \GIij\ \right):\ 
\vcenter{\vbox{
\xymatrix{&&L_\bullet^i\ar[dl] \\ 
\langle e_i^++e_j^-\rangle\ar[r] & \langle e_i^+,e_j^-\rangle & \\
&&M_\bullet^j\ar[lu]}
}}
\end{eqnarray*}
\end{minipage}
}
\caption{The indecomposable objects $\mathcal{I}_i^+$, $\mathcal{I}_i'^+$ $\mathcal{I}_j^-$, $\mathcal{I}_j'^-$, $\mathcal{I}_{i,j}$ (for $1\leq i\leq p$ and $1\leq j\leq q$) of the category {$ \JQRep $}.}\label{F1}
\end{figure}
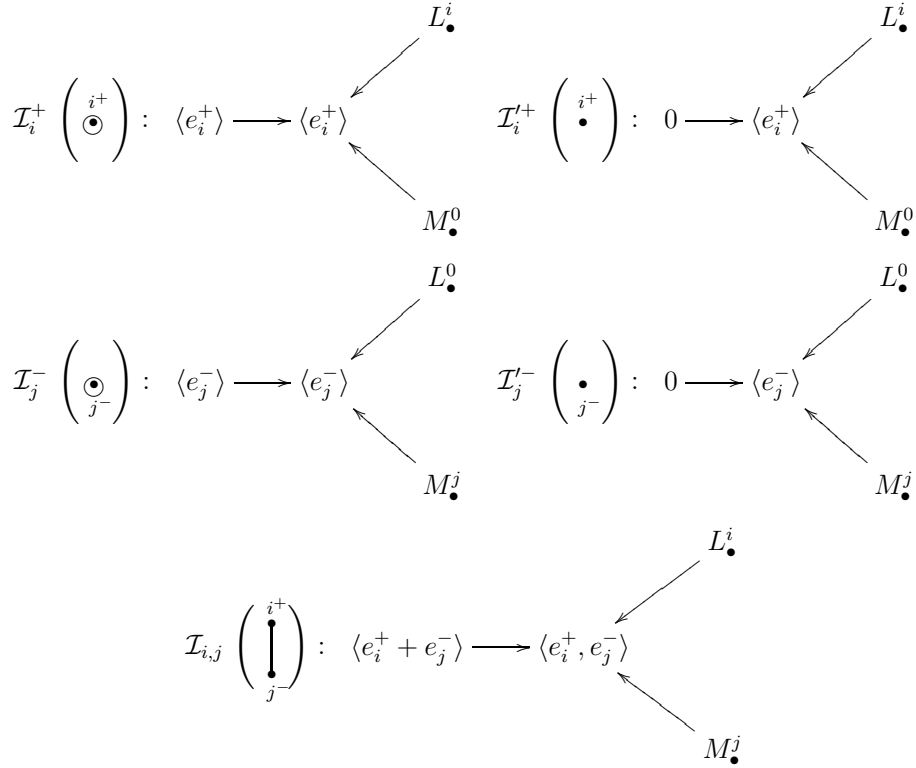

Recall that if $\mathcal{F},\mathcal{G}$ are objects in $\JQRep$, we have denoted by $\Hom(\mathcal{F},\mathcal{G})$ the complex vector space of morphisms of quiver representations, and set
$\langle\mathcal{F},\mathcal{G}\rangle=\dim\Hom(\mathcal{F},\mathcal{G})$.

\begin{lemma}\label{lemma:hom.indecomposables}
Let $ \mathcal{F} \in \JQRep(\mbfa,\mbfb,\mbfc) $ be a quiver representation corresponding to the point  
$ ((F_k^+)_{k=1}^p,(F_\ell^-)_{\ell=1}^q,W) \in \dblFV((\mbfa',\mbfb');\mbfc) $.
For all $(i,j)\in\{1,\ldots,p\}\times\{1,\ldots,q\}$, we have
\begin{eqnarray*}
&\Hom(\mathcal{I}_i^+,\mathcal{F})= F_i^+\cap W,\qquad 
\Hom(\mathcal{I}_i'^+,\mathcal{F}) =F_i^+, \\
&\Hom(\mathcal{I}_j^-,\mathcal{F})=F_j^-\cap W,\qquad
\Hom(\mathcal{I}_j'^-,\mathcal{F})=F_j^-, \\
&\Hom(\mathcal{I}_{i,j},\mathcal{F})=(F_i^+\oplus F_j^-)\cap W.
\end{eqnarray*}
\end{lemma}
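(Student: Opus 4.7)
The plan is to unpack the definition of a morphism of quiver representations for each of the five indecomposables in turn, and check that compatibility with the arrows of $Q$ pins down the free data to an element of the prescribed subspace of $V$.

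First I would handle $\mathcal{I}_i^+$. Its source spaces are $\langle e_i^+\rangle$ at each of the vertices $L_i,L_{i+1},\ldots,L_p,D,C$ and zero elsewhere, with all transition maps equal to the identity on $\langle e_i^+\rangle$. A morphism $\phi\colon\mathcal{I}_i^+\to\mathcal{F}$ is therefore determined by the single vector $v:=\phi_{L_i}(e_i^+)\in F_i^+$: compatibility with the inclusions $L_k\hookrightarrow L_{k+1}$ of $\mathcal{F}$ forces $\phi_{L_k}(e_i^+)=v$ for all $k\geq i$, compatibility with $L_p\to D$ (that is, with $\alpha_{\mathcal{F}}\colon V^+\hookrightarrow V$) forces $\phi_D(e_i^+)=v$ viewed in $V$, and finally compatibility with $C\to D$ forces $\phi_C(e_i^+)=v$. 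Since $\phi_C$ takes values in $W$, this last condition is equivalent to $v\in W$, and combined with $v\in F_i^+$ gives $v\in F_i^+\cap W$. Conversely every such $v$ produces a well-defined morphism, so $\phi\mapsto v$ is the claimed natural identification.

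The case of $\mathcal{I}_i'^+$ is identical except that the vertex $C$ is zero in the source, so the constraint $v\in W$ disappears and we obtain $\Hom(\mathcal{I}_i'^+,\mathcal{F})\cong F_i^+$. The two minus-type indecomposables $\mathcal{I}_j^-$ and $\mathcal{I}_j'^-$ are handled by the evident symmetry exchanging $V^+$ with $V^-$ and the two branches of $Q$. For the mixed indecomposable $\mathcal{I}_{i,j}$, a morphism is determined by the two generators $v^+:=\phi_{L_i}(e_i^+)\in F_i^+$ and $v^-:=\phi_{M_j}(e_j^-)\in F_j^-$; compatibility with $\alpha$ and $\beta$ forces $\phi_D(e_i^+)=v^+$ and $\phi_D(e_j^-)=v^-$, so $\phi_D(e_i^++e_j^-)=v^++v^-$ in $V$. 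Compatibility with $C\to D$ then says $\phi_C(e_i^++e_j^-)=v^++v^-$, which must lie in $W$. Since $V=V^+\oplus V^-$ is a direct decomposition and $v^\pm\in F_i^+,F_j^-$ respectively, the sum $v^++v^-$ is an arbitrary element of $(F_i^+\oplus F_j^-)\cap W$, and conversely each such element is uniquely of this form; thus $\phi\mapsto v^++v^-$ yields the desired isomorphism.

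The argument is a direct and essentially mechanical verification once the transition maps of each indecomposable are read off from Figure~\ref{F1}; no serious obstacle is expected. The only bookkeeping subtlety is to correctly allocate the two families of compatibility relations: the ``flag'' arrows $L_k\to L_{k+1}$ and $M_k\to M_{k+1}$ together with $\alpha,\beta$ enforce membership in the flag subspace $F_i^+$ or $F_j^-$, whereas the arrow $C\to D$ is precisely what enforces membership in $W$; for $\mathcal{I}_{i,j}$ the directness of $V^+\oplus V^-$ is the ingredient that lets one recover $v^\pm$ from $w\in(F_i^+\oplus F_j^-)\cap W$.
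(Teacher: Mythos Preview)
Your proof is correct and is precisely the direct verification the paper has in mind; the paper's own proof consists of the single word ``Straightforward.'' You have simply written out the routine unpacking of the definition of a morphism of quiver representations that the authors chose to omit.
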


\begin{proof}
Straightforward.
\end{proof}

\subsection{Orbit decomposition of $\dblFV$}\label{subsec:orbit.by.quivers.typeAIII}

We recall some results and notation from \cite{Fresse.N.2020,Fresse.N.2021,Fresse.N.2023}.  
Since 
\begin{equation*}
\dblFV = K/B_K \times G/P \simeq (\Flags(V^+)\times\Flags(V^-)) \times \Grass_r(V) ,
\end{equation*}
it is easy to see $ \dblFV / K \simeq B_K \backslash \Grass_r(V) $.  
On the other hand, a point in $ \Grass_r(V) $ represents an $ r $-dimensional subspace of $ V $, 
which can be represented as an image of rank $ r $ matrix $ A \in \Mat_{n, r} \; (\dim V = n = p + q) $.
It is well known that $ \Mat^{\circ}_{n, r}/\GL_r \simeq \Grass_r(V) $, where $ \Mat^{\circ}_{n, r} $ denotes the set of full rank (i.e., rank $ r $) matrices.
Thus we get $ \dblFV / K \simeq B_K \backslash \Mat^{\circ}_{p + q, r}/\GL_r $.  
In this way, as a representative of an orbit in $ \dblFV / K $, we can take a matrix from $ \Mat^{\circ}_{p + q, r} $.  
In fact, we can take ``partial permutations'' as representatives.

By $\ppermutations_{p,r}$ we denote the set of \emph{partial permutation matrices} of size $(p,r)$, i.e., matrices $\tau$ of this shape with coefficients in $\{0,1\}$ and at most one $1$ in each row and each column (consequently all the other entries are $ 0 $).  
We put 
\begin{equation*}
\ppermutations = \Bigl\{ \omega=\begin{pmatrix} \tau_1 \\ \tau_2 \end{pmatrix} \Bigm| \tau_1\in\ppermutations_{p,r},\tau_2\in\ppermutations_{q,r} \Bigr\}, 
\;\;
\ppermutations^{\circ} = \{ \omega \in \ppermutations \mid \rank \omega = r \} 
\subset\ppermutations .
\end{equation*}
The latter subset $ \ppermutations^{\circ} $ is equipped with a natural right action of the symmetric group $\permutationsof{r}$, and we finally 
put $\parameters=\ppermutations^{\circ}/\permutationsof{r}$.  

The set $ \parameters $ is in bijection with $ \dblFV / K $ (see \cite[Theorem 8.1]{Fresse.N.2020}).  
More precisely, we can describe a representative of any $ K $-orbit as follows.  
The range $[\omega]:=\mathrm{Im}\,\omega$ is well defined whenever $\omega\in\parameters$.
Let $\flag^+$ and $\flag^-$ denote the standard flags of $V^+$ and $V^-$ respectively.  
Every orbit in the double flag variety $ \dblFV/K $ has a representative of the form
$$
\flag_\omega:=(\flag^+,\flag^-,[\omega]) \in \dblFV \qquad (\omega\in\parameters).
$$

\begin{notation}
\label{notation:graph}
(a)
It is convenient to represent $\omega=\begin{pmatrix} \tau_1 \\ \tau_2 \end{pmatrix}\in\parameters$ by the graph $\graphic(\omega)$ given below.
\begin{itemize}
    \item $\graphic(\omega)$ has $p+q$ vertices labeled with
    $1^+,\ldots,p^+$ and $1^-,\ldots,q^-$, respectively.
    \item For every column of $\omega$ which contains two $1$'s, in the $i$-th and $j$-th rows of $\tau_1$ and $\tau_2$, respectively, we join the vertices $i^+$ and $j^-$ of $\graphic(\omega)$ with an edge.
    \item For every column of $\omega$ which contains only one $1$, situated in the $i$-th row of $\tau_1$ (resp. in the $j$-th row of $\tau_2$), we circle the corresponding vertex $i^+$ (resp. $j^-$) of $\graphic(\omega)$, and call it a \emph{marked vertex}.
    \item In particular, the vertex $i^+$ (resp. $j^-$) is neither marked nor incident with an edge (we will call this ``free point'') if and only if the corresponding row of $\omega$ contains only $0$'s.
\end{itemize}

For instance,
\begin{equation}
\omega=\mbox{\tiny $\begin{pmatrix} 0 & 0 & 0 & 0 \\ 1 & 0 & 0 & 0 \\ 0 & 0 & 0 & 0 \\ 0 & 1 & 0 & 0 \\ 0 & 0 & 1 & 0 \\ \hline 0 & 1 & 0 & 0 \\ 0 & 0 & 0 & 1 \\ 1 & 0 & 0 & 0 \end{pmatrix}$}
\;\;
\rightsquigarrow
\;\;
\graphic(\omega)=
\mbox{\tiny
$\begin{picture}(100,20)(0,0)
\put(0,11){$\bullet$}\put(20,11){$\bullet$}\put(40,11){$\bullet$}\put(60,11){$\bullet$}\put(80,11){$\bullet$}
\put(0,18){$1^+$}\put(20,18){$2^+$}\put(40,18){$3^+$}\put(60,18){$4^+$}\put(80,18){$5^+$}
\put(0,-11){$\bullet$}\put(20,-11){$\bullet$}\put(40,-11){$\bullet$}
\put(0,-20){$1^-$}\put(20,-20){$2^-$}\put(40,-20){$3^-$}
\put(21,13){\line(1,-1){21}}
\put(61,12){\line(-3,-1){60}}
\put(82,13){\circle{8}}
\put(22.5,-9){\circle{8}}
\end{picture}$}.
\label{formula:example.graph}
\end{equation}

(b)
Note that, according to the representation of the indecomposable objects of the category $\JQRep$ given in Figure \ref{F1}, 
the connected components of $\graphic(\omega)$ give rise to a collection of indecomposable objects which we denote $\Indecomposable(\graphic(\omega))$.  
For instance, for the above $\omega$, we have
$$
\Indecomposable(\graphic(\omega)) 
= \{ \mathcal{I}_5^+, \; \mathcal{I}_1'^+, \; \mathcal{I}_3'^+, \; {\mathcal{I}_2^-,} \; \mathcal{I}_{2,3}, \; \mathcal{I}_{4,1} \}.
$$
\end{notation}

The next lemma easily follows from the definitions.

\begin{lemma}\label{lemma:decomposition.omega}
In the category $\JQRep$, 
we have a multiplicity free decomposition 
\begin{equation*}
\mathcal{F}_\omega=\bigoplus\nolimits_{\mathcal{I}\in \Indecomposable(\graphic(\omega))}\mathcal{I}
\end{equation*}
by indecomposable representations.  
\end{lemma}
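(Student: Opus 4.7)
The plan is to exhibit the direct sum decomposition explicitly, connected component by connected component, and then to check that no indecomposable object arises twice. The key is to write $\mathcal{F}_\omega$ in coordinates: it is the representation whose four flag-slots are
$L_k = F_k^+ = \langle e_1^+,\ldots,e_k^+\rangle$,
$M_\ell = F_\ell^- = \langle e_1^-,\ldots,e_\ell^-\rangle$,
$D=V$, and $C=W=[\omega]$, where $W$ has a natural basis indexed by the columns of $\omega$.

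First I would describe the column basis of $W$: since $\omega = \bigl(\begin{smallmatrix}\tau_1\\\tau_2\end{smallmatrix}\bigr)$ is a partial permutation matrix of full column rank, each column of $\omega$ is of exactly one of three types, producing the basis vector $e_i^+ + e_j^-$ (if both $\tau_1$ and $\tau_2$ contribute a $1$, at rows $i$ and $j$ respectively), or $e_i^+$ (if only $\tau_1$ contributes, at row $i$), or $e_j^-$ (if only $\tau_2$ contributes). These three types are exactly what the edges, marked vertices on the $+$ row, and marked vertices on the $-$ row of $\graphic(\omega)$ encode by construction, while the free vertices correspond to zero rows of $\omega$, i.e.\ basis vectors $e_i^+$ or $e_j^-$ which do not appear in $W$ at all.

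Next, for each connected component of $\graphic(\omega)$ I would read off the associated subrepresentation of $\mathcal{F}_\omega$: an edge $(i^+, j^-)$ yields $(L_\bullet^i, M_\bullet^j, \langle e_i^++e_j^-\rangle, \langle e_i^+, e_j^-\rangle)\cong \I_{i,j}$; a marked $i^+$ yields $(L_\bullet^i,0,\langle e_i^+\rangle,\langle e_i^+\rangle)\cong \I_i^+$, and analogously on the $-$ side; a free $i^+$ yields $(L_\bullet^i,0,0,\langle e_i^+\rangle)\cong \I_i'^+$ and likewise $\I_j'^-$ for a free $j^-$. Summing these over all components slot by slot, the $D$-slot becomes $\bigoplus_i \langle e_i^+\rangle \oplus \bigoplus_j \langle e_j^-\rangle = V$; the $L_k$-slot becomes $\bigoplus_{i\leq k}\langle e_i^+\rangle = F_k^+$ (and similarly for $M_\ell$); and the $C$-slot is the span of all basis vectors of $W$ described in the previous paragraph, hence $W$ itself. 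This gives the claimed decomposition of $\mathcal{F}_\omega$.

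Finally, multiplicity freeness is essentially a bookkeeping remark: every vertex $i^+$ (resp.\ $j^-$) of $\graphic(\omega)$ belongs to exactly one connected component (free, marked, or endpoint of a unique edge, since $\omega$ has at most one $1$ in every row), and the indecomposables $\I_i^+, \I_i'^+, \I_j^-, \I_j'^-, \I_{i,j}$ are pairwise distinguished by the indices $i,j$, so each isomorphism class occurs at most once. I do not anticipate a genuine obstacle here; the only mildly delicate point is to make sure the spanning argument for the $C$-slot really produces a basis of $W$, which ultimately amounts to observing that the column rank of $\omega$ equals the number of connected components that are not free vertices, and this is built into the definition of $\parameters = \ppermutations^\circ/\permutationsof{r}$.
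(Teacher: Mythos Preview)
Your proposal is correct and is exactly the direct verification the paper has in mind: the paper does not give a proof at all, merely remarking that the lemma ``easily follows from the definitions.'' Your explicit unpacking of the column basis of $[\omega]$ and the slot-by-slot summation is precisely the kind of check the authors are leaving to the reader.
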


\begin{theorem}\label{theorem:orbits.AIII}
\begin{penumerate}
    \item The map $\parameters\to \dblFV/K$, $\omega\mapsto \Xorbit_\omega:=K\cdot \mathcal{F}_\omega$,
    is a bijection.
    \item We have 
    \begin{align*}
        \dim\Xorbit_\omega
        &= p^2+q^2- 
        \sum_{\text{\makebox[10ex][c]{\upshape $ x=i^\pm $: free point}}} i
        \\
        &
         - \sum_{\text{\makebox[8ex][c]{\upshape $ x $: marked point}}} \quad
         \#\left\{
         \text{\upshape
         \begin{tabular}{l}
         marked points on the same row
         \\
         on the left of $x$, including $x$
         \end{tabular}
         }\right\} \\
         & - \sum_{\text{\upshape $ \alpha $: edge}} \;\;
         \#\left\{\text{\upshape
         \begin{tabular}{l}
         marked points and edges 
         \\
         on the left of $\alpha$, including $\alpha$
         \end{tabular}
         }\right\}. 
    \end{align*}

    \item For all $\omega,\omega'\in\parameters$, we have
    $$ \Xorbit_\omega\subset\overline{\Xorbit_{\omega'}}
    \iff \langle\mathcal{I},\mathcal{F}_\omega\rangle\geq     \langle\mathcal{I},\mathcal{F}_{\omega'}\rangle
    \quad \text{for all indecomposable $\mathcal{I}\in\JQRep$}.
    $$
\end{penumerate}
\end{theorem}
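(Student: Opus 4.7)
The plan is to prove the three items in order, leveraging the material already in place.

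For (1), I would note that the bijection $\parameters \simeq \dblFV/K$ is already supplied by the paper (the sentence preceding the theorem cites \cite[Theorem~8.1]{Fresse.N.2020}, and describes $\flag_\omega$ as a representative of each orbit). The only thing left is to match the $K$-orbit of $\flag_\omega$ with the isomorphism class of $\mathcal{F}_\omega$. By Lemma~\ref{lemma:quiver-vs-orbits}, these two classifications are tautologically identified once one observes that $\mathcal{F}_\omega$ is, by construction, the object in $\JQRep(\mbfa,\mbfb,\mbfc)$ attached to the triple $(\flag^+, \flag^-, [\omega])$. So (1) reduces to unpacking definitions plus a citation.

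For (2), I would begin with Lemma~\ref{lemma:general.properties.orbits}\,(1), which gives $\dim \Xorbit_\omega = p^2+q^2 - \langle \mathcal{F}_\omega, \mathcal{F}_\omega\rangle$. By Lemma~\ref{lemma:decomposition.omega}, $\mathcal{F}_\omega = \bigoplus_{\mathcal{I} \in \Indecomposable(\graphic(\omega))} \mathcal{I}$, so $\langle \mathcal{F}_\omega, \mathcal{F}_\omega\rangle = \sum_{\mathcal{I},\mathcal{J}} \langle \mathcal{I}, \mathcal{J}\rangle$, where the sum runs over ordered pairs of indecomposable summands. Using Lemma~\ref{lemma:hom.indecomposables} applied to each summand $\mathcal{J}$, one computes $\langle \mathcal{I}, \mathcal{J}\rangle$ explicitly for every pair drawn from the five families $\mathcal{I}_i^+, \mathcal{I}_i'^+, \mathcal{I}_j^-, \mathcal{I}_j'^-, \mathcal{I}_{i,j}$. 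The computation is a finite, routine case-check based on the simple forms of these indecomposables (e.g. $\langle \mathcal{I}_i^+, \mathcal{I}_{i'}^+\rangle = 1$ iff $i \geq i'$, etc.). The remaining combinatorial step is to recognise the resulting sum as precisely the three corrections recorded in the statement: the contribution from free points $i^\pm$ gives the term $\sum i$; pairs involving marked points on a common row give the ``marked points on the left'' count; pairs involving edges (or edge/marked pairs) give the ``edges on the left'' count. I would organise this by displaying a small table of the $\langle \mathcal{I},\mathcal{J}\rangle$ values and then verifying that summing them graphically yields the stated formula.

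For (3), the direction ``$\Xorbit_\omega \subset \overline{\Xorbit_{\omega'}}\Rightarrow$ Hom-inequalities'' is exactly Lemma~\ref{lemma:general.properties.orbits}\,(2). The converse is the main obstacle; my approach would be to argue by induction on $\dim \Xorbit_{\omega'} - \dim \Xorbit_\omega$, reducing to the case of a \emph{covering} pair and exhibiting an explicit one-parameter degeneration $\mathcal{F}_t \in \JQRep$ such that $\mathcal{F}_t \cong \mathcal{F}_{\omega'}$ for generic $t$ and $\mathcal{F}_0 \cong \mathcal{F}_\omega$. Concretely, if the Hom inequalities are all satisfied with at least one strict (so that $\omega\neq\omega'$), some minimal difference in the graph $\graphic(\omega')$ relative to $\graphic(\omega)$ can be realised by one of a short list of local moves (breaking an edge into two marked points, or turning a marked point into a free point, or an edge/marked pair into a free pair). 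For each local move I would write down a family $A(t) \in \Mat^{\circ}_{n,r}$, check that $A(0)$ represents $\flag_\omega$ while $A(t)$ for $t\neq 0$ lies in $\Xorbit_{\omega'}$, and so conclude orbit-closure inclusion. The hardest part is organising this finite catalogue of moves and verifying that the Hom-number comparisons force each intermediate step to belong to $\JQRep$; once that combinatorial bookkeeping is done, the geometric degeneration argument is straightforward.
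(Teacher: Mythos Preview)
Your treatment of (1) and (2) is essentially what the paper does: for (1) it cites \cite[Theorem~2.2(1)]{Fresse.N.2023} (same result, different paper in the series), and for (2) it invokes exactly Lemmas~\ref{lemma:general.properties.orbits}(1), \ref{lemma:hom.indecomposables}, and~\ref{lemma:decomposition.omega} to interpret $\langle\mathcal{F}_\omega,\mathcal{F}_\omega\rangle$ combinatorially.

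For (3), however, the paper takes a different and much shorter route. Rather than building explicit degenerations for a catalogue of covering moves, it observes via Lemma~\ref{lemma:hom.indecomposables} that the numbers $\langle\mathcal{I},\mathcal{F}_\omega\rangle$ for the five types of indecomposables are precisely the rank functions $r_{i,j}(\omega)$ (counting marked points and edges in the subgraph on vertices $\{1^+,\dots,i^+,1^-,\dots,j^-\}$), together with the trivial values $i$ and $j$. The closure-order characterisation in terms of these $r_{i,j}$ is then imported wholesale from \cite[Theorem~2.2(4)]{Fresse.N.2023}. So the paper's contribution here is the translation step, not the closure analysis itself. Your degeneration approach is self-contained and would work, but the inductive reduction to covering pairs is exactly the hard content buried inside the cited theorem; you would effectively be reproving it. The paper's approach buys brevity by citation; yours would buy independence from \cite{Fresse.N.2023} at the cost of a substantial case analysis that you have correctly flagged as the crux.
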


\begin{proof}
(1) follows from \cite[Theorem 2.2\,(1)]{Fresse.N.2023}.

(2) follows from a combinatorial interpretation of 
$ \langle \mathcal{F}_{\omega}, \mathcal{F}_{\omega} \rangle $ 
by using Lemmas \ref{lemma:general.properties.orbits}\,(1), \ref{lemma:hom.indecomposables}, and \ref{lemma:decomposition.omega}.

(3) Following the notation of \cite[\S2.1]{Fresse.N.2023}, 
for $(i,j)\in\{0,1,\ldots,p\}\times\{0,1,\ldots,q\}$,
we denote by $r_{i,j}(\omega)$ the number of marked points or edges of the subgraph of $\graphic(\omega)$ formed by the vertices $k^+$, $\ell^-$ with $1\leq k\leq i$ and $1\leq \ell\leq j$.
Then, Lemma \ref{lemma:hom.indecomposables} yields the equalities
\begin{eqnarray*}    
 & \langle\I_i^+,\mathcal{F}_\omega\rangle=r_{i,0}(\omega),\quad 
\langle\I_i'^+,\mathcal{F}_\omega\rangle=i,\\
& \langle\I_j^-,\mathcal{F}_\omega\rangle=r_{0,j}(\omega),\quad
\langle\I_j'^+,\mathcal{F}_\omega\rangle=j,\quad
\langle\I_{i,j},\mathcal{F}_\omega\rangle=r_{i,j}(\omega).
\end{eqnarray*}
This observation, combined with \cite[Theorem 2.2\,(4)]{Fresse.N.2023}, yields
part (3) of the theorem.
\end{proof}

\begin{remark}
The obtained dimension formula in Theorem \ref{theorem:orbits.AIII}\,(2)
is much simpler than the one given in \cite[Theorem 2.2\,(2)]{Fresse.N.2023}.
\end{remark}

\part{\textbf{Steinberg theory for double flag varieties}}\label{part:Steinberg.theory}
%%\section*{\textbf{Part III. Steinberg theory for double flag varieties}}\partlabel{part:Steinberg.theory}
%%\addcontentsline{toc}{section}{\textbf{Part III. Steinberg theory for double flag varieties}\partstrut}

\section{Definition of Steinberg maps}\label{sec:Definition.Steinberg.maps}

\subsection{Symmetrized and exotic Steinberg maps for  double flag varieties}\label{subsec:two.Steinberg.maps.nil-projection.assumption}

Recall the following general situation:
\begin{itemize}
    \item $G$ is a connected reductive group endowed with an involution $\theta\in\Aut(G)$;
    \item $K:=G^\theta$ is the corresponding symmetric subgroup and we assume that it is connected;
    \item $P\subset G$ and $Q\subset K$ are parabolic subgroups, and we consider the double flag variety
    $$
    \dblFV:= K/Q \times G/P.
    $$
\end{itemize}
%%We denote by $\fg$ and $\fk$ the Lie algebras of $G$ and $K$, respectively. 
By abuse of notation, let $\theta:\fg\to\fg$ denote the differential of the involution $ \theta $, which is an involution of the Lie algebra.  
We also consider the Cartan decomposition 
\begin{equation*}
\fg=\fk\oplus\fs    , \quad
\text{ where \quad $\fs= 
%%\fg^{-\theta} = 
\{x\in\fg:\theta(x)=-x\}$.}
\end{equation*}
Let $\fg\to\fk$, $x\mapsto x^\theta$ and $\fg\to\fs$, $x\mapsto x^{-\theta}$ be the projections along this decomposition.

We apply the constructions of \S~\ref{subsec:moment.map.for.FlB} to the double flag variety $\dblFV$.
In particular, we consider $\dblFV$ as a $ K $-variety with diagonal $ K $-action. 
This gives rise to the moment map:
$$
\mu:T^*\dblFV\to\fk\cong\fk^*,\ ((x,\fq_1),(y,\fp_1))\mapsto x+y^\theta
$$
\skipover{
and
$$
\phi:T^*\dblFV\to\fg\cong\fg^*,\ ((x,\fq_1),(y,\fp_1))\mapsto y.
$$
}
and we get the conormal variety 
$$
\conormal:=\mu^{-1}(0)\cong\{(y,\fq_1,\fp_1)\in\fg\times K/Q\times G/P:y\in\fn_{\fp_1},\ y^\theta\in\fn_{\fq_1}\}.
$$
Here we recall that $\fn_{\fp_1}$ and $\fn_{\fq_1}$ respectively denote the nilradicals of the parabolic subalgebras $\fp_1\subset\fg$ and $\fq_1\subset\fk$.
By Lemma \ref{lemma:conormal}, $\conormal$ is the disjoint union of the conormal bundles
$$
\conormal = {\coprod_{\Xorbit \in \dblFV/K} T^*_\Xorbit\dblFV} .
$$
On the other hand, by using the moment map for the $ G $-action on the second factor, 
we get a map
$$
\phi: \conormal \to G\,\fn_{\fp} \subset \nilpotentsof{\fg}, \qquad  (y, \fq_1, \fp_1) \mapsto y,
$$
which is simply {the} projection to the first factor, and 
it is clearly $K$-equivariant.
By composing with the projections along the Cartan decomposition, this finally yields two $K$-equivariant maps
\begin{align*}   
&
\phi_\fk: \conormal \to K\,\fn_{\fq} \subset \nilpotentsof{\fk}, 
&&
(y, \fq_1, \fp_1) \mapsto y^\theta
\qquad\text{ and }\qquad
\\
&
\phi_\fs: \conormal \to \fs, 
&&
(y, \fq_1, \fp_1) \mapsto y^{-\theta} .
\end{align*}
The image of $ \phi_\fk $ is clearly contained in the nilpotent variety $ \nilpotentsof{\fk} $.  
For $\phi_\fs$, we also expect that the image of $\phi_\fs$ is contained in the nilpotent variety $ \nilpotentsof{\fs} $ as well, but it is not always the case.  
See Propositions~\ref{proposition:image:nilpotent.classical}, \ref{proposition:image:nilpotent.AIII} and Conjecture~\ref{conj:Im.exotic.mm.in.nilpotent.variety} below.  

From now on, we assume the following ``nil-projection assumption''.

\begin{assumption}\label{assumption:phis}
The image of the projection $ \phi_{\fs}: \conormal \to \lie{s} $ 
is contained in the nilpotent variety $ \nilpotentsof{\lie{s}} $.
\end{assumption}

Note that the nilpotent variety $\nilpotentsof{\fs}$ has {a finite number}  of $ K $-orbits due to \cite{Kostant.Rallis.1971}.

\medskip

For every orbit $\Xorbit\in\dblFV/K$, the conormal bundle $T^*_\Xorbit\dblFV$ is $K$-stable and irreducible. Hence its images by $\phi_{\fk}$ and $\phi_{\fs}$ are $K$-stable irreducible subsets of $\nilpotentsof{\fk}$ and $ \nilpotentsof{\fs} $ respectively.  
This means the closure of the image of $\Xorbit\in\dblFV/K$ by $\phi_{\fk}$ (respectively $\phi_{\fs}$) coincides with the closure of a single nilpotent $ K $-orbit in  $\nilpotentsof{\fk}$ (respectively $\nilpotentsof{\fs}$).  

\begin{definition}\label{def:definition.of.Steinberg.maps}
\begin{penumerate}
\item 
%%Recall that $\nilpotentsof{\fk}$ consists of finitely many $K$-orbits.
The map $\phi_{\fk}$ gives rise to a well-defined map
$$
\Phi_\fk:\dblFV/K\to\nilpotentsof{\fk}/K,\quad \Xorbit\mapsto \big(\text{the unique $K$-orbit dense in }\phi_\fk(T^*_\Xorbit\dblFV)\big),
$$
which we call the {\em symmetrized Steinberg map}.

\item 
%%For deriving a similar map from $\phi_{\fs}$, 
Similarly, 
under Assumption~\ref{assumption:phis}, 
we define the {\em exotic Steinberg map}, using $\phi_{\fs}$:
$$
\Phi_\fs:\dblFV/K\to\nilpotentsof{\fs}/K,\quad  \Xorbit\mapsto \big(\text{the unique $K$-orbit dense in }\phi_\fs(T^*_\Xorbit\dblFV)\big).
$$
\end{penumerate}
\end{definition}

\subsection{Assumption~\ref{assumption:phis} for the exotic Steinberg map}

To define the exotic Steinberg map, we need Assumption~\ref{assumption:phis} above.  
In fact, when the group $ G $ is classical and the parabolic subgroup $ P $ of $ G $ has abelian unipotent radical, this condition always holds.  
More generally, we have the following proposition.  

\begin{proposition}\label{proposition:image:nilpotent.classical}
Assumption~\ref{assumption:phis} holds
in the following situations.
\begin{penumerate}
\item\label{prop:P.abelian.unipotent.radical:1} 
$G=\GL_n$ and $P\subset G$ is a maximal
parabolic subgroup; 
\item\label{prop:P.abelian.unipotent.radical:2}
$G=\Sp_{2n}$ and $P\subset G$ is the stabilizer of a Lagrangian subspace or of a line;
\item\label{prop:P.abelian.unipotent.radical:3}
$G=\SO_{n}$ and $P\subset G$ is the stabilizer of an isotropic line or of a Lagrangian subspace (if $n$ is even), or of a maximal isotropic subspace if $n=5$.
\end{penumerate}
\medskip 
In particular, this implies that Assumption~\ref{assumption:phis} holds whenever $G$ is classical and $P$ has an abelian unipotent radical (see \cite{Richardson.Roehrle.1992}).
\end{proposition}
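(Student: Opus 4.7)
The plan is to reduce the problem, in all listed cases, to establishing the single property $y^2=0$ in the defining representation of $G$ for every $(y,\fq_1,\fp_1)\in\conormal$. The reduction rests on the elementary identity: if $y^2=0$, then also $\theta(y)^2=\theta(y^2)=0$, and expanding $(y+\theta y)^2+(y-\theta y)^2 = 2y^2+2\theta(y)^2 = 0$ gives
\[
(y^{-\theta})^2 = -(y^\theta)^2,
\]
hence $(y^{-\theta})^{2k}=(-1)^k(y^\theta)^{2k}$. Thus $y^{-\theta}$ is nilpotent iff $y^\theta$ is, and the latter holds by the very definition of $\conormal$ (since $y^\theta\in\fn_{\fq_1}$). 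So once $y^2=0$ is proved, $y^{-\theta}\in\nilpotentsof\fs$.

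Verifying $y^2=0$ is then handled case by case. In (1), (2a), and (3b), the parabolic $P=\Stab_G(U)$ stabilizes a subspace $U\subset V$ with $\fn_\fp=\{y:y(V)\subset U,\;y(U)=0\}$, so $y^2=0$ is immediate; case (3c) reduces to (2a) via the exceptional isomorphism $\SO_5\cong\Sp_4$, which identifies the stabilizer of a maximal isotropic $2$-plane in $\SO_5$ with the Siegel parabolic in $\Sp_4$. In case (2b), $\fn_\fp$ is Heisenberg with only $y^3=0$ in general; one shows that $y^2\colon V\to\ell$ vanishes by observing that $y\in\fsp(V)$ implies $y^2$ is $\omega$-symmetric, i.e.\ $\omega(y^2u,v)=\omega(u,y^2v)$, and then pairing with a dual vector $v^*$ of $\ell$ together with antisymmetry of $\omega$ forces the scalar coefficient $\omega(y^2v^*,v^*)$ to vanish.

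The main obstacle is case (3a), where $\fn_\fp$ is abelian but the Richardson orbit has Jordan type $(3,1^{n-3})$, so $y^2\neq 0$ in general for $y\in\fn_\fp$ \emph{alone}; the argument must therefore use the joint hypothesis $y^\theta\in\fn_{\fq_1}$. Writing $y^2(v^*)=-B(w,w)\ell_0$ with $w=y(v^*)\in\ell^\perp$ and $B$ the defining symmetric form, it suffices to prove $B(w,w)=0$. For a BDI involution $\theta=\Ad(I_{p,q})$, decomposing $V=V^+\oplus V^-$ along the $\theta$-eigenspaces yields $B(w,w)=B(w^+,w^+)+B(w^-,w^-)$, and the hypothesis that $y^\theta$ is nilpotent implies nilpotency of each block $y^\theta|_{V^\pm}$; combined with the constraints from $y\in\fn_\fp$ on the components $w^\pm$, this forces each summand to vanish. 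The DIII involution is handled analogously. I expect this orthogonal-isotropic-line case to be the only one requiring a genuine interplay between the parabolic structure and the Cartan decomposition, and its verification amounts essentially to a direct but involved matrix computation; all other cases follow by the short algebraic identity above.
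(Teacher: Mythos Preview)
Your reduction via the identity $(y^{-\theta})^2=-(y^\theta)^2$ whenever $y^2=0$ is correct and is exactly the mechanism behind the paper's Lemma~8.4(1)(A) and~8.4(2); your treatment of cases (1), (2a), (2b), (3b) matches the paper's. Your argument for (2b) showing $y^2=0$ from the antisymmetry of $\omega$ is clean, and the exceptional-isomorphism reduction for (3c) is legitimate (minor slip: the maximal isotropic stabilizer in $\SO_5$ corresponds to the \emph{line} stabilizer in $\Sp_4$, not the Siegel parabolic, but since you have established both (2a) and (2b) this is harmless).

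The genuine gap is in case (3a). Your plan there is still to prove $y^2=0$, i.e.\ $B(w,w)=0$ with $w=y(v^*)$, using the extra hypothesis that $y^\theta$ is nilpotent. This is false. Take $\SO_5$ with the anti-diagonal form $B(e_i,e_{6-i})=1$, $\ell=\langle e_1\rangle$, and $\theta=\Ad\bigl(\diag(1,-1,1,-1,1)\bigr)$ so that $K=\SO_3\times\SO_2$ with $V^+=\langle e_1,e_3,e_5\rangle$. A general $y\in\fn_\fp\cap\lie{so}_5$ has the shape
\[
y(e_1)=0,\quad y(e_2)=a e_1,\quad y(e_3)=b e_1,\quad y(e_4)=c e_1,\quad y(e_5)=-c e_2-b e_3-a e_4,
\]
so $w=y(e_5)$ and $B(w,w)=2ac+b^2$. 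Here $y^\theta$ is automatically nilpotent (it lies in a Borel nilradical of $\lie{so}_3\times\lie{so}_2$), yet $y^2(e_5)=-(2ac+b^2)e_1\neq 0$ generically. Your decomposition $B(w,w)=B(w^+,w^+)+B(w^-,w^-)$ gives $b^2$ and $2ac$ respectively, neither of which is forced to vanish. Thus the ``direct but involved matrix computation'' you anticipate cannot succeed, because the target statement $y^2=0$ is simply not true on the conormal variety in this case.

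The paper avoids this by \emph{not} aiming for $y^2=0$ here. It observes instead that for the isotropic-line stabilizer one has $y^3=0$ and $\rk y\le 2$ (the image of $y$ is spanned by $\ell$ and one further vector), and proves directly---by an induction on $p+q$ reducing to cases already handled---that under these two hypotheses together with $y^\theta$ nilpotent one gets $y^{-\theta}$ nilpotent (Lemma~8.4(1)(B)). So for (3a) you need to abandon the $y^2=0$ route and work with the pair of conditions $y^3=0$, $\rk y\le 2$.
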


\begin{proposition}\label{proposition:image:nilpotent.AIII}
Let $(G,K)=(\GL_{p+q},\GL_p\times\GL_q) \; (p,q\geq 1) $ be a symmetric pair of type {\upshape AIII}. Then,
Assumption~\ref{assumption:phis} holds whenever we are in {one of the following cases that appear in Table \ref{table:AIII.finite.type}:}
{
\begin{penumerate}
\item 
{\upshape (PA-1), (PA-2), (PA-3), (P2)};
\item 
{\upshape(P3-1)},  
%%with $ q = 2; $ 
{\upshape(P3-3)} with $ q = 2 $;
\item 
{\upshape (P3-4)} with $Q_1$ maximal. 
\end{penumerate}
}

In particular, this implies that Assumption~\ref{assumption:phis} holds
whenever the double flag variety $\dblFV=G/P\times K/Q$ is of finite type and $P$ is a Borel subgroup of $G$.  
\end{proposition}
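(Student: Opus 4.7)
The plan is to verify, case by case, that every $(y,\fq_1,\fp_1)\in\conormal$ has $y^{-\theta}\in\nilpotentsof{\fs}$. Writing $y=\begin{pmatrix} A & B\\ C & D\end{pmatrix}$ in the $V^+\oplus V^-$ block form gives $(y^{-\theta})^2=\begin{pmatrix} BC & 0\\ 0 & CB\end{pmatrix}$, so the task reduces to showing $BC$ (equivalently $CB$) is nilpotent in $\gl_p$ (resp.\ $\gl_q$).

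In case (P2), the unipotent radical of $P$ is abelian, so every $y\in\fn_\fp$ satisfies $y^2=0$. Projecting to $\fk$, the identity reads $(y^\theta)^2+(y^{-\theta})^2=0$, so $(y^{-\theta})^2=-(y^\theta)^2$ is nilpotent and so is $y^{-\theta}$.

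For (PA-1)--(PA-3) and the $Q_2=\GL_q$ subcase of (P3-3), one factor of $Q$ is a full general linear group. The subcase $Q=K$ of (PA-1) is immediate: $y^\theta=0$ and $y=y^{-\theta}$ is nilpotent. Otherwise, assuming (by symmetry) $Q_2=\GL_q$, one has $\fn_{\fq_1}\subset\gl_p\oplus 0$, so $D=0$. The Schur complement yields $\det(tI-y)=t^{q-p}\det(t^2I-tA-BC)$, and the nilpotency of $y$ gives $\det(t^2I-tA-BC)=t^{2p}$. Writing the left-hand side as $\prod_i(t^2-\mu_i(t))$ for the (continuously varying) eigenvalues $\mu_i(t)$ of $tA+BC$, any nonzero $\mu_i(0)$ would contribute a factor of nonzero value $-\mu_i(0)$ at $t=0$, preventing the product from vanishing to order $2p$ at $t=0$; nilpotency of $A$ rules out the possibility that such contributions algebraically cancel (which would otherwise require nonzero trace invariants of $A$). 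Hence every eigenvalue of $BC$ vanishes and $BC$ is nilpotent.

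The remaining cases (P3-1), (P3-3) with $q=2$ and $Q_2$ a Borel of $\GL_2$, and (P3-4) with $Q_1$ maximal are the most delicate, since neither factor of $Q$ is $\GL$. Here the key input is $|P|\leq 3$ (which gives $y^3=0$) together with small-rank conditions on $A$ or $D$ from $\fn_{\fq_1}$ (for instance $A^2=0$ when $Q_1$ is maximal in $\GL_p$, or $D^2=0$ when $q=2$). In the $q=2$ subcase, $CB$ is $2\times 2$ and its nilpotency reduces to $\Tr(CB)=\det(CB)=0$: the first follows from $\Tr(y^2)=0$ and the nilpotency of $A,D$, while the second reduces via $\det(CB)=-\tfrac12\Tr((CB)^2)$ to showing $\Tr((BC)^2)=0$, which I extract by combining the block identity $(y^3)_{12}=A^2B+ABD+BCB+BD^2=0$ (after taking traces and using $D^2=0$) with the identity $\Tr(y^4)=0$. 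For (P3-1), the condition $\min c_i=1$ forces $y^2$ to have rank at most one, say $y^2=uv^\top$, so $y^3=0$ translates into $yu=0$ and $y^\top v=0$; iterating these relations together with $\Tr(y^{2k})=0$ yields $\Tr((BC)^k)=0$ for all $k\geq 1$, hence $BC$ is nilpotent. The principal obstacle is this recursive trace computation in (P3-1); the case (P3-4) with $Q_1$ maximal then follows by the symmetric argument. Finally, the second statement of the proposition follows from the first: a direct inspection of Table~\ref{table:AIII.finite.type} shows that any finite-type $\dblFV$ with $P=B_G$ or $Q=B_K$ falls into one of the cases above.
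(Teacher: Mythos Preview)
Your argument for the block ``(PA-1)--(PA-3) and the $Q_2=\GL_q$ subcase of (P3-3)'' has a genuine error: the claim that $D=0$ together with $y$ and $A$ nilpotent forces $BC$ to be nilpotent is false. Take $p=q=3$,
\[
A=\begin{pmatrix}0&1&0\\0&0&1\\0&0&0\end{pmatrix},\qquad B=\unitmatrix_3,\qquad C=\begin{pmatrix}0&0&-1\\0&0&0\\1&0&0\end{pmatrix},\qquad D=0.
\]
Then $\det(t^2I-tA-C)=t^6$, so $y=\begin{pmatrix}A&B\\C&0\end{pmatrix}$ is nilpotent and $A$ is nilpotent, yet $BC=C$ has eigenvalues $0,\pm i$. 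Your Schur-complement identity $\det(t^2I-tA-BC)=t^{2p}$ is correct, but the heuristic with ``continuously varying eigenvalues'' and ``nilpotency of $A$ rules out algebraic cancellation'' does not go through: the single polynomial identity in $t$ does not pin down all the symmetric functions of the eigenvalues of $BC$. This does not contradict the proposition, because none of (PA-1)--(PA-3) actually produces such a $y$; each of those cases carries an \emph{extra} constraint on $A$ (namely $\rk A\le 1$ for the mirabolic case, $q=1$, or $q=2$ with $A^2=0$) that you have discarded.

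The paper's proof proceeds exactly by keeping these extra constraints and treating each case by an explicit trace/rank computation (its Lemma~\ref{lemma:image:nilpotent}): case (G) uses $q=1$, case (E) uses $\rk A\le 1$ via a clever determinant identity, case (F) uses $q=2$ and $A^2=0$, case (D) uses $y^3=0$ and $q=2$, case (C) uses $y^3=0$ with $A^2=D^2=0$, and case (A) is your (P2). Your treatment of (P2) is fine and matches the paper. Your sketch for (P3-3) with $q=2$ is on the right track and close to the paper's case (D). But your handling of (P3-1) (``iterating these relations\dots yields $\Tr((BC)^k)=0$'') is only an outline, and your reduction of (P3-4) to (P3-1) ``by the symmetric argument'' does not make sense as stated---(P3-4) with $Q_1,Q_2$ maximal is the hypothesis $A^2=D^2=0$, $y^3=0$, which is a separate computation (case (C)), not a consequence of the rank-one argument you describe for (P3-1).
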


The propositions are shown case-by-case. In fact, they follow from the next technical lemma.
(It would be interesting to have a more conceptual proof of the result.)

\begin{lemma}\label{lemma:image:nilpotent}
\begin{penumerate}
    \item Let $ y \in \Mat_{p+q} $ be nilpotent and put 
\begin{equation*}
y=\begin{pmatrix} a \;&\; b \\ c \;&\; d \end{pmatrix}\in\nilpotentsof{\gl_{p+q}}, \qquad
v:=\begin{pmatrix} a \;&\; 0 \\ 0 \;&\; d \end{pmatrix}, \quad
w:=\begin{pmatrix} 0 \;&\; b \\ c \;&\; 0 \end{pmatrix}, 
\end{equation*}
with $(a,d)\in\gl_p\times \gl_q$, $b\in\Mat_{p,q}$, $c\in\Mat_{q,p}$.
    Assume that $ v $ is nilpotent and one of the following conditions {\upshape (A)--(G)}  occurs.
%%$v:=\begin{pmatrix} a & 0 \\ 0 & d \end{pmatrix}$ 
    Then $ w $ is also nilpotent.
%%$w:=\begin{pmatrix} 0 & b \\ c & 0 \end{pmatrix}$ 
%
    \begin{itemize}
        \item[{\makebox[3em][r]{\upshape(A)}}] $y^2=0$;
        \item[{\makebox[3em][r]{\upshape(B)}}] $y^3=0$ and {$\rk y \leq 2$};
        \item[{\makebox[3em][r]{\upshape(C)}}] $y^3=0$, $a^2=0$, and $d^2=0$;
        \item[{\makebox[3em][r]{\upshape(D)}}] $y^3=0$ and $q=2$;
        \item[{\makebox[3em][r]{\upshape(E)}}] $\rk a\leq 1$ and $d=0$;
        \item[{\makebox[3em][r]{\upshape(F)}}] $q=2$, $a^2=0$, and $d=0$;
        \item[{\makebox[3em][r]{\upshape(G)}}] $q=1$.
    \end{itemize}
    \item Let $y\in\gl_n$ and let $y^*$ be the adjoint of $y$ with respect to a given orthogonal or symplectic form on $\C^n$. 
    Assume that $v:=\frac{1}{2}(y-y^*) \smallvstrut$ is nilpotent and $y^2=0$.
    Then $w:=\frac{1}{2}(y+y^*) \smallvstrut$ is also nilpotent.
\end{penumerate}
\end{lemma}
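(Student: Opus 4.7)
The plan relies on a $\Z/2$-grading: $v$ is block-diagonal (``even'') while $w$ is block-off-diagonal (``odd''). In the monomial expansion of $y^k = (v+w)^k$, each summand is block-diagonal or block-off-diagonal according to the parity of the number of $w$-factors. The crucial reduction for part~(1) is
\begin{equation*}
w^2 = \begin{pmatrix} bc & 0 \\ 0 & cb \end{pmatrix},
\end{equation*}
so $w$ is nilpotent if and only if $bc$ (equivalently $cb$, since the two share all non-zero spectrum) is nilpotent.

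Part~(2) is the conceptually cleanest case and provides the model: since $v^* = -v$ and $w^* = w$, the hypothesis $y^2 = 0$ implies $(y^*)^2 = 0$, and averaging the expansions of $y^2$ and $(y^*)^2$ gives $v^2 + w^2 = 0$, so $w^2 = -v^2$ is nilpotent and thus $w$ is nilpotent. Case~(A) of part~(1) admits exactly the same argument in block form: separating $y^2 = 0$ by parity yields $v^2 + w^2 = 0$ (block-diagonal part) together with $vw + wv = 0$ (off-diagonal part), and again $w^2 = -v^2$ is nilpotent.

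Cases~(B), (C), (D) expand $y^3 = 0$ with the same parity separation. In~(C), the hypothesis $a^2 = d^2 = 0$ yields $v^2 = 0$, so the block-diagonal part of $y^3 = 0$ reduces to $vw^2 + wvw + w^2 v = 0$, from which nilpotency of $w^2$ is readily extracted. In~(D) with $q = 2$, it suffices to check $\Tr(cb) = \Tr((cb)^2) = 0$; the first follows at once from $\tfrac{1}{2}(\Tr(y^2) - \Tr(v^2)) = \Tr(bc) = 0$, and the second from $\Tr(y^4) = 0$ after parity bookkeeping. In~(B), a preliminary Jordan-type analysis shows that a rank-$2$ nilpotent annihilated by $\lambda^3$ has Jordan type either $(2, 2, 1^{n-4})$, in which case $y^2 = 0$ and case~(A) applies, or $(3, 1^{n-3})$, which is dispatched by a direct computation.

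The remaining cases~(E), (F), (G) all exploit $d = 0$, so $v = \diag(a, 0)$ and the stated hypotheses force $v^2 = 0$. Case~(G) ($q = 1$) is immediate: $cb$ is a scalar equal to $\Tr(bc) = \tfrac{1}{2}(\Tr(y^2) - \Tr(v^2)) = 0$. For~(E) and~(F) one iterates the same trace extraction, using $\rk a \leq 1$ in~(E) and $a^2 = 0$ in~(F) to ensure that the ``correction terms'' $\Tr((vw + wv)^k)$ vanish (for instance, in~(E), $\Tr((vw+wv)^2) = \Tr(abca) = \Tr(a^2 bc) = 0$), so that the successive identities $\Tr(y^{2k}) = 0$ reduce to $\Tr((bc)^k) = 0$ for all relevant $k$. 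The main obstacle is the Jordan-type analysis in case~(B), and the combinatorial management of the correction terms in~(E) and~(F), which become increasingly intricate but remain controlled by the smallness of the rank conditions.
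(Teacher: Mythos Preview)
Your grading/parity framework matches the paper's strategy, and for (A), (G), (F), and part~(2) your argument is essentially theirs. One small correction in (C): the even part $vw^2 + wvw + w^2v = 0$ alone does not yield nilpotency of $w^2$; you also need the odd part $vwv + w^3 = 0$, which gives $bcb = -abd$ and $cbc = -dca$, whence $(bc)^4 = (bdca)(abdc) = bdc\,a^2\,bdc = 0$ --- exactly the paper's computation, just in different packaging. Similarly in (D), ``parity bookkeeping on $\Tr(y^4)$'' is not enough on its own: since $a^2$ need not vanish, the even part reads $0 = 4\Tr(a^2bc) + 4\Tr(abdc) + 2\Tr((bc)^2)$, and one must separately extract $\Tr(a^2bc) = \Tr(abdc) = 0$ from the block equations of $y^3 = 0$. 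The paper does precisely this, multiplying those blocks by $a$, $c$, $d$ and taking traces.

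The substantive divergences are (B) and (E). For (E) the paper does \emph{not} use trace extraction: after padding to $p = q$ and normalizing $a = E_{1,2}^{(p)}$, it observes that $\det(\epsilon v + w - t\unitmatrix_{2p}) = \det(t^2\unitmatrix_p - bc - \epsilon t E_{1,2}^{(p)}) = f(t^2) + \epsilon t\,g(t^2)$; setting $\epsilon = 1$ and using nilpotency of $y$ forces $f(t^2) = t^{2p}$ and $g \equiv 0$, so $\epsilon = 0$ gives $\det(w - t\unitmatrix_{2p}) = t^{2p}$. Your trace route has a real gap here: writing $y^2 = X + W$ with $X = vw+wv$, $W = w^2$, the expansion of $\Tr(y^{2k}) = \Tr((X+W)^k)$ contains not just $\Tr(X^m)$ but mixed terms $\Tr(XW^iXW^j)$, and these are \emph{not} killed by $a^2 = 0$. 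With $a = u\,{}^t\nu$ one finds $\Tr(XW^iXW^j) = \mu_{i+1}\mu_j + \mu_i\mu_{j+1}$ where $\mu_m = {}^t\nu(bc)^m u$; one does get $\mu_1 = 0$ from $\Tr(y^3) = 0$, but controlling all $\mu_m$ is a separate induction your sketch does not supply. The determinant trick sidesteps all of this in two lines.

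For (B), the paper does not attempt a direct computation in the Jordan-$(3,1^{n-3})$ case. Instead it argues by induction on $p+q$: having already handled (A) and (C), one may assume $y^2 \neq 0$ and (say) $a^2 \neq 0$, hence $\rk a = 2$; conjugating by $\GL_p\times\GL_q$ puts $a = E_{1,2}^{(p)} + E_{2,3}^{(p)}$, and then $\rk y = 2$ forces the first column of $y$ to vanish, so deleting the first row and column reduces to the smaller case. Your ``direct computation'' would in effect need a $K$-normal form compatible with the block structure, which is what the induction provides.
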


\begin{proof}
(1) We calculate 
\begin{equation}\label{ysquare}
y^2=\begin{pmatrix} a^2 + b c \;&\; a b + b d \\ c a + d c \;&\; d^2 + c b \end{pmatrix}, 
\end{equation}
and get $ \Tr y^2 = \Tr a^2 + \Tr d^2 + 2 \Tr bc $.  
Since $y$, $a$, and $d$ are all nilpotent, we obtain 
\begin{equation}\label{trace.cb}
\Tr cb=0.
\end{equation}

(G) When $q=1$, \eqref{trace.cb} already implies that $cb=0$, thus $(bc)^2=b(cb)c=0$, and so $w$ is nilpotent.

(F) When $d=0$ and $a^2=0$, we see that
    $$
    y^4=\begin{pmatrix}
    (bc)^2+abca \;&\; bcab+abcb \\ cabc+cbca \;&\; (cb)^2
    \end{pmatrix}.
    $$
    Whence
    $
    0=\Tr y^4=\Tr (bc)^2+\Tr abca+\Tr (cb)^2=2\Tr (cb)^2
    $
because 
%%$\Tr (bc)^2=\Tr(cb)^2$ and 
$\Tr abca=\Tr bca^2=0$.
    If $q=2$, since $cb$ is a matrix of size $2$ such that $\Tr(cb)=\Tr(cb)^2=0$, we conclude that $(cb)^2=0$,
    so that {$(bc)^2b=b(cb)^2=0$} holds. Therefore, {$w^5=0$}.

(D) If $y^3=0$ and $d^2=0$ (which holds in particular if $q=2$), then we have
\begin{equation} \label{equations:y3=0}
0=y^3=\begin{pmatrix}
a^3 + a b c + b c a + b d c \;&\; a^2 b + a b d + b c b \\
c a^2 + c b c + d c a \;&\; c a b + c b d + d c b
\end{pmatrix}.
\end{equation}
Multiplying the upper right coefficient by $c$ on the left, we get
\begin{equation} \label{formula:cbcb}
(cb)^2 = - c a^2 b - c a b d.
\end{equation}
Multiplying the lower diagonal coefficient by $d$ on the right, we obtain 
\begin{equation} \label{formula:tracecabd}
\Tr (c a b d) = - \Tr (c b d^2) - \Tr (d c b d) = - \Tr (c b d^2) = 0, 
\end{equation}
since $d^2=0$. Finally, multiplying the upper diagonal coefficient of \eqref{equations:y3=0} by $a$ and using $ \Tr a^4 = 0 $, we get
\begin{equation} \label{formula:traceca2b}
2 \Tr (c a^2 b) = -\Tr (b d c a) = 0
\end{equation}
because of \eqref{formula:tracecabd}. Altogether, \eqref{formula:cbcb}, \eqref{formula:tracecabd}, and \eqref{formula:traceca2b} imply that
$\Tr(cb)^2=0$. When $q=2$, this fact combined with \eqref{trace.cb}
is sufficient for ensuring that $(cb)^2=0$,
{whence $(bc)^2b=0$. Therefore, $w^5=0$.}

(C) Here, in addition to $y^3=0$ and $d^2=0$, we assume that $a^2=0$. Hence we can replace $a^3$, $a^2b$, and $ca^2$ by zero in \eqref{equations:y3=0}.
Multiplying the lower left (resp. upper right) block of the above matrix by $b$ on the left (resp. by $c$ on the right), we obtain
$$
- b d c a = (bc)^2 = - a b d c,
$$
whence
$$
(bc)^4 = b d c a^2 b d c = 0
$$
(since $a^2=0$). 
Similarly it can be shown that $(cb)^4=0$, hence $w$ is nilpotent.

(A) If $y^2=0$, then from \eqref{ysquare} we have
\begin{equation*}
w^2=\begin{pmatrix} bc \;&\; 0 \\ 0 \;&\; cb \end{pmatrix}
=-\begin{pmatrix} a^2 \;&\; 0 \\ 0 \;&\; d^2 \end{pmatrix}=-v^2.    
\end{equation*}
Since $v$ is nilpotent, we conclude that $w$ is also nilpotent in this case.

(E)
Up to enlarging the size of the matrices by adding rows/columns of zeros, we can assume that $p=q\geq 2$.
By $E_{i,j}^{(p)}$ we will denote the elementary matrix of size $(p,p)$ with $1$ at the position $(i,j)$.
Also, we can assume that $\rk a=1$ and $d=0$ and, up to conjugation by $\GL_p\times \GL_p$, we can assume that
$a=E_{1,2}^{(p)}$.
Fix $\epsilon\in\{0,1\}$.
Then, we have
\begin{eqnarray*}
\det(\epsilon v+w-t \, \unitmatrix_{2p}) & = & 
\det\begin{pmatrix} \epsilon a - t \, \unitmatrix_p & b \\ c & - t \, \unitmatrix_p \end{pmatrix} = \det((\epsilon a - t \, \unitmatrix_p)(-t \, \unitmatrix_p) - b c) \\
 & = & \det(t^2 \, \unitmatrix_p - b c - \epsilon t \, E_{1,2}^{(p)}) = f(t^2)+\epsilon t g(t^2)
\end{eqnarray*}
with certain polynomials $f,g$. 
Since $t^{2p} = \det (v + w - t\, \unitmatrix_{2p}) 
= f(t^2) + t \, g(t^2) $ 
(because $y=v+w$ is nilpotent), we conclude that $g\equiv 0$, hence $\det(w-t\unitmatrix_{2p})=t^{2p}$, and so $w$ is nilpotent.

(B) 
Finally we assume that $y^3=0$ and $\rk y\leq 2$.
For this remaining case, we reason by induction on $p+q$, with immediate initialization if $p=0$ or $q=0$. 
We have $2\geq \rk y\geq \max\{\rk a,\rk d\}$.
Since we have already dealt with (A) and (C), 
we can assume that $\rk y=2$ and one of $a^2$, $d^2$ is nonzero, say $a^2\not=0$, and so $\rk a=2$.
Then up to conjugating under $\GL_p\times\GL_q$, we can assume that
$$
a=E_{1,2}^{(p)}+E_{2,3}^{(p)}.
$$
Since $\rk y=2$, this implies in particular that the first column of $y$ is zero.
Let $y',v',w'$ be the submatrices of $y,v,w$ obtained by removing the first row and column.
We can apply the induction hypothesis to $y'$, and
we get that $w'$ is nilpotent. Since the first column of $w$ (like the one of $y$) is zero, we conclude that $w$ is nilpotent.

(2) We have
$$
0=y^2=(v+w)^2=(v^2+w^2)+(vw+wv).
$$
Since $v^*=-v$ and $w^*=w$,
we get that $(v^2+w^2)^*=v^2+w^2$
and $(vw+wv)^*=-(vw+wv)$.
Due to the uniqueness of the decomposition of a matrix as sum of self-adjoint and skew-adjoint elements, we derive that $v^2+w^2=vw+wv=0$. In particular, knowing that $v$ is nilpotent, we conclude that $w$ is nilpotent.
\end{proof}

\begin{example}\label{example:imagenotins}
In general, Assumption~\ref{assumption:phis} will be valid
if and only if the implication
\begin{equation}\label{condition:phifs:implication}
(y \in G \fn_{\fp} \quad \text{and} \quad y^\theta \in K \fn_{\fq}) \quad \Longrightarrow \quad y^{-\theta} \in \nilpotentsof{\fs}
\end{equation}
holds.
When $P$ and $Q$ are Borel subgroups, 
the left-hand side of \eqref{condition:phifs:implication} just means that $y$ and $y^\theta$ are nilpotent,
and the implication of \eqref{condition:phifs:implication} is \emph{not true} in general.
Consider for instance the symmetric pair $(G,K)=(\GL_6,\GL_3\times\GL_3)$ of type AIII, and
$$
y=\begin{pmatrix}
    a \;&\; \unitmatrix_3 \\ b \;& -a
\end{pmatrix}
\quad\text{with}\quad a=\begin{pmatrix}
    0 \;&\; 1 \;& 0 \\ 0 \;&\; 0 \;& -1 \\ 0 \;&\; 0 \;& 0
\end{pmatrix}\quad\text{and}\quad b=\begin{pmatrix}
    0 \;&\; 0 \;&\; 1 \\ 0 \;&\; 0 \;&\; 0 \\ 1 \;&\; 0 \;&\; 0
\end{pmatrix}.
$$
Then we have $y^4=0$, $(y^\theta)^3=0$, but $y^{-\theta}\notin\nilpotentsof{\fs}$.
However, note that the corresponding double flag variety is \emph{not} of finite type.
\skipover{
\begin{penumerate}
\item
Also in the case of the symmetric pair $(G,K)=(\GL_n\times \GL_n,\GL_n)$, condition \eqref{condition:phifs:implication} does not hold in general. Take for instance $ n = 3 $, 
and consider 
\begin{eqnarray*}
y & = & \left(\begin{pmatrix}
    0 & 1 & 1 \\ 0 & 0 & 1 \\ 0 & 0 & 0
\end{pmatrix},\begin{pmatrix}
    0 & -1 & 0 \\ 0 & 0 & 0 \\ 1 & -1 & 0
\end{pmatrix}\right) \\
 & = & \underbrace{\frac{1}{2}\left(\begin{pmatrix}
    0 & 0 & 1 \\ 0 & 0 & 1 \\ 1 & -1 & 0
\end{pmatrix},\begin{pmatrix}
    0 & 0 & 1 \\ 0 & 0 & 1 \\ 1 & -1 & 0
\end{pmatrix}\right)}_{y^\theta}+
\underbrace{\frac{1}{2}\left(\begin{pmatrix}
    0 & 2 & 1 \\ 0 & 0 & 1 \\ -1 & 1 & 0
\end{pmatrix},\begin{pmatrix}
    0 & -2 & -1 \\ 0 & 0 & -1 \\ 1 & -1 & 0
\end{pmatrix}\right)}_{y^{-\theta}}.
\end{eqnarray*}
Then, $y$ and $y^\theta$ are nilpotent, unlike $y^{-\theta}$.
Note that this case corresponds to the triple flag variety 
$ (\GL_3/B_3^+)^3 $ which is \emph{not} of finite type again.  
\end{penumerate}
}
\end{example}

We do not have a counterexample where 
a double flag variety is \emph{of finite type} 
and the image of the projection $ \phi_{\fs}: \conormal \to \lie{s} $ is not contained in the nilpotent variety.  
Thus, based on the cases already solved by Propositions \ref{proposition:image:nilpotent.classical} and \ref{proposition:image:nilpotent.AIII}, we propose a conjecture here.

\begin{conjecture}[{Nil-projection Conjecture}]\label{conj:Im.exotic.mm.in.nilpotent.variety}
If a double flag variety $ \dblFV $ is of finite type, 
then the image of the projection $ \phi_{\fs}: \conormal \to \lie{s} $ 
is contained in the nilpotent variety $ \nilpotentsof{\lie{s}} $.
\end{conjecture}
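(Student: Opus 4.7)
The plan is to combine the invariant-theoretic characterization of $\nilpotentsof{\fs}$ given by Kostant--Rallis with the natural $\Cbatsu$-action by scaling on the cotangent fibers, thereby reducing the conjecture to a prehomogeneity statement for the isotropy action on each conormal fiber.

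First, by Kostant--Rallis, $\nilpotentsof{\fs}$ coincides with the common zero locus of the positive-degree part of $\C[\fs]^K$, equivalently with the set of $y \in \fs$ such that $0 \in \closure{K \cdot y}$. By Lemma~\ref{lemma:conormal}, the finite-type hypothesis yields a \emph{finite} decomposition $\conormal = \bigsqcup_{\Xorbit \in \dblFV/K} T^*_\Xorbit \dblFV$. It therefore suffices to fix one orbit $\Xorbit$ and show that the image closure $Y_\Xorbit := \closure{\phi_\fs(T^*_\Xorbit \dblFV)} \subset \fs$ lies in $\nilpotentsof{\fs}$.

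Second, I would exploit the $\Cbatsu$-action by scaling the cotangent fibers of $T^*\dblFV$. This action commutes with $K$, preserves $\conormal$ and each $T^*_\Xorbit \dblFV$ (since the moment map $\mu$ and the projection $\phi$ are both linear in the cotangent direction), and makes $\phi_\fs$ equivariant of weight one. Consequently $Y_\Xorbit$ is closed, irreducible, and $K \times \Cbatsu$-stable in $\fs$. Suppose, towards the main step of the argument, that $K$ has a dense orbit $\Orbit$ on $Y_\Xorbit$; since $Y_\Xorbit$ is irreducible such a dense orbit is unique, so $\Cbatsu$-stability of $Y_\Xorbit$ forces $t \cdot \Orbit = \Orbit$ for all $t \in \Cbatsu$. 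Letting $t \to 0$ along points of $\Orbit$ shows $0 \in \closure{\Orbit}$, whence every element of $\Orbit$ is unstable for the $K$-action on $\fs$; by Kostant--Rallis this forces $\Orbit \subset \nilpotentsof{\fs}$, and therefore $Y_\Xorbit = \closure{\Orbit} \subset \nilpotentsof{\fs}$.

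The main obstacle, and the technical heart of the plan, is to establish the existence of a dense $K$-orbit on $Y_\Xorbit$. Writing $\Xorbit = K/H$ with isotropy $H \subset K$, the conormal bundle is the associated bundle $K \times^H N$, where $N \subset \fn_{\fp_1} \oplus \fn_{\fq_0}$ is the conormal fiber at a fixed base point, and the image satisfies $\phi_\fs(T^*_\Xorbit \dblFV) = K \cdot \phi_\fs(N)$. The existence of a dense $K$-orbit on $Y_\Xorbit$ is thus equivalent to $\phi_\fs(N) \subset \fs$ being a prehomogeneous $H$-module, i.e., $H$ having an open orbit on $\phi_\fs(N)$. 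One may hope to verify this by combining the finite-type hypothesis on $\dblFV$ (which, via the Brion--Vinberg Theorem~\ref{thm:Brion.Vinberg}, is closely related to $K$-sphericity) with Knop's structure theory for cotangent bundles of spherical varieties, exploiting the fact that $\C[T^*\dblFV]^K$ is a polynomial algebra in the spherical case and that its generators control the moment map image.

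A complementary, more pedestrian strategy — already successful in the situations covered by Propositions~\ref{proposition:image:nilpotent.classical} and \ref{proposition:image:nilpotent.AIII} — is case-by-case reduction through the embedding theory developed in Part~\ref{part:Orbit.Embedding}. Once the conjecture is established for all finite-type double flag varieties of type AIII (via the classification of Corollary~\ref{cor:classification.AIII.finite.type} and the nilpotency criterion of Lemma~\ref{lemma:image:nilpotent}), every embedding of a double flag variety of another classical type into one of type AIII should transport the nilpotency of $\phi_\fs$, provided one first establishes functorial compatibility of $\phi_\fs$ with the embedding. This last compatibility is essentially the content of Conjecture~\ref{conj:question.compatibility}, suggesting that progress on the two conjectures is likely to come hand in hand.
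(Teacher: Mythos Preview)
This statement is presented in the paper as an open \emph{conjecture}; the paper offers no proof, only partial evidence (Propositions~\ref{proposition:image:nilpotent.classical} and~\ref{proposition:image:nilpotent.AIII}) together with Example~\ref{example:imagenotins} showing the implication can fail without the finite-type hypothesis. There is therefore no paper's proof to compare against, and what you have written is a research plan rather than a proof.

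Your first route is logically sound up to the point you yourself flag: the reduction via the $\Cbatsu$-scaling and the Kostant--Rallis description of $\nilpotentsof{\fs}$ as the null cone is correct, and if $K$ had a dense orbit on $Y_\Xorbit$ the conclusion would follow exactly as you argue. But nothing in the finite-type hypothesis produces that dense orbit. Finite type controls $B_K$-orbits on $\dblFV$, not $K$-orbits on an arbitrary $K\times\Cbatsu$-stable closed cone in $\fs$; and the only general finiteness result for $K$-orbits in $\fs$ is Kostant--Rallis's finiteness on $\nilpotentsof{\fs}$, which you cannot invoke without circularity. The appeal to Knop's theory is suggestive but not a concrete step: you would need an actual statement that forces prehomogeneity of the isotropy $H$ on $\phi_\fs(N)$, and no such statement is known in this generality.

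Your second route contains a minor misattribution and a real gap. The misattribution: transporting Assumption~\ref{assumption:phis} along an embedding $\dblFV\hookrightarrow\bbdblFV$ does \emph{not} require Conjecture~\ref{conj:question.compatibility}. By Theorem~\ref{thm:embedding.conormal.bundles} one has $\conormal_\dblFV=(\conormal_\bbdblFV)^\sigma$, and the projection $\phi_\fs$ for $\dblFV$ is literally the restriction of that for $\bbdblFV$; since $\nilpotentsof{\lie{S}}\cap\fs=\nilpotentsof{\fs}$, nilpotency descends immediately (the paper notes this just before Conjecture~\ref{conj:question.compatibility}). The real gap is upstream: Proposition~\ref{proposition:image:nilpotent.AIII} does not cover all finite-type AIII cases in Table~\ref{table:AIII.finite.type} --- for instance (P6), (P4-1), (P4-2), (P3-2), and parts of (P3-3) and (P3-4) are missing --- so even granting the embedding machinery, the AIII base case is not yet established. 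That is precisely why the paper leaves the statement as a conjecture.
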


\section{Generalization of Robinson-Schensted correspondence}\label{section:generalization.RS}

We keep the setting and notation of the previous section.  
Let us assume 
\begin{equation}\label{condition:finite.type}
    \text{$\Xfv$ has finitely many $K$-orbits, that is, $\Xfv$ is of finite type}, 
\end{equation}
and we further assume Assumption~\ref{assumption:phis}, i.e., the image of the projection $ \phi_{\fs} $ is contained in the nilpotent variety so that we have  
$ \phi_{\fs}: \conormal \to \nilpotentsof{\lie{s}} $.

%Note that this is a favorable situation for dealing with the maps $\Phi_\fk$ and $\Phi_\fs$ is when the following condition is satisfied:
Because $ \Xfv $ is of finite type, as already pointed out in \S~\ref{subsec:moment.map.for.FlB},
the conormal variety $\conormal$ is equidimensional
and the closures of the conormal bundles $T^*_\Xorbit\Xfv$,
for $\Xorbit\in\Xfv/K$, are the irreducible components of $\conormal$.
Moreover, in this case, the $K$-orbits in $\Xfv$ can be described combinatorially,
whereas nilpotent orbits have a well-known combinatorial description (see for example, \cite{Collingwood.McGovern.1993}), hence
the maps $\Phi_\fk$ and $\Phi_\fs$ can be explicitly described by the combinatorial data.
%(By the way, we know no example where \eqref{assumption:phis} does not hold when \eqref{condition:finite.type} is fulfilled.) 

In the previous section, we have extended the setting of the classical work of Steinberg \cite{Steinberg.1976}, which deals with the standard double flag variety $ \dblFV=G/B\times G/B $ on which $ G $ acts diagonally. This standard theory yields the classical {\em Steinberg map} $ \Phi:W\to\nilpotentsof{\fg}/G $, also considered in \cite{Hinich.Joseph.2005}.
In the next subsections, first we explain how classical Steinberg theory can be obtained as a particular case of the construction given above
and recall from \cite{Steinberg.1976,Steinberg.1988} the calculation of $\Phi$ in the case where $G=K=\GL_n$.
Then we calculate $\Phi_\fk$ and $\Phi_\fs$ for the symmetric pair $(G,K)=(\GL_n,\GL_p\times \GL_q)$ of type AIII 
and the double flag variety $\Xfv$ considered in \S~\ref{section-7}.

\subsection{Classical Steinberg map and Robinson--Schensted correspondence}

Let us consider 
$ \bbG = G \times G $ 
and define $ \theta \in \Aut \bbG $ as 
$ \theta(g_1, g_2) = (g_2, g_1) \; ((g_1, g_2 ) \in \bbG) $, which is the ``flip'' of two components.  
Put $ \bbK = \bbG^{\theta} = \Delta G $, which is the diagonal embedding $ G \overset{\Delta}{\hookrightarrow} \bbG $.  
We identify $ \bbK $ and $ G $ sometimes.  
Now consider the parabolic subgroup $ \bbQ = \bbK $ and a Borel subgroup $ \bbB = B \times B \subset G \times G = \bbG $.  
Then the double flag variety associated with these data is 
\begin{equation*}
\dblFV = \bbK/\bbQ \times \bbG/\bbB = G/B \times G/B ,
\end{equation*}
on which $ G (=\bbK) $ acts diagonally.  

\skipover{
In this section, we assume that $\theta=\mathrm{id}_G:G\to G$, so that $(G,K)=(G,G)$ is a trivial symmetric pair.
Moreover, let $P=Q=B\supset T$, a standard Borel subgroup of $G$, so that $\Xfv=G/B\times G/B$ is a full double flag variety.
}

By virtue of Bruhat decomposition, $\Xfv$ has finitely many $G$-orbits ($ G = \bbK $ as noted above), parametrized by the elements of the Weyl group $ W = N_G(T)/T $, 
where $ T $ is a maximal torus in $ B $.  So let us write the decomposition as 
$$
\Xfv=G/B\times G/B=\coprod_{w\in W}\Xorbit_w
\quad\text{where }\quad
\Xorbit_w=G\cdot(B,wB).
$$
Note that the conormal direction at $ (B, wB) \in \dblFV $ is 
{\begin{equation*}
%\bigl((\Delta \lie{g}) + (\lie{g}/\fb, \lie{g}/{}^w\fb) \bigr)
\{(x+\fb,x+{}^w\fb):x\in \lie{g}\}^{\bot}  = \fn\cap {}^w\fn ,
\end{equation*}}
where $\fn\subset\fb=\mathrm{Lie}(B)$ stands for the nilpotent radical of our Borel subalgebra,
and the notation ${}^w\fb$ or ${}^w\fn$ stands for the conjugate by $ w \in W $.
Therefore the conormal bundles of orbits are given by
$$
T^*_{\Xorbit_w}\Xfv=G\cdot \{(\fb,{}^w\fb,x):x\in\fn\cap {}^w\fn\}.
$$
\indent
In the present situation, the nilpotent varieties are 
\begin{equation*}
{
\nilpotentsof{\fk} = \nilpotents^{\theta} = \{ (x, x) : x \in \nilpotentsof{\lie{g}} \} 
\;\;
\text{ and }
\;\;
\nilpotentsof{\fs} = \nilpotents^{-\theta} = \{ (x, -x) : x \in \nilpotentsof{\lie{g}} \} 
}
\end{equation*}
and both are identified with $ \nilpotentsof{\lie{g}} $.  
Since we have chosen $ \bbQ = \bbK $, the symmetrized Steinberg map $ \Phi_\fk $ is the trivial map which assigns every orbit $ \Xorbit_w $ to the trivial orbit, and 
Assumption~\ref{assumption:phis} is trivially satisfied.  
%%fulfilled, as we have $\fs=0$. Hence both Steinberg maps $\Phi_\fk$ and $\Phi_\fs$ are well defined.
Then we focus on
%%Actually we disregard the exotic map $\Phi_\fs$, which is the trivial map, and 
the (exotic) Steinberg map 
$$
\Phi=\Phi_\fs:\Xfv/G\cong W\to\nilpotentsof{\fg}/G
$$
which is explicitly given as 
$$
\Phi: w \mapsto (\mbox{the unique nilpotent $G$-orbit $\mathcal{O}$ which intersects $\fn\cap{}^w\fn$ densely}).
$$

Let us further assume $G=\GL_n$.  In this case, the Steinberg map $\Phi$ can be computed explicitly, in terms of the \emph{Robinson-Schensted correspondence} (RS correspondence) 
$$
\RS:\permutationsof{n}
\stackrel{\sim}{\longrightarrow}
\coprod\nolimits_{\lambda\vdash n}\ST{\lambda}\times\ST{\lambda},\ w\mapsto (\RS_1(w),\RS_2(w)).
$$
Here, the union on the right-hand side is over partitions of $n$ (equivalently, Young diagrams of size $n$) and $\ST{\lambda}$ denotes the set of 
\emph{standard Young tableaux} of shape $\lambda$ (i.e., the boxes of $ \lambda $ are filled with integers $ 1, 2, \dots, n $ and 
the filling is increasing in both directions, horizontal and vertical).  
Let us recall the definition and some properties of the RS correspondence briefly (see \cite{Fulton} for more details).

\begin{definition}
\begin{penumerate}
\item If $T$ is a Young tableau and $a$ an integer, let $T\leftarrow a$ be the tableau obtained by inserting $a$ into $T$ as follows:
\begin{itemize}
    \item If $a$ is greater than or equal to every entry in the first row of $T$, then we insert $a$ at the end of the first row of $T$.
    \item Otherwise, let $b$ be the biggest entry of the first row which is $>a$; we substitute $b$ by $a$, and insert $b$ into the subtableau formed by the remaining rows of $T$, according to the same rule.
\end{itemize}
\item If $a_1,\ldots,a_k$ is a list of integers, let $\Rowinsert(a_1,\ldots,a_k)$ be the tableau obtained by inserting successively $a_1,\ldots,a_k$ according to the rule described above, that is,
$$
\Rowinsert(a_1,\ldots,a_k)=(\emptyset\leftarrow a_1\leftarrow\cdots\leftarrow a_k).
$$
\item 
The tableau $\RS_1(w)$ is defined by
$$
\RS_1(w)=\Rowinsert(w_1,\ldots,w_n)
$$
while we define $\RS_2(w)$ as the unique tableau
such that the shape of the subtableau of $\RS_2(w)$ of entries $1,\ldots,k$ coincides with the shape of $\Rowinsert(w_1,\ldots,w_k)$.
\end{penumerate}
\end{definition}

For later use, we mention some properties of the Robinson--Schensted correspondence:

\begin{proposition}[{See, e.g., \cite{Fulton,vanLeeuwen.1996}}]\label{proposition:classical.RS}
\begin{penumerate}
    \item The map 
    $$
    \RS=(\RS_1,\RS_2):\permutationsof{n}\to \coprod\nolimits_{\lambda\vdash n}\ST{\lambda}\times \ST{\lambda}
    $$ 
    so obtained is a bijection.
    \item If $w\in \permutationsof{n}$, then $(\RS_1(w^{-1}),\RS_2(w^{-1}))=(\RS_2(w),\RS_1(w))$.
    \item If $w_0:k\mapsto n-k+1$ is the longest element of $\permutationsof{n}$, then $\RS_1(ww_0)={}^t\RS_1(w)$, 
    where $ \transpose{T} $ denotes the transposition of a tableau $ T $, and
    $\RS_1(w_0w)= \transpose{\mathbf{S}(\RS_1(w))} $,
    where $\mathbf{S}$ stands for the \emph{Sch\"utzenberger involution} {\upshape(\emph{see} \cite[\S 2]{vanLeeuwen.1996})}.
\end{penumerate}
\end{proposition}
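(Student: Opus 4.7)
The plan is to prove the three statements using a combination of the reverse bumping algorithm, Fomin's growth diagrams, and the duality between row and column insertion.

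For part~(1), the strategy is to construct an explicit inverse to $\RS$. Given a pair $(P, Q) \in \ST{\lambda} \times \ST{\lambda}$ of the same shape, I would recover the permutation $w$ by reverse row-insertion: the cell in $Q$ containing the maximal entry $n$ must be an inner corner of $\lambda$; the entry of $P$ in that cell is popped and ``bumped upward'' through the unique reverse bumping path in $P$, yielding the value $w_n$ and a smaller standard pair. Iterating with the entries $n-1, n-2, \ldots, 1$ of $Q$ reconstructs $w_{n-1}, \ldots, w_1$. Well-definedness of both the forward and reverse algorithms, together with the cardinality identity $n! = \sum_{\lambda \vdash n} |\ST{\lambda}|^2$ (equivalently, dimensional count for $\C[\permutationsof{n}]$), confirms that $\RS$ is a bijection.

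For part~(2), the cleanest route is via Fomin's growth diagrams. One encodes $w \in \permutationsof{n}$ as a permutation matrix placed in an $n \times n$ grid and labels the lattice points with partitions according to local growth rules; the partitions along the top row encode $\RS_2(w)$ and those along the right column encode $\RS_1(w)$. The growth rules are invariant under reflecting the grid across its main diagonal, which corresponds precisely to passing from $w$ to $w^{-1}$ and simultaneously exchanges the roles of $\RS_1$ and $\RS_2$. Alternatively, one can verify the identity on Knuth-equivalence generators, but the growth-diagram proof is more uniform.

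For part~(3), the identity $\RS_1(w w_0) = \transpose{\RS_1(w)}$ follows from the duality between row and column insertion: postcomposing $w$ with $w_0$ reverses the input sequence, and reverse row insertion of a sequence produces the transpose of the tableau obtained by column insertion of the original sequence; combined with Sch\"utzenberger's theorem that column insertion equals the transpose of row insertion, the claim follows. The second identity $\RS_1(w_0 w) = \transpose{\mathbf{S}(\RS_1(w))}$ can again be read off a growth diagram after reflecting across the horizontal axis, or derived from the characterization of the Sch\"utzenberger involution $\mathbf{S}$ via evacuation: $\mathbf{S}(\RS_1(w))$ computes the insertion tableau of the reverse word, and transposing matches the action of left-multiplication by $w_0$.

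The hard part will be part~(3), specifically the appearance of the Sch\"utzenberger involution: unlike parts~(1) and~(2), which admit transparent diagrammatic proofs, the evacuation procedure is intrinsically recursive and its compatibility with $\RS$ under the various symmetries $w \mapsto w_0 w, w w_0, w_0 w w_0$ requires careful bookkeeping of jeu de taquin slides. A rigorous exposition would factor these symmetries through the involutions $w \mapsto w^{-1}$ (handled in part~(2)) and $w \mapsto w_0 w w_0$, reducing the non-trivial content to the identification $\RS_1(w_0 w w_0) = \mathbf{S}(\RS_1(w))$, which is the classical evacuation theorem of Sch\"utzenberger and can be referenced from \cite{Fulton} or \cite{vanLeeuwen.1996}.
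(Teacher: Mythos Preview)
The paper does not give its own proof of this proposition: it is stated as a classical fact with the attribution ``See, e.g., \cite{Fulton,vanLeeuwen.1996}'' and no argument is supplied in the text. Your outline therefore goes well beyond what the paper does, and there is no in-paper proof to compare against. That said, the approaches you sketch (reverse bumping for (1), Fomin growth diagrams for (2), row/column insertion duality and evacuation for (3)) are exactly the standard arguments one finds in those references.

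One small wobble: your explanation of the first identity in part~(3) is a bit tangled. Right-multiplication by $w_0$ sends the one-line word $(w_1,\ldots,w_n)$ to its reversal $(w_n,\ldots,w_1)$, and the clean statement is that row-inserting the reversed word yields the transpose of the row-insertion tableau of the original word; this is most cleanly seen either via Greene's theorem on increasing/decreasing subsequences or via the identity between row insertion of the reversal and column insertion of the original. Your phrasing ``Sch\"utzenberger's theorem that column insertion equals the transpose of row insertion'' is not quite the right formulation and risks confusing the reader. Likewise, your reduction of the second identity to the evacuation theorem $\RS_1(w_0 w w_0)=\mathbf{S}(\RS_1(w))$ is correct, but the sentence preceding it (``$\mathbf{S}(\RS_1(w))$ computes the insertion tableau of the reverse word'') is not accurate as stated. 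If you intend to write this out, it would be cleaner simply to cite the relevant sections of \cite{Fulton} and \cite{vanLeeuwen.1996} as the paper does.
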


\begin{remark}
    More generally, if $\sigma:\{j_1<\ldots<j_n\}\to \{i_1<\ldots<i_n\}$ is a bijection, then we define
    $$    \RS_1(\sigma)=\Rowinsert(\sigma(j_1),\ldots,\sigma(j_n)),
    $$
    which is a Young tableau with entries $i_1,\ldots,i_n$, and 
    we can define
    $\RS_2(\sigma)$ as the unique Young tableau of entries $j_1,\ldots,j_n$ such that
    $$
    \shape \RS_2(\sigma)|_{\{j_1,\ldots,j_k\}}
    =
    \shape \RS_1(\sigma|_{\{j_1,\ldots,j_k\}})\quad\text{for all $k=1,\ldots,n$},
    $$
    where the notation on the left hand side stands for the shape of the subtableau of entries {$j_1,\ldots,j_k$}.
    Moreover, from Proposition {\ref{proposition:classical.RS}\,(2)} we easily have $\RS_2(\sigma)=\RS_1(\sigma^{-1})$.
\end{remark}

Finally, we recall that nilpotent orbits of $\fg=\gl_n$ are parametrized by the set $\partitionsof{n}=\{\lambda\vdash n\}$ of partitions of $n$ through Jordan normal form:
$$
\nilpotentsof{\gl_n}=\coprod\nolimits_{\lambda\vdash n}\mathcal{O}_\lambda.
$$
Let us denote the shape of a tableau $ T $ by $ \shape T $, which is also interpreted as a partition (or a Young diagram).

\begin{theorem}[{Steinberg, \cite{Steinberg.1976,Steinberg.1988}}]\label{theorem:Steinberg}
For $G=\GL_n$ and $\Xfv=G/B\times G/B$,
the Steinberg map $\Phi: \Xfv/G\cong W\to\nilpotentsof{\fg}/G$ is given by, for $ w \in W $, 
$$
\Phi:\Xorbit_w\mapsto \mathcal{O}_\lambda\quad\mbox{such that $\lambda=\shape(\RS_1(w))=\shape(\RS_2(w))$}.
$$
\end{theorem}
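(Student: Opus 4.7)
The plan is to identify $\Phi(\Xorbit_w)$ with $\mathcal{O}_\lambda$ for $\lambda = \shape \RS_1(w)$; the equality $\shape \RS_1(w) = \shape \RS_2(w)$ will then follow from a conjugation argument at the end.

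First I would reduce the statement to a question about generic Jordan types. By the description of the conormal bundle $T^*_{\Xorbit_w}\Xfv = G \cdot \{(\fb, {}^w\fb, x) : x \in \fn \cap {}^w\fn\}$ given above, the exotic moment map $\phi_\fs$ (which, after the identification $\fs \cong \fg$, is simply the projection onto the $x$-coordinate) sends this set onto $G \cdot (\fn \cap {}^w\fn)$, an irreducible $G$-stable subset of $\nilpotentsof{\fg}$. Its closure is the closure of a single nilpotent orbit $\mathcal{O}_{\lambda(w)}$, where $\lambda(w)$ denotes the Jordan type of a generic $x \in \fn \cap {}^w\fn$. Hence the theorem amounts to showing $\lambda(w) = \shape \RS_1(w)$.

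Second I would invoke Greene's theorem on the combinatorial side: for a permutation $w \in \permutationsof{n}$, the first $k$ column lengths of $\shape \RS_1(w)$ sum to
\[
d_k(w) := \max\{|S_1 \cup \cdots \cup S_k| : S_1, \dots, S_k \text{ decreasing subsequences of } (w(1),\dots,w(n))\}.
\]
Since the Jordan type of a nilpotent $x$ is determined by $\lambda^t_1 + \cdots + \lambda^t_k = \dim \ker x^k$, the theorem reduces to the kernel identity
\[
\dim \ker x^k \,=\, d_k(w) \qquad \text{for all } k \geq 1, \text{ for generic } x \in \fn \cap {}^w\fn.
\]

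Third I would establish this identity. In a basis adapted to $\fb$, elements of $\fn \cap {}^w\fn$ are strictly upper-triangular matrices whose support positions are constrained by the inversion pattern of $w$. The inequality $\dim \ker x^k \geq d_k(w)$ holds for \emph{every} $x$: from a maximizing family $S_1, \dots, S_k$ one constructs explicitly $d_k(w)$ linearly independent vectors in $\ker x^k$ from the coordinate vectors labelled by positions in $\bigcup_i S_i$, using the support restriction to verify that $x^k$ annihilates each such vector. The opposite inequality on a dense open subset of $\fn \cap {}^w\fn$ requires producing an explicit element $x_w \in \fn \cap {}^w\fn$ built from the inversion data of $w$ whose $k$-th power has rank exactly $n - d_k(w)$ simultaneously for every $k$; semicontinuity of kernel dimensions then extends the equality to a generic $x$.

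Finally, for the symmetry, I would invoke Proposition~\ref{proposition:classical.RS}\,(2), which gives $\shape \RS_2(w) = \shape \RS_1(w^{-1}) = \lambda(w^{-1})$. The equality $\lambda(w) = \lambda(w^{-1})$ then follows from the $G$-conjugacy $w \cdot (\fn \cap {}^{w^{-1}}\fn) \cdot w^{-1} = \fn \cap {}^w\fn$, which preserves Jordan types. The main obstacle is the kernel identity in Step~three: the combinatorial lower bound is immediate, but the geometric upper bound requires displaying inside the highly constrained linear space $\fn \cap {}^w\fn$ a single element whose entire filtration $\ker x \subset \ker x^2 \subset \cdots$ simultaneously saturates the Greene quantities $d_k(w)$, which is exactly the content that forces the RSK correspondence to appear and is the heart of Steinberg's original argument.
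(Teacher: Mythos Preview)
The paper does not give its own proof of this theorem; it is stated as a classical result with citation to Steinberg \cite{Steinberg.1976,Steinberg.1988} and then used as a black box (e.g., in the proof of Theorem~\ref{theorem:description.Phik}). So there is no argument in the paper to compare yours against.

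Your overall strategy---reducing to the generic Jordan type on $\fn\cap{}^w\fn$ and matching it to the RS shape via Greene's theorem---is a legitimate and well-known route. But the argument you give for the lower bound in Step~3 is incorrect. You claim that for \emph{every} $x\in\fn\cap{}^w\fn$, the coordinate vectors labelled by a maximizing union of $k$ decreasing subsequences furnish $d_k(w)$ independent elements of $\ker x^k$. This already fails for $n=3$, $w=(1,3,2)$, $k=1$: the support condition forces $x_{23}=0$ while $x_{12}=a$, $x_{13}=b$ are free; the unique length-$2$ decreasing subsequence sits in positions $\{2,3\}$, yet $xe_2=ae_1$ and $xe_3=be_1$ are nonzero, and $\mathrm{span}\{e_2,e_3\}\cap\ker x$ is only one-dimensional. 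The bound $\dim\ker x\geq 2$ does hold, but because $e_1$ supplies the second kernel vector, not because of any construction from the decreasing subsequence.

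The inequality $\dim\ker x^k\geq d_k(w)$ is true for all $x$, but it is not ``immediate''; it is essentially Gansner's theorem on Jordan forms of poset-supported nilpotents, and proving it requires roughly the same work as the upper bound you defer to Steinberg. So both halves of Step~3 are substantial, and your sketch has a genuine gap on the side you labelled easy.
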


\subsection{Description of Steinberg maps for a double flag variety of type AIII}

In this section, we use the same notation as in the beginning of \S~\ref{section-7}.  
\skipover{
we consider the same situation as in \S~\ref{subsec:orbit.by.quivers.typeAIII}, namely
\begin{itemize}
\item $n=p+q$, $r\leq n$ are positive integers;
\item $V=\C^n=V^+\oplus V^-$ where $V^+=\C^p\times \{0\}^q$, $V^-=\{0\}^p\times \C^q$, with standard bases $(e_1,\ldots,e_n)=(e_1^+,\ldots,e_p^+,e_1^-,\ldots,e_q^-)$;
\smallskip
\item $G=\GL(V)=\GL_n$, $K=\GL(V^+)\times \GL(V^-)=\GL_p\times\GL_q$, $ P = \Stab_G(\C^r\times\{0\}^{n-r})$, $B_K=B_p^+\times B_q^+$ where $B_p^+\subset\GL_p$, $B_q^+\subset \GL_q$ are subgroups of upper-triangular matrices.
\end{itemize}
}
Thus our double flag variety is
\begin{align*}
\dblFV &= K/B_K \times G/P =  \bigl( \GL_p/ B_p^+ \times \GL_q/ B_q^+ \bigr) \times \GL_n/P_{(r, n - r)} 
\\
&\simeq \Bigl( \Flags(V^+)\times\Flags(V^-) \Bigr) \times \Grass_r(V).
\end{align*}
The purpose of this section is to give a combinatorial description of the Steinberg maps
$$
\Phi_\fk:\Xfv/K \to \nilpotentsof{\fk}/K
\quad\text{and}\quad
\Phi_\fs:\Xfv/K \to \nilpotentsof{\fs}/K.
$$
To this end, we first need to review the combinatorial parametrization of the various orbit sets that appear in the picture.

\subsubsection{Parametrization of $\nilpotentsof{\fk}/K$}
In the present case, the Cartan decomposition is
$\fg=\fk\oplus\fs$
with
\begin{align*}
\fk&=\left\{\begin{pmatrix}
a \;&\; 0 \\ 0 \;&\; d
\end{pmatrix}:a\in\gl_p,\ d\in\gl_q\right\},\quad 
\\
\fs&=\left\{\begin{pmatrix}
0 \;&\; b \\ c \;&\; 0
\end{pmatrix}:b\in \Mat_{p,q},\ c\in\Mat_{q,p}\right\}.
\end{align*}

    We have natural bijections
    $\nilpotentsof{\fk}\cong \nilpotentsof{\gl_p}\times \nilpotentsof{\gl_q}$
    and so 
    $\nilpotentsof{\fk}/K\cong \nilpotentsof{\gl_p}/\GL_p\times \nilpotentsof{\gl_q}/\GL_q\cong 
    \partitionsof{p}\times \partitionsof{q}$
    where the last identification is through Jordan normal form.

    \subsubsection{Parametrization of $\nilpotentsof{\fs}/K$}

    Nilpotent orbits of $\fs$ are parametrized by a set of signed Young diagrams:

    \begin{definition}
        \label{def:SYD}
{\rm (a)} A {\em signed Young diagram} (of signature $(p,q)$)
is a Young diagram of size $p+q$ whose boxes are filled in with $p$ symbols $+$ and $q$ symbols $-$
so that:
\begin{itemize}
\item two consecutive boxes of the same row have opposite signs, so that each row is a sequence of alternating signs;
\item we identify two such fillings up to permutation of rows, in particular 
we can standardize the filling in such a way that,
among rows which have the same length, the rows starting with a $+$ are above those starting with a $-$ (if there is any).
\end{itemize}

Let $\signedyd{p,q}$ denote the set of signed Young diagrams of signature $(p,q)$.
For $\Lambda\in\signedyd{p,q}$, the shape of $\Lambda$ is an element of $\partitionsof{p+q}$, also denoted by $\shape(\Lambda)$.
{By $\#\Lambda_{\leq k}(\pm)$ we will denote the number of $\pm$'s within the first $k$ columns of $\Lambda$.}
    \end{definition}

\begin{proposition}[{see, e.g., \cite{Collingwood.McGovern.1993,Ohta.1991.TMJ}}]
We have a bijection
$$
\signedyd{p,q}\to \nilpotentsof{\fs}/K,\ \Lambda\mapsto \{x\in\nilpotentsof{\fs}:
\forall k\geq 1,\ \dim \ker x^k\cap V^\pm=\#\Lambda_{\leq k}(\pm)\}.
$$
\end{proposition}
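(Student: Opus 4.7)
The plan is to use the Kostant--Rallis analogue of the Jacobson--Morozov theorem, together with the representation theory of $\mathfrak{sl}_2$, to reduce the classification to bookkeeping by signed Young diagrams. First, I would check that the map is well-defined, i.e., for a fixed $x\in\nilpotentsof{\fs}$, the sequence $(\dim\ker x^k\cap V^\pm)_{k\geq 1}$ actually comes from a signed Young diagram in the sense of Definition~\ref{def:SYD}. To do so, invoke Kostant--Rallis to embed $x$ in a normal $\mathfrak{sl}_2$-triple $(e,h,f)$ with $e=x$, $h\in\fk$, and $f\in\fs$. Since $h\in\fk$ preserves the decomposition $V=V^+\oplus V^-$, while $x,f\in\fs$ exchange $V^+$ and $V^-$, each irreducible $\mathfrak{sl}_2$-constituent $W$ of $V$ inherits a $\Z/2$-grading $W=W^+\oplus W^-$ of alternating dimensions $1,1,1,\dots$ along its weight basis. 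Each such constituent therefore contributes a single row of the signed Young diagram, starting with $+$ or $-$ depending on the sign of its highest weight vector, and the rows can be standardized as prescribed in Definition~\ref{def:SYD}. The identity $\dim\ker x^k\cap V^{\pm}=\#\Lambda_{\leq k}(\pm)$ is then just the statement that inside a single irreducible $\mathfrak{sl}_2$-module, $\ker x^k$ is spanned by the first $\min(k,\dim W)$ weight vectors, and one counts signs row by row.

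Next I would show injectivity on $K$-orbits: if $x,x'\in\nilpotentsof{\fs}$ give rise to the same signed Young diagram $\Lambda$, then they are $K$-conjugate. This rests on the Kostant--Rallis uniqueness statement, namely that associated $\mathfrak{sl}_2$-triples with a common nilpositive element in $\fs$ are unique up to the stabilizer; combined with the structure already obtained, one finds an explicit $K$-isomorphism $V\to V$ intertwining the two gradings and the two $\mathfrak{sl}_2$-actions. Concretely, pick a standard weight basis in each irreducible summand respecting the $\theta$-grading and transport them to one another; the resulting intertwiner is block-diagonal with respect to $V=V^+\oplus V^-$ and hence lies in $K=\GL(V^+)\times\GL(V^-)$.

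For surjectivity, given any $\Lambda\in\signedyd{p,q}$ I would construct an explicit model. For each row of $\Lambda$ of length $d$ starting with sign $\epsilon$, take the standard $d$-dimensional $\mathfrak{sl}_2$-module with a basis whose $\theta$-eigenvalues alternate starting with $\epsilon$; then the standard raising operator $x$ lies in $\fs$ and is nilpotent. Assembling these pieces over all rows and matching totals $p$ and $q$ (which is guaranteed by the signature of $\Lambda$) produces an $x\in\nilpotentsof{\fs}$ whose associated diagram is $\Lambda$. Combining the three steps yields the claimed bijection.

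The main obstacle is the injectivity step: one must verify that the signed Young diagram is a \emph{complete} $K$-invariant, not merely a $G$-invariant refined by signs. The subtle point is that even after the $\mathfrak{sl}_2$-decomposition respects $\theta$, one must check that the standardization rule in Definition~\ref{def:SYD} (in particular the ordering of rows of equal length with differing starting signs) captures exactly the $K$-equivalence and not a finer or coarser equivalence; this is where one genuinely uses that $K$ is disconnected-free in its action on each isotypic component and that there is no extra discrete invariant hidden in the $\theta$-grading. For this one may alternatively invoke \cite{Ohta.1991.TMJ} or the Springer--Steinberg style arguments in \cite[Ch.~9]{Collingwood.McGovern.1993}, but a self-contained verification via the model constructed in the surjectivity step is equally possible.
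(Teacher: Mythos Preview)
The paper does not give a proof of this proposition at all: it is stated with the citation ``see, e.g., \cite{Collingwood.McGovern.1993,Ohta.1991.TMJ}'' and immediately used, so there is nothing to compare against in the paper itself. Your sketch is the standard argument one finds in those references: embed $x$ in a normal $\mathfrak{sl}_2$-triple via Kostant--Rallis with $h\in\fk$ and $e,f\in\fs$, observe that the highest weight spaces are $\theta$-stable so that $V$ decomposes into $\theta$-graded irreducible $\mathfrak{sl}_2$-modules with alternating signs along weight strings, and read off the rows of $\Lambda$; injectivity and surjectivity then follow by the explicit basis/model construction you describe. One small clarification worth making explicit in your write-up is why each irreducible summand can be chosen $\theta$-stable: this is because the highest weight space $\{v:hv=\lambda v,\ ev=0\}$ is preserved by $\theta$ (as $h$ commutes and $e$ anticommutes with $\theta$), so one can pick a $\theta$-eigenbasis of highest weight vectors and generate from there. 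With that in place your argument is complete and matches the treatment in the cited sources.
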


\subsubsection{Parametrization of $\dblFV/K$ by partial permutations} \label{section:combi.quotient}
    
We recall from \S~\ref{subsec:orbit.by.quivers.typeAIII} that the $K$-orbits of $\dblFV$ are parametrized by the set
$$
\parameters=\left\{
\omega=\begin{pmatrix}\tau_1 \\ \tau_2\end{pmatrix}
:\tau_1\in
\ppermutations_{p,r},\ 
\tau_2\in \ppermutations_{q,r},\ \rk \omega=r
\right\}/\permutationsof{r}.
$$
In fact, we have a bijection
$$
\parameters\stackrel{\sim}{\to} \dblFV/K,\
\omega\mapsto K(\flag^+,\flag^-,[\omega])
$$
where $[\omega]=\mathrm{Im}\,\omega$ and
$\flag^+,\flag^-$ are the standard flags of $V^+,V^-$, respectively
(see Theorem \ref{theorem:orbits.AIII}).
Note also that we have a graphical expression
$\graphic(\omega)$
of the elements $\omega\in\parameters$
(see Notation \ref{notation:graph}).  

\subsubsection{Combinatorial Steinberg maps}

Now, according to the parametrization of orbits  from the previous subsections, 
the symmetrized and exotic Steinberg maps $\Phi_\fk$ and $\Phi_\fs$ can be considered as combinatorial maps
\begin{equation}
\Phi_\fk:\parameters\to \{(\lambda,\mu):\lambda\vdash p,\ \mu\vdash q\}
\quad\text{and}\quad 
\Phi_\fs:\parameters\to \signedyd{p,q}.    
\end{equation}
By using these maps, 
we will finally reformulate another parametrization of $ \dblFV/K $ in terms of standard tableaux in Theorem~\ref{theorem:fibers.Phik}, 
which clarifies the irreducible components of the conormal fibers of Steinberg maps. 

Let us describe these maps $\Phi_\fk$ and $\Phi_\fs$ explicitly. 

\subsection{Generalized Robinson-Schensted correspondence related to $\Phi_\fk$} \label{section:generalized-RS}

The following theorem gives a complete algorithm to calculate the map $ \Phi_\fk:\parameters\to \{(\lambda,\mu):\lambda\vdash p,\ \mu\vdash q\} $.

\begin{theorem} \label{theorem:description.Phik}
Let $\omega\in\parameters$. We use the following notation, relative to the graph $\graphic(\omega)$:
\begin{itemize}
    \item Let $i_1<\ldots<i_k$, $\ell_1<\ldots<\ell_s$, and {$\ell'_1<\ldots<\ell'_{s'}$} be respectively the numbers of the end points, marked points, and free points, within the line of positive vertices of~$\graphic(\omega)$.
    Let $j_1<\ldots<j_k$, $m_1<\ldots<m_t$, and {$m'_1<\ldots<m'_{t'}$} be similarly the numbers of the end points, marked points, and free points, within the negative vertices of $\graphic(\omega)$.
    \item 
    Let $L$, $L'$, $M$, $M'$ be the vertical Young tableaux with entries 
    $\{\ell_i\}_{i=1}^s$, $\{\ell'_i\}_{i=1}^{s'}$, $\{m_i\}_{i=1}^t$, $\{m'_i\}_{i=1}^{t'}$, respectively.
    Let $\sigma:\{j_1,\ldots,j_k\}\to\{i_1,\ldots,i_k\}$
    be the bijection such that $\sigma(j)=i$ whenever $(i^+,j^-)$ is an edge of $\graphic(\omega)$.
\end{itemize}
Then, the image of $\omega$ by the symmetrized Steinberg map is the pair of partitions 
\begin{equation}
\Phi_\fk(\omega)=(\lambda,\mu)=\big(\shape( L\cdot \RS_1(\sigma)\cdot L')\,,\,\shape(M\cdot \RS_2(\sigma)\cdot M')\big)
\end{equation}
where $\cdot$ is the sliding operation of the plactic monoid (see \cite[\S1.2--2.1]{Fulton}).
\end{theorem}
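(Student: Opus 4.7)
The plan is to compute the generic image of the conormal bundle $T^*_{\Xorbit_\omega}\Xfv$ under the projection $\phi_\fk$ and to match the resulting Jordan type with the plactic sliding formula. Working at the distinguished representative $\flag_\omega=(\flag^+,\flag^-,W)$ with $W=\Image\,\omega$, the conormal fiber consists of those $y=\begin{pmatrix}a&b\\c&d\end{pmatrix}\in\gl_n$ satisfying $y(V)\subset W$, $y(W)=0$ (hence $y^2=0$), and with $a\in\fn_{B_p^+}$, $d\in\fn_{B_q^+}$ upper-triangular nilpotent; here $\phi_\fk$ returns the pair $(a,d)$. Thus the task is to determine, for generic $y$ in this fiber, the $K$-orbit of $(a,d)\in\nilpotentsof{\gl_p}\times\nilpotentsof{\gl_q}$, equivalently the pair of partitions $(\lambda,\mu)$.

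First I would exploit the explicit description of $W$ provided by $\graphic(\omega)$: edges $(i^+,j^-)$ contribute the basis vectors $e_i^++e_j^-$ to $W$, marked vertices $i^+$ (resp.\ $j^-$) contribute $e_i^+$ (resp.\ $e_j^-$), and free vertices impose no generating relation. This gives a block decomposition of the conormal fiber along the connected components of $\graphic(\omega)$, reducing the generic Jordan type computation to a sum over component contributions. In particular, the support of $a$ under $\flag^+$ splits into three sectors: the indices $\{i_1<\dots<i_k\}$ incident to edges (the ``edge sector''), the marked indices $\{\ell_1<\dots<\ell_s\}$ corresponding to $L$, and the free indices $\{\ell'_1,\dots,\ell'_{s'}\}$ corresponding to $L'$.

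Next I would handle the pure-edge case, in which only the bijection $\sigma:\{j_1,\dots,j_k\}\to\{i_1,\dots,i_k\}$ appears. Using the flag filtration on $V^+$ restricted to the edge sector, the generic condition $y^2=0$ forces a relation of the form $a^2=-bc$ where the generic rank of powers $a^m$ is controlled by the same filtration invariants as Steinberg's classical analysis in \cite{Steinberg.1976,Steinberg.1988}. After passing to the subspaces spanned by $\{e_{i_t}^+\}$ and $\{e_{j_t}^-\}$, this reduces to computing the Robinson--Schensted insertion tableau of $\sigma$, giving the edge contribution $\shape\RS_1(\sigma)$ on the $V^+$-side and $\shape\RS_2(\sigma)$ on the $V^-$-side (the latter via Proposition \ref{proposition:classical.RS}\,(2) applied to $\sigma^{-1}$).

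Finally I would incorporate the marked and free vertices. A marked positive vertex $\ell_h$ already lies in $W$, so the column $e_{\ell_h}^+$ of $a$ is forced to vanish while all preceding flag-compatible columns of $a$ receive a new nonzero entry in position $\ell_h$; propagating this through the generic rank computation amounts to column-inserting $\ell_h$ on the \emph{left} of the already-built RS tableau. Free positive vertices, by contrast, contribute columns that can be filled generically and are naturally column-inserted on the \emph{right}. The Knuth-equivalence theory of the plactic monoid (cf.\ \cite[\S1.2--2.1]{Fulton}) then identifies the two sequences of column insertions with the sliding product $L\cdot \RS_1(\sigma)\cdot L'$; the symmetric argument on the $V^-$-side delivers $\mu=\shape(M\cdot\RS_2(\sigma)\cdot M')$. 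The main technical obstacle is the last step, namely the identification of the generic-rank calculation with plactic sliding: one must show that the order in which marked and free vertices are inserted is irrelevant beyond their placement on the left/right of $\RS_1(\sigma)$, which should follow either from a direct jeu-de-taquin argument or from an inductive reduction to the smaller cases $r=1$ and single-edge configurations already treated in \cite{Fresse.N.2016,Fresse.N.2020,Fresse.N.2023}.
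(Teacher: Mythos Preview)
Your approach diverges from the paper's, and the divergence is where the real content lies. The paper's proof does not attempt to compute the generic Jordan type of $a$ by assembling contributions from edge, marked, and free sectors. Instead, the key step is a clean characterization of the projection: one shows that
\[
\Bigl\{\,a\in\gl_p \;\Bigm|\; \exists\, b,c,d\text{ with }\begin{pmatrix}a&b\\c&d\end{pmatrix}\in\mathcal{D}_\omega\,\Bigr\}
\;=\;\fn_p^+\cap\mathrm{Ad}(w_{\fk,+})\fn_p^+
\]
for the explicit permutation $w_{\fk,+}\in\permutationsof{p}$ built from $\omega$ (see Remark~\ref{remark:description.Phik}). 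Once this is established, classical Steinberg (Theorem~\ref{theorem:Steinberg}) gives $\lambda=\shape\RS_1(w_{\fk,+})$ immediately, and the plactic formula $\shape(L\cdot\RS_1(\sigma)\cdot L')$ is then a purely combinatorial identity about row-insertion of the concatenated word $(\ell_s,\dots,\ell_1,\sigma(j_1),\dots,\sigma(j_k),\ell'_{s'},\dots,\ell'_1)$. The same works for $d$ with $w_{\fk,-}$. So the geometric input is reduced to identifying the $a$-projection with a classical Steinberg variety; no separate analysis of how each vertex type ``inserts'' is needed.

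Your route has two problems. First, the asserted ``block decomposition of the conormal fiber along connected components of $\graphic(\omega)$'' is false as stated: the upper-triangular constraint on $a$ together with the conditions $\Im y\subset W$, $W\subset\ker y$ couple columns and rows across different components (e.g.\ the column of $a$ indexed by an edge endpoint may have nonzero entries in rows indexed by marked points of other components). The quiver representation $\mathcal{F}_\omega$ decomposes over components (Lemma~\ref{lemma:decomposition.omega}), but $\mathcal{D}_\omega$ does not. Second, and more seriously, your proposed mechanism---marked vertices force a zero column and ``propagate'' as left insertions, free vertices as right insertions---is a heuristic you yourself flag as ``the main technical obstacle,'' with no argument offered beyond an appeal to jeu-de-taquin or to smaller cases. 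Filling this gap without the identification $\{a\}=\fn_p^+\cap{}^{w_{\fk,+}}\fn_p^+$ would amount to reproving Steinberg's theorem in a disguised form. The missing idea is precisely that identification: it is what converts the problem from ``compute a generic Jordan type under coupled linear constraints'' into ``apply the classical Steinberg map to an explicit permutation.''
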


\begin{example}
\label{example:Phik}
For $\omega$ as in (\ref{formula:example.graph}), we obtain
$$
L\cdot \RS_1(\sigma)\cdot L'=\mbox{\scriptsize $\young(5)$}\cdot\mbox{\scriptsize $\young(2,4)$}\cdot\mbox{\scriptsize $\young(1,3)$}=\mbox{\scriptsize $\young(13,2,4,5)$},\quad 
M\cdot \RS_2(\sigma)\cdot M'=\mbox{\scriptsize $\young(2)$}\cdot\mbox{\scriptsize $\young(1,3)$}=\mbox{\scriptsize $\young(13,2)$},
$$
hence
$$
\Phi_\fk(\omega)=(\lambda,\mu)\quad\text{with}\quad \lambda=\mbox{\scriptsize $\yng(2,1,1,1)$},\ \mu=\mbox{\scriptsize $\yng(2,1)$}.
$$
\end{example}

\begin{remark} \label{remark:description.Phik}
The properties of the row insertion algorithm also yield the following rephrasing of the formula giving $\Phi_\fk(\omega)=(\lambda,\mu)$ in Theorem \ref{theorem:description.Phik}.
Specifically, if we introduce the permutations
\begin{eqnarray*}
 & w_{\fk,+}=\begin{pmatrix} 1 & \cdots & s & s+1 & \cdots & s+k & s+k+1 & \cdots & p \\
\ell_s & \cdots & \ell_1 & \sigma(j_1) & \cdots & \sigma(j_k) & \ell'_{s'} & \cdots & \ell'_1\end{pmatrix}\in\permutationsof{p}, \\
 & w_{\fk,-}=\begin{pmatrix} 1 & \cdots & t & t+1 & \cdots & t+k & t+k+1 & \cdots & q \\
m_t & \cdots & m_1 & \sigma^{-1}(i_1) & \cdots & \sigma^{-1}(i_k) & m'_{t'} & \cdots & m'_1\end{pmatrix}\in\permutationsof{q}, 
\end{eqnarray*}
then we obtain
$$
(\lambda,\mu)=\big(
\shape(\RS_1(w_{\fk,+})),\shape(\RS_1(w_{\fk,-}))\big).
$$
\end{remark}

\begin{proof}[Sketch of proof of Theorem \ref{theorem:description.Phik}]
According to {Definition \ref{def:definition.of.Steinberg.maps}}, we need to consider the conormal bundle
$$
T_{\Xorbit_\omega}^*\dblFV=K\cdot \{(\flag_p^+,\flag_q^+,[\omega],y):y\in\mathcal{D}_\omega\}
$$
with the conormal direction
$$
\mathcal{D}_\omega=\{y\in \fn_\omega: y^\theta\in \fn_p^+\times \fn_q^+\}.
$$
Here $\fn_\omega=\{y:\mathrm{Im}\,y\subset [\omega]\subset \ker y\}$ is the nilradical of the parabolic subalgebra determined by $[\omega]$ and $\fn_p^+,\fn_q^+$ stand for the subalgebras of strictly upper triangular matrices of $\gl_p$ and $\gl_q$, respectively. 

For $a\in\gl_p$, it can be shown that
$$
\exists b,c,d\ \text{such that}\ \begin{pmatrix} a & b \\ c & d \end{pmatrix} 
\in \mathcal{D}_\omega \quad \Longleftrightarrow \quad a \in \fn_p^+ \cap \Ad  (w_{\fk,+}) \fn_p^+
$$
where $w_{\fk,+}$ is as in Remark \ref{remark:description.Phik} above.
A reformulation of
Steinberg's Theorem \ref{theorem:Steinberg} gives that the unique nilpotent $\GL_p$-orbit which intersects the linear space
$\fn_p^+ \cap \Ad(w_{\fk,+}) \fn_p^+$
is the one corresponding to the partition
$$
\shape \RS_1(w_{\fk,+})=\shape \RS_2(w_{\fk,+}).
$$
Arguing similarly with the block $d$
and the permutation $w_{\fk,-}$, we finally get that the product of nilpotent orbits that intersects the projection $\phi_\fk(T_{\Xorbit_\omega}^*\dblFV)=K\cdot\{y^\theta:y\in\mathcal{D}_\omega\}$ densely is $\mathcal{O}_\lambda\times\mathcal{O}_\mu$
with
$$
(\lambda,\mu)=(\shape \RS_1(w_{\fk,+}),\shape \RS_1(w_{\fk,-})).
$$
This (combined with Remark \ref{remark:description.Phik}) yields the claimed assertion.
See \cite[\S\ 4.3]{Fresse.N.2023} for a complete proof.
\end{proof}

As a byproduct of Theorem \ref{theorem:description.Phik}, we can determine the fibers of $\Phi_\fk$ in terms of a combinatorial correspondence which extends the Robinson--Schensted correspondence.
It involves the following sets of $5$-tuples:
for $(\lambda,\mu)\in\partitionsof{p}\times\partitionsof{q}$, let 
\begin{equation}
\Tquintuples{\lambda,\mu}=\{(T_1,T_2;\lambda',\mu';\nu) \mid 
T_1 \in \ST{\lambda}, \; T_2 \in \ST{\mu}, 
\text{satisfying}\ (\star) \}
\end{equation}
where
\begin{itemize}
%    \item[$(\star)$] $T_1$ and $T_2$ are standard Young tableaux of shapes $\lambda$ and $\mu$, respectively;
    \item[$(\star)$] $\nu\columnstrip\lambda'\columnstrip\lambda$,
$\nu\columnstrip\mu'\columnstrip\mu$, and $|\lambda'|+|\mu'|=|\nu|+r$.
\end{itemize} 
Hereafter, $|\lambda|$ stands for the size of a partition $\lambda$, and the notation
$\lambda'\columnstrip \lambda$ means that $\lambda'$ is a Young subdiagram of $\lambda$ such that the skew diagram $\lambda\setminus\lambda'$ is {\em column strip} (i.e., it contains at most one box in each row).

\begin{theorem}\label{theorem:fibers.Phik}
We use the notation of Theorem \ref{theorem:description.Phik} and Remark \ref{remark:description.Phik}.
We have a commutative diagram
\[
\xymatrix @R 10ex {
\parameters
\ar[dr]_{\Phi_\fk} \ar@{->}[rr]^-{\sim}_-{\genRS} & & \coprod\limits_{(\lambda, \mu) \in \partitionsof{p} \times \partitionsof{q}}
\Tquintuples{\lambda,\mu}
\ar[dl]^{\pi} \\
& \partitionsof{p} \times \partitionsof{q}}
\]
where $\pi$ is the natural {map}
and $\genRS$
is a bijection defined by
\begin{eqnarray}
\genRS(\omega)&=&\big(L\cdot \RS_1(\sigma)\cdot L',M\cdot\RS_2(\sigma)\cdot M'; \nonumber \\
&&\shape(L\cdot \RS_1(\sigma)),\shape(M\cdot \RS_2(\sigma));\ \shape(\RS_1(\sigma))\big). \label{formula:theorem.genRS}
\end{eqnarray}
In particular, $\genRS$ restricts to a bijection
$$
\Phi_\fk^{-1}(\lambda,\mu)\stackrel{\sim}{\longrightarrow} \Tquintuples{\lambda,\mu}
$$
for all $(\lambda,\mu)\in\partitionsof{p}\times\partitionsof{q}$.
\end{theorem}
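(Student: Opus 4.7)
The plan is to prove the theorem by establishing three things in order: (i) the diagram commutes; (ii) the image of $\genRS$ satisfies the defining condition $(\star)$ of $\Tquintuples{\lambda,\mu}$; and (iii) $\genRS$ is a bijection by explicit construction of the inverse.

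For (i), commutativity is immediate from Theorem \ref{theorem:description.Phik}: the first two components of $\genRS(\omega)$ have shapes $(\lambda,\mu) = (\shape(L\cdot \RS_1(\sigma)\cdot L'),\shape(M\cdot \RS_2(\sigma)\cdot M'))$, which by definition is $\Phi_\fk(\omega)$, so $\pi\circ\genRS=\Phi_\fk$. For (ii), the column-version of the Pieri rule in the plactic monoid asserts that when a column-shaped (i.e., single-column) tableau $C$ is multiplied on the left or right of a tableau $P$, the shape of $P\cdot C$ is obtained from $\shape(P)$ by adding a column (i.e., vertical) strip whose size equals $|C|$. Applying this successively to the products $L\cdot\RS_1(\sigma)$ and $(L\cdot\RS_1(\sigma))\cdot L'$ yields $\nu\columnstrip \lambda'\columnstrip \lambda$, and the analogous chain $\nu\columnstrip\mu'\columnstrip\mu$ on the negative side. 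The size condition is verified by bookkeeping: if $k=|\nu|$ is the number of edges of $\graphic(\omega)$ and $s,t$ are the numbers of marked positive/negative vertices respectively, then $|\lambda'|=k+s$, $|\mu'|=k+t$, $r=k+s+t$, hence $|\lambda'|+|\mu'|=|\nu|+r$.

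For (iii), I would construct the inverse of $\genRS$ as follows. Start from $(T_1,T_2;\lambda',\mu';\nu)\in\Tquintuples{\lambda,\mu}$. The key lemma is that the column Pieri rule is multiplicity-free: given a standard tableau $T$ of shape $\lambda$ and a sub-shape $\lambda'\subset\lambda$ with $\lambda\setminus\lambda'$ a column strip, there is a unique pair $(P,C)$ consisting of a standard tableau $P$ of shape $\lambda'$ and a single-column standard tableau $C$ with $T=P\cdot C$. This pair is obtained by reverse jeu de taquin, sliding the entries of $T$ sitting in $\lambda\setminus\lambda'$ outward until they form a column. Applying this lemma twice to $T_1$ (first with the chain $\lambda'\columnstrip\lambda$, then with $\nu\columnstrip\lambda'$) produces single-column tableaux $L',L$ and a straight-shape tableau $R_1$ of shape $\nu$; applying it twice to $T_2$ gives $M',M,R_2$. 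Since $\shape(R_1)=\shape(R_2)=\nu$, the classical inverse Robinson--Schensted correspondence recovers a unique bijection $\sigma$ with $(\RS_1(\sigma),\RS_2(\sigma))=(R_1,R_2)$. Finally, $\omega$ is reconstructed by declaring the entries of $L$ (resp.\ $L'$) to be the marked (resp.\ free) positive vertices, those of $M$ (resp.\ $M'$) to be the marked (resp.\ free) negative vertices, and the edges of $\graphic(\omega)$ to be given by $\sigma$. That this process produces a valid element of $\parameters$ uses the disjointness of entries, which follows from $s+s'+k=p$ and $t+t'+k=q$ guaranteed by the shapes involved.

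The main obstacle will be establishing the reverse column Pieri lemma and showing that the constructed maps $\genRS$ and $\genRS^{-1}$ are mutual inverses on the nose. The existence and uniqueness of the decomposition $T=P\cdot C$ under a prescribed column-strip split can be proved either via growth diagrams or, more concretely, by observing that reverse jeu de taquin slides from the boxes of $\lambda\setminus\lambda'$ (taken in a fixed canonical order, e.g.\ from right to left) always yield a column-shaped tableau on the left and are invertible. Once this combinatorial fact is in place, the composition $\genRS^{-1}\circ\genRS=\mathrm{id}$ is just the statement that rectifying $L\cdot R_1\cdot L'$ and then un-rectifying along $\nu\subset\lambda'\subset\lambda$ returns the original triple, while $\genRS\circ\genRS^{-1}=\mathrm{id}$ follows by construction. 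The restriction statement $\genRS\colon\Phi_\fk^{-1}(\lambda,\mu)\stackrel{\sim}{\to}\Tquintuples{\lambda,\mu}$ is then the fiberwise reading of the commutative diagram.
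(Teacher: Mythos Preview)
Your proposal is correct and follows essentially the same route as the paper's proof: both reduce commutativity to Theorem~\ref{theorem:description.Phik}, verify well-definedness via the column Pieri rule (which the paper cites as \cite[Proposition in \S1.1]{Fulton}), and construct the inverse by twice peeling off a column factor from each of $T_1,T_2$ and then applying inverse Robinson--Schensted to the resulting pair of shape-$\nu$ tableaux. The only cosmetic difference is that you phrase the reverse Pieri step in terms of jeu de taquin while the paper simply invokes Fulton's proposition for the uniqueness of the factorization; either formulation works, and your extra remarks on why the two compositions are identities are a welcome elaboration of what the paper leaves implicit.
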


\begin{proof}
We only need to justify that the map $\genRS$ is a bijection; the rest of the statement will then follow from Theorem \ref{theorem:description.Phik}.

First, we note that the considered map is well defined. Let $(T_1,T_2;\lambda',\mu';\nu)$ denote the right-hand side of \eqref{formula:theorem.genRS}.
The fact that $\lambda\setminus\lambda'$, $\lambda'\setminus\nu$, $\mu\setminus\mu'$, and $\mu'\setminus\nu$ are column strips follows from \cite[Proposition in \S1.1]{Fulton},
and by construction we have
$|\lambda'|+|\mu'|=2k+s+t=|\nu|+r$.

For showing the bijectivity, let $(T_1,T_2;\lambda',\mu';\nu)\in\mathcal{T}_{\lambda,\mu}$.
Two applications of \cite[Proposition in \S1.1]{Fulton} yield a unique 6-tuple
$(S_1,S_2,L,L',M,M')$, where $S_1,S_2$ are Young tableaux of shape $\nu$
and $L,L',M,M'$ are vertical Young tableaux, all contents being disjoint, such that $T_1=L\cdot S_1\cdot L'$, $T_2=M\cdot S_2\cdot M'$,
$\lambda'=\shape(L\cdot S_1)$, and $\mu'=\shape(M\cdot S_2)$.
Letting $\sigma:J\to I$ be the bijection associated with the pair $(S_1,S_2)$ via Robinson--Schensted correspondence,
the data $I,L,L',J,M,M',\sigma$ determine a unique element $\omega\in\parameters$
fitting with the statement of Theorem \ref{theorem:description.Phik},
and by construction the map $(T_1,T_2;\lambda',\mu';\nu)\mapsto \omega$
so obtained is the inverse procedure of $\genRS$.
\end{proof}

\begin{example} \label{example:genRS}
\begin{penumerate}
\item Let $ (p, q, r) = (5, 3, 4) $.  
For $\omega$ as in (\ref{formula:example.graph}) and Example \ref{example:Phik}, we obtain
%\\[-.1ex]
\begin{gather*}
\omega=
\begin{pmatrix}
\eb_2 & \eb_4 & \eb_5 & 0 \\ \hline
\eb_3 & \eb_1 & 0     & \eb_2 
\end{pmatrix}
\;\;
\rightsquigarrow
\;\;
\graphic(\omega)=
\mbox{\tiny
$\begin{picture}(100,20)(0,0)
\put(0,11){$\bullet$}\put(20,11){$\bullet$}\put(40,11){$\bullet$}\put(60,11){$\bullet$}\put(80,11){$\bullet$}
\put(0,18){$1^+$}\put(20,18){$2^+$}\put(40,18){$3^+$}\put(60,18){$4^+$}\put(80,18){$5^+$}
\put(0,-11){$\bullet$}\put(20,-11){$\bullet$}\put(40,-11){$\bullet$}
\put(0,-20){$1^-$}\put(20,-20){$2^-$}\put(40,-20){$3^-$}
\put(21,13){\line(1,-1){21}}
\put(61,12){\line(-3,-1){60}}
\put(82,13){\circle{8}}
\put(22.5,-9){\circle{8}}
\end{picture}$}, 
\\[2ex]
\genRS(\omega)=
\left(\;
{\tiny \young(13,2,4,5)\,,\ \young(13,2)\,;\ \yng(1,1,1)\,,\ \yng(2,1)\,;\ \yng(1,1) }
%\ytableaushort{13,2,4,5}, \ytableaushort{13,2} ;
%\ydiagram{1,1,1}, \ydiagram{2,1}; \ydiagram{1,1} }
\right).%=
%\left(
%{\tiny \ytableaushort{13,2,4,5} *[*(red)]{1,1} *[*(yellow)]{1,1,1}, \ytableaushort{13,2} *[*(red)]{1,1} *[*(yellow)]{2,1} }
%\right)
\end{gather*}
Here, to describe $ \omega $ as a pair of partial permutations, 
we use $ \eb_k $ for the $ k $-th elementary basis vector, i.e., a column vector whose $ k $-th entry is $ 1 $ and any other entries are $ 0 $ (we do not distinguish whether it is belonging to $ \C^p $ or $ \C^q $ for simplicity).  
%(we encode the tuple $\genRS(\omega)=(T_1,T_2;\lambda',\mu';\nu')$ tuple as the pair of tableaux $(T_1,T_2)$
%where the boxes of $\lambda',\mu',\nu$ are suitably colored).
\item
We give another example for $ (p, q, r) = (5, 4, 4) $.  
%\\
\begin{gather*}
\omega
=\begin{pmatrix}
\eb_1 & \eb_2 & \eb_3 & \eb_5 \\ \hline
\eb_2 & \eb_4 & 0     & \eb_3
\end{pmatrix}
\;\;
\rightsquigarrow
\;\;
\graphic(\omega)=
\mbox{\tiny
$\begin{picture}(100,20)(0,0)
\put(0,11){$\bullet$}\put(20,11){$\bullet$}\put(40,11){$\bullet$}\put(60,11){$\bullet$}\put(80,11){$\bullet$}
\put(0,18){$1^+$}\put(20,18){$2^+$}\put(40,18){$3^+$}\put(60,18){$4^+$}\put(80,18){$5^+$}
\put(0,-11){$\bullet$}\put(20,-11){$\bullet$}\put(40,-11){$\bullet$}\put(60,-11){$\bullet$}
\put(0,-20){$1^-$}\put(20,-20){$2^-$}\put(40,-20){$3^-$}\put(60,-20){$4^-$}
\put(01,13){\line(1,-1){21}}
\put(21,12){\line(2,-1){41}}
\put(82,13){\line(-2,-1){41}}
\put(42.2,12.6){\circle{8}}
\end{picture}$}, 
\\[2ex]
\genRS(\omega)=
\left(
{\tiny \young(124,35)\,,\ \young(13,2,4)\,;\ \yng(2,2)\,,\ \yng(2,1)\,;\ \yng(2,1) }
%\ytableaushort{13,2,4,5}, \ytableaushort{13,2} ;
%\ydiagram{1,1,1}, \ydiagram{2,1}; \ydiagram{1,1} }
\right).%=
\end{gather*}

\item Let $p=q=r=n$. In this case, $\parameters$ contains the subset
$$
\parameters_0:=\left\{\omega_\sigma=\begin{pmatrix} 
\sigma \\
\unitmatrix_n \end{pmatrix}:\sigma\in\permutationsof{n}\right\}.
$$
Then we have
$$
\genRS(\omega_\sigma)=(\RS_1(\sigma),\RS_2(\sigma);\lambda,\mu;\nu)
$$
with $\lambda=\mu=\nu$ is the common shape of $\RS_1(\sigma)$ and $\RS_2(\sigma)$.
In this way, $\genRS$ is indeed an extension of Robinson--Schensted correspondence.
\item \label{example:genRS.3}
Let $p=r\geq 2$ and $q=1$. Then $\parameters$ contains three families of elements $\{\omega_i^{\mathrm{I}}\}_{i=1}^p$, 
$\{\omega_i^{\mathrm{II}}\}_{i=1}^p$, and
$\{\omega^{\mathrm{III}}\}$, with respective graphical representations as below. 
\begin{align*}
\graphic(\omega_i^{\mathrm{I}}) &= \;\omegaI\ \quad\;\;
%%\\[7ex]
& & 
\graphic(\omega_i^{\mathrm{II}}) &= \;\omegaII\ \quad
\\[7ex]
\graphic(\omega^{\mathrm{III}}) &= \;\omegaIII
\\[.1ex]
\end{align*}
Their images by $\genRS$ are as follows.
\begin{eqnarray*}
 & \begin{array}{r|c|c}    
\omega & \omega_1^\mathrm{I} & \omega_i^\mathrm{I},\ i>1 \\ \hline & & \\[-4mm]
\genRS(\omega) 
 & {\tiny \ytableaushort{1,2,\vdotsshiftedup,p} , \ytableaushort{1} , \ytableaushort{\ ,\ ,\vdotsshiftedup,\ } , \ytableaushort{\ } , \ytableaushort{\ } } 
 & {\tiny \ytableaushort{1i,\vdotsshiftedup,p} , \ytableaushort{1} , \ytableaushort{\ \ ,\vdotsshiftedup,\ } , \ytableaushort{\ } , \ytableaushort{\ } }
\end{array} \\[2ex]
 & {\begin{array}{r|c|c|c}    
\omega & \omega_1^{\mathrm{II}} & \omega_i^{\mathrm{II}},\ i>1 & \omega^{\mathrm{III}} \\ \hline
 & & & \\[-4mm]
\genRS(\omega) 
 & {\tiny \ytableaushort{1,2,\vdotsshiftedup,p} , \ytableaushort{1} , \ytableaushort{\ ,\vdotsshiftedup,\ } , \ytableaushort{\ } , \emptyset } 
 & {\tiny \ytableaushort{1i,\vdotsshiftedup,p} , \ytableaushort{1} , \ytableaushort{\ ,\vdotsshiftedup,\ } , \ytableaushort{\ } , \emptyset } 
 & {\tiny \ytableaushort{1,2,\vdotsshiftedup,p} , \ytableaushort{1} , \ytableaushort{\ ,\ ,\vdotsshiftedup,\ } , \emptyset , \emptyset } 
\end{array}}
\end{eqnarray*}
\end{penumerate}
\end{example}

\subsubsection{Relation to the Travkin's mirabolic Robinson-Schensted correspondence}

In \cite{Travkin.2009}, Travkin considers the variety
$$
X=\Flags(V)\times \Flags(V)\times V,
$$
with diagonal action of $\GL(V)$,
which is a slight variation of the triple flag variety 
$$ 
\Flags(V)\times \Flags(V)\times \mathbb{P}(V) = \GL_n/B_n^+ \times \GL_n/B_n^+ \times \GL_n/P_\mathrm{mir} , 
$$
where $ P_\mathrm{mir}\subset \GL_n $ is mirabolic --- the stabilizer of a line in $ V=\C^n $.
Then, using an explicit description of the conormal bundles to the orbits, he derives a \emph{mirabolic} Robinson--Schensted correspondence 
(\cite[\S 3.3]{Travkin.2009}) 
\begin{align}
& X/\GL_n \simeq 
\{(w,\sigma) \mid  w \in \permutationsof{n},\ \sigma\text{ a descent of $w$}\}
\notag
\\
& \xrightarrow{\;\sim\;} 
\coprod_{(\lambda,\mu)\in\partitionsof{n}^2}
\{ (T_1,T_2,\nu) \mid T_1 \in \ST{\lambda}, \; T_2 \in \ST{\mu}, 
\text{ satisfying $(\star\star)$}\} ,     
\label{eq:Travkin.mirabolic.RS.corr}
\end{align}
where a \emph{descent} of $w$ means a (possibly empty) sequence $i_1<\ldots<i_s$ with $w(i_1)>\ldots>w(i_s)$, and the property 
$(\star\star)$ is given below (compare it with the condition $ (\star) $ in \S~\ref{section:generalized-RS}).
%%the same condition as in \S~\ref{section:generalized-RS} (claiming that $T_1,T_2$ are standard Young tableaux of respective shapes $\lambda,\mu$) and
\smallskip
\begin{itemize}
    \item[$(\star\star)$] 
    $\lambda\setminus\nu$ and $\mu\setminus\nu$ are column strip, that is, $\nu\columnstrip\lambda$ and $\nu\columnstrip\mu$.
\end{itemize}

\medskip

On the other hand, in \cite{Fresse.N.2020}, 
we consider an ``open big cell'' $ \dblFV^{\circ}  $ in the double flag variety $ \dblFV $ of type AIII below,
\begin{equation*}
    \dblFV 
    = (\GL_n/B_n^+ \times \GL_n/B_n^+) \times \GL_{2n}/P_{(n,n)}
    = K/B_K \times G/P_S \quad (P_S := P_{(n,n)}).
\end{equation*}
If we denote by $ U_S^{\circ} $ the unipotent radical of the opposite parabolic subgroup $ P_S^{\circ} $, 
the open big cell is defined as a $ K $-stable subvariety 
\begin{equation*}
\dblFV^{\circ} = K/B_K \times U_S^{\circ} P_S/P_S 
\simeq (\GL_n/B_n^+ \times \GL_n/B_n^+) \times \Mat_n , 
\end{equation*}
where $ (k_1, k_2) \in K = \GL_n \times \GL_n $ acts on $ \Mat_n $ by 
$ k_1 A k_2^{-1} \;\; (A \in \Mat_n) $.  
It is obvious that 
$ \dblFV^{\circ}/K \simeq B_n^+ \backslash \Mat_n / B_n^+ $, and 
the representatives of $ B_n^+ \times B_n^+ $-orbits in $ \Mat_n $ are precisely given by 
partial permutations $ \ppermutations_{n,n} $ (see \S~\ref{subsec:orbit.by.quivers.typeAIII}); for the proof, we refer to \cite{Fulton}, and also \cite[Lemma 3.4]{Fresse.N.2023}.  
{In \cite{Fresse.N.2023}, the representatives are described as 
$ \omega = \begin{pmatrix} \tau_1 \\ \unitmatrix_n \end{pmatrix} $ 
with $ \tau_1 \in \ppermutations_{n,n} $}.  
If we apply the algorithm for the generalized Robinson--Schensted correspondence explained in \S~\ref{section:generalized-RS}, 
we precisely obtain {a triple $ (T_1, T_2, \nu) $ belonging to the set} in the right hand side of the Travkin's mirabolic RS correspondence \eqref{eq:Travkin.mirabolic.RS.corr} 
(note that $ L = M' = \emptyset $ in Theorem~\ref{theorem:fibers.Phik} so that we do not need intermediate $ \lambda' $ and $ \mu' $ because $ \lambda'=\nu $ and $ \mu'=\mu$).
Thus we get  a bijection
\begin{align}
& \dblFV^{\circ}/ K \simeq 
\ppermutations_{n,n} \notag
\\
& \xrightarrow{\;\sim\;} 
\coprod_{(\lambda,\mu)\in\partitionsof{n}^2}
\{ (T_1,T_2,\nu) \mid T_1 \in \ST{\lambda}, \; T_2 \in \ST{\mu}, 
\text{ satisfying $(\star\star)$}\}. 
\label{eq:big.cell.DFV.K}
\end{align}
Consequently all of these five sets that appear in \eqref{eq:Travkin.mirabolic.RS.corr} and \eqref{eq:big.cell.DFV.K}
are in bijection.  
In particular we get

\begin{proposition}
There is a bijection between orbits in 
$ (\GL_n/B_n^+ \times \GL_n/B_n^+ \times \C^n)/\GL_n $
and 
$ (\GL_n/B_n^+ \times \GL_n/B_n^+ \times \Mat_n)/(\GL_n \times \GL_n) $.  
\end{proposition}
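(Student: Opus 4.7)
The plan is to compose the two bijections that have just been displayed. The Travkin mirabolic Robinson--Schensted correspondence, recalled in~\eqref{eq:Travkin.mirabolic.RS.corr}, yields a bijection between $(\GL_n/B_n \times \GL_n/B_n \times \C^n)/\GL_n$ and the combinatorial set of triples $(T_1,T_2,\nu)$ satisfying condition $(\star\star)$, indexed by pairs of partitions $(\lambda,\mu)\in\partitionsof{n}^2$. Independently, the analysis of the open big cell $\dblFV^\circ \subset \dblFV$ leading to~\eqref{eq:big.cell.DFV.K} produces a bijection between $\dblFV^\circ/K$ and the \emph{same} combinatorial set. Composing one bijection with the inverse of the other immediately gives the claimed identification of orbit sets.

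To close the argument, one needs to recognize the target of the second bijection as the orbit space $(\GL_n/B_n \times \GL_n/B_n \times \Mat_n)/(\GL_n \times \GL_n)$ stated in the Proposition. This is built into the construction of $\dblFV^\circ$: by definition $\dblFV^\circ = K/B_K \times U_S^\circ P_S/P_S$, and the natural identification $U_S^\circ P_S/P_S \cong U_S^\circ \cong \Mat_n$ transports the diagonal $K$-action to the action of $K = \GL_n \times \GL_n$ on $\GL_n/B_n \times \GL_n/B_n \times \Mat_n$ given by $(k_1,k_2)\cdot(F_1,F_2,A) = (k_1 F_1, k_2 F_2, k_1 A k_2^{-1})$, which is exactly the stated two-sided action on $\Mat_n$. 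Together with the obvious reduction $(\GL_n/B_n \times \GL_n/B_n \times \Mat_n)/(\GL_n \times \GL_n) \simeq B_n \backslash \Mat_n / B_n$, this identifies $\dblFV^\circ/K$ with the second orbit space in the Proposition.

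The main interest of the result lies less in any technical obstacle --- both ingredients are already in place --- than in the observation that two rather different constructions (Travkin's conormal-bundle analysis of $\Flags(V)\times\Flags(V)\times V$ on the one hand, and the generalized Robinson--Schensted correspondence of Theorem~\ref{theorem:fibers.Phik} applied to partial permutations $\omega\in\ppermutations_{n,n}$ whose lower block is the identity on the other hand) land on the very same combinatorial set of triples. It would be natural to seek a direct geometric bijection between the two orbit spaces that intertwines the natural Steinberg-map data on each side, relating a descent of a permutation $w$ to the combinatorial shape of the partial permutation $\tau_1$; isolating such a compatibility would be the principal conceptual challenge beyond the bijection itself.
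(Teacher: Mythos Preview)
Your argument is correct and matches the paper's approach: both obtain the bijection by composing Travkin's mirabolic RS correspondence \eqref{eq:Travkin.mirabolic.RS.corr} with the generalized RS correspondence \eqref{eq:big.cell.DFV.K}, noting that they share the same combinatorial target. The paper then goes one step further and actually writes down the direct bijection you speculate about in your final paragraph: given $\tau\in\ppermutations_{n,n}$, one completes it to a permutation $w\in\permutationsof{n}$ by placing $1$'s in the empty columns $i_1<\cdots<i_k$ (with $k=n-\rank\tau$) so that $w(i_1)>\cdots>w(i_k)$, and then $\sigma=\{w(i_1),\ldots,w(i_k)\}$ is the descent; this gives an explicit bijection $\ppermutations_{n,n}\leftrightarrow\{(w,\sigma)\}$ between the orbit representatives on the two sides.
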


We get this proposition {by} comparing mirabolic/generalized RS correspondence.  
However, constructing bijection between Travkin's set 
$ \{ (w, \sigma) \mid w \in S_n, \sigma \subset {\{1,\ldots,n\}} \text{ is a {descent} in $ w $} \} $ 
and the partial permutations $ \ppermutations_{n,n} $ is somewhat straightforward.  
In fact, for $ \tau \in \ppermutations_{n,n} $, we get a permutation matrix $ w\in\permutationsof{n} $ by adding $1$'s in some columns $ i_1,\ldots,i_k $ in such a way that 
$ i_1 < i_2 < \dots < i_k $ and $ w(i_1) > w(i_2) > \dots > w(i_k) $, 
where $ k = n - \rank \tau $. 
Then $ \sigma = \{ w(i_1) , w(i_2) , \dots , w(i_k) \} $ is a descent for $ w $, and this actually gives a bijection between orbital representatives.  

We thank Anthony Henderson for pointing out the bijection to us.  

\begin{remark}
\begin{penumerate}
\item
In the mirabolic case, 
there is a natural map from the conormal variety to the \emph{enhanced nilpotent cone} 
$ \nilpotentsof{\lie{gl}_n} \times V $ (see \cite{Achar.Henderson.2008}), which is $ \GL_n $-equivariant.  
It is interesting that the orbits $ (\nilpotentsof{\lie{gl}_n} \times V)/ \GL_n $ are classified by the bipartitions 
$ Q_n = \{ (\mu, \nu) \mid |\mu| + |\nu| = n \} $ which is further in bijection with 
$ \mathfrak{P} = \{ (\nu, \lambda) \mid |\lambda| = n , \nu\columnstrip\lambda \} $.  
Note that the latter set $ \mathfrak{P} $ also classifies the {pairs} $ (U, W) $, where 
$ U $ is an irreducible unipotent representation of $ \GL_n(\bbF_q) $, and 
$ W $ is an irreducible constituent of the restriction of $ U $ to the mirabolic subgroup $ P_n(\bbF_q) $ 
(\cite{Zelevinsky.LNM1981}; see also \cite[\S~2.2]{Travkin.2009}).)

\item
For the symplectic group, Syu Kato developed a generalization of Deligne-Langlands theory over the exotic nilpotent cone (\cite{Kato.2009}).  
In this case, Henderson-Trapa \cite{Henderson.Trapa.2012} and 
Nandakumar-Rosso-Saunders \cite{NRS.I.2018} establish a kind of Robinson--Schensted correspondence called the exotic RS correspondence.  
They are also related to the set of bipartitions $ Q_n $ or $ \mathfrak{P} $.  

\item 
There is yet another bijection for $ \ppermutations_{n,n} $ by using so-called ``admissible signed Young diagrams''.  For this, see \cite{Singh.arXiv.2020}. 
\end{penumerate}

\smallskip
It {would be} interesting to develop a general theory which explains all of these related results.  
\end{remark}

\subsection{Combinatorial description of the exotic Steinberg map $\Phi_\fs$}

The description of the exotic Steinberg map in terms of signed Young diagrams is more involved.

\begin{theorem}\label{thm:exotic.Steinberg.map}
Let $\omega\in \parameters$. With the notation of Theorem \ref{theorem:description.Phik},
we consider the bijections
\begin{eqnarray}
\label{ws+} & w_{\fs,+} = \begin{pmatrix} m_1 & \cdots & m_t & j_1 & \cdots & j_k & q+1 & \cdots & q+s' \\
-1 & \cdots & -t & \sigma(j_1) & \cdots & \sigma(j_k) & \ell'_{s'} & \cdots & \ell'_1 \end{pmatrix}, \\
\label{ws-} & w_{\fs,-} = \begin{pmatrix} \ell_1 & \cdots & \ell_s & i_1 & \cdots & i_k & p+1 & \cdots & p+t' \\
-1 & \cdots & -s & \sigma^{-1}(i_1) & \cdots & \sigma^{-1}(i_k) & m'_{t'} & \cdots & m'_1 \end{pmatrix}.
\end{eqnarray}
Then, the image of $\omega$ by the exotic Steinberg map is the signed Young diagram $\Phi_\fs(\omega)=\Lambda$  determined as follows. 
Here, {$\nrplus{\Lambda}{c}$, resp. $\nrminus{\Lambda}{c}$ (resp. $\nrboxes{\lambda}{c}$)
denotes the number of $+$'s, resp. $-$'s (resp. boxes)} in the first $c$ columns of $\Lambda$ (respectively $\lambda$).
\begin{penumerate}
\item\label{thm:ex.St.map:part-a}%%[\rm (a)] 
For every $c\geq 1$ even,
\[
\nrplus{\Lambda}{c}=\nrboxes{\lambda}{c}\quad\mbox{and}\quad \nrminus{\Lambda}{c}=\nrboxes{\mu}{c},
\]
where $(\lambda,\mu)=\Phi_\fk(\omega)$.
\item\label{thm:ex.St.map:part-b}%%[\rm (b)] 
For every $c\geq 1$ odd,
\[
\nrplus{\Lambda}{c}=s-t+\nrboxes{\lambda'}{c}\quad\mbox{and}\quad \nrminus{\Lambda}{c}=t-s+\nrboxes{\mu'}{c},
\]
where $(\lambda',\mu')$ is the pair of Young diagrams given by
\[(\lambda',\mu')=\big(\shape(\RS_1(w_{\fs,+})),\shape(\RS_1(w_{\fs,-}))\big).\]
\end{penumerate}
\end{theorem}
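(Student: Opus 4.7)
My proof plan mirrors the strategy used for Theorem~\ref{theorem:description.Phik}. By the definition of the exotic Steinberg map and the same conormal-bundle description as there, we have
\[
\phi_\fs(T^*_{\Xorbit_\omega}\dblFV) = K\cdot\{y^{-\theta} : y\in\mathcal{D}_\omega\},\qquad \mathcal{D}_\omega=\{y\in\fn_\omega : y^\theta\in\fn_p^+\times\fn_q^+\},
\]
so $\Phi_\fs(\omega)$ is the signed Young diagram of a generic $y^{-\theta}$ over $\mathcal{D}_\omega$. Writing $y=\begin{pmatrix}a & b\\ c & d\end{pmatrix}$, we have $y^{-\theta}=\begin{pmatrix}0 & b\\ c & 0\end{pmatrix}$; the diagram $\Lambda$ is characterized by $\nrplus{\Lambda}{c}=\dim\ker(y^{-\theta})^c\cap V^+$ and $\nrminus{\Lambda}{c}=\dim\ker(y^{-\theta})^c\cap V^-$.

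For part~(a), the key observation is that $(y^{-\theta})^2=\diag(bc,cb)$ lies in $\fk$ and preserves the polar decomposition, so that
\[
\nrplus{\Lambda}{2k}=\dim\ker(bc)^k,\qquad \nrminus{\Lambda}{2k}=\dim\ker(cb)^k.
\]
I would then show that, generically on $\mathcal{D}_\omega$, the nilpotents $bc\in\gl(V^+)$ and $cb\in\gl(V^-)$ share Jordan types with $a^2$ and $d^2$ respectively. Combined with the elementary identity $\#\tilde\nu_{\leq k}=\#\nu_{\leq 2k}$ (where $\tilde\nu$ is the Jordan type of the square of a nilpotent of type $\nu$) and with Theorem~\ref{theorem:description.Phik} identifying the generic Jordan type of $(a,d)$ on $\mathcal{D}_\omega$ with $(\lambda,\mu)=\Phi_\fk(\omega)$, this yields the formulas of part~(a). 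The generic equality of Jordan types is to be established via upper-semicontinuity of $\dim\ker$ together with the explicit construction of a representative $y\in\mathcal{D}_\omega$, pieced together from the connected components of $\graphic(\omega)$, in which both types are simultaneously maximal.

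For part~(b), the odd columns measure the asymmetry between $V^+$ and $V^-$ inside $\ker(y^{-\theta})^{2k+1}$ and cannot be reduced to a single-sign computation. The plan is to re-invoke classical Steinberg theory (Theorem~\ref{theorem:Steinberg}): to a generic $y\in\mathcal{D}_\omega$ one associates an \emph{extended} filtration on $V^+$ by adjoining $t$ auxiliary vectors reflecting the images of the minus-side marked points (accounted for by the negative labels $-1,\dots,-t$ in \eqref{ws+}), and analogously on $V^-$. The permutations $w_{\fs,\pm}$ in \eqref{ws+}--\eqref{ws-} are engineered to record the relative position of this extended filtration with respect to the standard flag and the parabolic associated with $[\omega]$, so that applying Robinson--Schensted to $w_{\fs,\pm}$ delivers $\shape\RS_1(w_{\fs,\pm})$ as the Jordan shape controlling $\dim\ker(y^{-\theta})^{2k+1}\cap V^\pm$.

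The main obstacle is verifying that a \emph{single} generic $y\in\mathcal{D}_\omega$ simultaneously realizes the claimed kernel dimensions for all columns of both parities, and that the geometric meaning of the negative-labeled entries of $w_{\fs,\pm}$ really matches the contributions of opposite-sign marked points to $\ker(y^{-\theta})^{2k+1}$. This requires a coherent construction of $y$ from the combinatorial data of $\graphic(\omega)$---edges, marked points, and free points---and a delicate case analysis of how nested applications of $y^{-\theta}$ shuffle the corresponding basis vectors between $V^+$ and $V^-$. The bookkeeping is expected to be substantially more intricate than in the proof of Theorem~\ref{theorem:description.Phik} precisely because the odd-column contributions couple the two sides of the Cartan decomposition, which the even-column ones do not.
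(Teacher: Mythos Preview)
Your overall plan is sound, but for part~\eqref{thm:ex.St.map:part-a} you are missing the key simplification that the paper exploits. Since $P$ is a maximal parabolic, the nilradical $\fn_\omega=\{y:\Im y\subset[\omega]\subset\ker y\}$ consists entirely of square-zero elements: $y^2=0$ for \emph{every} $y\in\mathcal{D}_\omega$, not just generic ones. Decomposing $0=y^2=(y^\theta+y^{-\theta})^2$ into its $\fk$- and $\fs$-parts gives $(y^{-\theta})^2=-(y^\theta)^2$ identically, i.e.\ $bc=-a^2$ and $cb=-d^2$ exactly. Hence $(y^{-\theta})^{2m}=(-1)^m(y^\theta)^{2m}$ for all $m\geq 1$, and the even-column kernel dimensions of $y^{-\theta}$ coincide with those of $y^\theta$ on the nose. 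Part~\eqref{thm:ex.St.map:part-a} then follows immediately from Theorem~\ref{theorem:description.Phik}. Your proposed route via upper-semicontinuity and an explicit representative to match the Jordan types of $bc$ and $a^2$ \emph{generically} would work, but it is a detour around an identity that holds everywhere.

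For part~\eqref{thm:ex.St.map:part-b} your instinct to reduce to classical Steinberg theory is correct and matches the paper's strategy. The paper's execution, however, is somewhat different from what you outline: rather than building extended filtrations on $V^\pm$ with auxiliary vectors, it characterizes which off-diagonal blocks $c\in\Mat_{q,p}$ (and dually $b$) can be completed to an element of $\mathcal{D}_\omega$, and reduces this to an intersection of nilradicals governed by the permutations $w_{\fs,\pm}$. The negative labels in \eqref{ws+}--\eqref{ws-} arise from this characterization rather than from literally adjoining vectors. The paper acknowledges this step is ``less direct'' than the symmetrized case and defers the full argument to \cite[\S4.4]{Fresse.N.2023}; your anticipated ``delicate case analysis'' is accurate in spirit, though the organizing principle is the block characterization rather than tracking basis vectors through powers of $y^{-\theta}$.
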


\begin{proof}[Sketch of proof]
In view of the definition of $\Phi_\fs$ in Definition \ref{sec:Definition.Steinberg.maps},
we need to determine the nilpotent $\GL_p\times\GL_q$-orbit of $\fs$ which intersects the projection $\{y^{-\theta}:y\in\mathcal{D}_\omega\}$ densely, where
$\mathcal{D}_\omega$ is the conormal direction already considered in the proof of Theorem \ref{theorem:description.Phik}.

Since every $y\in\mathcal{D}_\omega$ satisfies $y^2=0$, we already have
$$
(y^{-\theta})^{2m}=(-1)^m(y^\theta)^{2m}\quad\text{for all}\ m\geq 1,\ \text{all}\ y\in\mathcal{D}_\omega.
$$
This, combined with Theorem \ref{theorem:description.Phik}, proves \eqref{thm:ex.St.map:part-a}.
%%part (a) of the theorem.

For showing \eqref{thm:ex.St.map:part-b}, 
%%part (b), 
we need to characterize the blocks $c\in\Mat_{q,p}$ satisfying the condition
$$
\exists a,b,d\ \text{such that}\ \begin{pmatrix}
a & b \\ c & d
\end{pmatrix}\in\mathcal{D}_\omega.
$$
This is done, again, by relying on Steinberg's result (Theorem \ref{theorem:Steinberg}), though the construction here is less direct than in Theorem \ref{theorem:description.Phik}; see \cite[\S4.4]{Fresse.N.2023} for a complete proof.
\end{proof}

\begin{example}
\begin{penumerate}
\item
Taking again $\omega$ as in (\ref{formula:example.graph}) and Example \ref{example:Phik}, we have
$$
w_{\fs,+}=\begin{pmatrix} 1 & 2 & 3 & 4 & 5 \\ 4 & -1 & 2 & 3 & 1 \end{pmatrix}\quad\text{and}\quad w_{\fs,-}=\begin{pmatrix} 2 & 4 & 5 \\ 3 & 1 & -1 \end{pmatrix},
$$
which (combined with the calculation of $\Phi_\fk(\omega)$ in Example \ref{example:Phik}) yields
$$
\lambda'=\mbox{\scriptsize $\yng(3,1,1)$},\quad\mu'=\mbox{\scriptsize $\yng(1,1,1)$},\quad \mbox{and}\quad\Lambda=\mbox{\scriptsize $\young(-+,-+,+,+,+,-)$}.
$$

\item 
Assume $p=r\geq 2$ and $q=1$
as in Example \ref{example:genRS}\,\eqref{example:genRS.3}.
With the same notation as in Example \ref{example:genRS}, we obtain the following images by the exotic Steinberg map $\Phi_\fs$.
\begin{eqnarray*}
{\begin{array}{r|c|c|c|c}    
\omega & \ \ \omega_1^\mathrm{I}\ \ & \omega_i^\mathrm{I},\ i\geq 2 & \omega_i^{\mathrm{II}},\ i\geq 1 & \omega^{\mathrm{III}} \\ \hline  & & & \\[-4.8mm]
\Phi_\fs(\omega) 
 & {\tiny \ytableaushort{+,\vdotsshiftedup,+,-} } 
 & {\tiny \ytableaushort{+-,+,\vdotsshiftedup} } 
 & {\tiny \ytableaushort{-+,+,\vdotsshiftedup} }
 & {\tiny \ytableaushort{+-,+,\vdotsshiftedup} } 
 \end{array}}
\end{eqnarray*}
\end{penumerate}
\end{example}

Unlike the case of the symmetrized Steinberg map, we have almost no information about the fibers of the exotic Steinberg map $\Phi_\fs$. 
It would be interesting to identify the fibers, which will give different parametrization of the orbits $ \dblFV/K $ in terms of a generalized Robinson--Schensted correspondence analogous to Theorem \ref{theorem:fibers.Phik}.

\part{\textbf{Embedding theory for double flag varieties}}\label{part:Orbit.Embedding}
%%\section*{\textbf{Part IV. Embedding theory for double flag varieties}}\partlabel{part:Orbit.Embedding}
%%\addcontentsline{toc}{section}{\textbf{Part IV. Embedding theory for double flag varieties}\partstrut}

Let us return back to a general double flag variety associated with a symmetric pair.
Thus $ (G, K) $ is a symmetric pair, $ P, Q $ being parabolic subgroups 
of $ G $ and $ K $ respectively, and 
we consider a double flag variety $ \dblFV = K/Q \times G/P $.  

{Here by ``embedding theory'' we mean general results which guarantee an embedding between orbit sets, and general consequences of these results.}
There {exist general results on orbit embedding 
(see \cite{Ohta.2008}, \cite{Nishiyama.2014}), 
and one can apply them} to the case of double flag varieties 
(\cite{Fresse.N.2021}).
If $ \dblFV $ can be embedded into a larger, well-understood double flag variety, 
we can deduce finiteness of orbits or even orbit classifications from those of the larger double flag variety.  

\section{Embedding of double flag varieties and their orbits}\label{sec:embedding.theory.DFV}

Let $ \bbG $ be a (larger) connected reductive algebraic group over $ \C $, 
and let $ \theta, \sigma \in \Aut \bbG $ be two involutions which are commutative.  
Let $ \bbK = \bbG^{\theta} $ be the fixed point subgroup of $ \theta $, 
and assume that our $ G $ is the fixed point subgroup of $ \sigma $, i.e., $ G = \bbG^{\sigma} $.
Then $ ( \bbG, \bbK ) $ is a symmetric pair, and 
$ K = G^{\theta} = \bbK^{\sigma} = G \cap \bbK $ is a symmetric subgroup of both $ G $ and $ \bbK $.
We assume all the subgroups $ G, K $ and $ \bbK $ are connected.
\begin{equation*}
\vcenter{
\xymatrix @R-.3ex @M+.5ex @C-3ex @L+.5ex @H+1ex {
 & \ar@{-}[ld]_{\sigma} \makebox[3ex][c]{$\bbG$} \ar@{-}[rd]^{\theta} &
\\
\ar@{-}[rd]_(.4){\theta} \makebox[3ex][r]{$ \bbG^{\sigma} = G $}  &  & \makebox[3ex][l]{$ \bbK = \bbG^{\theta} $} \ar@{-}[ld]^(.3){\sigma}
%%\ar@{}@<1ex>[ld]_{\theta} \ar@{-}[ld]
\\
& \makebox[9ex][l]{$K = G^{\theta} = G \cap \bbK = \bbG^{\sigma, \theta}$} & 
}}
\end{equation*}
\skipover{
\begin{equation*}
\begin{array}{ccc}
\bbK & \subset & \bbG 
\\
\text{\rotatebox[origin=c]{-90}{$\supset$}} & & \text{\rotatebox[origin=c]{-90}{$\supset$}} 
\\
K & \subset & G 
\end{array}
\end{equation*}
}
\indent
For any parabolic subgroup $ P $ of $ G $, 
there exists a $ \sigma $-stable parabolic subgroup $ \bbP $ of $ \bbG $ 
satisfying 
$ P = \bbP^{\sigma} = \bbP \cap G $.  
Similarly, for a parabolic subgroup $ Q $ of $ K $, 
we choose a $ \sigma $-stable parabolic subgroup 
\skipover{
In most of the literature, the letter $ \bbQ $ is reserved to denote 
the field of rational numbers.  However, in this paper, we consider everything over 
the complex number field $ \C $ and actually the rational number field does play no rule.  
For this reason, we choose the letter $ \bbQ $ for a parabolic subgroup of $ \bbK $.}
$ \bbQ $ of $ \bbK $ 
such that 
$ Q = \bbQ^{\sigma} = \bbQ \cap K $.  
We denote 
$ \bbdblFV = \bbG / \bbP \times \bbK / \bbQ $ a double flag variety 
for $ ( \bbG, \bbK ) $, and   
$ \dblFV = G/P \times K/Q $ as before.  
Then we get a natural embedding 
\begin{equation*}
\dblFV \hookrightarrow \bbdblFV, \qquad
(g P, k Q)  \mapsto (g \bbP, k \bbQ) \quad\; (g \in G, k \in K)
\end{equation*}
and we will use the same notation 
$ \dblFV $ for its embedded image in $ \bbdblFV $.  

By abuse of notation, let $ \sigma $ also denote {the} automorphism of $ \bbdblFV $ defined by 
$ 
\sigma (\bbg \cdot \bbP, \bbk \cdot \bbQ) 
= (\sigma(\bbg) \cdot \bbP, \sigma(\bbk) \cdot \bbQ) 
$ so that $ \sigma $ is a {$ \bbK $}-equivariant automorphism on $ \bbdblFV $, i.e., 
$ \sigma(h \cdot \bbx) = \sigma(h) \cdot \sigma(\bbx) $ holds 
for any $ h \in \bbK $ and $ \bbx \in \bbdblFV $.

\begin{definition}\label{def:-sigma.square.root}
For a $ \sigma $-stable subgroup $ \bbH $ of $ \bbG $, 
let us denote 
\begin{equation*}
\bbH^{-\sigma} = \{ h \in \bbH \mid \sigma(h) = h^{-1} \} .
\end{equation*}
We say that the subgroup \emph{$ \bbH $ admits $ (-\sigma) $-square roots}  
if for any $ h \in \bbH^{- \sigma} $ there exists an $ f \in \bbH^{-\sigma} $ 
which satisfies $ h = f^2 $.  
In this case $ f $ is called a $ (-\sigma) $-square root of $ h $.  
\end{definition}

\begin{lemma}
If both $ \bbP $ and $ \bbQ $ admit $ (-\sigma) $-square roots, then
$ \dblFV $ 
coincides with the fixed point set 
$ \bbdblFV^{\sigma} := \{ \bbx \in \bbdblFV \mid \sigma(\bbx) = \bbx \} $ 
of $ \sigma $.  
\end{lemma}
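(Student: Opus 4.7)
The plan is to verify the two inclusions $\dblFV \subseteq \bbdblFV^{\sigma}$ and $\bbdblFV^{\sigma} \subseteq \dblFV$ separately. The first inclusion should be essentially tautological, and the second is where the $(-\sigma)$-square root hypothesis does the work.

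For the easy direction, I would take $(gP, kQ) \in \dblFV$ with $g \in G = \bbG^{\sigma}$ and $k \in K = \bbK^{\sigma}$. Since $\bbP$ and $\bbQ$ are $\sigma$-stable, we have $\sigma(g\bbP) = \sigma(g)\bbP = g\bbP$ and likewise $\sigma(k\bbQ) = k\bbQ$, so the image of $(gP,kQ)$ in $\bbdblFV$ lies in $\bbdblFV^{\sigma}$.

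For the converse, I would take $(\bbg\bbP, \bbk\bbQ) \in \bbdblFV^{\sigma}$. Being $\sigma$-fixed gives $\bbg^{-1}\sigma(\bbg) \in \bbP$; set $p := \bbg^{-1}\sigma(\bbg)$. Applying $\sigma$ and using $\sigma^2 = \mathrm{id}$ yields $\sigma(p) = \sigma(\bbg)^{-1}\bbg = p^{-1}$, hence $p \in \bbP^{-\sigma}$. By hypothesis there exists $f \in \bbP^{-\sigma}$ with $f^2 = p$. I then set $g := \bbg f$ and compute
\[
\sigma(g) = \sigma(\bbg)\sigma(f) = (\bbg p) f^{-1} = \bbg f^2 f^{-1} = \bbg f = g,
\]
so $g \in \bbG^{\sigma} = G$. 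Since $f \in \bbP$, we have $g\bbP = \bbg\bbP$. The identical argument for $\bbQ$ produces $k \in K$ with $k\bbQ = \bbk\bbQ$. Finally, because $P = \bbP \cap G$ and $Q = \bbQ \cap K$, the natural maps $G/P \hookrightarrow \bbG/\bbP$ and $K/Q \hookrightarrow \bbK/\bbQ$ are injective, so the pair $(gP, kQ) \in \dblFV$ is well-defined and maps to $(\bbg\bbP, \bbk\bbQ)$.

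The only non-formal step is the trivialization of the ``cocycle'' $p = \bbg^{-1}\sigma(\bbg) \in \bbP^{-\sigma}$, which is precisely what the $(-\sigma)$-square root assumption furnishes: without it, one would only know that every $\sigma$-fixed coset yields a class in a suitable $H^1$-type set with values in $\bbP^{-\sigma}$, and there would be no reason for this class to vanish. Thus the hypothesis that both $\bbP$ and $\bbQ$ admit $(-\sigma)$-square roots is tailored exactly to make this Galois cohomology obstruction disappear, and is the heart of the argument.
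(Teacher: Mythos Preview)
Your proof is correct and follows essentially the same approach as the paper's: both reduce to showing that the cocycle $p=\bbg^{-1}\sigma(\bbg)\in\bbP^{-\sigma}$ can be trivialized by a $(-\sigma)$-square root $f$, so that $\bbg f\in G$. You are more explicit in checking $\sigma(p)=p^{-1}$ and in treating the easy inclusion, and your closing remark about the $H^1$-type obstruction is a nice conceptual gloss, but the core argument is identical.
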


\begin{proof}
Take $ \bbg \bbP \in (\bbG/\bbP)^{\sigma} $.  
Then we get 
$ \bbg^{-1} \sigma(\bbg) \in \bbP^{-\sigma} $ since $ \bbP $ is $ \sigma $-stable.  
Therefore there exists a $ (-\sigma) $-square root $ f \in \bbP^{-\sigma} $ of $ \bbg^{-1} \sigma(\bbg) $.  
By this definition $ \bbg f = \sigma(\bbg) f^{-1} = \sigma(\bbg f) $ holds, 
and hence $ \bbg f \in G $, which in turn implies $ \bbg \bbP = (\bbg f) \bbP $ belongs to $ G/P = G \cdot \bbP \subset \bbG / \bbP $.  
The proof of $ \bbk \bbQ \in K/ Q $ in $ \bbK/\bbQ $ is the same.  
\end{proof}

The natural $ K $-equivariant embedding 
$ \dblFV \hookrightarrow \bbdblFV $ 
induces an orbit map 
\begin{equation*}
\xymatrix @R -5ex @C -6ex @L 5pt { \iota : & \dblFV / K \ar[rr] & \qquad \qquad & \bbdblFV / \bbK
\\
& \text{\rotatebox[origin=c]{-90}{$\ni$}} &  & \text{\rotatebox[origin=c]{-90}{$\ni$}} 
\\
& \calorbit \ar@{|->}[rr] & & \makebox[1em][l]{\,$ \orbit = \bbK \cdot \calorbit $}} ,
\end{equation*}
which is not injective in general.  
(In the previous sections, we often used the notation $ \orbit $ to denote a $ K $-orbit in $ \dblFV $, 
but here we change it to $ \calorbit $.)  
However, in a nice situation (like the following theorem), the map $ \iota $ becomes an injection and 
we get $ \calorbit = \orbit \cap \dblFV $ is a single $ K $-orbit.  

\begin{theorem}\label{thm:embedding.orbits}
Let us consider the double flag varieties 
$ \dblFV = K/Q \times G/P $ and $ \bbdblFV = \bbK / \bbQ \times \bbG / \bbP $ above.  
We assume 
\begin{itemize}
\item[\upshape(E)]
The parabolic subgroups $ \bbP $ and $ \bbQ $ admit $ (-\sigma) $-square roots, 
and,  
for any $ \sigma $-stable parabolic subgroups $ \bbP_1 \subset \bbG $ and 
$ \bbQ_1 \subset \bbK $ which are conjugate to $ \bbP $ and $ \bbQ $ respectively, 
the intersection $ \bbP_1 \cap \bbQ_1 $ also admits $ (-\sigma) $-square roots.  
\end{itemize}
Under the condition {\upshape(E)}, the following statements hold.  
\begin{penumerate}
\item
The natural orbit map 
$ \iota : \dblFV / K \to \bbdblFV / \bbK $ defined by 
$ \iota(\calorbit) = \bbK \cdot \calorbit $  for $ \calorbit \in \dblFV / K $ is injective, 
i.e., if we put $ \orbit = \iota(\calorbit) $, then it holds $ \calorbit = \orbit \cap \dblFV = \orbit^{\sigma} $.

\item
For any $ \bbK $-orbit $ \orbit $ in $ \bbdblFV $, 
the intersection $ \orbit \cap \dblFV $ is either empty or a single $ K $-orbit.
\end{penumerate}
\end{theorem}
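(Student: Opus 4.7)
The plan is to prove part (2) first, since (1) will follow essentially as a corollary. For part (2), I would fix a $\bbK$-orbit $\orbit \subset \bbdblFV$ meeting $\dblFV$ in a nonempty set, pick a basepoint $\bbx \in \orbit \cap \dblFV$ and take an arbitrary second point $\bby \in \orbit \cap \dblFV$. Writing $\bby = \bbk \cdot \bbx$ for some $\bbk \in \bbK$, the goal is to modify $\bbk$ by an element of the stabilizer $\bbH := \Stab_{\bbK}(\bbx)$ to produce an element of $K = \bbK^{\sigma}$ that still carries $\bbx$ to $\bby$. The lemma preceding the theorem, whose hypothesis is part of condition (E), already identifies $\dblFV$ with $\bbdblFV^{\sigma}$, so both $\bbx$ and $\bby$ are fixed by $\sigma$.

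The key step is a $(-\sigma)$-square-root computation. From $\sigma(\bby)=\bby$ and $\sigma(\bbx)=\bbx$, applying $\sigma$ to $\bby = \bbk \cdot \bbx$ gives $\bby = \sigma(\bbk) \cdot \bbx$, hence $\bbk^{-1}\sigma(\bbk) \in \bbH$. A direct computation shows $\sigma(\bbk^{-1}\sigma(\bbk)) = (\bbk^{-1}\sigma(\bbk))^{-1}$, so this element actually lies in $\bbH^{-\sigma}$. Writing $\bbx = (\bbk_0 \bbQ, \bbg_0 \bbP)$, one sees that $\bbH = \bbQ_1 \cap \bbP_1$ with $\bbQ_1 = \bbk_0 \bbQ \bbk_0^{-1}$ and $\bbP_1 = \bbg_0 \bbP \bbg_0^{-1}$; since $\bbx$ is $\sigma$-fixed, $\bbP_1$ and $\bbQ_1$ are $\sigma$-stable parabolic subgroups conjugate to $\bbP$ and $\bbQ$ respectively. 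Condition (E) then provides an $\bbf \in \bbH^{-\sigma}$ with $\bbf^{2} = \bbk^{-1}\sigma(\bbk)$. Setting $k := \bbk \bbf$, a short check gives $\sigma(k) = \sigma(\bbk)\bbf^{-1} = \bbk(\bbk^{-1}\sigma(\bbk))\bbf^{-1} = \bbk \bbf^{2}\bbf^{-1} = k$, so $k \in K$, while $k \cdot \bbx = \bbk \cdot (\bbf \cdot \bbx) = \bbk \cdot \bbx = \bby$ because $\bbf$ stabilizes $\bbx$. This shows $\orbit \cap \dblFV$ is a single $K$-orbit and establishes (2).

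For (1), the map $\iota$ sends $\calorbit \in \dblFV/K$ to $\orbit := \bbK \cdot \calorbit$, and then $\calorbit \subset \orbit \cap \dblFV$. By (2) the right-hand side is a single $K$-orbit, which forces $\calorbit = \orbit \cap \dblFV$. The identification $\orbit \cap \dblFV = \orbit^{\sigma}$ then follows because $\dblFV = \bbdblFV^{\sigma}$ by the preceding lemma. Injectivity of $\iota$ is an immediate consequence.

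The main subtlety will not lie in any of the group-theoretic manipulations (which are short once one accepts the $(-\sigma)$-square-root formalism), but rather in the bookkeeping that identifies $\Stab_{\bbK}(\bbx)$ with an intersection of the form $\bbP_{1}\cap\bbQ_{1}$ of $\sigma$-stable parabolics conjugate to $\bbP$ and $\bbQ$ respectively. In particular one must verify that the $\sigma$-stability conclusion comes from $\sigma(\bbx) = \bbx$, so that condition (E) applies to exactly the subgroup at hand. Once this is in place, the square-root trick is the precise analogue of the argument in the preceding lemma, only applied inside the stabilizer rather than inside $\bbP$ or $\bbQ$ separately.
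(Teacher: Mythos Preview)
Your proof is correct and is precisely the argument that the paper defers to the references \cite{Fresse.N.2021} and \cite{Nishiyama.2014}: the $(-\sigma)$-square-root trick applied inside the stabilizer $\bbH=\Stab_{\bbK}(\bbx)=\bbP_1\cap\bbQ_1$ is exactly the mechanism of \cite[Theorem~3.2]{Nishiyama.2014}, and your identification of $\bbH$ with a $\sigma$-stable intersection of conjugates of $\bbP$ and $\bbQ$ is what makes condition~(E) apply. Your derivation of (1) from (2) via the preceding lemma is also the intended route.
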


For the proof, we refer the readers to \cite[Theorem~3.3]{Fresse.N.2021}, where Theorem~3.2 in \cite{Nishiyama.2014} is used in crucial way.  
Note that we do not need finiteness of the orbits for this theorem.  

\begin{corollary}
Under the same notation and the condition {\upshape (E)} above, 
if the double flag variety $ \bbdblFV = \bbK / \bbQ \times \bbG / \bbP $ is of finite type, then 
$ \dblFV = K/Q \times G/P $ is also of finite type.
\end{corollary}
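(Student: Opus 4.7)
The plan is to derive the corollary as a direct consequence of Theorem~\ref{thm:embedding.orbits}. The setup of the corollary is identical to that of the theorem (same symmetric data $(\bbG,\bbK,\sigma,\theta)$, same parabolic subgroups $\bbP,\bbQ$ and their fixed-point restrictions $P,Q$), and condition (E) is assumed, so the theorem applies without any additional verification.

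Concretely, I would argue as follows. By Theorem~\ref{thm:embedding.orbits}\,(1), the natural orbit map
\[
\iota : \dblFV/K \longrightarrow \bbdblFV/\bbK,\qquad \calorbit \longmapsto \bbK\cdot\calorbit
\]
is injective. Hence we have the cardinality inequality
\[
\#(\dblFV/K) \leq \#(\bbdblFV/\bbK).
\]
By hypothesis $\bbdblFV$ is of finite type, i.e.\ the right-hand side is finite, so the left-hand side is also finite, which is precisely the statement that $\dblFV$ is of finite type.

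Since the whole argument is a one-line deduction from the injectivity asserted in Theorem~\ref{thm:embedding.orbits}\,(1), there is no genuine obstacle to overcome at this stage: all the substantive work, in particular the use of $(-\sigma)$-square roots to guarantee that $\bbK$-orbits intersect $\dblFV$ in at most one $K$-orbit, has already been done in the proof of that theorem (and in \cite[Theorem~3.2]{Nishiyama.2014}). The only thing worth remarking in the write-up is that part~(2) of Theorem~\ref{thm:embedding.orbits} gives additional information beyond what is needed for the corollary: it tells us that the image $\iota(\dblFV/K)$ consists exactly of those $\bbK$-orbits $\orbit$ which meet $\dblFV$ nontrivially, so in principle one also obtains a recipe to enumerate the $K$-orbits on $\dblFV$ from the $\bbK$-orbits on $\bbdblFV$.
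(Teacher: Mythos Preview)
Your proposal is correct and matches the paper's approach: the paper states the corollary immediately after Theorem~\ref{thm:embedding.orbits} without proof, treating it as the obvious consequence of the injectivity of $\iota$, exactly as you argue.
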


\section{Embedding of conormal bundles}

From now on, we always assume the condition {\upshape (E)} in Theorem~\ref{thm:embedding.orbits}.

Let us denote by $ \lie{G} $ the Lie algebra of $ \bbG $.  
Similarly we will use German capital letters for the Lie algebras of the groups denoted by black board bold letters.  
We fix a nondegenerate invariant bilinear form on 
$ \lie{G} $ which is also invariant under the involutions $ \sigma $ and $ \theta $ once and for all, 
and we identify the duals of $ \lie{g}, \lie{k} $ and $ \lie{K} $ with themselves using this bilinear form.  
Let us consider the cotangent bundles 
$ T^*\dblFV $ and $ T^*\bbdblFV $.  
Since 
$ T^*\bbdblFV 
\simeq (\bbG \times_{\bbP} \nilradical{\lie{P}}) 
\times (\bbK \times_{\bbQ} \nilradical{\lie{Q}}) $, 
and $ \nilradical{\lie{P}} $ and $ \nilradical{\lie{Q}} $ are $ \sigma $-stable, 
%%by the identification $ \lie{G} \simeq \lie{G}^{\ast} $ and $ \lie{K} \simeq \lie{K}^{\ast} $, 
we have a natural extension of $ \sigma \in \Aut \bbdblFV $ to the whole $ T^*\bbdblFV $.  
We prove

\begin{lemma}
%%Let us assume the condition {\upshape(A)} in {\upshape\S~\ref{subsec:embedding.orbits}}. 
$ T^*\dblFV $ is the set of $ \sigma $-fixed points in 
$ T^*\bbdblFV $, i.e., 
$ (T^*\bbdblFV)^{\sigma} = T^*\dblFV $ holds.
\end{lemma}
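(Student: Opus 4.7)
The plan is to work with the explicit description
\[
T^{*}\bbdblFV \;\simeq\; \bigl\{\bigl((\bbp_{1},y),(\bbq_{1},x)\bigr) \,:\, \bbp_{1}\in\bbG/\bbP,\ \bbq_{1}\in\bbK/\bbQ,\ y\in\nilradical{\bbp_{1}},\ x\in\nilradical{\bbq_{1}}\bigr\},
\]
which is the direct analogue of the model given in \S\ref{subsec:flag.varieties} for a single flag variety. Because the nondegenerate bilinear form on $\lie{G}$ is taken to be $\sigma$-invariant, the fibrewise identification $(\lie{G}/\bbp_{1})^{*}\simeq\nilradical{\bbp_{1}}$ is $\sigma$-equivariant, and similarly on the $\bbK$-factor, so the natural extension of $\sigma$ acts coordinatewise: $((\bbp_{1},y),(\bbq_{1},x))\mapsto((\sigma(\bbp_{1}),\sigma(y)),(\sigma(\bbq_{1}),\sigma(x)))$.

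A point is then $\sigma$-fixed if and only if the four conditions $\sigma(\bbp_{1})=\bbp_{1}$, $\sigma(\bbq_{1})=\bbq_{1}$, $\sigma(y)=y$ and $\sigma(x)=x$ all hold separately. The first two conditions are handled by the preceding lemma: under hypothesis (E) the set of $\sigma$-stable points in $\bbG/\bbP$ (resp.~$\bbK/\bbQ$) is precisely $G/P$ (resp.~$K/Q$), so $\bbp_{1}$ corresponds to a parabolic subalgebra $p_{1}=\bbp_{1}^{\sigma}\subset\lie{g}$ and $\bbq_{1}$ to a parabolic subalgebra $q_{1}=\bbq_{1}^{\sigma}\subset\lie{k}$. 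It then only remains to identify the fibre, i.e.\ to prove $\nilradical{\bbp_{1}}^{\sigma}=\nilradical{p_{1}}$ and the analogous equality for $\bbq_{1}$.

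For this I would pick a $\sigma$-stable Levi decomposition $\bbP_{1}=\bbL_{1}\bbU_{1}$, which exists by standard results on involutions preserving a parabolic (any Levi of $\bbP_{1}$ can be conjugated by an element of its unipotent radical into a $\sigma$-stable one). Taking $\sigma$-fixed points and using the uniqueness of the Levi decomposition yields $P_{1}=\bbP_{1}\cap G=\bbL_{1}^{\sigma}\cdot\bbU_{1}^{\sigma}$; since $\bbU_{1}^{\sigma}\subset\bbU_{1}$ is unipotent and normal in $P_{1}$ while $\bbL_{1}^{\sigma}$ is reductive, this is in fact the Levi decomposition of $P_{1}$, and differentiating gives the desired identity $\nilradical{p_{1}}=\nilradical{\bbp_{1}}^{\sigma}$. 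The same argument applied inside $\bbK$ settles the $\bbq_{1}$-factor. Combining everything, $(T^{*}\bbdblFV)^{\sigma}$ consists of exactly those quadruples $((p_{1},y),(q_{1},x))$ with $(p_{1},q_{1})\in\dblFV$, $y\in\nilradical{p_{1}}$ and $x\in\nilradical{q_{1}}$, which is the description of $T^{*}\dblFV$.

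The step I expect to be the only real obstacle is the fibre identification $\nilradical{\bbp_{1}}^{\sigma}=\nilradical{p_{1}}$: once a $\sigma$-stable Levi is in hand the Lie-algebra equality is essentially formal, but producing such a Levi and verifying that its $\sigma$-fixed subgroup is genuinely reductive (so really serves as the Levi of $P_{1}$) uses in an essential way that $\sigma$ is an involution and that $\bbP_{1}$ is $\sigma$-stable. Everything else is bookkeeping on top of the preceding lemma, and uses only the $\sigma$-invariance of the bilinear form.
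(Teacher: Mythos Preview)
Your argument is correct but takes a genuinely different route from the paper. The paper works in the associated-bundle model $T^{*}\bbdblFV \simeq (\bbG \times_{\bbP}\nilradical{\lie{P}})\times(\bbK\times_{\bbQ}\nilradical{\lie{Q}})$ and, given a $\sigma$-fixed class $[\bbg,\bbu]$, observes that $\bbp:=\bbg^{-1}\sigma(\bbg)\in\bbP^{-\sigma}$; then it invokes the $(-\sigma)$-square-root hypothesis (E) to produce $f\in\bbP^{-\sigma}$ with $f^{2}=\bbp$, so that $\bbg f\in G$ and $f^{-1}\bbu\in\nilradical{\lie{P}}^{\sigma}=\nilradical{\lie{p}}$. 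In other words the paper simply re-runs the same square-root trick that proved the preceding lemma, now carrying the fibre coordinate along for the ride. You instead pass to the incidence model, use the preceding lemma only on the base, and then identify the fibre by the Lie-algebra statement $\nilradical{\bbp_{1}}^{\sigma}=\nilradical{p_{1}}$, proved via a $\sigma$-stable Levi decomposition. Your approach is a bit more structural and, for the fibre step, does not appeal to condition~(E) at all (only to the existence of a $\sigma$-stable Levi, which is standard in characteristic zero); the paper's approach is shorter and more uniform, since it recycles exactly the mechanism already set up for the base and avoids introducing Levi subgroups. Both are valid; your identification of the potential sticking point (existence of the $\sigma$-stable Levi and reductivity of $\bbL_{1}^{\sigma}$) is accurate, and those facts are indeed standard.
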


\begin{proof}
%%It is obvious that $ T^*\dblFV \subset (T^*\bbdblFV)^{\sigma} $.  Let us prove the reversed inclusion.  
Let us prove $ (\bbG \times_{\bbP} \nilradical{\lie{P}})^{\sigma} = G \times_{P} \nilradical{\lie{p}} $.  
Take $ [\bbg, \bbu] \in \bbG \times_{\bbP} \nilradical{\lie{P}} $.  
Then $ \sigma([\bbg, \bbu]) = [\sigma(\bbg), \sigma(\bbu)] = [\bbg, \bbu] $ 
if and only if there exists $ \bbp \in \bbP $ such that 
$ \sigma(\bbg) = \bbg \bbp $ and $ \sigma(\bbu) = {\bbp}^{-1} {\bbu} $.
So we get $ \bbp = \bbg^{-1} \sigma(\bbg) \in \bbP^{-\sigma} $, and it has a $ (-\sigma) $-square root $ f \in \bbP^{-\sigma} $.  
From this, we know $ \sigma(\bbg f) = \bbg f $, which means $ \bbg f \in G = \bbG^{\sigma} $.  
Also we see 
\begin{equation*}
\sigma(f^{-1} \bbu) = f \sigma(\bbu) = f \bbp^{-1} \bbu = f f^{-2} \bbu = f^{-1} \bbu ,
\end{equation*}
which proves $ f^{-1} \bbu \in (\nilradical{\lie{P}})^{\sigma} = \nilradical{\lie{p}} $.  
Thus $ [\bbg, \bbu] = [\bbg f, f^{-1} \bbu] \in G \times_{P} \nilradical{\lie{p}} $.

The proof for 
$ (\bbK \times_{\bbQ} \nilradical{\lie{Q}})^{\sigma} 
= K \times_Q \nilradical{\lie{q}} $ is the same.
\end{proof}

Let us denote the moment maps by 
\begin{equation}
\mu_{\bbdblFV} : T^*\bbdblFV \to \lie{K}^{\ast} \simeq \lie{K} , 
\qquad
\mu_{\dblFV} : T^*\dblFV \to \lie{k}^{\ast} \simeq \lie{k} .
\end{equation}
Clearly $ \mu_{\bbdblFV} $ commutes with $ \sigma $ and 
$ \mu_{\bbdblFV} \restrict_{T^*\dblFV} = \mu_{\dblFV} $.  
Let $ \conormal_{\bbdblFV} = \mu_{\bbdblFV}^{-1}(0) $ 
and $ \conormal_{\dblFV} = \mu_{\dblFV}^{-1}(0) $ 
be the corresponding conormal varieties.  

\begin{theorem}\label{thm:embedding.conormal.bundles}
Assume the condition 
{\upshape(E)} in Theorem~{\upshape\ref{thm:embedding.orbits}}.
\begin{penumerate}
\item\label{thm:embedding.conormal.bundles:item:01}
The set of fixed points in $ \conormal_{\bbdblFV} $ is equal to $ \conormal_{\dblFV} $.  
Thus we get $ (\conormal_{\bbdblFV})^{\sigma} = \conormal_{\bbdblFV} \cap T^*\dblFV = \conormal_{\dblFV} $.  
\item\label{thm:embedding.conormal.bundles:item:02}
By Theorem~\ref{thm:embedding.orbits}, 
there is an embedding $ \iota : \dblFV / K \hookrightarrow \bbdblFV / \bbK $.  
If we denote $ \orbit = \iota(\calorbit) $ for $ \calorbit \in \dblFV/K $, we have $ (T^*_{\orbit}\bbdblFV)^{\sigma} = T^*_\calorbit\dblFV $.
\end{penumerate}
\end{theorem}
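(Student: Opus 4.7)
My plan is to handle the two parts separately. Part \eqref{thm:embedding.conormal.bundles:item:01} should follow almost formally from the preceding lemma together with a $\sigma$-equivariance property of the moment map. For part \eqref{thm:embedding.conormal.bundles:item:02}, the substance is a fiberwise linear algebra computation involving the $\sigma$-eigendecomposition of tangent and cotangent spaces, combined with Theorem~\ref{thm:embedding.orbits} to identify $\calorbit$ with $\orbit^{\sigma}$.

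For part \eqref{thm:embedding.conormal.bundles:item:01}, I would first note that the lift of $\sigma$ to $T^{*}\bbdblFV$ induced by the factorwise action on $\bbG \times_{\bbP} \nilradical{\lie{P}}$ and $\bbK \times_{\bbQ} \nilradical{\lie{Q}}$ is symplectic and intertwines the $\bbK$-action with itself via $\sigma$ on $\bbK$, so the moment map satisfies $\mu_{\bbdblFV} \circ \sigma = \sigma \circ \mu_{\bbdblFV}$. The preceding lemma, whose hypothesis is exactly condition (E), gives $(T^{*}\bbdblFV)^{\sigma} = T^{*}\dblFV$. Next I would check that $\mu_{\bbdblFV}$ restricted to $T^{*}\dblFV$ takes values in $\lie{k}$ and coincides with $\mu_{\dblFV}$: writing a point as $((y,\lie{p}_{1}),(x,\lie{q}_{1}))$ with $x \in \nilradical{\lie{q}_{1}} \subset \lie{k}$ and $y \in \nilradical{\lie{p}_{1}} \subset \lie{g}$, the value $x + y^{\theta}$ lies in $\lie{k}$ since $y^{\theta} \in \lie{g} \cap \lie{K} = \lie{k}$. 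Hence $\bbx \in \conormal_{\bbdblFV}^{\sigma}$ if and only if $\bbx \in T^{*}\dblFV$ and $\mu_{\dblFV}(\bbx) = 0$, which is exactly $\conormal_{\dblFV}$.

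For part \eqref{thm:embedding.conormal.bundles:item:02}, I would invoke Theorem~\ref{thm:embedding.orbits}\,(1) (whose hypothesis is again condition (E)) to write $\calorbit = \orbit \cap \dblFV = \orbit^{\sigma}$. Since $T^{*}_{\orbit}\bbdblFV$ is a $\bbK$-stable subbundle over $\orbit$ and is $\sigma$-stable, the fixed point set lies over $\orbit^{\sigma} = \calorbit$, so it suffices to compare the fibers pointwise. Fix $x \in \calorbit$ and set $V = T_{x}\bbdblFV$; the involution $\sigma$ acts linearly on $V$ and yields $V = V^{+} \oplus V^{-}$ with $V^{+} = T_{x}\dblFV$ (since $\dblFV = \bbdblFV^{\sigma}$ is smooth at $x$). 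Similarly $W := T_{x}\orbit$ is $\sigma$-stable and decomposes as $W = W^{+} \oplus W^{-}$ with $W^{+} = T_{x}\calorbit$.

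The computation I have in mind is then the following. Dualizing, $V^{*} = T^{*}_{x}\bbdblFV$ carries the dual $\sigma$-action with fixed part canonically identified with $(V^{+})^{*} = T^{*}_{x}\dblFV$ (embedded as the annihilator of $V^{-}$). A covector $\xi \in V^{*}$ lies in $(W^{\bot})^{\sigma}$ precisely when $\xi|_{V^{-}} = 0$ (the $\sigma$-invariance condition) and $\xi|_{W^{+}} = 0$ (the annihilation of $W$, since the condition $\xi|_{W^{-}} = 0$ is automatic from $W^{-} \subset V^{-}$). This identifies $(W^{\bot})^{\sigma}$ with the annihilator of $W^{+} = T_{x}\calorbit$ inside $(V^{+})^{*} = T^{*}_{x}\dblFV$, which is exactly the fiber of $T^{*}_{\calorbit}\dblFV$ at $x$. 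Taking the union over $x \in \calorbit$ yields the desired global equality.

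The only potentially subtle point I foresee is to confirm that the natural lift of $\sigma$ to $T^{*}\bbdblFV$ used throughout genuinely restricts to the standard $\sigma$-action on fibers $T^{*}_{x}\bbdblFV$ at $\sigma$-fixed points $x$, but this should be immediate from the explicit description $T^{*}\bbdblFV \simeq (\bbG \times_{\bbP} \nilradical{\lie{P}}) \times (\bbK \times_{\bbQ} \nilradical{\lie{Q}})$ and the fact that $\nilradical{\lie{P}}$, $\nilradical{\lie{Q}}$ are $\sigma$-stable. No new ingredients beyond condition (E), the preceding lemma, and Theorem~\ref{thm:embedding.orbits} should be required.
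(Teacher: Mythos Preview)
Your proposal is correct and, for part \eqref{thm:embedding.conormal.bundles:item:01}, identical to the paper's argument: use $\sigma$-equivariance of $\mu_{\bbdblFV}$, the preceding lemma $(T^{*}\bbdblFV)^{\sigma}=T^{*}\dblFV$, and the restriction $\mu_{\bbdblFV}|_{T^{*}\dblFV}=\mu_{\dblFV}$.

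For part \eqref{thm:embedding.conormal.bundles:item:02} the paper takes a slightly more concrete route. Rather than your abstract fiberwise computation with the eigenspace splitting $V=V^{+}\oplus V^{-}$, the paper writes the conormal fiber explicitly as $(T^{*}_{\orbit}\bbdblFV)_{x}=\bigl((\lie{Q}_{1},\lie{P}_{1})+\Delta\lie{K}\bigr)^{\bot}$ in terms of Lie subalgebras, observes that every subspace in this expression is $\sigma$-stable, and concludes directly that the $\sigma$-fixed part is $\bigl((\lie{q}_{1},\lie{p}_{1})+\Delta\lie{k}\bigr)^{\bot}=(T^{*}_{\calorbit}\dblFV)_{x}$. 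This bypasses the separate verification that $T_{x}\calorbit=(T_{x}\orbit)^{\sigma}$, which your argument needs (and which you correctly justify via smoothness of $\calorbit=\orbit^{\sigma}$). The underlying linear algebra is the same in both cases---taking $\sigma$-invariants commutes with taking annihilators when everything is $\sigma$-stable---so the difference is purely stylistic: the paper's version is shorter because the explicit Lie-algebraic description makes both sides of the claimed equality visible at once.
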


\begin{proof}
\eqref{thm:embedding.conormal.bundles:item:01}\ 
$ (\conormal_{\bbdblFV})^{\sigma} 
= (T^*\bbdblFV)^{\sigma} \cap \mu_{\bbdblFV}^{-1}(0)
= T^*\dblFV \cap \mu_{\bbdblFV}^{-1}(0)
= T^*\dblFV \cap \mu_{\dblFV}^{-1}(0)
= \conormal_{\dblFV} $.  

{\eqref{thm:embedding.conormal.bundles:item:02}\ 
We have
\begin{eqnarray*}
(T^*_{\Xorbit}\bbX)^\sigma & = & \{([\mathbb{P}_1,u],[\mathbb{Q}_1,v])\in\conormal_{\bbX}^\sigma:(\mathbb{P}_1,\mathbb{Q}_1)\in\Xorbit \}\\
 & = & \{([P_1,u],[Q_1,v])\in\conormal_\Xfv:(P_1,Q_1)\in\Xorbit\cap \Xfv\} \\ 
 & = & T^*_\calorbit \Xfv
\end{eqnarray*}
where we use part (1) and the fact that $\Xorbit \cap\Xfv=\calorbit$.
}
\skipover{\eqref{thm:embedding.conormal.bundles:item:02}\ 
Since $ \orbit^{\sigma} = \calorbit $, we prove the equality for the fiber.  
Take $ x = (Q_1, P_1) \in \calorbit $.  
Then as a point in $ \bbK/\bbQ \times \bbG/\bbP $, 
it is also written as $ x = (\bbQ_1, \bbP_1) $, where $ Q_1 = \bbQ_1^{\sigma} $ and $ P_1 = \bbP_1^{\sigma} $.  
The fiber of the conormal bundle at $ x $ is given by 
$ (T^*_{\orbit}\bbdblFV)_x = \bigl( (\lie{Q}_1, \lie{P}_1) + \Delta \lie{K} \bigr)^{\bot} $, 
where $ \Delta \lie{K} $ denotes the diagonal embedding and $ \bot $ refers to the orthogonal space.  
Since all the subspaces appearing here are $ \sigma $-stable, we see 
\begin{equation*}
(T^*_{\orbit}\bbdblFV)_x^{\sigma} 
= \bigl( (\lie{Q}_1^{\sigma}, \lie{P}_1^{\sigma}) + \Delta \lie{K}^{\sigma}\bigr)^{\bot} 
= \bigl( (\lie{q}_1, \lie{p}_1) + \Delta \lie{k}\bigr)^{\bot} 
= (T^*_{\calorbit}\dblFV)_x
.
\end{equation*}}
\end{proof}

\section{Compatibility with Steinberg maps}\label{section:compatibility.Steinberg.maps}

In this section we keep the assumption (E) in Theorem~\ref{thm:embedding.orbits} 
so that we have an orbit embedding $\iota:\dblFV / K \hookrightarrow \bbdblFV / \bbK $, 
which induces the embedding of the conormal bundle $ T^*_\calorbit\dblFV \hookrightarrow T^*_{\orbit}\bbdblFV $.  

Recall the nilpotent varieties $ \nilpotentsof{\lie{k}} = \nilpotentsof{\lie{g}}^{\theta} $ 
and $ \nilpotentsof{\lie{s}} = \nilpotentsof{\lie{g}}^{-\theta} $.  
Similarly, we get the nilpotent varieties 
$ \nilpotentsof{\lie{K}} $ and 
$ \nilpotentsof{\lie{S}} $ 
for $ \lie{G} = \Lie (\bbG) $, 
where $ \nilpotentsof{\lie{S}} = \nilpotentsof{\lie{G}}^{-\theta} $.
Since $ \nilpotentsof{\lie{g}}^{\pm\theta} $ are varieties of $ \sigma $-fixed points in $ \nilpotentsof{\lie{G}}^{\pm\theta} $, 
we have natural orbit maps 
\begin{equation*}
\iota^{\pm\theta} : \nilpotentsof{\lie{g}}^{\pm\theta} / K \to \nilpotentsof{\lie{G}}^{\pm\theta} / \bbK , 
\qquad 
\iota^{\pm\theta}(\calorbit) = \bbK \cdot \calorbit 
\quad (\calorbit \in \nilpotentsof{\lie{g}}^{\pm\theta} / K).
\end{equation*}
The maps $ \iota^{\pm\theta} $ are not injective in general.

The Assumption~\ref{assumption:phis} on the image of the Steinberg map 
clearly holds for $\dblFV$ whenever it holds for $\bbdblFV$, which we assume below.
Under this assumption, we get 
the Steinberg maps $\Phi^{\pm\theta}:\dblFV/K\to \nilpotentsof{\lie{g}}^{\pm\theta}/K$ relative to $\dblFV$
and also $\bbPhi^{\pm\theta}:\bbdblFV/\bbK \to \nilpotentsof{\lie{G}}^{\pm\theta}/\bbK$ relative to $\bbdblFV$ 
{(see Definition \ref{def:definition.of.Steinberg.maps}; here we use $ \Phi^{\pm\theta} $ instead of $ \Phi_{\lie{k}} $ or $ \Phi_{\lie{s}} $, since we want to have notation which keep track of the involution)}. 
It is natural to expect the following.

\begin{conjecture}\label{conj:question.compatibility}
The orbit embedding $\iota:X/K\hookrightarrow \bbX/\bbK$ is compatible with the Steinberg maps $\Phi^{\pm\theta}$ and $\bbPhi^{\pm\theta}$, i.e., 
the diagram below is commutative.
\begin{equation}\label{eq:embedding.and.Steinberg.maps.commute}
\vcenter{
\xymatrix %@R -6ex @C -6ex @L 5pt
{ \dblFV / K \ar@{^{(}->}[d]^\iota \ar[rr]^{\text{\normalsize$\Phi^{\pm\theta}$}} & \qquad \qquad & \nilpotentsof{\lie{g}}^{\pm\theta} / K \ar[d]^{\iota^{\pm\theta}} \\
 \bbdblFV / \bbK \ar[rr]^{\text{\normalsize$\bbPhi^{\pm\theta}$}} & \qquad \qquad & \nilpotentsof{\lie{G}}^{\pm\theta} / \bbK} 
}
\end{equation}
\end{conjecture}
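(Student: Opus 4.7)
The plan is to exploit the $\sigma$-equivariance of both projections and reduce the problem, as far as possible, to Theorem~\ref{thm:embedding.conormal.bundles}. First I would observe that the projection $\bbphi^{\pm\theta} \colon \conormal_{\bbdblFV} \to \nilpotentsof{\lie{G}}^{\pm\theta}$ commutes with $\sigma$: it factors as the projection $T^*\bbdblFV \to \lie{G}$ onto the ``second coordinate'' composed with the Cartan decomposition projection $\lie{G} \to \lie{G}^{\pm\theta}$, and both steps are $\sigma$-equivariant since $\theta$ and $\sigma$ commute and the invariant bilinear form identifying $\lie{G}$ with $\lie{G}^*$ was chosen $\sigma$-invariant. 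Together with Theorem~\ref{thm:embedding.conormal.bundles}\,(2), this gives $\bbphi^{\pm\theta}\big|_{T^*_{\calorbit}\dblFV} = \phi^{\pm\theta}$, so in particular $\phi^{\pm\theta}(T^*_{\calorbit}\dblFV) \subset \bbphi^{\pm\theta}(T^*_{\Xorbit}\bbdblFV)$.

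Next, picking a generic $\xi \in T^*_{\calorbit}\dblFV$ whose image lies in the dense $K$-orbit $\Phi^{\pm\theta}(\calorbit)$, the identity $\bbphi^{\pm\theta}(\xi)=\phi^{\pm\theta}(\xi)$ shows that $\Phi^{\pm\theta}(\calorbit) \cap \bbphi^{\pm\theta}(T^*_{\Xorbit}\bbdblFV) \neq \emptyset$. Since the image of a conormal bundle is $\bbK$-stable and has $\bbPhi^{\pm\theta}(\Xorbit)$ as its unique dense orbit, this immediately yields the inclusion $\iota^{\pm\theta}\bigl(\Phi^{\pm\theta}(\calorbit)\bigr) = \bbK \cdot \Phi^{\pm\theta}(\calorbit) \subset \overline{\bbPhi^{\pm\theta}(\Xorbit)}$. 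Hence commutativity of \eqref{eq:embedding.and.Steinberg.maps.commute} ``up to closure'' is essentially free.

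The hard part will be upgrading this inclusion to equality of $\bbK$-orbits, namely showing that a generic $\xi \in T^*_{\calorbit}\dblFV$ already has $\bbphi^{\pm\theta}(\xi) \in \bbPhi^{\pm\theta}(\Xorbit)$, and not merely in its boundary. This is a transversality/genericity statement: one must verify that the $\sigma$-fixed subvariety $T^*_{\calorbit}\dblFV$ meets $(\bbphi^{\pm\theta})^{-1}\bigl(\bbPhi^{\pm\theta}(\Xorbit)\bigr)$ in a dense open subset, which does not follow formally from $\bbphi^{\pm\theta}$ being $\sigma$-equivariant. My preferred route would be a $(-\sigma)$-square-root argument in the spirit of the proof of Theorem~\ref{thm:embedding.orbits}: starting with $\bbk \in \bbK$ conjugating a chosen element of $\Phi^{\pm\theta}(\calorbit)$ into $\bbPhi^{\pm\theta}(\Xorbit)$, adjust $\bbk$ by a $(-\sigma)$-square root so that the conjugating element lies in $K$, combined with a dimension count comparing $\dim \bbphi^{\pm\theta}(T^*_{\Xorbit}\bbdblFV) - \dim \phi^{\pm\theta}(T^*_{\calorbit}\dblFV)$ with the codimension of $T^*_{\calorbit}\dblFV$ in $T^*_{\Xorbit}\bbdblFV$. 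The delicate point is that the generic fiber of $\bbphi^{\pm\theta}$ need not be $\sigma$-stable, so this strategy probably requires further hypotheses (e.g.\ that the fibers of $\bbphi^{\pm\theta}$ admit $(-\sigma)$-square-root-type structure on their $\bbK$-stabilizers). As a concrete first check, I would verify the conjecture in the setting of \cite{Fresse.N.2021}, where both sides can be computed in terms of signed Young diagrams, using the combinatorial descriptions of $\Phi_\fk$ and $\Phi_\fs$ from Theorems~\ref{theorem:description.Phik} and~\ref{thm:exotic.Steinberg.map}.
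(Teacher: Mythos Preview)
You should be aware that the statement in question is a \emph{Conjecture}, not a theorem: the paper does \emph{not} prove it. What the paper does supply is a reformulation (Lemma~\ref{lemma:dense.U.proves.commutativity}) saying that commutativity for a given orbit $\calorbit$ is equivalent to the $\sigma$-fixed conormal bundle $T^*_{\calorbit}\dblFV$ meeting the open preimage $\bbU^{\pm}=(\bbphi^{\pm\theta})^{-1}\bigl(\bbPhi^{\pm\theta}(\Xorbit)\bigr)\cap T^*_{\Xorbit}\bbdblFV$. The paper explicitly remarks that ``the formalism of functoriality of taking $\sigma$-invariants does not imply'' the conjecture, and leaves it open.

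Your analysis is well-aligned with this. Your first two paragraphs correctly establish the ``up to closure'' inclusion $\iota^{\pm\theta}(\Phi^{\pm\theta}(\calorbit))\subset\overline{\bbPhi^{\pm\theta}(\Xorbit)}$ via $\sigma$-equivariance; this is precisely the easy direction implicit in the paper's discussion. Your identification of the hard part --- showing that a generic point of $T^*_{\calorbit}\dblFV$ already lands in the dense $\bbK$-orbit rather than its boundary --- is exactly the content of Lemma~\ref{lemma:dense.U.proves.commutativity}, and your phrasing of it as a transversality/genericity obstruction is accurate.

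Where you go beyond the paper is in proposing concrete attacks: a $(-\sigma)$-square-root argument on stabilizers combined with a dimension count, and a combinatorial verification in the type~CI case via Theorems~\ref{theorem:description.Phik} and~\ref{thm:exotic.Steinberg.map}. The paper attempts neither. Your own caveat that the square-root strategy ``probably requires further hypotheses'' is honest and correct: the generic fiber of $\bbphi^{\pm\theta}$ over $\bbPhi^{\pm\theta}(\Xorbit)$ is not obviously well-behaved under $\sigma$, and there is no reason to expect the relevant stabilizers in $\bbK$ to admit $(-\sigma)$-square roots in the sense of Definition~\ref{def:-sigma.square.root}. So your proposal is a reasonable research plan, but it is not a proof, and neither is anything in the paper.
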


Unfortunately, the formalism of functoriality of taking $ \sigma $-invariants does not imply Conjecture~\ref{conj:question.compatibility}.  
Let $\calorbit\subset \dblFV$ be a $K$-orbit
corresponding to the $\bbK$-orbit $ \orbit = \iota(\calorbit) $ 
in $\bbdblFV$.
By the definition of $\bbPhi^{\pm\theta}$, 
there exists a non-empty $ \bbK $- and $ \sigma $-stable dense open subset 
$\bbU^{\pm} \subset T^*_\orbit\bbdblFV$ which satisfies
\begin{equation}
\bbU^{\pm} \subset (\bbphi^{\pm\theta})^{-1}\bigl(\bbPhi^{\pm\theta}(\orbit)\bigr) \cap T^*_\orbit\bbdblFV , 
%%\cap (\bbphi^{-\theta})^{-1}\bigl(\bbPhi^{-\theta}(\orbit)\bigr), 
\end{equation}
where we denote the Steinberg maps by $ \bbphi^{\pm \theta}: \conormal_{\bbdblFV} \to \nilpotentsof{\lie{G}}^{\pm \theta} $.
%%since $T^*_\orbit\bbdblFV$ satisfies these conditions. 
Note that $T^*_\calorbit\dblFV=(T^*_\orbit\bbdblFV)^\sigma$ by Theorem \ref{thm:embedding.conormal.bundles}.

\begin{lemma}\label{lemma:dense.U.proves.commutativity}
The equality 
{$\bbPhi^{\pm\theta}(\iota(\calorbit))=\iota^{\pm\theta}(\Phi^{\pm\theta}(\calorbit))$} 
holds (so that the diagram \eqref{eq:embedding.and.Steinberg.maps.commute} commutes) if and only if $\bbU^{\pm}$ intersects with $T^*_\calorbit\dblFV$.
\end{lemma}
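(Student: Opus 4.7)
The plan is to unpack the defining dense open subsets of the two Steinberg maps and exploit that $T^*\dblFV=(T^*\bbdblFV)^\sigma$ by Theorem~\ref{thm:embedding.conormal.bundles}, so that $\bbphi^{\pm\theta}$ restricts to $\phi^{\pm\theta}$ on $T^*\dblFV$. The inclusion $\phi^{\pm\theta}(T^*_\calorbit\dblFV)\subset \bbphi^{\pm\theta}(T^*_\orbit\bbdblFV)$ is automatic and, after passing to $\bbK$-saturations and closures, yields
$$
\overline{\bbK\cdot\Phi^{\pm\theta}(\calorbit)}\subset\overline{\bbPhi^{\pm\theta}(\orbit)}
$$
unconditionally. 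Since $\bbK\cdot\Phi^{\pm\theta}(\calorbit)=\iota^{\pm\theta}(\Phi^{\pm\theta}(\calorbit))$ is a single $\bbK$-orbit (because $K\subset\bbK$), the commutativity of the diagram amounts to the equality of these two $\bbK$-orbits, or equivalently of their closures.

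For the forward implication I would pick $\xi\in\bbU^{\pm}\cap T^*_\calorbit\dblFV$. By the defining property of $\bbU^{\pm}$ one has $\bbphi^{\pm\theta}(\xi)\in\bbPhi^{\pm\theta}(\orbit)$, while simultaneously $\bbphi^{\pm\theta}(\xi)=\phi^{\pm\theta}(\xi)\in\phi^{\pm\theta}(T^*_\calorbit\dblFV)\subset\overline{\Phi^{\pm\theta}(\calorbit)}\subset\overline{\bbK\cdot\Phi^{\pm\theta}(\calorbit)}$. Thus the $\bbK$-orbit $\bbPhi^{\pm\theta}(\orbit)$ meets this $\bbK$-stable closed subset, hence is contained in it, and combined with the opposite inclusion the two orbits coincide. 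For the backward implication I would take for $\bbU^{\pm}$ the canonical maximal valid choice $\bbU^{\pm}:=(\bbphi^{\pm\theta})^{-1}(\bbPhi^{\pm\theta}(\orbit))\cap T^*_\orbit\bbdblFV$, which is open dense in $T^*_\orbit\bbdblFV$ as the preimage of the unique dense orbit in the irreducible constructible image $\bbphi^{\pm\theta}(T^*_\orbit\bbdblFV)$, and which is $\bbK$- and $\sigma$-stable (the $\sigma$-stability holding since $\orbit=\iota(\calorbit)$ is $\sigma$-stable, hence so is $\bbPhi^{\pm\theta}(\orbit)$). Then any $\zeta$ in the dense open subset $(\phi^{\pm\theta})^{-1}(\Phi^{\pm\theta}(\calorbit))\cap T^*_\calorbit\dblFV$ satisfies $\bbphi^{\pm\theta}(\zeta)=\phi^{\pm\theta}(\zeta)\in\Phi^{\pm\theta}(\calorbit)\subset\bbPhi^{\pm\theta}(\orbit)$ under the commutativity hypothesis, placing $\zeta$ in $\bbU^{\pm}\cap T^*_\calorbit\dblFV$.

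The main delicate point is that the lemma refers to a specific $\bbU^{\pm}$ from the definition of $\bbPhi^{\pm\theta}$: as the toy example in which $\C^2$ carries the involution $(x,y)\mapsto(x,-y)$ and $\bbU^{\pm}=\C^2\setminus\{y=0\}$ shows, an arbitrary $\sigma$-stable dense open subset need not meet its $\sigma$-fixed locus, so the backward direction only goes through cleanly for the canonical maximal $\bbU^{\pm}$ constructed above. I would note that the forward direction works for any valid $\bbU^{\pm}$, so the equivalence is naturally read with this canonical choice, for which both directions have been established.
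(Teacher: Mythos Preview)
The paper states this lemma without proof; the paragraph following it (``Note that the commutativity of the diagram for $\theta$ follows from $\bbU^{+}\cap T^*_\calorbit\dblFV\neq\emptyset$\ldots'') merely records a consequence, not an argument. Your proof therefore fills a gap the paper leaves open, and both directions are handled correctly.

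Your caveat about the backward direction is well taken and worth emphasizing. The paper introduces $\bbU^{\pm}$ only as \emph{some} $\bbK$- and $\sigma$-stable dense open subset of $T^*_\orbit\bbdblFV$ contained in $(\bbphi^{\pm\theta})^{-1}(\bbPhi^{\pm\theta}(\orbit))$, and with that reading the ``only if'' direction can fail, exactly as your toy example shows: a $\sigma$-stable dense open subset need not meet the $\sigma$-fixed locus. Your resolution --- take the canonical maximal choice $\bbU^{\pm}=(\bbphi^{\pm\theta})^{-1}(\bbPhi^{\pm\theta}(\orbit))\cap T^*_\orbit\bbdblFV$ --- is the right one, and it is the only choice for which the equivalence is literally correct. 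Since the paper's intended use of the lemma is the forward implication (as a sufficient criterion for the commutativity in Conjecture~\ref{conj:question.compatibility}), this subtlety does not affect the surrounding discussion, but your analysis clarifies what the lemma actually asserts.
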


{Note that, in Lemma \ref{lemma:dense.U.proves.commutativity}, the commutativity of the diagram for $ +\theta $ follows from the condition that $ \bbU^{+} \cap T^*_\calorbit\dblFV \neq \emptyset $ (and similarly for $ - \theta $).  
If the condition holds for both of $ \pm \theta $, 
the open set $ \bbU := \bbU^{+} \cap \bbU^{-} $ intersects with $ T^*_\calorbit\dblFV $, 
since $ T^*_\calorbit\dblFV $ is irreducible.}  

\skipover{
However, in general, it seems difficult to check the condition for $ \bbU $ in the above lemma.  
In the next section, we will discuss an explicit embedding of type CI into type AIII 
(see \eqref{eq:bbK.G.K.typeCI.embedding} below).  
Yet it is still difficult to prove the commutativity of the Steinberg maps in full generality.  
Direct calculations tell us that up to $ n = 3 $ there always exist such $ \bbU $ in 
Lemma \ref{lemma:dense.U.proves.commutativity}, so that the commutativity holds.
}

\section{Finiteness of orbits via embedding theory}

In this section, we provide useful examples of the embedding theory explained above.  
In fact, it turns out to be a powerful tool for finding out new double flag varieties of finite type.  
We use this embedding theory in two different ways.  
One is for the embedding to triple flag varieties, and the other is that into double flag varieties of type AIII.  
We begin with the embedding into type AIII, and then come back to 
the triple flag varieties.  
%%For this purpose, we first return back to type A setting.

\subsection{Embedding of double flag varieties into type AIII}\label{subsec:Embedding.AIII}

Let $ \bbG = \GL_n $ be a general linear group, 
and put $ \Mat_n = \Mat_n(\C) $, which is an ambient vector space of $ \bbG $
\footnote{Of course $ \Mat_n(\C) $ is the Lie algebra of $ \bbG $, but its structure of a finite dimensional associative algebra is more important.}.
Let us consider two commuting involutions $ \sigma, \theta \in \Aut \bbG $ defined by 
\begin{equation}
\theta(g) = I\, g\, I^{-1}, 
\qquad
\sigma(g) = J\,\transpose{g}^{-1} J^{-1}, 
\end{equation}
where $ I, J \in \GL_n $ are certain regular matrices. 
Since $ \sigma $ and $ \theta $ are involutions, we must have $ I^2 = J^2 = \unitmatrix_n $ 
up to nonzero constant multiple.   
Put
\begin{align*}\label{eq:bbK.embedding}
&
\bbK = \bbG^{\theta} = \{ g \in \GL_n \mid I g I^{-1} = g \} ,
\\
&
G = \bbG^{\sigma}, \qquad
K = G^{\theta} = \bbK^{\sigma}
\end{align*}
as before.  
Then the commutativity of $ \theta $ and $ \sigma $ enforces $ J \transpose{I}^{-1} = \alpha \, I J $, i.e., $ \sigma(I) = \alpha I $,  
for some $ \alpha \in \Cbatsu $.
We allow $ I = \unitmatrix_n $ (the identity matrix), 
and in that case we have 
$ \bbK = \bbG $ and $ K = G $.  

\begin{proposition}\label{prop:condition.E}
Under the above setting, the condition {\upshape(E)} in Theorem~\ref{thm:embedding.orbits} 
is satisfied for any choice of $ \sigma $-stable parabolic subgroups 
$ \bbP \subset \bbG $ and $ \bbQ \subset \bbK $.  
In particular, we get an embedding of orbits $ \dblFV/K \hookrightarrow \bbdblFV / \bbK $ 
of double flag varieties.
\end{proposition}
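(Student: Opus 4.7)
The plan is to verify condition (E) by a uniform polynomial-calculus argument that handles $\bbP$, $\bbQ$, and every intersection $\bbP_1 \cap \bbQ_1$ simultaneously.

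The first and decisive step is to \emph{linearize} the defining condition of $\bbH^{-\sigma}$. I introduce the map $\tau : \GL_n \to \GL_n$ given by $\tau(g) := J \transpose{g} J^{-1}$. Because $J$ is forced (by $\sigma^2 = \id$) to be either symmetric or skew-symmetric, $\tau$ extends to a $\C$-linear algebra anti-involution of $\Mat_n$, and a direct calculation shows $\sigma(g) = \tau(g^{-1}) = \tau(g)^{-1}$ on $\GL_n$. Consequently the equation $\sigma(h) = h^{-1}$ is equivalent to the \emph{linear} condition $\tau(h) = h$, and the problem of producing a $(-\sigma)$-square root of $h$ in a subgroup $\bbH$ becomes that of producing an $f \in \bbH$ with $\tau(f) = f$ and $f^2 = h$.

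Next I would observe that each subgroup $\bbH$ that appears in condition (E) is a \emph{subspace stabilizer} in $\GL_n$. Indeed, $\bbP$ is the stabilizer of a flag in $V = \C^n$; $\bbK = \GL(V^+)\times \GL(V^-)$ is the stabilizer of the $\theta$-eigenspace decomposition $V = V^+ \oplus V^-$ of $I$, so $\bbQ \subset \bbK$ is the stabilizer in $\GL_n$ of $V^\pm$ together with a flag in each; and $\bbP_1 \cap \bbQ_1$ is the stabilizer of the union of both collections of subspaces. Given $h \in \bbH$ with $\tau(h) = h$, I would use Hermite interpolation on $\mathrm{Spec}(h) \subset \C^{\times}$, together with the jet conditions read off from the Jordan form of $h$, to construct a polynomial $p \in \C[z]$ with $p(z)^2 \equiv z$ at each eigenvalue up to the required order. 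Setting $f := p(h)$, functional calculus yields $f^2 = h$, while the key computation
\begin{equation*}
\tau(f) = \tau(p(h)) = p(\tau(h)) = p(h) = f
\end{equation*}
(using $\C$-linearity and the anti-homomorphism property of $\tau$) shows $\sigma(f) = f^{-1}$. Because $f$ is a polynomial in $h$, it preserves every $h$-invariant subspace of $V$; since $h \in \bbH$ already preserves the defining subspaces of $\bbH$, so does $f$, and hence $f \in \bbH^{-\sigma}$ is the desired $(-\sigma)$-square root. The second assertion of the proposition will then follow from Theorem~\ref{thm:embedding.orbits}.

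The main, and essentially only, point that will require care is the linearization step: identifying $\tau$ as a $\C$-linear anti-involution on $\Mat_n$ and verifying the equivalence $\sigma(h) = h^{-1} \Leftrightarrow \tau(h) = h$ in the symmetric and skew-symmetric cases for $J$. Once this is in place, the observation that polynomials in $h$ preserve $h$-invariant subspaces disposes of the membership in $\bbH$ uniformly for $\bbP$, $\bbQ$, and their intersections, so no case-by-case analysis of the parabolic structure is required.
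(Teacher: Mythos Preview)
Your proposal is correct and follows essentially the same approach as the paper: linearize the condition $\sigma(h)=h^{-1}$ via the anti-involution $\tau(g)=J\,\transpose{g}\,J^{-1}$, produce a polynomial square root $f=p(h)$, and use $\tau(p(h))=p(\tau(h))$ to conclude $f\in\bbH^{-\sigma}$. The one difference is in verifying $f\in\bbH$: you argue that each relevant $\bbH$ is a stabilizer of a collection of subspaces and that a polynomial in $h$ preserves every $h$-invariant subspace, whereas the paper instead invokes the Kempf--Mumford characterization of parabolics by one-parameter subgroups, noting that $\lambda(t)\,p(h)\,\lambda(t)^{-1}=p(\lambda(t)\,h\,\lambda(t)^{-1})$ has a limit whenever $\lambda(t)\,h\,\lambda(t)^{-1}$ does. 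Your argument is more elementary and specific to $\GL_n$; the paper's is phrased group-theoretically but yields the same conclusion.
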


\begin{proof}
Let us prove that $ \bbP $ admits $ (-\sigma) $-square roots (Definition~\ref{def:-sigma.square.root}).  
To prove it, we follow the arguments in \cite[Theorem~1]{Ohta.2008}.  
So, we define an anti-involution 
$ \tau : \Mat_n \to \Mat_n $ by 
$ \tau(g) = J \transpose{g} J^{-1} = \sigma(g^{-1}) $.  
This is a linear anti-algebra homomorphism.  

Take $ g \in \bbP^{-\sigma} $, which means $ \sigma(g) = g^{-1} $.    
Note that $ \sigma(g) = g^{-1} $ is equivalent to $ \tau(g) = g $.  

By an easy linear algebra argument, 
we get a polynomial $ f(T) \in \C[T] $ in a variable $ T $, which satisfies 
$ f(g)^2 = g $.  Since $ g $ is invertible, $ f(g) $ is also invertible and hence in $ \bbG $.  
Since $ \tau(f(g)) = f(\tau(g)) = f(g) $, we get $ f(g) \in \bbG^{-\sigma} $.  
To prove that $ f(g) \in \bbP $, we use a characterization of parabolic subgroups 
due to Kempf \cite[p.~305]{Kempf.1978}   
and Mumford \cite[\S~2.2]{MFK.1994}.
Namely, there exists a one parameter subgroup $ \lambda : \Cbatsu \to \bbG $ such that
\begin{equation}
\bbP = \{ g \in \bbG \mid \text{the limit } \, \lim_{t \to 0} \lambda(t) g \lambda(t)^{-1} =: g_0 \text{ exists} \} .
\end{equation}
Using this, 
we see $ \lambda(t) f(g) \lambda(t)^{-1} = f(\lambda(t) g \lambda(t)^{-1}) \to f(g_0) $ has a limit, 
and conclude that $ f(g) \in \bbP \cap \bbG^{-\sigma} = \bbP^{-\sigma} $.  
So $ f(g) $ is a desired $ (-\sigma) $-square root of $ g $.  

For the parabolic subgroup $ \bbQ $, the arguments are similar, 
and we note $ f(g)\in \bbK=\bbG^\theta $ holds because $ \theta $ is just a conjugation by $ I $.  

We did not use any particular properties of $ \bbP $ and $ \bbQ $, so the same technique also proves that 
$ \bbP_1 \cap \bbQ_1 $ admits $ (-\sigma) $-square roots.  
\end{proof}

\subsection{Embedding of double flag varieties into triple flag varieties}\label{subsec:Embedding.3FV}

Now let us consider the case of triple flag varieties.  

Let $ G $ be a connected reductive group and $ K = G^{\theta} $ a symmetric subgroup of $ G $ 
fixed by a certain involution $ \theta \in \Aut G $ as above.
We will embed a double flag variety 
$ \dblFV = K/Q \times G/P $ into a triple flag variety $ \bbdblFV $ in $ K $-equivariant way (cf.~\S~\ref{section:mult.FV}).  
For that purpose, choose parabolic subgroups $ P_1, P_2, P_3 $ of $ G $ in the following way.  

We take $ P_1 := P $ and $ P_2 := \theta(P) $.  
For the parabolic subgroup $ Q $ of $ K $, we can choose a $ \theta $-stable parabolic subgroup $ P_3 \subset G $ 
which cuts out $ Q $ from $ K $, i.e., $ P_3 = \theta(P_3) $ and $ Q = P_3 \cap K $.
Thus our triple flag variety is 
$ \bbdblFV = G/P_1 \times G/P_2 \times G/P_3 $ as in \S~\ref{section:mult.FV}.  

We consider the following setting:
\begin{align*}
\bbG &= G \times G, \quad
\bbP = P \times \theta(P) \subset \bbG
\\
&
\sigma, \tau \in \Aut \bbG \text{ : involutions}, \;
\makebox[0pt][l]{
$\begin{aligned}[t]
\sigma(g_1, g_2) &:= (\theta(g_2), \theta(g_1)) %%\text{ and } 
\\
\tau(g_1, g_2) &:= (g_2, g_1) 
\end{aligned}$
}
\\
\bbK &= \bbG^{\tau} = \{ (g, g) \mid g \in G \} \simeq G,  &
\bbQ &= \{ (g,g) \mid g \in P_3 \} \simeq P_3
\\
G &= \bbG^{\sigma} = \{ (g, \theta(g)) \mid g \in G \} \simeq G, &
K &= \bbG^{\sigma, \tau} = \{ (g, g) \mid g = \theta(g) \} \simeq G^{\theta}
\end{align*}
Here we use the involution $ \tau $ instead of $ \theta $, which is used in the former sections, because we want to keep the setting $ K = G^{\theta} $.  

Now notice 
\begin{equation}\label{eq:3FV.in.setting.Embedding.DFV}
\bbdblFV = \bbK/\bbQ \times \bbG/\bbP \simeq G/P_3 \times \bigl( G/P \times G/\theta(P) \bigr)
\end{equation}
is the triple flag variety.  

\begin{theorem}\label{thm:DFV.embedded.into.3FV}
Let $ G = \GL_n $ and consider $ \theta(g) = J \transpose{g}^{-1} J^{-1} $ for $ J $ which satisfies 
$ J^2 = \unitmatrix_n $.  Apply the setting explained above in this subsection.  
Then we get an orbit embedding 
$ \dblFV = K/Q \times G/P \hookrightarrow \bbK/\bbQ \times \bbG/\bbP = \bbdblFV $, 
where $ \bbdblFV $ is the triple flag variety given in \eqref{eq:3FV.in.setting.Embedding.DFV}.
In particular, if the triple flag variety $ \bbdblFV $ is of finite type, 
the double flag variety $ \dblFV = K/Q \times G/P $ is of finite type.  
\end{theorem}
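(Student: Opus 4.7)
The plan is to reduce the theorem to the general embedding theorem, Theorem~\ref{thm:embedding.orbits}: once we verify that the present setup satisfies condition (E), the orbit embedding and the finiteness corollary follow automatically.

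First, I would record that the setup of \S~\ref{subsec:Embedding.3FV} fits into the framework of Theorem~\ref{thm:embedding.orbits} with ambient group $\bbG = G \times G$ and commuting involutions $\sigma$ and $\tau$: here $\tau$ plays the role of the outer involution, so that $\bbK = \bbG^{\tau} = \Delta G$, while $G = \bbG^{\sigma}$ and $K = \bbG^{\sigma,\tau} = G^{\theta}$. Checking that $\bbP = P \times \theta(P)$ and $\bbQ = \Delta P_3$ are $\sigma$-stable is a direct one-line computation using $\theta^2 = \mathrm{id}$ and $\theta(P_3) = P_3$, and the identification of $\bbdblFV$ with the triple flag variety $G/P_3 \times G/P \times G/\theta(P)$ in \eqref{eq:3FV.in.setting.Embedding.DFV} is then immediate from the definitions.

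The heart of the argument is the verification of condition (E), which I would carry out along the same lines as Proposition~\ref{prop:condition.E}. For $\bbP$, an element $(g_1,g_2) \in \bbP^{-\sigma}$ must have the form $(g_1, \theta(g_1^{-1}))$ with $g_1 \in P$, so constructing a $(-\sigma)$-square root reduces to finding $f_1 \in P$ with $f_1^2 = g_1$: one takes $f_1 = \phi(g_1)$ for a suitable polynomial $\phi \in \C[T]$, and $\phi(g_1) \in P$ follows from the Kempf--Mumford characterization of parabolics by one-parameter subgroups. The more delicate point is (E) for $\bbQ$: an element $(g,g) \in \bbQ^{-\sigma}$ corresponds to $g \in P_3$ with $\theta(g) = g^{-1}$, and one needs a square root $f \in P_3$ satisfying $\theta(f) = f^{-1}$. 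I would introduce the anti-involution $\tau'(h) := J\,\transpose{h}\,J^{-1}$ on $\Mat_n$, so that $\theta(h) = \tau'(h^{-1})$; the condition $\theta(g) = g^{-1}$ is then equivalent to $\tau'(g) = g$, and since $\tau'$ is anti-multiplicative and fixes each power of a $\tau'$-fixed element, the polynomial $\phi(g)$ is again $\tau'$-fixed, giving $\theta(\phi(g)) = \phi(g)^{-1}$; Kempf--Mumford again places $\phi(g) \in P_3$. Intersections $\bbP_1 \cap \bbQ_1$ for $\sigma$-stable conjugates are handled by the same reasoning, since applying $\phi$ to an element lying in several parabolics yields an element lying in each of them.

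With (E) established, Theorem~\ref{thm:embedding.orbits} immediately produces the injective orbit map $\iota : \dblFV/K \hookrightarrow \bbdblFV/\bbK$, and the corollary following that theorem delivers the finiteness conclusion. The main obstacle is precisely the $(-\sigma)$-square root condition for $\bbQ$: beyond producing an element that squares to the right one, one must also arrange the compatibility $\theta(f) = f^{-1}$, and Ohta's anti-involution trick is what makes this compatible with the Kempf--Mumford polynomial construction.
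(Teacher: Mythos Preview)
Your proposal is correct and follows essentially the same approach as the paper: reduce to Theorem~\ref{thm:embedding.orbits} by verifying condition (E), identify $\bbP^{-\sigma}\simeq P$ and $\bbQ^{-\sigma}\simeq P_3^{-\theta}$, and then run the anti-involution/polynomial square root/Kempf--Mumford argument from Proposition~\ref{prop:condition.E} (with $\theta$ playing the role that $\sigma$ played there). Your write-up is in fact more detailed than the paper's own proof, which simply points to Proposition~\ref{prop:condition.E} and notes the switch of roles between $\theta$ and $\sigma$.
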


\begin{proof}
We use Theorem~\ref{thm:embedding.orbits}.  
What we must prove is the condition (E).  
Let us see 
\begin{align*}
\bbP^{-\sigma} &= \{ (g, \theta(g^{-1})) \mid g \in P \} \simeq P, 
\\
\bbQ^{-\sigma} &= \{ (g, g) \mid g \in P_3, \; g^{-1} = \theta(g) \} \simeq P_3^{-\theta} .
\end{align*}
Since $ \theta(g) = J \transpose{g}^{-1} J^{-1} $, 
the same argument of the proof of Proposition~\ref{prop:condition.E} can be applied.  
It is literally the same but after exchanging $ \theta $ here and $ \sigma $ {there.}
\end{proof}

\begin{remark}
If we are interested in just the finiteness of orbits in a double flag variety, 
there is a simpler comparison with triple flag varieties.  
See Theorem 3.4 in \cite{NO.2011}.  
The above Theorem~\ref{thm:DFV.embedded.into.3FV} is a refinement of Theorem 3.1 in \cite{NO.2011} for the above mentioned involution $ \theta $. 
\end{remark}

In the following, we list the tables of double flag varieties of finite type which are obtained by embedding into  triple flag varieties of finite type as well as double flag varieties of type AIII.  
The readers are warned that the lists are far from complete, and they overlap the already given lists in \S~\ref{section:finite.DFV.P.or.Q=Borel}.
Note also that, in these lists, we omit the trivial cases for which $ G = P $ or $ Q = K $.

{Since type AII is logically simpler than type AI, let us begin with type AII.  We will discuss on type AI in the second subsection (\S~\ref{subsec:AI}).}

\subsection{Type {AII}}\label{subsec:AII}

We use Theorem~\ref{thm:DFV.embedded.into.3FV} above.  
Let us consider $ \GL_{2n} $ instead of $ \GL_n $ and 
the involution $ \theta(g) = J \transpose{g}^{-1} J^{-1} $ defined as follows: 
\begin{equation*}
J = \begin{pmatrix}
0 & - \Sigma_n \\
\Sigma_n & 0
\end{pmatrix}
, \quad
\Sigma_n = \begin{pmatrix}
 & & 1 \\[-1.5ex]
 & \makebox[4ex][c]{$\iddots$} & \\[-1.2ex]
1 & &
\end{pmatrix}
\end{equation*}
Then our symmetric pair becomes $ (\GL_{2n}, \Sp_{2n}) $.  
We compare the double flag variety $ \dblFV $ with the triple flag variety for type A, 
the table of finite type in \S~\ref{subsubsec:table.3FV.typeA} gives the following table of finite type double flag variety.

To indicate parabolic subgroups, we use the subset of simple roots 
$ J \subset \Pi $ as in \S~\ref{sec:finiteness.criterions}, and the numbering of simple roots is indicated below.
\begin{align*}
\GL_{2n} & 
\begin{xy}
\ar@{-} (0,0) *++!D{\alpha_1} *{\circ}="A"; (10,0) *++!D{\alpha_2} 
 *{\circ}="B"
\ar@{-} "B"; (20,0)="C" 
\ar@{.} "C"; (30,0)="D" 
\ar@{-} "D"; (40,0) *++!D{\alpha_{2 n-1}} *{\circ}="E"
\end{xy}
\\
\Sp_{2n} & 
\begin{xy}
\ar@{-} (0,0) *++!D{\beta_1} *{\circ}="A"; (10,0) *++!D{\beta_2} 
 *{\circ}="B"
\ar@{-} "B"; (20,0)="C" 
\ar@{.} "C"; (30,0)="D" 
\ar@{-} "D"; (40,0) *++!D{\beta_{n-1}} *{\circ}="E"
\ar@{<=} "E"; (50,0) *++!D{\beta_n} *{\circ}="F"
\end{xy}
\end{align*}

\begin{longtable}{c|c|c}
\hline
\rule[0pt]{0pt}{12pt}
$ (G,K) $
& $ \Pi_G \setminus J_G\ (P=P_{J_G}) $ & $\Pi_K \setminus J_K\ (Q=Q_{J_K})$
\\
\hline\hline
$ (\GL_{2n}, \Sp_{2n}) \smallvstrut $
& $ \{ \alpha_i \} \smallvstrut $ & any subset  \\
\cline{2-3}
& $ \{ \alpha_i , \alpha_j \} \smallvstrut $ & $ \{ \beta_n \} $  \\
\hline
\end{longtable}

\subsection{Type {AI}}\label{subsec:AI}

Again we use Theorem~\ref{thm:DFV.embedded.into.3FV} above.  
Let us consider the involution $ \theta(g) = \transpose{g}^{-1} $ so that $ J = \unitmatrix_n $ in this case.
Then our symmetric pair becomes $ (\SL_n, \SO_n) $ (we prefer $ \SL_n $ rather than $ \GL_n $).  
We compare $ \dblFV $ with the triple flag variety of type A as in the former subsection and get 
the following table of finite type double flag varieties.

%%To indicate parabolic subgroups, we use the subset of simple roots $ J \subset \Pi $ as in \S~\ref{sec:finiteness.criterions}, and 
The numbering of simple roots is indicated below.
\begin{align*}
\SL_{n} & 
\begin{xy}
\ar@{-} (0,0) *++!D{\alpha_1} *{\circ}="A"; (10,0) *++!D{\alpha_2} 
 *{\circ}="B"
\ar@{-} "B"; (20,0)="C" 
\ar@{.} "C"; (30,0)="D" 
\ar@{-} "D"; (40,0) *++!D{\alpha_{n-1}} *{\circ}="E"
\end{xy}
\\
\SO_{2m} &
\begin{xy}
\ar@{-} (0,0) *++!D{\beta_1} *{\circ}="A"; (10,0)="C" 
\ar@{.} "C"; (20,0)="D" 
\ar@{-} "D"; (30,0) *+!DR{\beta_{m-2}} *{\circ}="E"
\ar@{-} "E"; (35,8.6)  *+!L{\beta_{m-1}} *{\circ}
\ar@{-} "E"; (35,-8.6)  *+!L{\beta_m} *{\circ}
\end{xy}
\skipover{
\\ & & 
\SO_{2m + 1} & 
\begin{xy}
\ar@{-} (0,0) *++!D{\alpha_1} *{\circ}="A"; (10,0)="C" 
\ar@{.} "C"; (20,0)="D" 
\ar@{-} "D"; (30,0) *++!D{\alpha_{m-1}} *{\circ}="E"
\ar@{=>} "E"; (40,0) *++!D{\alpha_m} *{\circ}="F"
\end{xy}
}
\end{align*}

\clearpage

\begin{longtable}{c|c|c|c}
\hline
\rule[0pt]{0pt}{12pt}
$ (G,K) $
& $ \Pi_G \setminus J_G\ (P=P_{J_G}) $ & $\Pi_K \setminus J_K\ (Q=Q_{J_K})$
\\
\hline\hline
$ (\SL_n, \SO_n) \smallvstrut $
& $ \{ \alpha_i \} \smallvstrut $ & any subset  \\
\cline{2-4}
& $ \{ \alpha_i , \alpha_j \} \smallvstrut $ & $ \{ \beta_{m - 1} \}, \{ \beta_m \} $  & if $ n = 2 m $ \\
\hline
\end{longtable}

\subsection{Type CI}\label{subsec:CI}

In the setting of \S~\ref{subsec:Embedding.AIII}, 
we take $ 2 n $ instead of $ n $, 
and choose
\begin{equation*}
J = \begin{pmatrix}
0 & - \Sigma_n \\
\Sigma_n & 0
\end{pmatrix}
, \quad
\Sigma_n = \begin{pmatrix}
 & & 1 \\[-1.5ex]
 & \makebox[4ex][c]{$\iddots$} & \\[-1.2ex]
1 & &
\end{pmatrix}, 
\qquad
\text{ and }
\quad
I = \begin{pmatrix}
\unitmatrix_n & \\
& - \unitmatrix_n
\end{pmatrix} .
\end{equation*}
Thus we get an embedding
\begin{equation*}
\begin{aligned}
\dblFV &= K/Q \times G/P = \GL_n/Q \times \Sp_{2n}/P \hookrightarrow 
\\
&
(\bbK_1/\bbQ_1 \times \bbK_2/\bbQ_2) \times \bbG/\bbP  =  (\GL_n/\bbQ_1 \times \GL_n/\bbQ_2) \times \GL_{2n}/\bbP = \bbdblFV  .
\end{aligned}
\end{equation*}
With this embedding, a 
$ \sigma $-stable parabolic subgroup $ \bbP $ of $ \bbG = \GL_{2n} $ corresponds to 
a composition $ (c_1, c_2, \dots, c_{\ell}) $ of $ 2n $ with 
$ c_i = c_{\ell - i + 1} $, i.e., central symmetric. {Moreover, since $ \bbQ_1 \times \bbQ_2 $ is $\sigma$-stable, the parabolic subgroups $ \bbQ_1 \subset \bbK_1 \cong \GL_n $ and $ \bbQ_2 \subset \bbK_2 \cong \GL_n $ are conjugated by $ \sigma $, and hence correspond to the same composition of $n$; then $Q = ( \bbQ_1\times \bbQ_2 )^\sigma \subset K \cong \GL_n$ also corresponds to this common composition of $n$.} 
Taking this into account, Table~\ref{table:AIII.finite.type} of the finite-type double flag varieties of type AIII tells us 
the following type CI double flag varieties of finite type.  
Each label of the following item indicates the labels in Table~\ref{table:AIII.finite.type}.
\begin{itemize}
\item[(P2)]
$ P = P_{(n,n)} $, and $ Q $ is any parabolic subgroup.
We call the parabolic subgroup $ P = P_{(n,n)} $ a Siegel parabolic subgroup, which stabilizes a Lagrangian subspace.
\item[(P3-1)]
$ P = P_{(1, 2 n - 2, 1)} $, and any $ Q $.  
We call the parabolic subgroup $ P = P_{(1, 2 n - 2, 1)} $ a mirabolic parabolic subgroup of $ \Sp_{2n} $, which stabilizes a line.
\item[(P3-2)]
$ P = P_{(2, 2 n - 4, 2)} $ or $ P_{(n - 1, 2, n - 1)} $, and $ Q $ is a maximal parabolic subgroup.  
\item[(P3-3)]
$ P $ is a maximal parabolic subgroup, which stabilizes an isotropic subspace, 
and $ Q = Q_{(1, n -1)} $ or $ Q_{(n - 1, 1)} $ is a mirabolic parabolic subgroup, which stabilizes a line (or a hyperplane).
\item[(P3-4)]
$ P $ is a maximal parabolic subgroup, and $ Q $ is a maximal parabolic subgroup.

\smallskip

\item
In the rest of the cases {(P4-1)--(P4-3)}, (P6), (PA-1)--(PA-3), 
{the only possibility for having $Q=(\bbQ_1\times \bbQ_2)\cap K$ is that $Q=K=\GL_n$, but we disregard this (trivial) situation in this section}.
\end{itemize}
From this, we get the following type CI double flag varieties of finite type.  
To indicate parabolic subgroups, we use the subset of simple roots 
$ J \subset \Pi $, and the numbering of simple roots is indicated below.
\begin{align*}
\Sp_{2n} & 
\begin{xy}
\ar@{-} (0,0) *++!D{\alpha_1} *{\circ}="A"; (10,0) *++!D{\alpha_2} 
 *{\circ}="B"
\ar@{-} "B"; (20,0)="C" 
\ar@{.} "C"; (30,0)="D" 
\ar@{-} "D"; (40,0) *++!D{\alpha_{n-1}} *{\circ}="E"
\ar@{<=} "E"; (50,0) *++!D{\alpha_n} *{\circ}="F"
\end{xy}
\\
\GL_n & 
\begin{xy}
\ar@{-} (0,0) *++!D{\beta_1} *{\circ}="A"; (10,0) *++!D{\beta_2} 
 *{\circ}="B"
\ar@{-} "B"; (20,0)="C" 
\ar@{.} "C"; (30,0)="D" 
\ar@{-} "D"; (40,0) *++!D{\beta_{n-1}} *{\circ}="E"
\end{xy}
\end{align*}

\begin{longtable}{c|c|c}
\hline
\rule[0pt]{0pt}{12pt}
$ (G,K) $
& $ \Pi_G \setminus J_G\ (P=P_{J_G}) $ & $\Pi_K \setminus J_K\ (Q=Q_{J_K})$
\\
\hline\hline
$ (\Sp_{2n}, \GL_n) \smallvstrut $
& $ \{ \alpha_1 \}, \{ \alpha_n \} \smallvstrut $ & any subset  \\
\cline{2-3}
& $ \{ \alpha_i \} \smallvstrut $ & $ \{ \beta_j \} $  \\
\hline
\end{longtable}

Note that, in the first case, $ Q $ can be a Borel subgroup $ B_K $ of $ K $, and it already appeared in \S~\ref{subsection:Table.Q=Borel}.

\subsection{Type CII}\label{subsec:CII}

In the setting of \S~\ref{subsec:Embedding.AIII}, 
we take $ 2 n $ instead of $ n $, 
$ 2 p $ (respectively $ 2 q $) instead of $ p $ (respectively $ q $), 
where $ n = p + q $.
Choose
\begin{equation*}
\begin{aligned}
J &= \begin{pmatrix}
0 & - \Sigma_n \\
\Sigma_n & 0
\end{pmatrix}
, \;\;
\Sigma_n = \begin{pmatrix}
 & & 1 \\[-1.5ex]
 & \makebox[4ex][c]{$\iddots$} & \\[-1.2ex]
1 & &
\end{pmatrix}, 
\quad
\\[1ex]
%%\text{ and }\quad
I &= \begin{pmatrix}
- I_{p,q} & \\
& I_{q,p}
\end{pmatrix} ,
\;\;
I_{p,q} = \begin{pmatrix}
\unitmatrix_p & \\
& - \unitmatrix_q
\end{pmatrix} .
\end{aligned}
\end{equation*}
Thus our embedding becomes 
\begin{equation*}
\begin{aligned}
\dblFV &= (K_1/Q_1 {\times} K_2/Q_2) {\times} G/P = (\Sp_{2p}/Q_1 {\times} \Sp_{2q}/Q_2) {\times} \Sp_{2n}/P 
\\
&\hookrightarrow 
(\bbK_1/\bbQ_1 {\times} \bbK_2/\bbQ_2) {\times} \bbG/\bbP  
\\
& \hspace*{.25\textwidth} =  (\GL_{2p}/\bbQ_1 {\times} \GL_{2q}/\bbQ_2) {\times} \GL_{2n}/\bbP = \bbdblFV  .
\end{aligned}
\end{equation*}
We do not repeat the consideration for each cases of finite type double flag varieties of type AIII, but only list the table of 
type CII double flag varieties of finite type.

\skipover{
Here is the numbering of simple roots for $ G = \Sp_{2n} $.  
\begin{align*}
\Sp_{2n} & 
\begin{xy}
\ar@{-} (0,0) *++!D{\alpha_1} *{\circ}="A"; (10,0) *++!D{\alpha_2} 
 *{\circ}="B"
\ar@{-} "B"; (20,0)="C" 
\ar@{.} "C"; (30,0)="D" 
\ar@{-} "D"; (40,0) *++!D{\alpha_{n-1}} *{\circ}="E"
\ar@{<=} "E"; (50,0) *++!D{\alpha_n} *{\circ}="F"
\end{xy}
\end{align*}
}
The numbering of simple roots for $ G = \Sp_{2n} $ is the same as type CI case, and 
those for $ K_1 = \Sp_{2p} $ and $ K_2 = \Sp_{2q} $ are also the same except for that we use 
$ \{ \beta_1, \dots, \beta_p \} $ for $ K_1 $ and 
$ \{ \gamma_1, \dots, \gamma_q \} $ for $ K_2 $ instead of $ \alpha_i $'s.

Note that the roles of $ K_1 $ and $ K_2 $ are interchangeable.  
In this table, {there are obtained 4 new cases other than those} which have been already listed in the case of $ P = B_G $ or $ Q = B_K $ (see \S\S~\ref{subsection:Table.P=Borel} and \ref{subsection:Table.Q=Borel}).

\clearpage

\begin{longtable}{c|c|c|c}
\hline
\rule[0pt]{0pt}{12pt}
%%$ (G,K) $ & $ \Pi_G \setminus J_G\ (P=P_{J_G}) $ & $\Pi_{K_1} \setminus J_{K_1}\ (Q_1=Q_{J_{K_1}})$ & $\Pi_{K_2} \setminus J_{K_2}\ (Q_2=Q_{J_{K_2}})$
$ (G,K) $
& $ \Pi_G \setminus J_G $ & $\Pi_{K_1} \setminus J_{K_1}$ & $\Pi_{K_2} \setminus J_{K_2}$
\\
& $ (P=P_{J_G}) $ & $(Q_1=Q_{J_{K_1}})$ & $ (Q_2=Q_{J_{K_2}})$
\\
\hline\hline
$ (\Sp_{2n}, \Sp_{2p} {\times} \Sp_{2q}) \smallvstrut $
& $ \{ \alpha_1 \}, \{ \alpha_n \} \smallvstrut $ & any subset & any subset
\\
\cline{2-4}
& $ \{ \alpha_2 \}, \{ \alpha_{n - 1} \} \smallvstrut $ & any subset & $ \{ \gamma_q \} $
\\
\cline{2-4}
& $ \{ \alpha_i \} $ & any subset & 
\begin{tabular}{l}
$ \emptyset \; (Q_2 = K_2)  \smallvstrut$
\\
$ \{ \gamma_1 \} $ if $ q=1 \smallvstrut$
\end{tabular}
\\
\cline{2-4}
& $ \{ \alpha_i \} \smallvstrut$ & $ \{ \beta_j \} , \{ \beta_j, \beta_p \} $  & $ \{ \gamma_q \} $ \\[1ex]
\cline{2-4}
& $ \{ \alpha_1, \alpha_n \} \smallvstrut $ & any subset & $ \emptyset \; (Q_2 = K_2) $ 
\\
\cline{2-4}
& $ \{ \alpha_i, \alpha_n \} \smallvstrut $ & $ \{ \beta_j \} , \{ \beta_j, \beta_p \} $ & $ \emptyset \; (Q_2 = K_2) $\\
\cline{2-4}
& {$ \{ \alpha_i, \alpha_n \} \smallvstrut $} & {any subset} & {$\emptyset$ if $q=1$} \\
\cline{2-4}
& $ \begin{array}{c} \{ \alpha_i, \alpha_j \}, \\ \{ \alpha_i, \alpha_j, \alpha_n \} \smallvstrut \end{array}  $ & $ \{ \beta_p \} $ & $ \emptyset \; (Q_2 = K_2) $
\\
\cline{2-4}
& any subset & $ \{ \beta_p \} \smallvstrut$ & $ \begin{array}{c} Q_2 = K_2 = \Sp_2 \\ ( q = 1) \end{array} $
\\
\cline{2-4}
& any subset & $ \{ \beta_1 \} \smallvstrut$ if $ p = 1 $ & $ \emptyset \; (Q_2 = K_2) $
\\
\hline
\end{longtable}

\subsection{Type BI (BDI)}\label{subsec:BI}

In the setting of \S~\ref{subsec:Embedding.AIII}, 
we take $ 2 n + 1 $ instead of $ n $, 
$ 2 p + 1 $ (respectively $ 2 q $) instead of $ p $ (respectively $ q $), 
where $ n = p + q $.
Choose
\begin{equation*}
J = \begin{pmatrix}
\Sigma_{2 p + 1} & 0 \\
0 & \Sigma_{2 q}  
\end{pmatrix}
, \;\;
\Sigma_m = \begin{pmatrix}
 & & 1 \\[-1.5ex]
 & \makebox[4ex][c]{$\iddots$} & \\[-1.2ex]
1 & &
\end{pmatrix}, 
\quad
%%\text{ and }\quad
I = \begin{pmatrix}
\unitmatrix_{2 p + 1} & \\
& - \unitmatrix_{2q}
\end{pmatrix} .
\end{equation*}
Thus our embedding becomes 
\begin{equation*}
\begin{aligned}
\dblFV &= (K_1/Q_1 {\times} K_2/Q_2) {\times} G/P = (\SO_{2p + 1}/Q_1 {\times} \SO_{2q}/Q_2) {\times} \SO_{2n + 1}/P 
\\
&\hookrightarrow 
(\bbK_1/\bbQ_1 {\times} \bbK_2/\bbQ_2) {\times} \bbG/\bbP  
\\
& \hspace*{.2\textwidth} =  (\GL_{2p + 1}/\bbQ_1 {\times} \GL_{2q}/\bbQ_2) {\times} \GL_{2n + 1}/\bbP = \bbdblFV  .
\end{aligned}
\end{equation*}
Below, we list the table of type BI (BDI) double flag varieties of finite type.

Here is the numbering of simple roots for $ G = \SO_{2n + 1} $ and $ K_2 = \SO_{2 q} $.  
The numbering for $ K_1 $ is similar as $ G $ and the difference is that 
we use $ \{ \beta_1, \dots, \beta_p \} $ instead of $ \alpha_i $'s. 
\begin{align*}
\SO_{2n + 1} & 
\begin{xy}
\ar@{-} (0,0) *++!D{\alpha_1} *{\circ}="A"; (10,0)="C" 
\ar@{.} "C"; (20,0)="D" 
\ar@{-} "D"; (30,0) *++!D{\alpha_{n-1}} *{\circ}="E"
\ar@{=>} "E"; (40,0) *++!D{\alpha_n} *{\circ}="F"
\end{xy}
\\
\SO_{2q} &
\begin{xy}
\ar@{-} (0,0) *++!D{\gamma_1} *{\circ}="A"; (10,0)="C" 
\ar@{.} "C"; (20,0)="D" 
\ar@{-} "D"; (30,0) *+!DR{\gamma_{q-2}} *{\circ}="E"
\ar@{-} "E"; (35,8.6)  *+!L{\gamma_{q-1}} *{\circ}
\ar@{-} "E"; (35,-8.6)  *+!L{\gamma_q} *{\circ}
\end{xy}
\end{align*}
\hfil
\resizebox{.99\linewidth}{!}{
%%\begin{longtable}{c|c|c|c}
\begin{tabular}{c|c|c|c}
\hline
\rule[0pt]{0pt}{12pt}
%%$ (G,K) $ & $ \Pi_G \setminus J_G\ (P=P_{J_G}) $ & $\Pi_{K_1} \setminus J_{K_1}\ (Q_1=Q_{J_{K_1}})$ & $\Pi_{K_2} \setminus J_{K_2}\ (Q_2=Q_{J_{K_2}})$
$ (G,K) $
& $ \Pi_G \setminus J_G $ & $\Pi_{K_1} \setminus J_{K_1}$ & $\Pi_{K_2} \setminus J_{K_2}$
\\
& $ (P=P_{J_G}) $ & $(Q_1=Q_{J_{K_1}})$ & $ (Q_2=Q_{J_{K_2}})$
\\
\hline\hline
$ (\SO_{2n +1}, \SO_{2p+1} \times \SO_{2q}) \smallvstrut $
& $ \{ \alpha_1 \}, \{ \alpha_n \} \smallvstrut$ & any subset  & any subset \\[1ex]
\cline{2-4}
& $ \{ \alpha_2 \} \smallvstrut$ & any subset  & $ \{ \gamma_{q - 1} \} , \{ \gamma_q \} $ \\[1ex]
\cline{2-4}
& $ \{ \alpha_i \} \smallvstrut$ & any subset  & \begin{tabular}{l}
$ \emptyset \; (Q_2 = K_2)  \smallvstrut$
\\
$ \{ \gamma_1 \} $ if $ q=1 \smallvstrut$
\end{tabular} \\[1ex]
\cline{2-4}
& $ \{ \alpha_i \} \smallvstrut$ & $ \emptyset \; (Q_1 = K_1) $ & any subset \\[1ex]
\cline{2-4}
& $ \{ \alpha_i \} \smallvstrut$ & $ \{ \beta_j \} $  & $ \{ \gamma_{q - 1} \} , \{ \gamma_q \} $ \\[1ex]
\cline{2-4}
& $ \{ \alpha_i, \alpha_j \} \smallvstrut$ & $ \emptyset \; (Q_1 = K_1) $  & $ \{ \gamma_{q - 1} \} , \{ \gamma_q \} $ \\[1ex]
\cline{2-4}
& any subset  & $ \emptyset \; (Q_1 = K_1) $ & $ \{\gamma_1\} $ if $ q = 1 $\\[1ex]
\cline{2-4}
& any subset  & $ Q_1 = \SO_1 \smallvstrut$ if $ p = 0 $ & any subset \\[1ex]
\hline
\end{tabular}
%%\end{longtable}
}
\hfil

\subsection{Type DI (BDI), even case}\label{subsec:DI-even}

In the setting of \S~\ref{subsec:Embedding.AIII}, 
we take $ 2 n $ instead of $ n $, 
$ 2 p $ (respectively $ 2 q $) instead of $ p $ (respectively $ q $), 
where $ n = p + q $.
Choose
\begin{equation*}
J = \begin{pmatrix}
\Sigma_{2 p} & 0 \\
0 & \Sigma_{2 q}  
\end{pmatrix}
, \;\;
\Sigma_m = \begin{pmatrix}
 & & 1 \\[-1.5ex]
 & \makebox[4ex][c]{$\iddots$} & \\[-1.2ex]
1 & &
\end{pmatrix}, 
\quad
%%\text{ and }\quad
I = \begin{pmatrix}
\unitmatrix_{2 p} & \\
& - \unitmatrix_{2q}
\end{pmatrix} .
\end{equation*}
Thus our embedding becomes 
\begin{equation*}
\begin{aligned}
\dblFV &= (K_1/Q_1 \times K_2/Q_2) \times G/P = (\SO_{2p}/Q_1 \times \SO_{2q}/Q_2) \times \SO_{2n}/P 
\\
&\hookrightarrow 
(\bbK_1/\bbQ_1 \times \bbK_2/\bbQ_2) \times \bbG/\bbP  
\\
& \hspace*{.2\textwidth} =  (\GL_{2p}/\bbQ_1 \times \GL_{2q}/\bbQ_2) \times \GL_{2n}/\bbP = \bbdblFV  .
\end{aligned}
\end{equation*}
Below, we list the table of type DI (BDI) double flag varieties of finite type (even case).

Here is the numbering of simple roots for $ G = \SO_{2n} $.  
The numbering for $ K_1 $ and $ K_2 $ is similar as $ G $ and the difference is that 
we use $ \{ \beta_1, \dots, \beta_p \} $ and $ \{ \gamma_1, \dots, \gamma_q \} $ instead of $ \alpha_i $'s. 
\begin{align*}
\SO_{2n} &
\begin{xy}
\ar@{-} (0,0) *++!D{\alpha_1} *{\circ}="A"; (10,0)="C" 
\ar@{.} "C"; (20,0)="D" 
\ar@{-} "D"; (30,0) *+!DR{\alpha_{n-2}} *{\circ}="E"
\ar@{-} "E"; (35,8.6)  *+!L{\alpha_{n-1}} *{\circ}
\ar@{-} "E"; (35,-8.6)  *+!L{\alpha_n} *{\circ}
\end{xy}
\end{align*}
\noindent
\resizebox{.98\linewidth}{!}{
%%\begin{longtable}{c|c|c|c}
\begin{tabular}{c|c|c|c}
\hline
\rule[0pt]{0pt}{12pt}
%%$ (G,K) $ & $ \Pi_G \setminus J_G\ (P=P_{J_G}) $ & $\Pi_{K_1} \setminus J_{K_1}\ (Q_1=Q_{J_{K_1}})$ & $\Pi_{K_2} \setminus J_{K_2}\ (Q_2=Q_{J_{K_2}})$
$ (G,K) $
& $ \Pi_G \setminus J_G $ & $\Pi_{K_1} \setminus J_{K_1}$ & $\Pi_{K_2} \setminus J_{K_2}$
\\
& $ (P=P_{J_G}) $ & $(Q_1=Q_{J_{K_1}})$ & $ (Q_2=Q_{J_{K_2}})$
\\
\hline\hline
%%$ (\SO_{2n}, \SO_{2p} {\times} \SO_{2q}) \smallvstrut $
$ (\SO_{2n}, $ \qquad\qquad
& $ \{ \alpha_1 \}, \{ \alpha_{n - 1} \} , \{ \alpha_n \} \smallvstrut$ & any subset  & any subset \\[1ex]
\cline{2-4}
$ \SO_{2p} {\times} \SO_{2q}) \smallvstrut $
& $ \{ \alpha_2 \} , {\{ \alpha_{n - 1}, \alpha_n \}} \smallvstrut$ & any subset & $ \{ \gamma_{q - 1} \} , \{ \gamma_q \} $ \\[1ex]
\cline{2-4}
& {$ \{ \alpha_i \} \smallvstrut$} & {any subset} & {$ \{ \gamma_1 \} $ if $q=1$} \\[1ex]
\cline{2-4}
& \begin{tabular}{c} 
$ \{ \alpha_i \}, \{ \alpha_1, \alpha_{n - 1} \} \smallvstrut$, \\
$ \{ \alpha_1, \alpha_n \}, \{ \alpha_{n - 1}, \alpha_n \}  \smallvstrut$ 
  \end{tabular}
& any subset  & $ \emptyset \; (Q_2 {=} K_2) $ \\[1ex]
\cline{2-4} 
& {$ \{ \alpha_i, \alpha_{n - 1} \}, 
\{ \alpha_i, \alpha_n \} \smallvstrut$}
& {any subset}  & {$ \emptyset \; (Q_2 {=} K_2) $ if $q{=}1$} \\[1ex]
\cline{2-4} %%
& $ \{ \alpha_i \} \smallvstrut$ & $ \begin{array}{c} \{ \beta_j \} , \smallvstrut \\ \{ \beta_j , \beta_{p - 1} \} , \{ \beta_j, \beta_p \} \smallvstrut \end{array} $  & $ \{ \gamma_{q - 1} \} , \{ \gamma_q \} $ \\[1ex]
\cline{2-4}
& $ \{ \alpha_i, \alpha_{n - 1} \} , \{ \alpha_i, \alpha_n \} \smallvstrut$  & 
$ \begin{array}{c}
\{ \beta_j \} , \smallvstrut\\ \{ \beta_j, \beta_{p - 1} \} , \{ \beta_j, \beta_p \} \smallvstrut \end{array} $  & $ \emptyset \; (Q_2 {=} K_2) $ \\[1ex]
\cline{2-4}
& \begin{tabular}{c}
$ \{ \alpha_i, \alpha_j\},  \{ \alpha_i, \alpha_j, \alpha_{n - 1} \} \smallvstrut $, \\
$ \{ \alpha_i, \alpha_j, \alpha_n \}, \{ \alpha_i, \alpha_{n-1}, \alpha_n \} \smallvstrut$  
  \end{tabular}
& $ \{ \beta_{p - 1} \} , \{ \beta_p \} $  & $ \emptyset \; (Q_2 {=} K_2) $ \\[1ex]
\cline{2-4}
& {any subset}  & {$ \emptyset \; (Q_1 {=} K_1) $}  & {$ \{ \gamma_1 \} $ if $q{=}1 \smallvstrut$} \\[1ex]
\cline{2-4}
& any subset  & $ \{ \beta_{p - 1} \} , \{ \beta_p \} \smallvstrut $  & $ \emptyset \; (Q_2 {=} K_2) $ if $q{=}1$ \\[1ex]
\hline
\end{tabular}
%%\end{longtable}
}

\subsection{Type DI (BDI), odd case}\label{subsec:DI-odd}

In the setting of \S~\ref{subsec:Embedding.AIII}, 
we take $ 2 n $ instead of $ n $, 
$ 2 p + 1$ (respectively $ 2 q + 1$) instead of $ p $ (respectively $ q $), 
where $ n = p + q + 1$.
Choose
\begin{equation*}
J = \begin{pmatrix}
\Sigma_{2 p+1} & 0 \\
0 & \Sigma_{2 q+1}  
\end{pmatrix}
, \;\;
\Sigma_m = \begin{pmatrix}
 & & 1 \\[-1.5ex]
 & \makebox[4ex][c]{$\iddots$} & \\[-1.2ex]
1 & &
\end{pmatrix}, 
\quad
%%\text{ and }\quad
I = \begin{pmatrix}
\unitmatrix_{2 p+1} & \\
& - \unitmatrix_{2q+1}
\end{pmatrix} .
\end{equation*}
Thus our embedding becomes 
\begin{equation*}
\begin{aligned}
\dblFV &= (K_1/Q_1 \times K_2/Q_2) \times G/P = (\SO_{2p+1}/Q_1 \times \SO_{2q+1}/Q_2) \times \SO_{2n}/P \hookrightarrow 
\\
&
(\bbK_1/\bbQ_1 \times \bbK_2/\bbQ_2) \times \bbG/\bbP  =  (\GL_{2p+1}/\bbQ_1 \times \GL_{2q+1}/\bbQ_2) \times \GL_{2n}/\bbP = \bbdblFV  .
\end{aligned}
\end{equation*}
Below, we list the table of type DI (BDI) double flag varieties of finite type (odd case).

\skipover{
Here is the numbering of simple roots for $ G = \SO_{2n} $ and $ K_1 = \SO_{2 p + 1} $.  
The numbering for $ K_2 $ is similar as $ K_1 $.  
\begin{align*}
\SO_{2n} &
\begin{xy}
\ar@{-} (0,0) *++!D{\alpha_1} *{\circ}="A"; (10,0)="C" 
\ar@{.} "C"; (20,0)="D" 
\ar@{-} "D"; (30,0) *+!DR{\alpha_{n-2}} *{\circ}="E"
\ar@{-} "E"; (35,8.6)  *+!L{\alpha_{n-1}} *{\circ}
\ar@{-} "E"; (35,-8.6)  *+!L{\alpha_n} *{\circ}
\end{xy}
&
\SO_{2p + 1} &
\begin{xy}
\ar@{-} (0,0) *++!D{\beta_1} *{\circ}="A"; (10,0)="C" 
\ar@{.} "C"; (20,0)="D" 
\ar@{-} "D"; (30,0) *++!D{\beta_{p-1}} *{\circ}="E"
\ar@{=>} "E"; (40,0) *++!D{\beta_p} *{\circ}="F"
\end{xy}
\end{align*}
}

The numbering of simple roots for $ G = \SO_{2n} $ and $ K_1 = \SO_{2 p + 1} $ 
are the same as that of type BI and DI (even case) except for the labeling Greek letters.

\noindent
\resizebox{\linewidth}{!}{
%%\begin{longtable}{c|c|c|c}
\begin{tabular}{c|c|c|c}
\hline
\rule[0pt]{0pt}{12pt}
%%$ (G,K) $ & $ \Pi_G \setminus J_G\ (P=P_{J_G}) $ & $\Pi_{K_1} \setminus J_{K_1}\ (Q_1=Q_{J_{K_1}})$ & $\Pi_{K_2} \setminus J_{K_2}\ (Q_2=Q_{J_{K_2}})$
$ (G,K) $
& $ \Pi_G \setminus J_G $ & $\Pi_{K_1} \setminus J_{K_1}$ & $\Pi_{K_2} \setminus J_{K_2}$
\\
& $ (P=P_{J_G}) $ & $(Q_1=Q_{J_{K_1}})$ & $ (Q_2=Q_{J_{K_2}})$
\\
\hline\hline
$ \begin{array}{l}(\SO_{2n}, \\ \SO_{2p + 1} \times \SO_{2q + 1}) \end{array} \smallvstrut $
& $ \{ \alpha_1 \}, \{ \alpha_{n - 1} \} , \{ \alpha_n \} \smallvstrut$ & any subset  & any subset \\[1ex]
\cline{2-4}
& \begin{tabular}{c} $ \{ \alpha_i \}, \{ \alpha_{n - 1}, \alpha_n \} \smallvstrut$ \\ {$\{\alpha_1,\alpha_{n-1}\},\{\alpha_1,\alpha_n\} \smallvstrut$}
\end{tabular}& any subset  & $ \emptyset \; (Q_2 = K_2) $ \\[1ex]
\cline{2-4}
& $ \{ \alpha_i, \alpha_{n - 1} \} , \{ \alpha_i, \alpha_n \} \smallvstrut$  & $ \{ \beta_j \} $  & $ \emptyset \; (Q_2 = K_2) $ \\[1ex]
\cline{2-4}
& any subset  & any subset  & $ K_2 = SO_1 \smallvstrut $ if $ q = 0 $ \\[1ex]
\hline
\end{tabular}
%%\end{longtable}
}

\subsection{\;\;Type DIII}\label{subsec:DIII}

In the setting of \S~\ref{subsec:Embedding.AIII}, 
we take $ 2 n $ instead of $ n $, 
$ n $ instead of $ p $ (respectively $ q $).  
Choose
\begin{equation*}
J = \Sigma_{2 n} 
= \begin{pmatrix}
 & & 1 \\[-1.5ex]
 & \makebox[4ex][c]{$\iddots$} & \\[-1.2ex]
1 & &
\end{pmatrix}, 
\quad
%%\text{ and }\quad
I = \begin{pmatrix}
\unitmatrix_{n} & \\
& - \unitmatrix_{n}
\end{pmatrix} .
\end{equation*}
Thus our embedding becomes 
\begin{equation*}
\begin{aligned}
\dblFV &= K/Q \times G/P = \GL_n/Q \times \SO_{2n}/P \hookrightarrow 
\\
&
(\bbK_1/\bbQ_1 \times \bbK_2/\bbQ_2) \times \bbG/\bbP  =  (\GL_n/\bbQ_1 \times \GL_n/\bbQ_2) \times \GL_{2n}/\bbP = \bbdblFV  .
\end{aligned}
\end{equation*}
Below, we list the table of type DIII double flag varieties of finite type.

\skipover{
Here is the numbering of simple roots for $ G = \SO_{2n} $ and $ K = \GL_n $.
\begin{align*}
\SO_{2n} &
\begin{xy}
\ar@{-} (0,0) *++!D{\alpha_1} *{\circ}="A"; (10,0)="C" 
\ar@{.} "C"; (20,0)="D" 
\ar@{-} "D"; (30,0) *+!DR{\alpha_{n-2}} *{\circ}="E"
\ar@{-} "E"; (35,8.6)  *+!L{\alpha_{n-1}} *{\circ}
\ar@{-} "E"; (35,-8.6)  *+!L{\alpha_n} *{\circ}
\end{xy}
&
\GL_n & 
\begin{xy}
\ar@{-} (0,0) *++!D{\beta_1} *{\circ}="A"; (10,0) *++!D{\beta_2} 
 *{\circ}="B"
\ar@{-} "B"; (20,0)="C" 
\ar@{.} "C"; (30,0)="D" 
\ar@{-} "D"; (40,0) *++!D{\beta_{n-1}} *{\circ}="E"
\end{xy}
\end{align*}
}
The numbering of simple roots for $ G = \SO_{2n} $ and $ K = \GL_n $ are the same as that of type DI and type CI respectively.
\begin{longtable}{c|c|cc}
\hline
\rule[0pt]{0pt}{12pt}
%%$ (G,K) $ & $ \Pi_G \setminus J_G\ (P=P_{J_G}) $ & $\Pi_{K_1} \setminus J_{K_1}\ (Q_1=Q_{J_{K_1}})$ & $\Pi_{K_2} \setminus J_{K_2}\ (Q_2=Q_{J_{K_2}})$
$ (G,K) $
& $ \Pi_G \setminus J_G \ (P=P_{J_G})$ & $\Pi_K \setminus J_K\ (Q=Q_{J_K})$
\\
\hline\hline
$ (\SO_{2n}, \GL_n) \smallvstrut $
& $ \{ \alpha_1 \}, \{ \alpha_{n - 1} \} , \{ \alpha_n \} \smallvstrut$ & any subset \\[1ex]
\cline{2-4}
& $ \{ \alpha_i \} \smallvstrut$ & $ \{ \beta_j \} $ \\[1ex]
\hline
\end{longtable}

{Here in this table, the embedding theory does not give the cases} where $ P $ is a Borel subgroup (hence can be any parabolic subgroup) and 
$ \Pi_K \setminus J_K = \{ \beta_1 \} $ or {$ \{ \beta_{n-1} \} $} (mirabolic case), 
which is of finite type and listed in \S~\ref{subsection:Table.P=Borel}.

\section{An example of embedding theory for type CI into type AIII}\label{section:embedding.typeCI.into.typeAIII}

In this section, we give an example of the embedding of the case of type CI into type AIII based on \cite{Fresse.N.2021}.  
As in \S~\ref{subsec:CI}, let us begin with 
$ \bbG = \GL_{2n} $ and commuting involutions $ \sigma, \theta \in \Aut \bbG $ defined by 
\begin{align*}
\theta(g) = I^{-1}\, g\, I & & I = I_{n,n} = \diag (\unitmatrix_n, -\unitmatrix_n) ,
\\
\sigma(g) = J^{-1}\,\transpose{g}^{-1} J & & J = J_n = \mattwo{}{-\unitmatrix_n}{\unitmatrix_n}{} .
\end{align*}
Note that we use a convention for $ J $ which is different from the one in \S~\ref{sec:embedding.theory.DFV}.  
We define a symplectic form on $ V = \C^{2n} $ by $ (u, v) = \transpose{u} J v $.  
Note that $ V^+ = \langle \eb_1, \eb_2, \dots, \eb_n \rangle \subset V $ is a Lagrangian subspace 
with respect to this symplectic form, 
where $ \eb_k $ denotes the $ k $-th elementary basis vector as usual.  
Then 
\begin{equation}\label{eq:bbK.G.K.typeCI.embedding}
\bbK = \bbG^{\theta} = \GL_n \times \GL_n , \quad 
G = \bbG^{\sigma} = \Sp_{2n} , \quad
K = G^{\theta} \simeq \GL_n
\end{equation}
are all connected, and $ (G, K) = (\Sp_{2n}, \GL_n) $ is a symmetric pair of type CI 
and $ (\bbG, \bbK) = (\GL_{2n}, \GL_n \times \GL_n) $ is that of type AIII.  

Let us denote by $ B_n^{\pm} $ the Borel subgroup of $ \GL_n $ consisting of upper/lower triangular matrices.
Define 
\begin{equation*}
\bbBK = B_n^+ \times B_n^- \subset \bbK = \GL_n \times \GL_n, \qquad \bbP = \Stab_{\bbG}(V^+) \subset \bbG = \GL_{2n} 
\end{equation*}
both of which are $ \sigma $-stable.  
%% Borel subgroup of $ \bbK $ and a $ \sigma $-stable parabolic subgroup of $ \bbG $ respectively.  
Then $ Q = B_K = \bbBK^{\sigma} $ is a Borel subgroup in $ K $
and $ P = P_S = \bbP^{\sigma} $ is the Siegel parabolic subgroup of $ G = \Sp_{2n} $ stabilizing 
a Lagrangian subspace $ V^+ $.  
%%Similarly we put $ V^- = \langle \eb_{n + 1}, \eb_{n + 2}, \dots, \eb_{2 n} \rangle $, and we get a polarization $ V = V^+ \oplus V^- $ stable under $ \bbK $. 
%
Thus our double flag varieties are 
\begin{equation}\label{eq:dblflag.Sp2n.GLn}
\dblFV =  K/B_K \times G/P = \GL_n/B_n^+ \times \Sp_{2n}/P_S 
\simeq \FlagVar_n \times \LGrass(\C^{2n})
\end{equation}
and 
\begin{equation}\label{eq:dblflag.GL2n.GLnGLn}
\begin{aligned}[t]
\bbdblFV &= \bbK/\bbBK \times \bbG/\bbP = \bigl( \GL_n/B_n^+ \times \GL_n/B_n^- \bigr) \times \GL_{2n}/P_{(n,n)} 
\\  &
\simeq \bigl( \FlagVar_n \times \FlagVar_n \bigr) \times \Grass_n(\C^{2n}),
\end{aligned}
\end{equation}
where $ \LGrass(V) $ denotes the Lagrangian Grassmannian, the variety of all the Lagrangian subspaces 
in $ V = \C^{2n} $, 
%% with the symplectic form defined by $ J $, 
and $ \FlagVar_n $ is the variety of complete flags in $ \C^n $.  
We use the notation $ P_{(n,n)} $ for the parabolic subgroup in $ \GL_{2n} $ determined by 
the partition $ (n, n) $ of $ 2n $.

Proposition~\ref{prop:condition.E} enables orbit embedding in this case.

\begin{theorem}\label{thm:abstract.embedding.typeCI.to.typeAIII}
Let $ \dblFV $ and $ \bbdblFV $ be the double flag varieties defined in 
\eqref{eq:dblflag.Sp2n.GLn} and 
\eqref{eq:dblflag.GL2n.GLnGLn} respectively.  
Then the orbit map $ \dblFV / K \to \bbdblFV / \bbK $ is injective, i.e., 
for any $ \bbK $-orbit $ \orbit $ in $ \bbdblFV $, 
the intersection $ X \cap \orbit $ is either empty or a single $ K $-orbit.
\end{theorem}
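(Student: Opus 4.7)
The plan is to verify that the configuration in the statement fits exactly into the abstract framework set up in Section~\ref{subsec:Embedding.AIII}, and then invoke Proposition~\ref{prop:condition.E} together with Theorem~\ref{thm:embedding.orbits} to conclude. There is no need to re-analyze orbit structures: the result will be purely a matter of checking hypotheses.

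The first step is to confirm that $(\bbG,\bbK,G,K)$ and the involutions $(\theta,\sigma)$ of the statement match the setup at the start of \S\ref{subsec:Embedding.AIII}, with $n$ replaced by $2n$, and $(p,q)=(n,n)$. The commutativity $\theta\sigma=\sigma\theta$ follows from the relation $JI=-IJ$, i.e., $\sigma(I)=-I$, which is the condition on $I,J$ required in that framework (with $\alpha=-1$). Connectedness of $\bbG,\bbK,G,K$ is straightforward: $\bbG=\GL_{2n}$, $\bbK=\GL_n\times\GL_n$, $G=\Sp_{2n}$ are all connected, and $K\simeq\GL_n$ is connected as well.

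The second step is to identify the parabolic data. One checks directly that $\bbP=\Stab_{\bbG}(V^+)$ is $\sigma$-stable because $V^+$ is a Lagrangian for the form defined by $J$, so $\sigma(\bbP)=\Stab_{\bbG}(\sigma^{-1}V^+)=\bbP$; and $\bbP^\sigma=\bbP\cap G=\Stab_G(V^+)=P_S$, the Siegel parabolic. For $\bbBK=B_n^+\times B_n^-$, the involution $\sigma$ acts on a pair $(g_1,g_2)\in\bbK$ as a composition of inverse-transpose and conjugation by the antidiagonal $\Sigma_n$, which interchanges upper- and lower-triangular matrices; together with the swap of the two factors built into $\theta$, this makes $\bbBK$ $\sigma$-stable, and a direct calculation identifies $\bbBK^\sigma$ with a Borel subgroup of $K\simeq\GL_n$. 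Thus $\bbdblFV^\sigma=\dblFV$ in the notation of the statement.

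The third step is to apply Proposition~\ref{prop:condition.E}: under the setup just verified, condition (E) of Theorem~\ref{thm:embedding.orbits} holds for any choice of $\sigma$-stable parabolics in $\bbG$ and $\bbK$, and in particular for our $\bbP$ and $\bbBK$. Consequently Theorem~\ref{thm:embedding.orbits}(2) applies and yields exactly the claim: for every $\bbK$-orbit $\orbit\subset\bbdblFV$, the intersection $\dblFV\cap\orbit=\orbit^\sigma$ is empty or a single $K$-orbit, so the orbit map $\iota:\dblFV/K\to\bbdblFV/\bbK$ is injective. The only real content of the argument is the verification in Step~2; all other pieces are direct invocations. I do not expect a serious obstacle, since the matrix computations required to check $\sigma$-stability and fixed-point identification are routine consequences of the explicit forms of $I$ and $J$.
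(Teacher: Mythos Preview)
Your proposal is correct and matches the paper's approach exactly: the paper simply notes that ``Proposition~\ref{prop:condition.E} enables orbit embedding in this case'' and states the theorem, so the content is precisely the verification that the setup of \S\ref{subsec:Embedding.AIII} applies and then the invocation of Theorem~\ref{thm:embedding.orbits}. One small slip in your Step~2: the swap of the two $\GL_n$ factors in $\bbK$ comes from $\sigma$ itself (via the off-diagonal block structure of $J$), not from $\theta$; concretely $\sigma(g_1,g_2)=(\transpose{g_2}^{-1},\transpose{g_1}^{-1})$, which indeed sends $B_n^+\times B_n^-$ to itself.
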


Let us describe the explicit embedding of orbits in this case. 
First, we clarify the explicit embedding map $ \dblFV \to \bbdblFV $ and the involutive automorphism on $ \bbdblFV $. %
Note the following isomorphism from 
%% the various identifications (see 
\S\S~\ref{subsec:orbit.by.quivers.typeAIII} and \ref{section:combi.quotient}.
\begin{equation}\label{eqn:bbdblFV.bbK.various.forms.typeAIII}
\begin{aligned}
\bbdblFV / \bbK &\simeq B_n^+ {\times} B_n^- \backslash \GL_{2n} / P_{(n,n)} 
\simeq B_n^+ {\times} B_n^- \backslash \Grass_n(\C^{2n}) 
\\  &
\simeq B_n^+ {\times} B_n^- \backslash \regMat_{2n,n} / \GL_n \simeq \regTnxTn / S_n ,
\end{aligned}
\end{equation}
where $ \regMat_{2n,n} $ denotes the set of $ 2n $ by $ n $ matrices of full rank (i.e., rank $ n $), 
and $ \ppermutations^{\circ} = \regTnxTn $ is the set of pairs of partial permutation matrices which are full rank 
(see \S~\ref{subsec:orbit.by.quivers.typeAIII}).  
Note that we put the subscript $ n $.  

Let us explain the above isomorphism more precisely.  
Take $ \omega = \vectwo{\tau_1}{\tau_2} \in \regTnxTn $.  
Then we can find matrices $ \xi_1, \xi_2 \in \Mat_n $ such that 
\begin{equation*}
\bbg = \mattwo{\tau_1}{\xi_1}{\tau_2}{-\xi_2} \in \GL_{2n} = \bbG
\end{equation*}
is a regular matrix.  
Note that the coset $ \bbg \bbP $ does not depend on the choice of $ \xi_1, \xi_2 $ 
but only depending on $ \omega $.  
The $ \bbK $-orbit $ \orbit $ which goes through 
$ (\bbBK, \bbg \bbP) \in \bbdblFV $ is denoted by $ \orbit_{\omega} $.  

There is a great possibility for the choice of $ \xi_1, \xi_2 $.  
In fact, we can choose $ \xi_1, \xi_2 $ in such a way that 
$ \vectwo{\xi_1}{\xi_2} \in \regTnxTn  $ is also a pair of partial permutations of full rank and 
it satisfies
\begin{equation} 
\transpose{\xi_1} \tau_1 - \transpose{\xi_2} \tau_2 = 0 .
\end{equation}
For this, see \cite[Lemma~4.4]{Fresse.N.2021}.  
In the following, we always assume this.

We define an involution on $ \regTnxTn / S_n \simeq \bbdblFV / \bbK $ by 
$ \sigma(\orbit_{\omega}) = \orbit_{\sigma(\omega)} \; (\omega \in \regTnxTn) $, 
which is denoted by the same letter $ \sigma $ by abuse of notation.  
Note that $ \sigma(\omega) $ is determined modulo the right multiplication by $ S_n $.  

\begin{proposition}\label{prop:explicit.sigma.on.bbdblFV.bbK}
For $ \omega = \vectwo{\tau_1}{\tau_2} \in \regTnxTn $ and 
$ \bbg = \mattwo{\tau_1}{\xi_1}{\tau_2}{-\xi_2} $ chosen above,  
let $ \orbit_{\omega} $ be the $ \bbK $-orbit in $ \bbdblFV $ through the point 
$ (\bbBK, \bbg \bbP) $.  
Then $ \sigma(\orbit_{\omega}) = \orbit_{\sigma(\omega)} $ is given by 
\begin{equation*}
\sigma(\omega) = \sigma\Bigl( \vectwo{\tau_1}{\tau_2} \Bigr)
= \vectwo{\xi_2}{\xi_1} \pmod{S_n}.
\end{equation*}
\end{proposition}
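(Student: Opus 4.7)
\bigskip

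My plan is to avoid computing $\sigma(\bbg) = J^{-1} \transpose{\bbg}^{-1} J$ directly, and instead identify geometrically what $\sigma$ does to points of $\bbG/\bbP \cong \Grass_n(\C^{2n})$. Recall that the coset $\bbg \bbP$ corresponds to the subspace $W = \bbg V^+ \subset \C^{2n}$, i.e., the column span of the first $n$ columns of $\bbg$. I will first check that $\sigma$ acts on the Grassmannian by taking symplectic perpendiculars: $W \mapsto W^\perp$ where the perpendicular is with respect to the symplectic form $(u,v) = \transpose{u} J v$.

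To see this, note that $J^{-1} \transpose{g} J$ is the symplectic adjoint $g^*$ of $g$, characterized by $(gu, v) = (u, g^* v)$, so that $\sigma(g) = (g^*)^{-1}$. For $v \in \C^{2n}$, we have $v \in \sigma(\bbg) V^+$ iff $g^* v \in V^+$ iff $(g^* v, V^+) = 0$ (because $V^+$ is Lagrangian, so $(V^+)^\perp = V^+$) iff $(v, \bbg V^+) = 0$, i.e., iff $v \in W^\perp$. Hence $\sigma(\bbg)\bbP$ is the Grassmannian point $W^\perp$. Once this is established, the first $n$ columns of any matrix representing $\sigma(\bbg)\bbP$ (modulo the right $\GL_n$-action contained in $\bbP$) can be taken to be any basis of $W^\perp$.

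Next I will exhibit an explicit basis of $W^\perp$ in terms of $\xi_1, \xi_2$. For columns $\vectwo{(\tau_1)_i}{(\tau_2)_i}$ of $\omega$ and candidate vectors $\vectwo{(\xi_2)_j}{(\xi_1)_j}$, the symplectic pairing is
\[
\transpose{(\tau_2)_i} (\xi_2)_j - \transpose{(\tau_1)_i} (\xi_1)_j,
\]
and these vanish for all $i,j$ precisely because the chosen auxiliary matrices satisfy $\transpose{\xi_1}\tau_1 - \transpose{\xi_2}\tau_2 = 0$, i.e., (by transposition) $\transpose{\tau_2}\xi_2 = \transpose{\tau_1}\xi_1$. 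Since $\vectwo{\xi_1}{\xi_2}$ has full rank $n$ by assumption, so does its row-swap $\vectwo{\xi_2}{\xi_1}$, and therefore its $n$ columns form a basis of the $n$-dimensional subspace $W^\perp$.

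Finally, I will translate this back to the combinatorics. The $\bbK$-orbit $\sigma(\orbit_\omega) = \bbK \cdot (\bbBK, \sigma(\bbg)\bbP)$ corresponds, under the bijection \eqref{eqn:bbdblFV.bbK.various.forms.typeAIII}, to the double coset $\bbBK \sigma(\bbg) \bbP$, which, by the previous step, admits the representative whose first $n$ columns are $\vectwo{\xi_2}{\xi_1}$. Since $\xi_1,\xi_2$ are partial permutation matrices, $\vectwo{\xi_2}{\xi_1}$ is itself an element of $\regTnxTn$, and so the class $\sigma(\omega) \in \regTnxTn/S_n$ is represented by $\vectwo{\xi_2}{\xi_1}$, as claimed. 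The only step that is not a direct computation is the identification of $\sigma$ on $\Grass_n(\C^{2n})$ with $W\mapsto W^\perp$, and this is really the crux of the argument; everything else is a quick verification that the matrices furnished by \cite[Lemma~4.4]{Fresse.N.2021} are compatible with this geometric description.
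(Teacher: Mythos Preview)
Your proof is correct and takes a genuinely different route from the paper's. The paper computes $\sigma(\bbg)=J_n^{-1}\,\transpose{\bbg}^{-1}J_n$ head-on: it observes that $\transpose{\bbg}\bbg=\diag(d_1,d_2)$ with $d_i$ diagonal (entries $1$ or $2$), so $\transpose{\bbg}^{-1}=\bbg\,\bbd^{-1}$, and then a one-line conjugation by $J_n$ yields $\sigma(\bbg)\bbP=\mattwo{\xi_2}{\tau_2}{\xi_1}{-\tau_1}\bbP$. Your argument instead first identifies the action of $\sigma$ on $\bbG/\bbP\cong\Grass_n(\C^{2n})$ as $W\mapsto W^{\perp}$ for the symplectic form, and then checks directly, via the relation $\transpose{\xi_1}\tau_1=\transpose{\xi_2}\tau_2$, that the columns of $\vectwo{\xi_2}{\xi_1}$ furnish a basis of $W^{\perp}$. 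The paper's computation is shorter and yields the explicit element $\sigma(\bbg)$ (not just its $\bbP$-coset), while your approach isolates the conceptual content---that $\sigma$ is ``take the symplectic orthogonal''---which is reusable and avoids the slightly ad hoc observation that $\transpose{\bbg}\bbg$ happens to be diagonal.
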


\begin{proof}
We have $ \sigma\bigl( (\bbBK, \bbg \bbP) \bigr) = (\bbBK, \sigma(\bbg) \bbP) $ and 
$ \sigma(\bbg) = J_n^{-1} \transpose{\bbg}^{-1} J_n $. 
Let us compute $ \transpose{\bbg}^{-1} $.  We get 
\begin{align*}
\transpose{\bbg} \bbg 
&= \mattwo{\transpose{\tau_1}}{\transpose{\tau_2}}{\transpose{\xi_1}}{-\transpose{\xi_2}} 
  \mattwo{\tau_1}{\xi_1}{\tau_2}{-\xi_2} 
\\
&= \mattwo{\transpose{\tau_1} \tau_1 + \transpose{\tau_2} \tau_2}{\transpose{\tau_1} \xi_1 - \transpose{\tau_2} \xi_2}{\transpose{\xi_1} \tau_1 - \transpose{\xi_2} \tau_2}{\transpose{\xi_1} \xi_1 + \transpose{\xi_2} \xi_2}
=: \mattwo{d_1}{0}{0}{d_2} , 
\end{align*}
by the property $ \transpose{\xi_1} \tau_1 - \transpose{\xi_2} \tau_2 = 0 $.  
An easy calculation tells that $ d_1 = \transpose{\tau_1} \tau_1 + \transpose{\tau_2} \tau_2 $ 
is a diagonal matrix with diagonal entries $ 1 $ or $ 2 $, 
and so is $ d_2 = \transpose{\xi_1} \xi_1 + \transpose{\xi_2} \xi_2 $.  
Thus we get 
$ \transpose{\bbg}^{-1} = \bbg \bbd^{-1} $ with $ \bbd = \diag (d_1, d_2) $.  
From this, we compute 
\begin{equation*}
\sigma(\bbg) = J_n^{-1} \transpose{\bbg}^{-1} J_n 
= J_n^{-1} \bbg \bbd^{-1} J_n 
= \mattwo{\xi_2}{\tau_2}{\xi_1}{-\tau_1}
\cdot \diag(-d_2^{-1}, -d_1^{-1}) , 
\end{equation*}
and get $ \sigma(\bbg) \bbP = \mattwo{\xi_2}{\tau_2}{\xi_1}{-\tau_1} \bbP $.  
This implies $ \sigma\Bigl( \vectwo{\tau_1}{\tau_2} \Bigr)
= \vectwo{\xi_2}{\xi_1} $.
\end{proof}

\begin{theorem}\label{thm:explicit.embedding.typeCI.to.typeAIII}
For $ \omega = \vectwo{\tau_1}{\tau_2} \in \regTnxTn / S_n $, 
let $ \orbit_{\omega} $ be the corresponding $ \bbK $-orbit in $ \bbdblFV $.  
Then the following {\upshape(1)--(4)} are all equivalent.
\begin{penumerate}
\item
$ \orbit_{\omega} \cap \dblFV \neq \emptyset $, hence it is a single $ K $-orbit.
\item
$ \sigma( \orbit_{\omega} ) = \orbit_{\omega} $, i.e., the $ \bbK $-orbit is $ \sigma $-stable.
\item
$ \transpose{\tau_1} \tau_2 \in \Sym_n $.
\item
$ \transpose{\tau_2} \tau_1 \in \Sym_n $.
\end{penumerate}

\smallskip

In particular, the set of $ K $-orbits in the double flag variety 
$ \dblFV = \GL_n / B_n^+ \times \Sp_{2n} / P_S $ 
of type {\upshape{}CI} is parametrized by $ \regCnxCn / S_n $, where 
\begin{equation*}
\regCnxCn := 
\bigl\{ \omega = \vectwo{\tau_1}{\tau_2} \in \regTnxTn \bigm| 
\transpose{\tau_1} \tau_2 = \transpose{\tau_2} \tau_1 \in \Sym_n \bigr\} .
\end{equation*}
\end{theorem}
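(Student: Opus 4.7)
The plan is to close the logical cycle $(3) \Rightarrow (1) \Rightarrow (2) \Rightarrow (3)$, and to handle $(3) \Leftrightarrow (4)$ as a direct matrix identity. The equivalence $(3) \Leftrightarrow (4)$ is immediate since $\transpose{(\transpose{\tau_1}\tau_2)} = \transpose{\tau_2}\tau_1$: a matrix is symmetric iff it equals its own transpose, so $\transpose{\tau_1}\tau_2 \in \Sym_n$ iff $\transpose{\tau_1}\tau_2 = \transpose{\tau_2}\tau_1$ iff $\transpose{\tau_2}\tau_1 \in \Sym_n$.

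For $(3) \Rightarrow (1)$, the key observation is the symplectic interpretation of condition (3). A direct computation with $J = \mattwo{}{-\unitmatrix_n}{\unitmatrix_n}{}$ gives $\transpose{\omega} J \omega = -\transpose{\tau_1}\tau_2 + \transpose{\tau_2}\tau_1$, so (3) says precisely that $W := \Im(\omega) \subset \C^{2n}$ is an $n$-dimensional isotropic, hence Lagrangian, subspace. In particular $W \in \LGrass(\C^{2n}) = G/P$, and the point $(\bbBK, W) \in \dblFV$ belongs to $\orbit_\omega$ because by construction the second component of the base representative $(\bbBK, \bbg\bbP)$ is the coset corresponding to the subspace $\bbg V^+ = \Im(\omega) = W$. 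This furnishes a point of $\orbit_\omega \cap \dblFV$, which by Theorem \ref{thm:abstract.embedding.typeCI.to.typeAIII} is then a single $K$-orbit.

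For $(1) \Rightarrow (2)$: if $x \in \orbit_\omega \cap \dblFV$, then $\sigma(x) = x$, and by $\sigma$-equivariance of the $\bbK$-action one has $\sigma(\orbit_\omega) \ni \sigma(x) = x \in \orbit_\omega$; two $\bbK$-orbits meeting in a point must coincide, forcing $\sigma(\orbit_\omega) = \orbit_\omega$. For $(2) \Rightarrow (3)$: by Proposition \ref{prop:explicit.sigma.on.bbdblFV.bbK}, $\sigma(\orbit_\omega) = \orbit_{\sigma(\omega)}$ with $\sigma(\omega) \equiv \vectwo{\xi_2}{\xi_1} \pmod{S_n}$, so (2) amounts to the existence of a permutation $s \in S_n$ with $\xi_2 = \tau_1 s$ and $\xi_1 = \tau_2 s$. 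Plugging these into the fundamental relation $\transpose{\xi_1}\tau_1 = \transpose{\xi_2}\tau_2$ yields $\transpose{s}\transpose{\tau_2}\tau_1 = \transpose{s}\transpose{\tau_1}\tau_2$, and cancelling the invertible $\transpose{s}$ gives $\transpose{\tau_1}\tau_2 = \transpose{\tau_2}\tau_1$, which is (3). The final parametrization then follows: $K$-orbits on $\dblFV$ correspond bijectively, via $\iota^{-1}$, to those classes $\omega \in \regTnxTn/S_n$ for which $\orbit_\omega$ meets $\dblFV$, which by the established equivalence is exactly the $S_n$-stable subset $\regCnxCn/S_n$ (one verifies that condition (3) is $S_n$-invariant because $\transpose{(\tau_1 s)}(\tau_2 s) = \transpose{s}(\transpose{\tau_1}\tau_2)s$ remains symmetric).

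The main subtlety will be the direction $(2) \Rightarrow (3)$: Proposition \ref{prop:explicit.sigma.on.bbdblFV.bbK} exhibits only one particular representative of $\sigma(\omega)$, but the identity $\sigma(\omega) \equiv \omega \pmod{S_n}$ is precisely what furnishes the permutation $s$ needed in the substitution, so the argument goes through without extra work. An alternative route would be to prove $(2) \Rightarrow (1)$ directly, via the classical fact that $\sigma$-stable $\bbK$-orbits contain $\sigma$-fixed points when the stabilizers enjoy suitable cohomological vanishing, but routing through (3) by means of the explicit computation of Proposition \ref{prop:explicit.sigma.on.bbdblFV.bbK} is cleaner in this setting.
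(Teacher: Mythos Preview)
Your proof is correct and follows essentially the same approach as the paper: the paper's proof is a one-line sketch that only writes down the key consequence of Proposition~\ref{prop:explicit.sigma.on.bbdblFV.bbK}, namely $\vectwo{\xi_2}{\xi_1} = \vectwo{\tau_1 s}{\tau_2 s}$, leaving the reader to substitute into the relation $\transpose{\xi_1}\tau_1 = \transpose{\xi_2}\tau_2$ and to supply the easy implications $(3)\Rightarrow(1)\Rightarrow(2)$ and $(3)\Leftrightarrow(4)$. You have filled in exactly those steps, in particular your direct Lagrangian argument for $(3)\Rightarrow(1)$ is what the paper records only after the proof, in the display defining $\regCMat_{2n,n}$.
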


\begin{proof}
{For (2)$\Rightarrow$(3) we use that $ \sigma(\omega) = \omega \mod{S_n} $, which means} 
\begin{equation*}
\sigma(\omega) = \vectwo{\xi_2}{\xi_1} = \vectwo{\tau_1 s}{\tau_2 s} 
\end{equation*}
for some $ s \in S_n $.
{The other implications are straightforward.}
\end{proof}

As in the case of $ \bbdblFV / \bbK $ (see \eqref{eqn:bbdblFV.bbK.various.forms.typeAIII}), 
there are natural bijections
\begin{equation}\label{eqn:dblFV.K.various.forms.typeCI}
\begin{aligned}
\dblFV / K &\simeq (\FlagVar_n \times \LGrass(\C^{2n})) / \GL_n 
\simeq B_n^+ \backslash \Sp_{2n} / P_S \simeq B_n^+ \backslash \LGrass(\C^{2n}) 
\\
&
\simeq B_n^+ \backslash \regCMat_{2n,n} / \GL_n \simeq \regCnxCn / S_n,
\end{aligned}
\end{equation}
where 
\begin{equation}
\begin{aligned}
\regCMat_{2n,n} 
&= \{ A \in \regMat_{2n,n} \mid \transpose{A} J_n A = 0 \} 
\\
&= \{ A = \vectwo{A_1}{A_2} \in \regMat_{2n,n} \mid \transpose{A_1} A_2 = \transpose{A_2} A_1 
\in \Sym_n \} . 
\end{aligned}
\end{equation}
Note that the actions of $ b \in B_n^+ $ are all defined by the left multiplications by 
$ \mattwo{b}{0}{0}{\transpose{b}^{-1}} $.  
The above theorem tells that all these coset spaces are in bijection with 
$ \regCnxCn / S_n $.  

%%[2023/08/04 17:50:58 JST]

\section*{Acknowledgments}

The authors thank the referee for his/her precise reading, and useful suggestions.  
The second author (K.~N.) is supported by JSPS KAKENHI Grant Number \#{21K03184}.

\bibliographystyle{amsalpha}
%%\bibliography{bib_DFV.bib}
%%\bibliography{bib_exotic_nullcone,/Users/kyo/home/math/mypaper/bib_nishiyama.bib}

\def\cprime{$'$} \def\Dbar{\leavevmode\lower.6ex\hbox to 0pt{\hskip-.23ex
  \accent"16\hss}D}
\providecommand{\bysame}{\leavevmode\hbox to3em{\hrulefill}\thinspace}
\providecommand{\MR}{\relax\ifhmode\unskip\space\fi MR }
% \MRhref is called by the amsart/book/proc definition of \MR.
\providecommand{\MRhref}[2]{%
  \href{http://www.ams.org/mathscinet-getitem?mr=#1}{#2}
}
\providecommand{\href}[2]{#2}

\renewcommand{\MR}[1]{}

%% for biblatex
%%\printbibliography

\end{document}